\documentclass[12pt, twoside]{article}
\usepackage{amsmath, amssymb, amsthm, physics, url, mathrsfs, cite, xcolor, mathtools, dsfont, esint}
\usepackage{subfigure, tikz}
\usetikzlibrary{arrows.meta}
\usepackage{longtable}
\usepackage{booktabs}
\usepackage[shortlabels]{enumitem}
\usepackage[title]{appendix}
\mathtoolsset{showonlyrefs}

\definecolor{darkblue}{RGB}{32,57,231}
\definecolor{purple}{RGB}{128,0,160}
\usepackage[colorlinks,
	citecolor=darkblue, 
	linkcolor=darkblue, 
	urlcolor=darkblue,
	bookmarks=true,
	bookmarksopen=true,
	pdfauthor={Fabrice Baudoin, Aobo Chen, Li Chen}, 
	pdftitle={Riesz transforms on the Vicsek set}
]{hyperref}

\numberwithin{equation}{section}
\numberwithin{figure}{section}
\newtheorem{theorem}{Theorem}[section]
\newtheorem{lemma}[theorem]{Lemma}
\newtheorem{proposition}[theorem]{Proposition}
\newtheorem{corollary}[theorem]{Corollary}

\theoremstyle{definition}
\newtheorem{definition}[theorem]{Definition}

\newtheorem{remark}[theorem]{Remark}

\newtheorem{notation}[theorem]{Notation}

\def\dist{{\mathop {\rm dist}}}
\def\diam{{\mathop {\rm diam }}}
\newcommand*{\dif}{\mathop{}\!\mathrm{d}}
\newcommand{\supp}{\operatorname{supp}}
\newcommand{\one}{\mathds{1}}
 \def\sB {{\mathcal {B}}} 
  \def\sF {{\mathcal {F}}}
  \def\sI {{\mathcal {I}}}
  
\def\sM {{\mathcal {M}}}  
\def\sP {{\mathcal {P}}} \def\sQ {{\mathcal {Q}}}

  \def\bC {{\mathbb {C}}}
 \def\bE {{\mathbb {E}}}

 \def\bN {{\mathbb {N}}} 
\def\bP {{\mathbb {P}}}  \def\bR {{\mathbb {R}}}

 \def\bZ {{\mathbb {Z}}}
  
  \def\rF {{\mathscr {F}}}
  
  \def\rL {{\mathscr {L}}}
  
  \def\rR {{\mathscr {R}}}

\def\ol{\overline}
\def\wt{\widetilde}
\def\wh{\widehat}

\newcommand{\set}[1]{\left\{ #1 \right\}}
\newcommand{\Sett}[2]{\left\{ #1  : \, #2 \right\}}
\DeclareMathOperator*{\einf}{einf}

\newcommand{\restr}[2]{{
		\left.\kern-\nulldelimiterspace 
		#1 
		\vphantom{\big|}
		\right|_{#2}
}} 
\newcommand{\bNN}[0]{\mathbb{N}_{0}}
\newcommand{\ambient}{X} 
\newcommand{\vicsek}{K} 
\newcommand{\metric}{d} 
\newcommand{\meas}{m} 
\newcommand{\skeleton}{\mathcal{S}} 
\newcommand{\medm}{\nu} 
\newcommand{\Lip}{\operatorname{Lip}}
\newcommand{\Ker}{\operatorname{Ker}}

\newcommand{\Length}[0]{\mathrm{Length}}
\newcommand{\cellsize}[1]{\ell( #1 )}
\newcommand{\cellctr}[1]{c_{ #1 }}
\newcommand{\cellvert}[1]{V_{ #1 }}
\newcommand{\celltree}[1]{T_{ #1 }}
\newcommand{\cellatt}[0]{\partial_{\mathrm{att}}}
\def\collectA {{\mathfrak {A}}} 
\def\collectB {{\mathfrak {B}}}

\def\collectD {{\mathfrak {D}}}
\def\collectE {{\mathfrak {E}}}

\def\collectQ {{\mathfrak {Q}}}
\def\collectJ {{\mathfrak {J}}}
\def\collectI {{\mathfrak {I}}}
\newcommand{\form}{\mathcal{E}} 
\newcommand{\domain}{\mathcal{F}} 
\newcommand{\gen}{\Delta} 
\newcommand{\wgrad}{\partial} 
\newcommand{\abscon}{\mathrm{AC}}
\newcommand{\Dom}{\mathrm{Dom}} 
\newcommand{\proj}{{E}} 
\newcommand{\riesz}{\mathcal{R}} 
\newcommand{\core}[0]{\mathcal{C}}
\newcommand{\harmonic}[0]{H}
\newcommand{\density}[0]{\rR}
\newcommand{\sobolev}[1]{W^{1, #1}}
\newcommand{\dimf}[0]{d_{\mathrm{f}}}
\newcommand{\dimw}[0]{d_{\mathrm{w}}}
\newcommand{\dims}[0]{d_{\mathrm{s}}}
\newcommand{\critic}[0]{\gamma}

\font\titlefont=cmbx12 scaled 1400
\title{\titlefont Riesz transforms on the Vicsek set}
\date{\today}
\author{Fabrice Baudoin, Aobo Chen, Li Chen}
\begin{document}
\maketitle
\vspace{-22pt}
\begin{abstract}
We study Riesz and reverse Riesz inequalities for the fractional powers of the Laplacian $\Delta$ on the unbounded Vicsek set $\ambient$. The space carries its Hausdorff measure $\meas$. The gradient $\wgrad$ is a weak gradient defined on the skeleton, and it is measured with respect to a length measure $\medm$ that is singular with respect to $\meas$. The relevant fractional order for the Riez inequalities is not $1/2$ but $\critic_{p}=\frac{\dimf+p-1}{p\dimw}$, where $\dimf=\log_{3}5$ and $\dimw=\dimf+1$. For $p\in[1,2]$, we prove the weak-type reverse Riesz inequality $\|(-\Delta)^{\gamma_p}f\|_{L^{p,\infty}(m)}\lesssim\|\partial f\|_{L^p(\nu)}$. The corresponding strong estimate fails for $1\le p<2$, whereas the weak estimate fails for $2<p<\infty$, even after heat regularization.  For the Riesz transform $\mathcal{R}_p=\partial(-\Delta)^{-\gamma_p}$, we show that $\mathcal{R}_1$ is not of weak type $(1,1)$ but  that $\mathcal{R}_p$ is bounded from $L^p(m)$ to $L^{p,\infty}(\nu)$ for $p\in(1,2)$, and from $L^{p,1}(m)$ to $L^p(\nu)$ for $p\in(2,\infty)$. However, if $p \ge 1$, $\mathcal{R}_p$ is bounded from $L^p(m)$ to $L^p(\nu)$ only for $p=2$. 

  \vskip0.2cm
  \noindent {\it Keywords:} Vicsek set; Riesz transform; reverse Riesz inequality; reverse quasi-Riesz inequality; Lorentz spaces; Calder\'on--Zygmund decomposition.
  \vskip0.2cm
  \noindent {\it Mathematics Subject Classification (2020):} 31C25, 42B20, 46E30, 60J45.
\end{abstract}

\tableofcontents

\section{Introduction and main results}
On Euclidean space $\bR^{n}$, the Riesz transform associated with the standard Laplacian operator $\gen$ is $\nabla(-\gen)^{-1/2}$. The corresponding upper and lower bounds give the following two inequalities:
\begin{enumerate}[label=\textup{(\alph*)}, align=right, leftmargin=*, topsep=5pt, parsep=0pt, itemsep=2pt]
  \item The Riesz inequality: $\|\nabla f\|_{p}\leq C\|(-\gen)^{1/2}f\|_{p}$;
  \item The reverse Riesz inequality: $\|(-\gen)^{1/2}f\|_{p}\leq C\|\nabla f\|_{p}$;
\end{enumerate}
On $\bR^{n}$, both inequalities hold for $p\in(1,\infty)$ and $f\in C_{c}^{\infty}(\bR^{n})$ with constants depending only on $p$; see \cite{Ste70,Mey84,Pis88} and \cite[Proposition~5.1.14 and Corollary~5.2.8]{Gra14}.

In the setting of complete Riemannian manifolds with the Laplace--Beltrami operator, both inequalities hold with constant $1$ for $p=2$, since $\|\nabla f\|_{2}=\|(-\gen)^{1/2}f\|_{2}$ for every smooth compactly supported function $f$. Recently, R.~Chen, Jiang, Li and Li \cite{CJLL26} proved that the Riesz transform on every complete non-compact Riemannian manifold is of weak type $(1,1)$ and therefore bounded on $L^p$ for $p\in(1,2]$, without additional geometric or heat kernel assumptions, thereby resolving the Coulhon--Duong conjecture \cite[Conjecture~1.1]{CD03}. For $p>2$, the validity of the Riesz inequalities however depends on the geometry of the manifold; see \cite[Section~1.1]{ACDH04}. 


The exponent $1/2$ in $(-\gen)^{1/2}$ is not universal, especially for spaces with \emph{sub-Gaussian heat kernel estimates}. On Vicsek-type manifolds and graphs, the Riesz inequality for $(-\gen)^{1/2}$ fails for $p\in(2,\infty)$, and the reverse Riesz inequality for $(-\gen)^{1/2}$ fails for $p\in(1,2)$; see \cite[Theorems~5.1 and~5.3]{CCFR17} and \cite{Fen26a} for more general obstructions.

It is  therefore natural to consider fractional powers $(-\gen)^{\critic}$ in place of $(-\gen)^{1/2}$ and weaker Riesz inequalities that do not rely on global Gaussian heat kernel assumptions. In particular, in manifolds satisfying some sub-Gaussian estimates at large scales the third author of this paper studied \emph{quasi-Riesz transforms} in \cite{Che15}. The corresponding inequalities are:
\begin{enumerate}[label=\textup{(\alph*)}, align=right, leftmargin=*, topsep=5pt, parsep=0pt, itemsep=2pt, resume]
  \item The quasi-Riesz inequality: $\|\nabla P_{1}f\|_{p}\leq C\|(-\gen)^{\critic}f\|_{p}$;
  \item The reverse quasi-Riesz inequality: $\|(-\gen)^{\critic}P_{1}f\|_{p}\leq C\|\nabla f\|_{p}$;
\end{enumerate}
Here, $P_{t}=\exp(t\gen)$ is the heat semigroup. On complete Riemannian manifolds, the quasi-Riesz inequality for $\critic\in(0,1/2)$ and $p\in(1,2]$ always holds \cite[Theorem~1.2 and Section~2.1]{Che15}. Devyver and Russ studied reverse Riesz inequalities on manifolds \cite{DR22} and reverse quasi-Riesz inequalities on the Vicsek cable system \cite{DR26}. Feneuil studied Riesz inequalities with fractional powers on Vicsek graphs \cite{Fen26b}. See also \cite{DRY23} for quasi-Riesz inequalities on fractal-like cable systems, including the Sierpi\'nski gasket cable system.

In this work, we study the validity of these Riesz inequalities on the unbounded Vicsek set $\ambient$, equipped with the intrinsic geodesic metric $\metric$ and Hausdorff measure $\meas$. Its Hausdorff dimension is $\dimf=\log_{3}5$. The unbounded Vicsek set is a metric tree with a skeleton $\skeleton$ and length measure $\medm$, which enable us to define the \emph{weak gradient operator} $\wgrad$. The weak gradient $\wgrad$ replaces the classical gradient $\nabla$ on Riemannian manifolds and on cable systems, and is used to define the Sobolev spaces $\sobolev{p}(\ambient)$ and a canonical strongly local regular Dirichlet form $(\form,\domain)$ on $L^{2}(\ambient,\meas)$:
\begin{equation}
\form(u,v)=\int_{\skeleton}\wgrad u\;\wgrad v\dif\medm,\quad \text{for }u,v\in\domain=\sobolev{2}(\ambient);
\end{equation}
see \cite{BC23,BC24}. Let $\gen$ be the generator of this form. The associated heat kernel satisfies sub-Gaussian estimates, with \emph{walk dimension} $\dimw=1+\dimf$. In terms of these dimensions, the value
\begin{equation}\label{e.critp}
\critic_{p}:=\frac{1}{p}+\left(1-\frac{2}{p}\right)\frac{1}{\dimw}=\frac{\dimf+p-1}{p\dimw},\quad \text{for }p\in[1,\infty],
\end{equation}
is the \emph{critical exponent} for the reverse quasi-Riesz inequality. To be precise, we have the following result. See Section~\ref{s.geometry} for the relevant definitions.
\begin{theorem}\label{t.critx}
Let $(\ambient,\metric,\meas,\form,\domain)$ be the unbounded Vicsek metric measure Dirichlet space. Let $p\in[1,\infty)$.
\begin{enumerate}[label=\textup{(\arabic*)}, align=right, leftmargin=*, topsep=5pt, parsep=0pt, itemsep=2pt]
  \item\label{it.neces} Let $\critic\in(0,1)$. If there exists $C\in(0,\infty)$ such that
\begin{equation}\label{e.assum}
\|(-\gen)^{\critic}P_{1}f\|_{L^{p,\infty}(\ambient,\meas)}\leq C\|\wgrad f\|_{L^{p}(\skeleton,\medm)},\quad \text{for every $f\in\sobolev{p}(\ambient)$,}
\end{equation}
then $\critic\geq\critic_{p}$.
  \item\label{it.suffi} For every $\critic\in(\critic_{p},1)$, there exists $C_{p,\critic}\in(0,\infty)$ such that
\begin{equation}\label{e.subcr}
\|(-\gen)^{\critic}P_{1}f\|_{L^{p}(\ambient,\meas)}\leq C_{p,\critic}\|\wgrad f\|_{L^{p}(\skeleton,\medm)},\quad \text{for every $f\in\sobolev{p}(\ambient)$.}
\end{equation}
\end{enumerate}
\end{theorem}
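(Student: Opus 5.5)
For part \ref{it.neces} we will test \eqref{e.assum} on a family of functions adapted to cells of size $3^{n}$, and let $n\to\infty$. Fix a large $n$ and a cell $\Omega_{n}$ of level $n$ (diameter $\asymp 3^{n}$, $\meas(\Omega_n)\asymp 5^{n}=3^{n\dimf}$). Let $f_{n}$ be the function that is linear along the diagonal path of $\Omega_{n}$ with slope $3^{-n}$ and constant on each subtree hanging off that path, so that $f_n$ goes from $0$ to $1$ across $\Omega_n$; extend it by constants outside $\Omega_n$, or better, let it return to $0$ so that it has compact support. Since $\wgrad f_{n}$ is supported on a geodesic of length $\asymp 3^{n}$ with $|\wgrad f_n|\asymp 3^{-n}$, we get
\[
\|\wgrad f_{n}\|_{L^{p}(\medm)}\asymp 3^{-n}3^{n/p}.
\]
For the lower bound on $(-\gen)^{\critic}P_1 f_n$, we will use the sub-Gaussian heat kernel estimates together with the subordination formula
\[
(-\gen)^{\critic}g=c_{\critic}\int_{0}^{\infty}t^{-1-\critic}(g-P_{t}g)\dif t .
\]
At times $t\asymp 3^{n\dimw}$ the heat flow averages $f_n$ over $\Omega_n$. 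On a subset $A\subset\Omega_n$ of measure $\gtrsim 5^{n}$ where $f_n\ge 3/4$, say, we expect $f_n-P_tf_n\gtrsim 1$ for $t\in[c3^{n\dimw},C3^{n\dimw}]$, while for $t\lesssim 1$ the smoothing $P_1$ only perturbs things slightly. We will show that the integrals over the remaining time ranges are nonnegative or of lower order, using the explicit one-dimensional structure of $f_n$ and the near-diagonal lower bound. This yields $|(-\gen)^{\critic}P_1f_n|\gtrsim 3^{-n\dimw\critic}$ on $A$, hence
\[
\|(-\gen)^{\critic}P_1 f_n\|_{L^{p,\infty}}\gtrsim 3^{-n\dimw\critic}5^{n/p}.
\]
Inserting both estimates into \eqref{e.assum} gives $3^{-n\dimw\critic+n\dimf/p}\lesssim 3^{-n+n/p}$ for all $n$. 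This forces $\dimw\critic\ge \dimf/p+1-1/p$, that is, $\critic\ge\critic_p$. The main obstacle is the pointwise lower bound on $(-\gen)^{\critic}P_1f_n$. To handle it, we would first try replacing $f_n$ by a harmonic-type function on $\Omega_n$, whose resolvent or heat behaviour is explicitly computable from the self-similar structure, and pairing against a test function. Concretely, we would bound $\langle(-\gen)^{\critic}P_1f_n,\phi_n\rangle$ from below with $\phi_n$ the normalized indicator of $A$, and use duality in $L^{p,\infty}$–$L^{p',1}$.

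For part \ref{it.suffi}, we will write
\[
(-\gen)^{\critic}P_1f=c\int_0^\infty t^{-\critic}(-\gen)P_{t+1}f\dif t
\]
and reduce matters to a gradient-to-Laplacian estimate of the form
\[
\|\gen P_{t}f\|_{L^{p}(\meas)}\lesssim t^{-\beta_p}\|\wgrad f\|_{L^{p}(\medm)},\qquad \beta_p=\frac{\dimf+p-1}{p\dimw}+\frac{\dimw-1}{\dimw}\cdot 0\ \text{(to be determined)},
\]
for $t\ge1$. To derive it, we write $\gen P_tf(x)=\int \partial_y(\partial_t\text{-kernel})\,\wgrad f\dif\medm$ by integration by parts on the tree, i.e. $\gen P_tf(x)=-\int_{\skeleton}\wgrad_y p_t(x,y)\,\wgrad f(y)\dif\medm(y)$. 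We then estimate $\wgrad_y p_t$ via the known gradient bounds for the Vicsek heat kernel. At scale $r=t^{1/\dimw}$, a ball has $\medm$-length $\asymp r$ and $\meas$-measure $r^{\dimf}$, and $|\wgrad_y p_t|\lesssim t^{-1}r\cdot r^{-\dimf}$ with sub-Gaussian decay. Schur's test then gives the $L^p(\medm)\to L^p(\meas)$ norm
\[
t^{-1}r^{1-\dimf}\, r^{\dimf/p}\, r^{1/p'} = t^{-1}\,t^{(\dimf+p-1)/(p\dimw)}=t^{\critic_p-1}.
\]
With this bound, the time integral $\int_1^\infty t^{-\critic}t^{\critic_p-1}\dif t$ converges precisely when $\critic>\critic_p$, and the range $t\in(0,1)$ is controlled by the $t+1$ shift. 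The main work is therefore the kernel gradient bound and the interpolation between $p=1$ and $p=\infty$ in the Schur estimate, for which the sub-Gaussian exponential tail ensures summability of the off-diagonal contributions.
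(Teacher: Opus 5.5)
The genuine gap is the lower bound in part (1), and that lower bound is the whole content of part (1). Your exponent bookkeeping is right: $\|\wgrad f_n\|_{L^p(\skeleton,\medm)}\asymp 3^{n(1/p-1)}$, and $\|(-\gen)^{\critic}P_1f_n\|_{L^{p,\infty}(\ambient,\meas)}\gtrsim 3^{n(\dimf/p-\dimw\critic)}$ would force $\critic\ge\critic_p$. The subordination sketch, however, does not deliver that bound.

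\begin{itemize}
\item If $f_n$ is a rescaled copy of a fixed profile, then every time range of $\int_0^\infty t^{-1-\critic}(f_n-P_tf_n)\dif t$ other than $t\lesssim1$ contributes at the same order $3^{-n\dimw\critic}$. Nothing there is of lower order.
\item On $\{f_n\ge3/4\}$ the sign of $f_n-P_tf_n=\int_0^tP_s(-\gen f_n)\dif s$ is not controlled. The functional $\form(f_n,\cdot)$ is a signed combination of point evaluations at the kinks. The two-sided bounds \eqref{e.HKE}, whose exponential constants $c_2\neq c_3$ differ, cannot compare heat kernels issued from kinks at comparable distances. By the Markov property, nonnegativity is automatic only where $f_n$ attains its maximum.
\end{itemize}

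A direct pointwise bound would need a delicate localization near one kink, as in Lemma~\ref{l.trans}. The idea you are missing is that no quantitative estimate is needed.

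\begin{itemize}
\item The dilation $x\mapsto3^nx$ maps $\ambient$ onto itself and scales $\meas$ by $3^{n\dimf}$ and $\medm$ by $3^n$. With $U_nh:=h(3^{-n}\cdot)$ this gives $\langle U_nf,U_ng\rangle=3^{n\dimf}\langle f,g\rangle$ and $\form(U_nf,U_ng)=3^{-n}\form(f,g)$.
\item Hence $(-\gen)U_n=3^{-n\dimw}U_n(-\gen)$, and by spectral calculus $(-\gen)^{\critic}P_1U_nh=3^{-n\dimw\critic}U_n\bigl((-\gen)^{\critic}P_{3^{-n\dimw}}h\bigr)$.
\item Take $f_n:=U_nh$ for one fixed $0\neq h\in\core$. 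This also avoids the support problem of your $f_n$, whose hanging subtrees leave $\Omega_n$ through its vertices. Everything then reduces to $\liminf_{s\downarrow0}\|(-\gen)^{\critic}P_sh\|_{L^{p,\infty}(\ambient,\meas)}>0$.
\item That statement is qualitative. Under the contradiction hypothesis $\critic<\critic_p$, Lemma~\ref{l.sobol} gives $h\in\Dom((-\gen)^{\critic})$. Since $\Ker(-\gen)=\{0\}$, $(-\gen)^{\critic}h\neq0$. Finally $(-\gen)^{\critic}P_sh\to(-\gen)^{\critic}h$ in $L^2$, hence in measure, so a fixed superlevel set keeps measure bounded below as $s\downarrow0$.
\end{itemize}

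Part (2) has the same skeleton as the paper: the representation with the shift $t+1$, integration by parts, and a time integral that converges exactly when $\critic>\critic_p$. The decay $\|\gen P_tf\|_{L^p(\ambient,\meas)}\lesssim t^{\critic_p-1}\|\wgrad f\|_{L^p(\skeleton,\medm)}$ you arrive at is the correct one, but your derivation of it does not hold up.

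\begin{itemize}
\item Every ball has infinite $\medm$-measure: a cell of size $\ell$ contains $8\cdot5^{k-1}$ new arms of length $3^{-k}\ell$ at each relative level $k$. So a Schur test based on a uniform bound for $\wgrad_yp_t(x,\cdot)$ over a ball of ``$\medm$-length $\asymp r$'' fails. Integrability of the gradient kernel comes instead from its decay on small side arms (compare Lemma~\ref{l.endpoint-gate}).
\item Your arithmetic gives $t^{-1}r^{1-\dimf}r^{\dimf/p}r^{1/p'}=t^{-1+(2-\dimf+(\dimf-1)/p)/\dimw}$. This is not $t^{\critic_p-1}$, because $\dimf\neq1$.
\item The exponent $\beta_p$ you display should be $1-\critic_p=\critic_{p'}$, not $\critic_p$.
\end{itemize}

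The estimate follows in one line by duality from the semigroup gradient bound of \cite[Theorem~3.12]{BC24}. For $f\in\core$ and $h\in L^{p'}(\ambient,\meas)\cap L^2(\ambient,\meas)$,
\[
|\langle(-\gen)P_tf,h\rangle|=|\form(f,P_th)|\le\|\wgrad f\|_{L^p(\skeleton,\medm)}\|\wgrad P_th\|_{L^{p'}(\skeleton,\medm)}\lesssim t^{-\critic_{p'}}\|\wgrad f\|_{L^p(\skeleton,\medm)}\|h\|_{L^{p'}(\ambient,\meas)}.
\]
This is what the paper does, and it makes the kernel bounds and the interpolation unnecessary. You then still need to pass from $\core$ to $\sobolev{p}(\ambient)$, using Corollary~\ref{c.coredense} and the $L^p$-boundedness of $(-\gen)^{\critic}P_1$ from Lemma~\ref{l.semes}.
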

Theorem~\ref{t.critx} says that the $L^{p}$ reverse quasi-Riesz inequality fails for $\critic\in(0,\critic_{p})$, since $\norm{\cdot}_{L^{p,\infty}(\ambient,\meas)}\leq\norm{\cdot}_{L^{p}(\ambient,\meas)}$, and holds for $\critic\in(\critic_{p},1)$. Therefore, only the case $\critic=\critic_{p}$ remains to be determined. This exponent already occurs in the interpolation inequalities in \cite[Theorem~3.17]{BC24}; see also \cite[Open question~3.18]{BC24}.

The same critical exponent occurs in the following results on the Vicsek cable system and Vicsek graphs.
\begin{itemize}[leftmargin=*, topsep=5pt, parsep=0pt, itemsep=5pt]
  \item On the Vicsek cable system, Devyver and Russ proved that, for $p\in(1,2)$, the $L^{p}$ reverse quasi-Riesz inequality holds for $\critic\in(\critic_{p},1)$ and fails for $\critic\in(0,\critic_{p})$ \cite[Theorem~1.8]{DR26}. At the critical exponent $\critic=\critic_{p}$, the reverse quasi-Riesz inequality in weak $L^{p}$ follows by combining \cite[Lemmas~2.2 and~2.17]{DR26}, while the validity of the $L^{p}$ reverse quasi-Riesz inequality was left open \cite[Remark~1.9]{DR26}. For $p\in(2,\infty)$, they proved the $L^{p}$ reverse quasi-Riesz inequality for $\critic\in[1/2,1)$ and its failure for $\critic\in(0,\critic_{p})$; the cases $\critic\in[\critic_{p},1/2)$ were left open.

  \item On Vicsek graphs, Feneuil studied the Riesz inequality and reverse Riesz inequality with fractional powers of the graph Laplacian \cite[Theorem~1.2]{Fen26b}. For $p\in(1,\infty)$, the Riesz inequality holds for $\critic\in(0,\critic_{p})$ and fails for $\critic\in(\critic_{p},1)$. The reverse Riesz inequality holds for $\critic\in(\critic_{p},1)$ and fails for $\critic\in(0,\critic_{p})$. The validity of these inequalities at the critical exponent $\critic=\critic_{p}$ was left open for $p\neq2$.
\end{itemize}
On the unbounded Vicsek set, the $L^{p}$ norm of a function is taken with respect to the Hausdorff measure $\meas$, while the $L^{p}$ norm of its weak gradient is taken with respect to the length measure $\medm$. These two measures are mutually singular \cite{BC24}. In this setting, we determine the validity of the Riesz inequality and reverse Riesz inequality at $\critic=\critic_{p}$, including $p=1$.

Our next main result concerns the reverse quasi-Riesz inequality on the unbounded Vicsek set at the critical exponent $\critic=\critic_{p}$. As we will see, its validity depends on whether $p\in[1,2)$, $p=2$ or $p\in(2,\infty)$.

\begin{theorem}\label{t.strfl}
  Let $(\ambient,\metric,\meas,\form,\domain)$ be the unbounded Vicsek metric measure Dirichlet space.
  \begin{enumerate}[label=\textup{(\arabic*)}, align=right, leftmargin=*, topsep=5pt, parsep=0pt, itemsep=2pt]
    \item\label{it.scale} If $p\in[1,2)$, there exists $C_{p}\in(0,\infty)$ such that
\begin{equation}\label{e.scalw}
      \sup_{t\in(0,\infty)}\norm{(-\gen)^{\critic_{p}}P_{t}f}_{L^{p,\infty}(\ambient,\meas)} \leq C_{p}\|\wgrad f\|_{L^{p}(\skeleton ,\medm)},\ \text{for every $f\in \sobolev{p}(\ambient)$}.
\end{equation}
    \item\label{it.strfl} If $p\in[1,2)$, then
\begin{equation}\label{e.strfl}
      \sup\Sett{\|(-\gen)^{\critic_{p}}P_{1}f\|_{L^{p}(\ambient,\meas)}}{f\in \core\cap \Dom((-\gen)^{\critic_{p}})\text{ and }\|\wgrad f\|_{L^{p}(\skeleton ,\medm)}\leq1}=\infty.
\end{equation}
    \item\label{it.p=2} If $p=2$, then
\begin{equation}
      \sup_{t\in(0,\infty)}\|(-\gen)^{1/2}P_{t}f\|_{L^{2}(\ambient,\meas)}\leq \|\wgrad f\|_{L^{2}(\skeleton,\medm)},\ \text{ for every $f\in\sobolev{2}(\ambient)$}.
\end{equation}
    \item\label{it.wtrfl} If $p\in(2,\infty)$, then
\begin{equation}\label{e.wtrfl}
      \sup\Sett{\|(-\gen)^{\critic_{p}}P_{1}f\|_{L^{p,\infty}(\ambient,\meas)}}{f\in \core\cap \Dom((-\gen)^{\critic_{p}})\text{ and }\|\wgrad f\|_{L^{p}(\skeleton ,\medm)}\leq1}=\infty.
\end{equation}
  \end{enumerate}
\end{theorem}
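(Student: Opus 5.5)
We will treat the four parts separately, starting with the simplest. For part (3) we use the spectral theorem and the fact that $\form(f,f)=\|(-\gen)^{1/2}f\|_{L^{2}(\meas)}^{2}=\|\wgrad f\|_{L^{2}(\medm)}^{2}$ for $f\in\domain=\sobolev{2}(\ambient)$. Since $P_{t}$ is a contraction on $L^{2}(\meas)$ that commutes with $(-\gen)^{1/2}$, we get $\|(-\gen)^{1/2}P_{t}f\|_{2}\le\|(-\gen)^{1/2}f\|_{2}=\|\wgrad f\|_{L^{2}(\medm)}$, with constant exactly $1$. Note that $\critic_{2}=1/2$.

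For part (1) we write
\[
(-\gen)^{\critic}P_{t}f=c_{\critic}\int_{0}^{\infty}s^{-\critic}\,\gen P_{s+t}f\,\frac{\dif s}{s}\cdot s .
\]
The plan is to use the tree structure of $\ambient$ through a pointwise bound on the time derivative of the heat kernel. From the sub-Gaussian estimates and the metric-tree structure, we expect an estimate of the form
\[
|\partial_{s}p_{s}(x,y)-\partial_{s}p_{s}(x,y')|\lesssim s^{-1-\dimf/\dimw}\,\bigl(\metric(y,y')/s^{1/\dimw}\bigr)\,(\text{Gaussian tail}),
\]
which holds because the kernel is Lipschitz along the skeleton with gradient of order $s^{-(\dimf+1)/\dimw}$ in the $\medm$-direction. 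Integrating by parts along the skeleton then gives
\[
\gen P_{s}f(x)=\int_{\skeleton}\wgrad_{y}\partial_{s}\text{-type kernel}\;\wgrad f(y)\dif\medm(y),
\]
with kernel $K_{s}(x,y)$ bounded by $s^{-1}\cdot s^{-\dimf/\dimw}\exp(-c(\metric/s^{1/\dimw})^{\dimw/(\dimw-1)})$, since $\medm$ and $\meas$ scale differently: a ball of radius $r$ has $\medm$-mass about $r$ and $\meas$-mass about $r^{\dimf}$. Integrating in $s$ yields an operator $\wgrad f\mapsto(-\gen)^{\critic_{p}}P_{t}f$ with kernel
\[
|K(x,y)|\lesssim\metric(x,y)^{-\dimf-\dimw\critic_{p}+\dimw-1}\cdot\ldots
\]
The exponent $\critic_{p}$ is exactly the one that makes the resulting fractional-integration map $L^{p}(\medm)\to L^{p,\infty}(\meas)$ scale-invariant. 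We then prove the weak-type bound by a Marcinkiewicz / Hardy–Littlewood–Sobolev type argument for a kernel $|K(x,y)|\lesssim\metric(x,y)^{-\alpha}$ acting from the one-dimensional measure $\medm$ to the $\dimf$-dimensional measure $\meas$. Splitting at the distance $\rho$ chosen from the level $\lambda$, Hölder's inequality gives the near part in $L^{p}$ and the far part in $L^{\infty}$; for $p>1$ the far part needs $p'$-integrability of $\metric^{-\alpha}$ against $\medm$, which is guaranteed exactly when $p<2$. For $p=1$ we use $\sup_{y}\meas\{x:|K(x,y)|>\lambda\}\lesssim\lambda^{-1}$, which follows from the kernel bound together with Minkowski's inequality for weak $L^{1}$ (this fails in general, so there we instead use a direct layer-cake argument over dyadic scales). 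The constants must be uniform in $t$, which is why the estimates should come from $s$-integrals over $(t,\infty)$ bounded by the full integral.

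For parts (2) and (4) we construct explicit counterexamples using self-similarity. We take $f_{n}$ to be the function equal to the distance from a point along one geodesic segment of the skeleton of length $3^{n}$ (suitably truncated), or a sum of bumps at many scales. Rescaling by $3^{n}$ and using the exact scaling $\meas\mapsto5^{n}\meas$, $\medm\mapsto3^{n}\medm$, $t\mapsto15^{n}t$, the quantity at time $1$ for $f_{n}$ becomes the quantity at time $15^{-n}$ for a fixed profile. For part (2) we superpose $N$ pieces at well-separated scales, each with $L^{p,\infty}$ contribution about $1$ and disjoint level sets. The $L^{p}$ norm then grows like $N^{1/p}$, while $\|\wgrad f\|_{L^{p}(\medm)}^{p}$ grows like $N$; this gives a ratio of $N^{1/p-1/p}$, so we need instead a single function whose output decays like $\metric^{-\dimf/p}$, producing a logarithmic divergence of the $L^{p}$ norm. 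A natural choice is $f=$ truncated Green-function-type potentials. For part (4), with $p>2$, we exploit the fact that the output near a point concentrates on a set of $\meas$-measure much larger than the $\medm$-support: take $f$ piecewise linear on a single skeleton segment with $\wgrad f=\pm1$ on scale $3^{-k}$ pieces summed across $N$ scales. \textbf{Main obstacle:} obtaining sharp two-sided pointwise bounds (lower bounds) for $(-\gen)^{\critic_{p}}P_{1}f$ in these constructions, which we would attempt through explicit harmonic-function computations on the Vicsek tree.
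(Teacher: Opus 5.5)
Your part~(3) is correct and is exactly the paper's argument. The other three parts have genuine gaps.

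\textbf{Part (1).} Your argument rests on the claim that a ball of radius $r$ has $\medm$-mass about $r$. This is false: every ball has infinite $\medm$-measure. Inside a cell $Q$ there are $8\cdot5^{k-1}$ arms of length $3^{-k}\cellsize{Q}$ that are new at relative level $k$ (the families $\collectJ_{k}$ of Section~\ref{s.failure}). Hence $\medm(\skeleton\cap Q)\geq\sum_{k}8\cdot5^{k-1}3^{-k}\cellsize{Q}=\infty$.

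Consequently no argument that uses only a size bound $|K(x,y)|\lesssim\metric(x,y)^{-\alpha}$, with $\alpha:=\dimw\critic_{p}=\dimf/p+1-1/p$, can work (H\"older splitting, Schur tests, layer cakes). Such an argument would equally bound the positive operator $T_{+}g(x):=\int_{\skeleton}\metric(x,y)^{-\alpha}|g(y)|\dif\medm(y)$, and $T_{+}$ is unbounded from $L^{p}(\skeleton,\medm)$ to $L^{p,\infty}(\ambient,\meas)$ for every $p\in[1,2)$.

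To see this, let $A$ be the union of the new level-$k$ arms in a cell $Q$ with $\cellsize{Q}=1$, so $\medm(A)\approx(5/3)^{k}$. Each subcell of size $3^{-j}$ containing a given $x\in Q$ contains $\approx5^{k-j}$ of these arms. Hence $T_{+}\one_{A}\gtrsim\sum_{j<k}5^{k-j}3^{-k}3^{j\alpha}$ on all of $Q$.
For $p>1$ this is $\gtrsim(5/3)^{k}$, so $\|T_{+}\one_{A}\|_{L^{p,\infty}(\meas)}/\|\one_{A}\|_{L^{p}(\medm)}\gtrsim(5/3)^{k(1-1/p)}$.
For $p=1$ one has $3^{\alpha}=5$ and the sum is $\approx k(5/3)^{k}$. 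So $g:=\one_{A}/\medm(A)$ satisfies $\|g\|_{L^{1}(\medm)}=1$ but $T_{+}g\gtrsim k$ on $Q$.

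Even granting your measure assumption, $\int_{\{\metric(x,\cdot)>\rho\}}\metric(x,y)^{-\alpha p'}\dif\medm(y)$ would converge for every $p>1$. So that step could not be the source of the threshold $p=2$ either.

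The true kernel is small on deep arms, and a proof has to exploit this. By Lemma~\ref{l.endpoint-gate}, $\partial_{y}p_{s}(x,y)$ is a heat flux through $y$, bounded by $s^{-1-\dimf/\dimw}$ times the $\meas$-mass of the branch cut off at $y$.

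The paper never uses a $\medm$-to-$\meas$ kernel.
\begin{itemize}
\item It decomposes $f\in\core$ over the maximal cells with $\varepsilon_{p}(f;Q)>\lambda^{p}\meas(Q)$, replacing $f$ by $\harmonic_{Q}f$ there.
\item Each bad piece vanishes on $\cellvert{Q}$, so Lemma~\ref{l.zerot} gives $\cellsize{Q}^{-\dimw\critic_{p}}\|b_{Q}\|_{L^{p}(\meas)}\lesssim\|\wgrad b_{Q}\|_{L^{p}(\medm)}$. After that, only the $\meas$-to-$\meas$ kernel bound of $(-\gen)^{\critic_{p}}P_{t}$ is needed off $\Omega^{*}$.
\item The good part has $\density_{p}(g)\lesssim\lambda^{p}$ and is controlled in $L^{2}$ by Proposition~\ref{p.goodp}. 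Its proof uses the telescoping $I_{M}u=I_{-N}u+\sum_{n}D_{n}u$, Lemma~\ref{l.sobol}, and the capped martingale inequality of Lemma~\ref{lem:capped-p} applied to the conditional expectations $\wgrad I_{m}u$ on each arm. This is exactly where $p<2$ enters.
\end{itemize}

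\textbf{Parts (2) and (4).} You give no construction with a proven lower bound, and the configurations you sketch point the wrong way. In the paper the blow-up is an $\ell^{2}$-versus-$\ell^{p}$ gap produced by random signs, and the two cases need opposite configurations.

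For $p<2$ one needs overlapping gradients and disjoint outputs. The paper places tents $\psi_{I}$ with independent signs on all nested subarms $I\in\sP_{k}(I_{N})$, $0\le k<N$, of one long arm. Khintchine's inequality then gives $\bE\|\wgrad f\|_{L^{p}(\medm)}^{p}\lesssim N^{p/2}R_{N}$. The outputs are bounded below on the pairwise disjoint cells $E_{I}$ of Lemmas~\ref{l.trans} and~\ref{l.packi}, so $\bE\|(-\gen)^{\critic_{p}}P_{1}f\|_{L^{p}(\meas)}^{p}\gtrsim NR_{N}$, and the ratio grows like $N^{(2-p)/(2p)}$.
Your separated scales have disjoint gradients and give ratio $N^{0}$, as you noticed. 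The Green-potential idea is not substantiated. In fact, the single-segment multiscale function you propose for (4) is, with random signs, precisely the right object for (2).

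For $p>2$ one needs disjoint gradients and overlapping outputs.
\begin{itemize}
\item The paper takes the new arms $I\in\collectJ_{k}$, $1\le k\le N$, of a cell $Q_{N}$; branching supplies $8\cdot5^{k-1}$ of them at level $k$.
\item They are weighted by $\medm(I)^{\dimw\critic_{p}-1}$, so that each level contributes $\approx R_{N}^{\dimf}$ to $\|\wgrad f\|_{L^{p}(\medm)}^{p}$.
\item Orthogonality of the signs gives $\bE\|\one_{Q_{N}}(-\gen)^{\critic_{p}}P_{1}f\|_{L^{2}(\meas)}^{2}\gtrsim NR_{N}^{\dimf}$.
\item The inequality $\|h\|_{L^{p,\infty}}\gtrsim\meas(Q_{N})^{1/p-1/2}\|h\|_{L^{2}(Q_{N})}$ (Lemma~\ref{l.loren}) then yields growth $N^{1/2-1/p}$.
\end{itemize}
On a single segment the gradients of different scales overlap, and there are only $3^{k}$ tents at level $k$. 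With level weights $a_{k}$, the lower bounds on the sets $E_{I}$ combined with Khintchine give at best a ratio $\lesssim\bigl(\sum_{k}a_{k}^{2}3^{k(1-\dimf)(1-2/p)}/\sum_{k}a_{k}^{2}\bigr)^{1/2}\le1$. So no growth is detected.

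Finally, the paper does not get the lower bound from harmonic-function computations. It uses the identity $(-\gen)^{\critic_{p}}P_{1}\psi_{I}=-\tfrac{3}{2}H_{\beta}(\cdot,\cellctr{Q_{I}})+\tfrac{9}{2}H_{\beta}(\cdot,v_{I})-3H_{\beta}(\cdot,a_{I})$. Here $H_{\beta}(x,z)\asymp\metric(x,z)^{-(\dimf-1)/p}$ for $\metric(x,z)\geq1$, by the two-sided heat kernel bounds and Lemma~\ref{l.intpot}. On a cell $E_{I}$ hanging off $v_{I}$ at distance $\approx3^{-M}\medm(I)$, the middle term dominates once $M$ is large.
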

Here, $\core$ is the class of compactly supported piecewise-affine functions that will be defined in Section~\ref{s.geometry}. In other words, Theorem~\ref{t.strfl} tells us
\begin{enumerate}[label=\textup{(\roman*)}, align=right, leftmargin=*, topsep=5pt, parsep=0pt, itemsep=2pt]
  \item For $p\in[1,2)$, at the critical exponent $\critic=\critic_{p}$, the reverse quasi-Riesz estimate holds weakly, uniformly in $t>0$, but its $L^{p}$ estimate fails;
  \item At $p=2$, the reverse quasi-Riesz inequality at $\critic=\critic_{p}$ holds with constant $1$;
  \item For $p\in(2,\infty)$, the reverse quasi-Riesz inequality at $\critic=\critic_{p}$ fails even in weak $L^{p}$.
\end{enumerate}

As a byproduct, the uniform estimates in Theorem~\ref{t.strfl}-\ref{it.scale} and~\ref{it.p=2} also enable us to obtain a homogeneous limit for $p\in[1,2]$.
\begin{theorem}\label{t.mainw}
  Let $(\ambient,\metric,\meas,\form,\domain)$ be the unbounded Vicsek metric measure Dirichlet space. Let $p\in[1,2]$ and $f\in \sobolev{p}(\ambient)$. There exists $G_{f}\in L^{p,\infty}(\ambient,\meas)$ such that
\begin{equation}\label{e.homog}
    \lim_{t\downarrow0} \|(-\gen)^{\critic_{p}}P_{t}f-G_{f}\|_{L^{p,\infty}(\ambient,\meas)}=0,
\end{equation}
  and
\begin{equation}\label{e.mainw}
    \|G_{f}\|_{L^{p,\infty}(\ambient,\meas)} \leq C_{p}\|\wgrad f\|_{L^{p}(\skeleton,\medm)}.
\end{equation}
  If, in addition, $f\in\Dom((-\gen)^{\critic_{p}})$, then $G_{f}=(-\gen)^{\critic_{p}}f$ $\meas$-a.e. on $\ambient$.
\end{theorem}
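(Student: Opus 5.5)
We plan to derive Theorem~\ref{t.mainw} from the uniform bounds of Theorem~\ref{t.strfl}-\ref{it.scale} (for $p\in[1,2)$) and Theorem~\ref{t.strfl}-\ref{it.p=2} (for $p=2$), by a density argument in $\sobolev{p}(\ambient)$. Write $T_{t}f:=(-\gen)^{\critic_{p}}P_{t}f$. The uniform estimate says that the family $\{T_{t}\}_{t>0}$ is bounded from $(\sobolev{p},\|\wgrad\cdot\|_{L^{p}(\medm)})$ into $L^{p,\infty}(\meas)$, uniformly in $t$. We equip $L^{p,\infty}$ with an equivalent norm for $p>1$, and with the quasi-norm $\|\cdot\|_{L^{1,\infty}}$ for $p=1$, which is complete and satisfies a quasi-triangle inequality with constant $2$. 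The strategy is the usual one: convergence on a dense class plus uniform boundedness gives convergence everywhere, with the limit controlled by the uniform constant.

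\textbf{Step 1: a dense class.} Take $f\in\core\cap\Dom((-\gen)^{\critic_{p}})$, or more simply $f\in\core\cap\Dom(\gen)$. We need to check that $\core\cap\Dom((-\gen)^{\critic_{p}})$ is dense in $\sobolev{p}$ for the seminorm $\|\wgrad f\|_{L^{p}(\medm)}$ (modulo constants). We expect this to come from the construction of $\core$ in Section~\ref{s.geometry}, for instance through piecewise-affine/harmonic-type approximants which lie in $L^{2}$ and in the form domain. Where membership in $\Dom((-\gen)^{\critic_{p}})$ is unclear, we would instead apply $P_{s}$ for small $s$, since $P_{s}f\in\Dom(\gen)$, and use the uniform bound once more.

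For such $f$, set $g:=(-\gen)^{\critic_{p}}f\in L^{2}(\meas)$. Then $T_{t}f=P_{t}g$. We must show $P_{t}g\to g$ in $L^{p,\infty}$. If $f\in\core$ is compactly supported, the sub-Gaussian heat kernel bounds give decay of $g$ away from $\supp f$ of order $\meas(B(x,d))^{-1}d^{-\dimw\critic_{p}}\cdot(\ldots)$, which places $g$ in $L^{p}\cap L^{2}$. Strong continuity of $P_{t}$ on $L^{p}$ for $p\in[1,2]$ then gives $P_{t}g\to g$ in $L^{p}\subset L^{p,\infty}$.

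\textbf{Step 2: Cauchy argument.} For general $f$, choose $f_{k}$ in the dense class with $\|\wgrad(f-f_{k})\|_{L^{p}}\to0$. By the quasi-triangle inequality,
\[
\|T_{t}f-T_{s}f\|_{p,\infty}\lesssim \|T_{t}(f-f_{k})\|_{p,\infty}+\|T_{t}f_{k}-T_{s}f_{k}\|_{p,\infty}+\|T_{s}(f-f_{k})\|_{p,\infty}.
\]
The outer two terms are bounded by $2C_{p}\|\wgrad(f-f_{k})\|_{L^{p}}$, uniformly in $t$ and $s$. The middle term tends to $0$ as $s,t\downarrow0$ by Step~1. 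So $T_{t}f$ is Cauchy as $t\downarrow0$, and completeness gives a limit $G_{f}$, proving \eqref{e.homog}. Passing to the limit in the uniform bound yields \eqref{e.mainw}; for $p=1$ we use the quasi-norm property to absorb the loss into $C_{p}$. Note also that $T_{t}$ annihilates constants, so the limit is well defined.

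\textbf{Step 3: identification.} If $f\in\Dom((-\gen)^{\critic_{p}})$, then $T_{t}f=P_{t}(-\gen)^{\critic_{p}}f\to(-\gen)^{\critic_{p}}f$ in $L^{2}$, hence in measure on sets of finite measure. Convergence in $L^{p,\infty}$ also implies convergence in measure. Uniqueness of limits in measure then gives $G_{f}=(-\gen)^{\critic_{p}}f$ $\meas$-a.e.

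\textbf{Main obstacle.} We expect the hard part to be Step~1: establishing density of a class on which $(-\gen)^{\critic_{p}}f$ lies in $L^{p}$, or at least on which $P_{t}g\to g$ in $L^{p,\infty}$. For $p<2$ this requires off-diagonal decay of $(-\gen)^{\critic_{p}}f$ for compactly supported $f$. We would first try writing
\[
(-\gen)^{\critic_{p}}f=c\int_{0}^{\infty}s^{-\critic_{p}}(-\gen)P_{s}f\,\dif s
\]
and using time-derivative heat kernel estimates. The needed decay is $\sim s^{-\critic_{p}-1}\cdot s^{-\dimf/\dimw}$ at large $s$, and it is integrable in $L^{p}$ at infinity since $\critic_{p}>0$ together with the volume growth.
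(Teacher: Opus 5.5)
Your overall architecture matches the paper's. You combine the uniform weak bound of Theorem~\ref{t.strfl}, convergence of $(-\gen)^{\critic_{p}}P_{t}h$ for $h$ in the dense class $\core$ (Corollary~\ref{c.coredense}), completeness of $L^{p,\infty}$, and identification via convergence in measure. The paper defines $G_{f}$ as the limit of $(-\gen)^{\critic_{p}}f_{j}$ instead of running a Cauchy argument in $t$; that difference is immaterial, and your Steps~2 and~3 are fine.

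The gap is Step~1, which is exactly the content of Lemma~\ref{l.coreconv}, and you have not established it. For $p<2$ one has $\critic_{p}>1/2$, so membership in $\domain$ gives nothing. You need the genuine fractional regularity $\core\subset\Dom((-\gen)^{\critic_{p}})$; the paper gets it from Lemma~\ref{l.sobol}, using $2\critic_{p}\le2\dimf/\dimw<2-\dims/2$. Both of your fallbacks fail:
\begin{itemize}
\item $\core\cap\Dom(\gen)=\{0\}$. For $h\in\core$ one has $\form(h,v)=\sum_{z\in Z_{h}}c_{h}(z)v(z)$, with $Z_{h},c_{h}$ as in the proof of Lemma~\ref{l.directheat}. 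This is a finite combination of point evaluations at vertices, and it is represented by an $L^{2}(\ambient,\meas)$ function only if every $c_{h}(z)=0$. In that case $h$ is a compactly supported discrete-harmonic function on the tree graph, hence $h=0$.
\item Regularizing by $P_{s}$ is circular, since $(-\gen)^{\critic_{p}}P_{t}P_{s}=(-\gen)^{\critic_{p}}P_{t+s}$. To use $P_{s}f$ as approximants you would need $\|\wgrad(P_{s}f-f)\|_{L^{p}(\skeleton,\medm)}\to0$, which is not available here for $p\neq2$.
\end{itemize}

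What is missing is the small-time analysis in the subordination formula. The large-$s$ end you discuss is harmless; the difficulty sits at $s\to0$, precisely because of those vertex masses. Write $(-\gen)P_{s}h=\sum_{z\in Z_{h}}c_{h}(z)p_{s}(z,\cdot)$ and use the sub-Gaussian bounds. This gives $\|(-\gen)P_{s}h\|_{L^{p}(\ambient,\meas)}\lesssim_{h}s^{-\frac{\dimf}{\dimw}(1-\frac{1}{p})}$ for $s\le1$. Multiplied by $s^{-\critic_{p}}$, this is integrable at $0$ only because $\critic_{p}+\frac{\dimf}{\dimw}(1-\frac{1}{p})=1-\frac{1}{p\dimw}<1$.

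For $s\ge1$, analyticity plus ultracontractivity give $\|(-\gen)P_{s}h\|_{L^{p}(\ambient,\meas)}\lesssim s^{-1-\frac{\dimf}{\dimw}(1-\frac{1}{p})}\|h\|_{L^{1}(\ambient,\meas)}$. With both ends controlled, duality gives $(-\gen)^{\critic_{p}}h\in L^{p}(\ambient,\meas)$, and then $P_{t}(-\gen)^{\critic_{p}}h\to(-\gen)^{\critic_{p}}h$ in $L^{p}$.

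Your alternative also works: combine $g\in L^{2}$ with the off-support decay $|g(x)|\lesssim\dist(x,\supp h)^{-\dimf-\dimw\critic_{p}}$. But $g\in L^{2}$ requires the $L^{2}$ analogue of the same small-time estimate, namely $\|(-\gen)P_{s}h\|_{L^{2}}\lesssim_{h}s^{-\dims/4}$ together with $\critic_{p}+\dims/4<1$. Once this is supplied, the rest of your argument goes through as written.
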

However, Theorem \ref{t.mainw} does not assert that every $f\in\sobolev{p}(\ambient)$ lies in the domain of $(-\gen)^{\critic_{p}}$. It first constructs $G_{f}$ as a limit in the $L^{p,\infty}$ quasi-norm and identifies this limit with the spectral fractional Laplacian only under the additional hypothesis $f\in\Dom((-\gen)^{\critic_{p}})$. In particular, for $p\in[1,2]$, by Theorem \ref{t.mainw}, we have the following reverse Riesz inequality
\begin{equation}
 \|(-\gen)^{\critic_{p}}f\|_{L^{p,\infty}(\ambient,\meas)} \leq C_{p}\|\wgrad f\|_{L^{p}(\skeleton,\medm)},\ \text{ for all }f\in \sobolev{p}(\ambient)\cap \Dom((-\gen)^{\critic_{p}}).
\end{equation}

As for the Riesz transform on the unbounded Vicsek set, we have:

\begin{theorem}\label{t.dualc}
  Let $(\ambient,\metric,\meas,\form,\domain)$ be the unbounded Vicsek metric measure Dirichlet space. Let $p\in[1,\infty)$. There is a linear space $\collectD_{p}^{\rm sp}\subset L^{2}(\ambient,\meas)$ such that
  \begin{enumerate}[label=\textup{(\roman*)}, align=right, leftmargin=*, topsep=5pt, parsep=0pt, itemsep=2pt]
    \item\label{it.Ds1} $\collectD_{p}^{\rm sp}\subset \Dom(( -\gen)^{-\critic_{p}})\cap L^{p}(\ambient,\meas)\cap L^{p,1}(\ambient,\meas)$;
    \item\label{it.Ds2} For any $u\in \collectD_{p}^{\rm sp}$, we have $( -\gen)^{-\critic_{p}}u\in\domain$; thus it makes sense to define
\begin{equation}\label{e.rieszp}
      \riesz_{p}:\collectD_{p}^{\rm sp}\to L^{2}(\skeleton,\medm),\ u\mapsto\wgrad(-\gen)^{-\critic_{p}}u;
\end{equation}
    \item\label{it.Ds3} If $p\in(1,\infty)$, then $\collectD_{p}^{\rm sp}$ is\textit{} dense in $L^{p}(\ambient,\meas)$ and in $L^{p,1}(\ambient,\meas)$;
  \end{enumerate}
  and the following properties hold:
  \begin{enumerate}[label=\textup{(\arabic*)}, align=right, leftmargin=*, topsep=5pt, parsep=0pt, itemsep=2pt]
    \item\label{it.dlow1} If $p=1$, then
\begin{equation}\label{e.dfail1}
      \sup\Sett{\norm{\riesz_{1} f}_{L^{1,\infty}(\skeleton,\medm)}}{f\in \collectD_{1}^{\rm sp} \text{ and }\norm{f}_{L^{1}(\ambient,\meas)}\leq1}=\infty.
\end{equation}
    In particular, there is no bounded extension $\riesz_{1}:L^{1}(\ambient,\meas)\to L^{1,\infty}(\skeleton,\medm)$.
    \item\label{it.dlowb} If $p\in(1,2)$, then $\riesz_{p}$ extends uniquely to a bounded linear operator $\riesz_{p}:L^{p}(\ambient,\meas)\to L^{p,\infty}(\skeleton,\medm)$.
    \item\label{it.dlowr} If $p\in[1,2)$, then there is no bounded extension $\riesz_{p}:L^{p,1}(\ambient,\meas)\to L^{p}(\skeleton,\medm)$.
    \item\label{it.dequa} If $p=2$, then $\riesz_{2}$ extends uniquely to an isometry, that is,
\begin{equation}\label{e.dequa}
      \norm{\riesz_{2}h}_{L^{2}(\skeleton,\medm)} = \norm{h}_{L^{2}(\ambient,\meas)}, \ \ \text{ for all } h\in L^{2}(\ambient,\meas).
\end{equation}
    \item\label{it.dhigh} If $p\in(2,\infty)$, then $\riesz_{p}$ extends uniquely to a bounded linear operator
\begin{equation}\label{e.dhigh}
      \riesz_{p}: L^{p,1}(\ambient,\meas) \to L^{p}(\skeleton,\medm),
\end{equation}
    but there is no bounded extension $\riesz_{p}: L^{p}(\ambient,\meas) \to L^{p}(\skeleton,\medm)$.
  \end{enumerate}
\end{theorem}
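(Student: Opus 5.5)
The plan is to obtain Theorem~\ref{t.dualc} by duality from the reverse inequalities of Theorem~\ref{t.strfl}, which play the role of the adjoint statements.

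\textbf{Duality identity.} For $u\in\collectD_{p}^{\rm sp}$ and $g\in\core$, the Dirichlet form and self-adjointness give
\begin{equation}
\int_{\skeleton}\riesz_{p}u\,\wgrad g\dif\medm=\form\big((-\gen)^{-\critic_{p}}u,g\big)=\int_{\ambient}u\,(-\gen)^{1-\critic_{p}}g\dif\meas .
\end{equation}
Since $1-\critic_{p}=\critic_{p'}$ (a direct computation from \eqref{e.critp} with $1/p+1/p'=1$), this becomes $\langle u,(-\gen)^{\critic_{p'}}g\rangle_{\meas}$. More generally, if $\phi\in L^{p'}(\skeleton,\medm)$ is a gradient field, the adjoint of $\riesz_{p}$ acts as $\phi\mapsto(-\gen)^{\critic_{p'}}$ applied to a potential of $\phi$, in the sense of the weak divergence.

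\textbf{Choosing $\collectD_{p}^{\rm sp}$.} I would take spectrally localized functions, for example $u=\psi(-\gen)h$ with $h\in\core$ (or $h$ bounded and compactly supported) and $\psi$ smooth, compactly supported in $(0,\infty)$. Items \ref{it.Ds1} and \ref{it.Ds2} then hold by spectral calculus together with heat kernel bounds that give $L^{p}$ and $L^{p,1}$ integrability. For density \ref{it.Ds3}, approximate $h$ by $\psi_{n}(-\gen)h$, where $\psi_{n}\to\one_{(0,\infty)}$. Convergence in $L^{p}$ needs $L^{p}$-boundedness of the smooth spectral multipliers, which follows from sub-Gaussian estimates, plus the fact that $\Ker\gen$ is trivial in $L^{p}$ for $1<p<\infty$ because the space is unbounded.

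\textbf{The individual items.}
\begin{itemize}
\item \ref{it.dequa} is immediate: $\norm{\riesz_2u}^2=\form((-\gen)^{-1/2}u,(-\gen)^{-1/2}u)=\norm{u}_2^2$.
\item \ref{it.dhigh}, positive part: for $p>2$ we have $p'\in(1,2)$, and Theorem~\ref{t.mainw} gives $\|(-\gen)^{\critic_{p'}}g\|_{L^{p',\infty}}\lesssim\|\wgrad g\|_{L^{p'}}$. The dual of $L^{p',\infty}$ contains $L^{p,1}$, so $|\langle \riesz_pu,\wgrad g\rangle|\lesssim\|u\|_{L^{p,1}}\|\wgrad g\|_{L^{p'}(\medm)}$. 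To close this, I need the gradients $\{\wgrad g\}$ to be norming in $L^{p'}(\medm)$ for $\riesz_pu$. This holds because $\riesz_pu$ is itself a gradient and, on a tree, every $L^{p'}$ field decomposes as a gradient plus a piece orthogonal to gradients, the decomposition being bounded through the edgewise projection.
\item \ref{it.dhigh}, negative part: this is dual to Theorem~\ref{t.strfl}\ref{it.strfl} at $p'<2$, since failure of the strong reverse inequality yields failure of $L^{p}\to L^{p}$.
\item \ref{it.dlowr}: for $p<2$ this is dual to Theorem~\ref{t.strfl}\ref{it.wtrfl} at $p'>2$. Indeed, a bound $L^{p,1}\to L^{p}$ would imply $\|(-\gen)^{\critic_{p'}}g\|_{L^{p',\infty}}\lesssim\|\wgrad g\|_{L^{p'}}$, with an extra $P_1$ handled by writing $g=P_1f$.
\item \ref{it.dlow1}: I would take an explicit counterexample, $u$ an approximate identity at a junction point or a cell indicator at scale $r$. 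Then $(-\gen)^{-\critic_1}u$ behaves like the Riesz-type kernel $d(x,\cdot)^{\critic_1\dimw-\dimf}=d^{0}$, so the gradient is logarithmic in nature. Summing over the $5^{n}$-fold branching should give a $\log$ blow-up of the $L^{1,\infty}(\medm)$ quasinorm.
\item \ref{it.dlowb} is the one statement with no clean dual, and I expect it to be the main obstacle. I would run a Calder\'on--Zygmund decomposition of $u\in L^{p}(\meas)$ at height $\lambda$ with respect to the doubling measure $\meas$. The good part is handled by the $L^{2}$ isometry. For the bad parts $b_Q$, I would write $b_Q=(I-P_{r_Q^{\dimw}})^{M}b_Q+\big(I-(I-P_{r_Q^{\dimw}})^{M}\big)b_Q$. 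The second piece is smooth at scale $r_Q$ and is recombined into an $L^{2}$ estimate via a square-function argument. The first piece is estimated off the dilated cubes through kernel bounds for $\wgrad(-\gen)^{-\critic_p}(I-P_{r^{\dimw}})^{M}$, which need gradient heat kernel estimates measured in $\medm$.
\end{itemize}

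The difficulty in \ref{it.dlowb} is that $\medm$ and $\meas$ are mutually singular. The exceptional set $\bigcup 2Q$ therefore has to be measured in $\medm$, and its $\medm$-size is $\sum r_Q\approx\sum \meas(Q)^{1/\dimf}$. Converting this into $\lambda^{-p}\|u\|_p^p$ is exactly where the exponent $\critic_p$ is forced. The scaling works out because $\meas(Q)\approx\lambda^{-p}\|u\|_{L^p(Q)}^p$ combined with $\lambda$ times the gradient normalization, but making it rigorous via H\"older on $\sum r_Q$ is the delicate point.
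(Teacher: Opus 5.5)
Your items (i)--(iii), (4), (5), and (3) for $p\in(1,2)$ follow the paper's route, up to three small points.
\begin{itemize}
\item Your spectrally localized $\collectD_p^{\rm sp}$ is workable, but it needs a H\"ormander-type multiplier theorem for sub-Gaussian kernels. The paper's $(I-P_R)^2P_\varepsilon h$ needs only contractivity and \eqref{e.ultra}.
\item In (5) there is no ``orthogonal piece'': Lemma~\ref{l.densg} shows that $\wgrad\core$ is dense in $L^{p'}(\skeleton,\medm)$.
\item In (3), do not test with $P_1f\notin\core$, which would need $\|\wgrad P_1f\|_{L^{p'}}\lesssim\|\wgrad f\|_{L^{p'}}$. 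Instead move $P_1$ onto $h$ as in \eqref{e.regdu}, or use that $P_1$ is bounded on $L^{p',\infty}$.
\end{itemize}
Items (2) and (1), however, have genuine gaps.

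\textbf{Item (2).} Your Calder\'on--Zygmund scheme breaks down at the exceptional set. Every cell has $\medm(\skeleton\cap Q)=\infty$: at each level $k$, $Q$ contains $8\cdot5^{k-1}$ new arms of length $3^{-k}\cellsize{Q}$, and $\sum_k 8\cdot5^{k-1}3^{-k}$ diverges. So $\bigcup 2Q$ cannot be discarded, and your claim that its $\medm$-size is $\approx\sum r_Q$ is false. You would have to control $\riesz_p b_Q$ on the infinitely long skeleton inside the cells, which kernel bounds off the cubes do not address.

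Moreover, (2) does have a clean dual. By \eqref{e.loreq} it suffices to prove $\int_E|\riesz_p h|\dif\medm\lesssim\|h\|_{L^p(\ambient,\meas)}\medm(E)^{1/p'}$. This is a restricted strong-type bound for the adjoint, tested on $\eta=\operatorname{sgn}(\riesz_ph)\one_E$.
\begin{itemize}
\item The paper writes $\eta$ as an $L^2$-limit of $\wgrad f_j$ with $f_j\in\core$ and $\|\wgrad f_j\|_{L^\infty}\le2$ (Lemma~\ref{l.densg}).
\item For $q=p'>2$ it then uses Proposition~\ref{p.endpoint-restricted}: $\|(-\gen)^{\critic_q}f\|_{L^q}^q\lesssim\|\wgrad f\|_{L^\infty}^{q-2}\|\wgrad f\|_{L^2}^2$.
\item That estimate follows from the pointwise bound $|(-\gen)^{\critic_q}f|\lesssim\|\wgrad f\|_{L^\infty}^{1-2/q}S_f^{2/q}$, where $S_f=(\int_0^\infty|\gen P_tf|^2\dif t)^{1/2}$ satisfies $\|S_f\|_{L^2}^2=\frac12\|\wgrad f\|_{L^2}^2$.
\item The large-time part is controlled by $\|\gen P_tf\|_{L^\infty}\lesssim t^{-(1-1/\dimw)}\|\wgrad f\|_{L^\infty}$, which is dual to the $L^1$ gradient bound for $P_t$.
\end{itemize}

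\textbf{Item (1).} A single approximate identity or cell indicator does not produce blow-up.
\begin{itemize}
\item By Lemma~\ref{l.endpoint-gate}, $\riesz_1u(z)$ is, up to sign, the flux $\int_{C_z}(-\gen)^{1/\dimw}u\dif\meas$ through $z$.
\item For a unit bump at $y$, take $z$ at distance $r$ whose far branch has size $s\le r$. The flux is then $\approx s^{\dimf}r^{-\dimf-1}$, and the set of such $z$ has $\medm$-length $\lesssim(r/s)^{\dimf}s$.
\item Summing over $s$ and dyadic $r$ gives $\medm(\{|\riesz_1u|>\lambda\})\lesssim\lambda^{-1}$.
\end{itemize}
So the $1/d$ behaviour you describe is exactly weak-$L^1$. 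The fivefold side branches carry too little flux to produce a logarithm.

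The blow-up needs mass spread in a one-dimensional pattern. The paper takes $(5/3)^N\one_{E_N}$ with $E_N=\bigcup_{w\in\{0,1,2\}^N}F_w(\vicsek)$.
\begin{itemize}
\item Each $Q_w$ with $|w|=k<N$ then carries mass $\cellsize{Q_w}$.
\item Its mass-free branch $F_{w3}(\vicsek)$ contains a segment $J_w$ of length $\cellsize{Q_w}/27$ on which the flux is $\gtrsim1$, by a heat kernel lower bound at time $\cellsize{Q_w}^{\dimw}$.
\item These segments are pairwise disjoint with total length $N/27$ (Lemma~\ref{l.flag}).
\end{itemize}
Without such a construction, (3) at $p=1$, which you would deduce from (1), also remains unproved.
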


\paragraph{Outlines of the proofs}

We briefly outline strategies used in the proofs of our four main results.
\begin{enumerate}[label=\textup{({\arabic*})}, align=right, leftmargin=*, topsep=5pt, parsep=0pt, itemsep=2pt]
  \item To prove Theorem~\ref{t.critx}-\ref{it.neces}, we use dilations. That is, if the bound \eqref{e.assum} holds for some $\critic\in(0,1)$ and $p\in[1,\infty)$, then we may test \eqref{e.assum} with the dilated Sobolev functions $f_{n}(x)=f(3^{-n}x)$ for $x\in \ambient$ and $n\in\bZ$, using the  \emph{self-similarity} of the unbounded Vicsek set. Choosing a suitable $f$ and letting $n\to\infty$, we will see that \eqref{e.assum} forces $\critic\geq\critic_{p}$, since the constant $C$ is independent of $n$.

   The proof of Theorem~\ref{t.critx}-\ref{it.suffi} adapts the duality method in \cite[Lemma~1.4]{DR26}, and relies on the semigroup gradient estimate in \cite[Theorem~3.12]{BC24}:
\begin{equation}\label{e.gradPt}
      \|\wgrad P_{t}h\|_{L^{p'}(\skeleton,\medm)} \lesssim  t^{-\critic_{p'}} \|h\|_{L^{p'}(\ambient,\meas)},\ \text{ for all }t>0\text{ and }h\in L^{p'}(\ambient,\meas)\cap L^{2}(\ambient,\meas),
\end{equation}
 where $p'\in(1,\infty]$ is the H\"older conjugate of $p$, so that $p^{-1}+p'^{-1}=1$. The boundedness \eqref{e.subcr} then follows from \eqref{e.gradPt}, $\critic_{p}+\critic_{p'}=1$, the spectral calculus for $(-\gen)^{\critic}P_{1}$ and the finiteness of the Beta function $B\bigl(1-\critic,\critic_{p'}-1+\critic\bigr)$ when $\critic\in(\critic_{p},1)$.
 \item  To prove Theorem~\ref{t.strfl}-\ref{it.scale}, we use the \emph{Sobolev Calder\'on--Zygmund decomposition} to decompose a function $f\in \core\subset\sobolev{p}(\ambient)$ into a \emph{good part} $g_{\lambda}$ and a \emph{bad part} $b_{\lambda}$ at level $\lambda\in(0,\infty)$, that is,
\begin{equation}
 	f=g_{\lambda}+b_{\lambda}=g_{\lambda}+\sum_{Q}b_{Q}.
\end{equation}
The classical decomposition appeared in \cite{CZ52}, and was later adapted to Sobolev functions to study reverse Riesz inequalities by Auscher and Coulhon \cite[Proposition~1.1 and Section~1.2]{AC05}; a related argument on Vicsek cable systems has appeared in \cite[Lemmas~2.15,~2.17,~2.18~and~2.19]{DR26}. Our estimates for the bad functions $b_{Q}$ follow arguments similar to those in \cite[Section~1.2]{AC05} and \cite[proofs of Lemmas~2.17,~2.18,~2.19]{DR26}. For the good function, \cite{AC05,DR26} obtained a pointwise estimate $\|\nabla g_{\lambda}\|_{L^{\infty}}\lesssim\lambda$ and the $L^{2}$ bound using
\begin{equation}
  \| \nabla g_{\lambda} \| _{L^{2}} ^{2} \leq \| \nabla g_{\lambda} \| _{L^{\infty}} ^{2-p} \| \nabla g_{\lambda} \| _{L^{p}} ^{p} .
\end{equation}
In the setting of the Vicsek set, the reference measure $\meas$ and the length measure $\medm$ are \emph{mutually singular}, so an almost-everywhere bound for one measure need not hold for the other. Moreover, $\critic_{p}>1/2$ for $p<2$, and the energy identity alone does not control the required fractional power. In our case, we use the following estimate for $u\in\sobolev{p}(\ambient)\cap C_{c}(\ambient)$ with $\density_{p}(u)<\infty$ as a replacement
\begin{equation}\label{e.intro-good0}
  \|(-\gen)^{\critic_{p}}u\|_{L^{2}(\ambient,\meas)}^{2} \lesssim \density_{p}(u)^{(2-p)/p} \|\wgrad u\|_{L^{p}(\skeleton,\medm)}^{p},
\end{equation}
where the quantity $\density_{p}(u)$ is defined in \eqref{e.defden}, and the proof of \eqref{e.intro-good0} will be given in Proposition~\ref{p.goodp}. Applying \eqref{e.intro-good0} to the good function $g_{\lambda}$ and using Chebyshev's inequality together with the bad-part estimate completes the proof of Theorem~\ref{t.strfl}-\ref{it.scale}. The proof of Theorem~\ref{t.strfl}-\ref{it.p=2} follows directly from the spectral calculus and \cite[Theorem~1.3.1]{FOT11}.

To prove Theorem~\ref{t.strfl}-\ref{it.strfl} and~\ref{it.wtrfl}, we will first construct `tent' functions on cells of the Vicsek set in Definition \ref{d.tent}, then take suitable \emph{random linear combinations} of these `tent' functions.  Khintchine's inequality and estimates of the expectations give choices of signs for which $\|(-\gen)^{\critic_{p}}P_{1}f\|_{L^{p}(\ambient,\meas)}$ for $p<2$, or $\|(-\gen)^{\critic_{p}}P_{1}f\|_{L^{p,\infty}(\ambient,\meas)}$ for $p>2$, divided by $\|\wgrad f\|_{L^{p}(\skeleton,\medm)}$, tends to infinity.
\item Theorem \ref{t.mainw} follows from convergence on $\core$, density in $\sobolev{p}(\ambient)$, the uniform estimates in Theorem~\ref{t.strfl}-\ref{it.scale} and~\ref{it.p=2}, and the completeness of the topological vector space $L^{p,\infty}(\ambient,\meas)$.
\item To prove Theorem \ref{t.dualc}, we first define the linear space of functions $\collectD_{p}^{\rm sp}$ in \eqref{e.specc-strict}, using the operators $(I-P_{R})^{2}P_{\varepsilon}$. The properties of $\collectD_{p}^{\rm sp}$ follow from spectral calculus and the approximation and contraction properties of the semigroup.
\begin{enumerate}[label=\textup{(\alph*)}, align=right, leftmargin=*, topsep=5pt, parsep=0pt, itemsep=2pt]
  \item In the case of $p=1$, to prove the unboundedness in \eqref{e.dfail1}, we will explicitly construct a sequence of functions in Lemma \ref{l.flag}, using the tree and cell structure of the Vicsek set.
  \item For $p\in(1,2)$ and $h\in\collectD_{p}^{\rm sp}$, to estimate $\norm{\riesz_{p}h}_{L^{p,\infty}(\skeleton,\medm)}$, we use the equivalent weak $L^{p}$ norm and estimate
\begin{equation*}
\int_{E}\abs{\riesz_{p}h}\dif\medm=\int_{\skeleton} \operatorname{sgn}(\riesz_{p}h)\one_{E}\cdot (\riesz_{p}h)\dif \medm
\end{equation*}
 by $C_{p}\medm(E)^{1/p'}\|h\|_{L^{p}(\ambient,\meas)}$, initially for measurable $E$ contained in a finite union of arms. General sets of finite measure follow by exhaustion. Lemma~\ref{l.densg} gives a sequence $(f_{j})_{j\in\bN}\subset\core$ such that $\wgrad f_{j}\to \operatorname{sgn}(\riesz_{p}h)\one_{E}$ in $L^{2}(\skeleton,\medm)$, with uniformly bounded gradients in $L^{\infty}(\skeleton,\medm)$. To estimate $\int_{\skeleton}\wgrad f_{j}\cdot (\riesz_{p}h)\dif \medm$, we use integration by parts in Lemma \ref{l.riesz-duality}, and reduce the estimate to bounding $\left\langle(-\gen)^{\critic_{p'}}f_{j},h \right\rangle_{L^{2}(\ambient,\meas)}$, which will be estimated by H\"older's inequality and Proposition \ref{p.endpoint-restricted}. This idea is motivated by \cite[p.~1152]{CD99} and \cite[Lemma~1.4]{DR26}.
  \item For $p\in(1,2)$, if $\riesz_{p}$ is bounded from $L^{p,1}(\ambient,\meas)$ to $L^{p}(\skeleton,\medm)$, then duality would give the weak reverse quasi-Riesz estimate for $(-\gen)^{\critic_{p'}}P_{1}$, contradicting Theorem~\ref{t.strfl}-\ref{it.wtrfl}, since $p'>2$. At $p=1$, the nonextension follows from the failure of weak type $(1,1)$ already proved.
  \item For $p=2$, we simply use the spectral calculus to extend $\riesz_{2}$ to an isometry from $L^{2}(\ambient,\meas)$ into $L^{2}(\skeleton,\medm)$.
  \item For $p\in(2,\infty)$, the boundedness of $\riesz_{p}: L^{p,1}(\ambient,\meas) \to L^{p}(\skeleton,\medm)$ follows from a duality argument and Theorem \ref{t.mainw}. The unboundedness of $\riesz_{p}: L^{p}(\ambient,\meas) \to L^{p}(\skeleton,\medm)$ also follows by duality, since the boundedness would contradict Theorem~\ref{t.strfl}-\ref{it.strfl}.
\end{enumerate}
\end{enumerate}

This paper is organized as follows. Section~\ref{s.geometry} introduces the geometry, Sobolev spaces and heat kernel estimates on the unbounded Vicsek set. Section~\ref{s.reverse} develops harmonic interpolation and the Calder\'on--Zygmund decomposition, and proves Theorem~\ref{t.strfl}-\ref{it.scale} and~\ref{it.p=2}, and Theorem \ref{t.mainw}. In Section~\ref{s.criticality}, we prove Theorem~\ref{t.critx}. Theorem~\ref{t.strfl}-\ref{it.strfl} and~\ref{it.wtrfl} is proved in Section~\ref{s.failure}. In Section~\ref{s.direct}, we prove Theorem \ref{t.dualc}. In Appendix \ref{sa.facts}, we collect some useful facts on Lorentz spaces and some technical estimates. Appendix \ref{sa.notat} lists the notation used in this paper.

\begin{notation}\label{n.conventions}
We use the following notation and conventions.
\begin{enumerate}[label=\textup{(\roman*)},align=right,leftmargin=*,topsep=5pt,parsep=0pt,itemsep=2pt]
  \item $\bN:=\{1,2,\ldots\}$. Thus $0\notin \bN$. We define $\bNN:=\{0\}\cup\bN$.
  \item Let $U$ and $V$ be two open subsets of a topological space. If $U$ is precompact and the closure of $U$ is contained in $V$, then we write $U\Subset V$.
  \item Let $X$ be a non-empty set. We define $\one_{A}\in\bR^{X}$ for $A\subset X$ by
\begin{equation}
    \one_{A}(x):=
    \begin{dcases}
      1 & \mbox{if $x \in A$,}\\
      0 & \mbox{if $x \notin A$.}
    \end{dcases}
\end{equation}
  \item Let $X$ be a topological space. We set
\begin{align}
      C(X)&:=\Sett{f\in\bR^{X}}{\textrm{$f$ is a continuous real-valued function on $X$}},\\
      C_{c}(X)&:=\Sett{f\in C(X)}{\textrm{$X\setminus f^{-1}(0)$ has compact closure in $X$}}.
\end{align}
  For any $f\in C(X)$, we define $\norm{f}_{\sup}:=\sup_{x\in X}\abs{f(x)}$.
  \item Let $X$ be a topological space equipped with a Borel measure $\mu$, and let $f$ be a Borel function. We denote by $\supp_{\mu}[f]$ the support of the measure $\abs{f}\dif\mu$.
  \item Let $(X,d)$ be a metric space. We define the metric ball by
\begin{equation}
    B(x,r):=\Sett{y\in X}{d(x,y)<r},\ \text{ for all }(x,r)\in X\times[0,\infty).
\end{equation}
  Under this convention, we have $B(x,0)=\emptyset$ for $x\in X$.
  \item We use the letters $C$, $c$, etc. to denote positive constants whose values are inessential and may change at each occurrence.
  \item For two extended real numbers $A,B\in\bR\cup\{-\infty,\infty\}$, let $A\wedge B:=\min(A,B)$ and $A\vee B:=\max(A,B)$.
  \item For real-valued quantities $f$ and $g$, if there exists an implicit constant $C\geq1$ that depends on inessential parameters such that $f\leq Cg$, then we write $f\lesssim g$.
\end{enumerate}
\end{notation}
\section{Geometry of the Vicsek set and Sobolev spaces}\label{s.geometry}
Let $q_{0}:=0\in\bC$, $q_{k}:=\mathrm{i}^{k-1}\in\bC$ for $k\in\{1,\ldots,4\}$, and let
\begin{equation}\label{e.ifsss}
  F_{j}(x):=q_{j}+\frac{x-q_{j}}{3},\ x\in\bC,\ j\in\{0,\ldots,4\}.
\end{equation}
Let $\vicsek$ be the non-empty compact set satisfying $\vicsek=\bigcup_{j=0}^{4}F_{j}(\vicsek)$, which exists and is unique by Hutchinson's theorem \cite[Theorem~3.1-(3)]{Hut81}. Define
\begin{equation}\label{e.ambient}
  \ambient:=\bigcup_{k\in\bNN}3^{k}\vicsek\subset\bC.
\end{equation}
We call $\vicsek$ the \emph{compact Vicsek set} and $\ambient$ the \emph{unbounded Vicsek set}.
Let $\metric$ be the intrinsic metric on $\ambient$ associated with the Euclidean metric $\metric_{\bC}$ on $\bC$ \cite[Section~2.3.3]{BBI01}. By \cite[Section~2.2]{BC23}, we know that the metric $\metric$ is bi-Lipschitz equivalent to $\restr{(\metric_{\bC})}{\ambient\times\ambient}$.

\subsection{Cells and cables}
We use the \emph{cell}, \emph{cable}, and \emph{skeleton} of the Vicsek set that are mainly developed in \cite[Sections~2.1--2.5]{BC23} and \cite[Section~2]{BC24}, adapted below to the unbounded case.
\begin{definition}\label{d.cells}
  Let $W_{0}:=\{\emptyset\}$, $W_{n}:=\{0,\ldots,4\}^{n}$ for $n\in\bN$, and let $W_{*}:=\bigcup_{n\in\bNN}W_{n}$. For $w\in W_{n}$, we define $\abs{w}=n$. We define $F_{\emptyset}:=\mathrm{id}$ to be the identity map, and $F_{w}:=F_{i_{1}}\circ\cdots\circ F_{i_{n}}$ for $w= ( i _{1} , \ldots , i _{n} ) \in W_{n}$ with $n\in\bN$.
  \begin{enumerate}[label=\textup{(\arabic*)}, align=right, leftmargin=*, topsep=5pt, parsep=0pt, itemsep=2pt]
    \item\label{it.cell1} We define the collection of all cells
    \begin{equation}
      \collectQ:=\Sett{Q\subset \ambient}{\text{there exists $(k,w)\in\bNN\times W_{*}$ such that $Q=3^{k}F_{w}(\vicsek)$}}.
    \end{equation}
    We call $(k,w)\in \bNN\times W_{*}$ an \emph{address} of $Q\in\collectQ$.
    \item\label{it.cell2} For $Q=3^{k}F_{w}(\vicsek)\in\collectQ$, we define
    \begin{equation}
      \cellsize{Q}:=\frac{\diam(Q)}{\diam(\vicsek)},\ \cellctr{Q}:=3^{k}F_{w}(q_{0}),\ \cellvert{Q}:=3^{k}F_{w}(\{q_{j}\}_{j=1}^{4}),\ \text{and}\ \celltree{Q}:=\bigcup_{q\in \cellvert{Q}}[\cellctr{Q},q].
    \end{equation}
    We also define $\cellatt Q:=Q\cap\ol{\ambient\setminus Q}$.
    \item\label{it.cell3} For $n\in\bZ$, we write $\ell_{n}:=3^{-n}$,
    \begin{equation}\label{e.cellfamilies}
      \collectQ_{n}:=\Sett{Q\in\collectQ}{\cellsize{Q}=\ell_{n}},\ G_{n}:=\bigcup_{Q\in\collectQ_{n}}\celltree{Q},\ \text{and}\ V_{n}:=\bigcup_{Q\in\collectQ_{n}}(\cellvert{Q}\cup\{\cellctr{Q}\}).
    \end{equation}
    We say that $I$ is an \emph{open arm}, if there exists $n\in\bZ$ such that $I$ is a connected component of $G_{n}\setminus V_{n}$, and in this case we call $\ol{I}$ a \emph{closed arm}. Let
    \begin{equation}
      \collectA_{n}:=\Sett{\ol{I}}{\text{$I$ is a connected component of $G_{n}\setminus V_{n}$}},\ n\in\bZ.
    \end{equation}
    We denote $\collectA:=\bigcup_{n\in\bZ}\collectA_{n}$.
    \item\label{it.cell4} Let $\skeleton:= \allowbreak\ \bigcup _{Q\in\collectQ} ( \celltree{Q} \setminus \cellvert{Q} ) $, and let $\medm$ be the length measure on $\skeleton$, characterized by $\medm(J)=\ell_{n}$ for every $J\in\collectA_{n}$ and $n\in\bZ$. When applied to a subset of $\ambient$, the measure $\medm$ is extended by zero outside $\skeleton$.\label{n.skeleton}
    \item\label{it.cell5} Let $\meas$ be the normalized $\dimf$-dimensional Hausdorff measure on $(\ambient,\metric)$; equivalently, it is the Borel measure such that $\meas(3^{k}F_{w}\vicsek)=5^{k-|w|}$ for every $(k,w)\in \bNN\times W_{*}$. We call the triple $(\ambient,\metric,\meas)$ the \emph{unbounded Vicsek metric measure space}.
  \end{enumerate}
\end{definition}
\begin{remark}
  We give several remarks on the definition of cells and measure.
  \begin{enumerate}[label=\textup{(\roman*)}, align=right, leftmargin=*, topsep=5pt, parsep=0pt, itemsep=2pt]
    \item Each $Q\in\collectQ$ may have more than two addresses, since $3^{k+r}F_{0^{r}w}(\vicsek)=3^{k}F_{w}(\vicsek)$ holds for all $(k,w)\in \bNN\times W_{*}$ and all $r\in\bNN$. However, suppose
    \begin{equation}
      Q=3^{k}F_{w}(\vicsek)=3^{k'}F_{w'}(\vicsek).
    \end{equation}
    Since each similarity $F_{i}$ has contraction ratio $1/3$, we know that $\diam(F_{w}(\vicsek))=3^{-|w|}\diam(\vicsek)$, and hence
    \begin{equation}
      \diam\bigl(3^{k}F_{w}(\vicsek)\bigr) = 3^{k}\diam(F_{w}(\vicsek)) = 3^{k-|w|}\diam(\vicsek).
    \end{equation}
    Similarly, $\diam\bigl(3^{k'}F_{w'}(\vicsek)\bigr)=3^{k'-|w'|}\diam(\vicsek)$. Since $\vicsek$ is nontrivial, $\diam(\vicsek)>0$. Thus $k-|w|=k'-|w'|$. Therefore, the definitions of $\cellctr{Q}$ and $\cellvert{Q}$ are independent of the address of $Q$.
    \item By \cite[Theorem~5.3-(1)]{Hut81} and the bi-Lipschitz equivalence of $\metric$ and $\restr{(\metric_{\bC})}{\ambient\times\ambient}$, we know that the metric space $(\ambient,\metric)$ has Hausdorff dimension $\dimf=\log_{3}5$\label{n.dimensions}, and there exists $C\in(1,\infty)$ such that
    \begin{equation}\label{e.ahlfors}
      C^{-1}r^{\dimf}\leq \meas(B(x,r))\leq Cr^{\dimf},\ \text{for all }(x,r)\in\ambient\times(0,\infty).
    \end{equation}
    In other words, the metric measure space $(\ambient,\metric,\meas)$ is \emph{Ahlfors $\dimf$-regular}.
    \item The set $\skeleton$ is the skeleton of the real tree $(\ambient,\metric)$, and $\medm$ is its length measure (see Definitions \ref{d.rtree0}, \ref{d.rtree} and \cite[Definition~2.2]{BCC26}). The union of the closed arms differs from $\skeleton$ only by some vertices that belong to the countable set $\bigcup_{n\in\bZ}V_{n}$, which is $\medm$-null.
    \item For every $Q\in\collectQ$, we always have $\cellatt Q\subset \cellvert{Q}$.
  \end{enumerate}
\end{remark}
\begin{proposition}\label{p.nest}
  Let $Q,R\in\collectQ$.
  \begin{enumerate}[label=\textup{(\arabic*)}, align=right, leftmargin=*, topsep=5pt, parsep=0pt, itemsep=2pt]
    \item\label{it.nest1} If $\cellsize{Q}=\cellsize{R}$, and $Q\neq R$, then $Q\cap R\subset\cellvert{Q}\cap\cellvert{R}$.
    \item\label{it.nest2} If $R\subset Q$, then $R\cap \cellvert{Q}\subset \cellvert{R}$.
  \end{enumerate}
\end{proposition}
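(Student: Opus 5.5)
We reduce both parts to the compact self-similar set $\vicsek$ by scaling. Given $Q,R\in\collectQ$, write them with addresses sharing the same $k$: by the remark after Definition~\ref{d.cells} we may pad addresses with zeros, since $3^{k}F_{w}(\vicsek)=3^{k+r}F_{0^{r}w}(\vicsek)$. So we may assume $Q=3^{k}F_{w}(\vicsek)$ and $R=3^{k}F_{v}(\vicsek)$ with a common $k$. Multiplication by $3^{-k}$ is a similarity that maps cells to cells and preserves the vertex sets $\cellvert{\cdot}$. Hence it suffices to prove both statements for $Q=F_{w}(\vicsek)$ and $R=F_{v}(\vicsek)$ inside $\vicsek$.

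\emph{Basic fact at level one.} For $i\neq j$ we have $F_{i}(\vicsek)\cap F_{j}(\vicsek)\subset F_{i}(\cellvert{\vicsek})\cap F_{j}(\cellvert{\vicsek})$, where $\cellvert{\vicsek}=\{q_{1},\dots,q_{4}\}$.
\begin{itemize}
\item The pairs that meet are $(0,j)$ with $j\geq1$. The corner cells $F_{j}(\vicsek)$, $j\ge1$, are pairwise disjoint because they lie in disjoint squares.
\item Consider $F_{0}(\vicsek)$ and $F_{1}(\vicsek)$. Since $\vicsek\subset[-1,1]^{2}$ rotated appropriately (the convex hull of the $q_{j}$), $F_{0}(\vicsek)$ lies in the square with corners $q_j/3$, and $F_{1}(\vicsek)$ lies in the square with corners $F_{1}(q_{j})$.
\item These two squares meet only at the single point $q_{1}/3=F_{0}(q_{1})=F_{1}(q_{3})$, which is a vertex of both cells.
\end{itemize}
We also need $\vicsek\cap\partial(\mathrm{hull})\subset\cellvert{\vicsek}$, i.e.\ $\vicsek$ touches the boundary of its convex hull only at $q_{1},\dots,q_{4}$. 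We prove this by the same self-similar induction: the only subcells reaching the boundary of the hull are the corner cells $F_{j}(\vicsek)$, and each touches it only at $q_j$.

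\emph{Part (1).} Here $|w|=|v|$ and $w\neq v$. Let $u$ be the longest common prefix, so $w=u i w'$ and $v=u j v'$ with $i\neq j$. Then
\[
Q\cap R\subset F_{u}\bigl(F_{i}(\vicsek)\cap F_{j}(\vicsek)\bigr),
\]
which by the level-one fact is at most the single point $p=F_{u}F_{i}(q_a)=F_{u}F_{j}(q_b)$. It remains to show that if $p\in F_{uiw'}(\vicsek)$, then $p$ is a vertex of that cell. We argue by induction on $|w'|$. The point $F_{i}(q_a)$ is a corner of the hull of $F_{i}(\vicsek)$. Among the subcells $F_{i}F_{c}(\vicsek)$, only the corner cell $c=a$ contains it, and there it is again the image of the corner $q_a$. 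Iterating, $p=F_{w}(q_a)\in\cellvert{Q}$, and symmetrically $p\in\cellvert{R}$.

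\emph{Part (2).} Now $R\subset Q$. Cells of equal size are either equal or, by part (1), meet in at most one point; so inclusion forces $v=wv'$. Applying $F_{w}^{-1}$ reduces to $Q=\vicsek$ and $R=F_{v'}(\vicsek)$, and we must show that $R\cap\{q_{1},\dots,q_{4}\}\subset\cellvert{R}$. If $q_a\in R$, then $q_a$ lies on the boundary of the hull of $\vicsek$. Every cell of the first generation containing $q_a$ must be $F_{a}(\vicsek)$, and $q_a=F_{a}(q_a)$. By induction $v'=a^{n}$ and $q_a=F_{a^{n}}(q_a)\in\cellvert{R}$.

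The main obstacle is the careful justification that $\vicsek$ meets the boundary of its convex hull only at the four corners, and that corner points are contained only in corner subcells. Everything else follows by routine induction on word length.
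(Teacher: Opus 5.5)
Your proof is correct and follows the paper's route. Like the paper, you pad the addresses with zeros to a common $k$ and then argue on words. The paper simply asserts the two self-similar facts it needs: $F_u(\vicsek)\cap F_v(\vicsek)\subset F_u(\cellvert{\vicsek})\cap F_v(\cellvert{\vicsek})$ for distinct words of equal length, and $v=ur$ with $F_r(\vicsek)\cap\cellvert{\vicsek}\subset F_r(\cellvert{\vicsek})$. Your level-one diamond computation and your corner induction actually justify both.

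One correction, though. Your auxiliary claim that $\vicsek$ meets the boundary of its convex hull $D=\mathrm{conv}\{q_1,\dots,q_4\}$ only at $q_1,\dots,q_4$ is false.
\begin{itemize}
\item The point $F_1(q_2)=\tfrac23+\tfrac13\mathrm{i}$ lies in $\vicsek$ and on the edge $\{x+y=1\}$ of $D$.
\item The reason is that $F_1(D)$ shares the segments $[q_1,F_1(q_2)]$ and $[q_1,F_1(q_4)]$ with $\partial D$. So your inductive step ``each corner cell touches the boundary only at $q_j$'' fails.
\item In fact $\vicsek$ meets each edge $e$ of $D$ in a middle-thirds Cantor set, since for the edge $e$ joining $q_1$ and $q_2$ one has $\vicsek\cap e=F_1(\vicsek\cap e)\cup F_2(\vicsek\cap e)$.
\end{itemize}

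This does not damage your argument, because you never actually use that claim. Both parts only need the following:
\begin{itemize}
\item $\vicsek\subset D$;
\item the explicit pairwise intersections of the level-one diamonds $F_i(D)$;
\item injectivity of the maps $F_w$;
\item the fact that the corner $q_a$ lies in $F_c(D)$ only for $c=a$, together with $F_a(q_a)=q_a$.
\end{itemize}
So delete the ``we also need'' sentence. The genuine key point, and the real ``main obstacle'', is the corner fact, not the boundary claim.
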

\begin{proof}
  Write $Q=3^{k}F_{w}(\vicsek)$, $R=3^{k'}F_{w'}(\vicsek)$. Choose an integer $N\geq\max\{k,k'\}$ and set $u:=0^{N-k}w$ and $v:=0^{N-k'}w'$.
  \begin{enumerate}[label=\textup{(\arabic*)}, align=right, leftmargin=*, topsep=5pt, parsep=0pt, itemsep=2pt]
    \item[\ref{it.nest1}] Since $F_{0}(x)=x/3$, we know that $Q=3^{N}F_{u}(\vicsek)$ and $R=3^{N}F_{v}(\vicsek)$. If $\cellsize{Q}=\cellsize{R}$, then $|u|=|v|$. Thus, if $Q\neq R$,
    \begin{equation}
      Q \cap R \subset 3 ^{N} F _{u} ( \{ q _{j} \} _{j=1} ^{4} ) \cap 3 ^{N} F _{v} ( \{ q _{j} \} _{j=1} ^{4} ) = \cellvert{Q} \cap \cellvert{R} .
    \end{equation}
    This proves \ref{it.nest1}.
    \item[\ref{it.nest2}] Suppose now that $R\subset Q$. Then $ F_{v}(\vicsek)\subset F_{u}(\vicsek)$, and there exists $r\in W_{*}$ such that $v=ur$. Hence
    \begin{equation}
      R \cap \cellvert{Q} = 3 ^{N} F _{u} \bigl( F _{r} ( \vicsek ) \cap \{q_{j}\}_{j=1}^{4} \bigr) \subset 3 ^{N} F _{u} F _{r} ( \{q_{j}\}_{j=1}^{4} ) = \cellvert{R} .
    \end{equation}
    This proves \ref{it.nest2}.
  \end{enumerate}
\end{proof}
\subsection{Sobolev spaces and heat kernel}
In this section, we define the Sobolev spaces on the unbounded Vicsek metric measure space $(\ambient,\metric,\meas)$. It is usually difficult to define a Sobolev space on a fractal space. However, the special \emph{tree} structure makes the concept of \emph{gradient} available on the Vicsek set. We first recall the definitions of tree, orientation, and gradient, cf. \cite[Definitions~2.1, 2.2, 2.7, and 2.8]{BCY25} and \cite[Definitions 2.1 and 2.4]{BCC26}.
\begin{definition}\label{d.rtree0}
  A metric space $(\ambient,\metric)$ is called a \emph{real tree} if it satisfies the following two properties:
  \begin{enumerate}[label=\textup{(\roman*)},align=right,leftmargin=*,topsep=5pt,parsep=0pt,itemsep=2pt]
    \item for every $u,v\in\ambient$, there exists a unique isometric embedding $ \phi_{u,v}\colon [0,\metric(u,v)]\rightarrow \ambient$ such that $\phi_{u,v}(0)=u$ and $\phi_{u,v}(\metric(u,v))=v$;
    \item for every injective continuous map $\kappa\colon[0,1]\to\ambient$, one has
    \begin{equation}
      \kappa ( [ 0 , 1 ] ) = \phi _{\kappa(0),\kappa(1)} \bigl( [ 0 , \metric ( \kappa ( 0 ) , \kappa ( 1 ) ) ] \bigr) .
    \end{equation}
  \end{enumerate}
  Let $(\ambient,\metric)$ be a real tree.
  \begin{enumerate}[label=\textup{(\arabic*)},align=right,leftmargin=*,topsep=5pt,parsep=0pt,itemsep=2pt]
    \item For every $x,y\in\ambient$, we write
    \begin{equation}
      [ x , y ] : = \phi _{x,y} \bigl( [ 0 , \metric ( x , y ) ] \bigr) , \text{ and } ( x , y ) : = \phi _{x,y} \bigl( ( 0 , \metric ( x , y ) ) \bigr) .
    \end{equation}
    \item The \emph{skeleton} of $\ambient$ is defined as $\skeleton:=\bigcup_{x,y\in\ambient}(x,y)$.
    \item There exists a unique measure $\medm$ on the $\sigma$-field generated by these open arcs such that
    \begin{equation}
      \text{ $\medm((x,y)):=\metric(x,y)$ whenever $x,y\in\skeleton$.}
    \end{equation}
    The measure $\medm$ is called the \emph{length measure} on $\skeleton$.
  \end{enumerate}
\end{definition}

\begin{definition}\label{d.rtree}
  Let $(\ambient,\metric)$ be a connected and separable real tree with skeleton $\skeleton$ and length measure $\medm$.
  \begin{enumerate}[label=\textup{(\arabic*)},align=left,leftmargin=*,topsep=5pt,parsep=0pt,itemsep=2pt]
    \item We say that a map $\gamma:[0,L]\to\ambient$ is a \emph{local arc chart} of $(\ambient,\metric)$ if $L\in(0,\infty)$, $\gamma$ is an isometric embedding such that $L=\Length(\gamma)$.
    \item We define the collection of \emph{absolutely continuous} functions by
    \begin{equation}\label{e.acdef}
      \abscon(\ambient,\metric):=\Biggl\{u\in C(\ambient)\Biggm|
      \begin{minipage}{270pt}
        for every local arc chart $\gamma:[0,L]\to\ambient$, the function $(u\circ \gamma):[0,L]\to\bR$ is absolutely continuous on $[0,L]$
      \end{minipage}
      \Biggr\}.
    \end{equation}
    \item We say that $\mathfrak{O}=(\gamma_{n})_{n\in\bN}$ is an \emph{orientation} of $(\ambient,\metric)$ if, for each $n\in\bN$, $\gamma_{n}:[0,L_{n}]\to\ambient$ is a local arc chart and $\skeleton=\bigcup_{n\in\bN}\gamma_{n}((0,L_{n}))$.\label{n.orientation}
    \item For every orientation $\mathfrak{O}=(\gamma_{n})_{n\in\bN}$ of $(\ambient,\metric)$ and every $u\in\abscon(\ambient,\metric)$, we define the \emph{weak derivative of $u$ with respect to $\mathfrak{O}$} by
    \begin{equation}\label{e.graddef}
      \wgrad_{\mathfrak{O}}u(z):=\sum_{n\in\bN}\one_{A_{n}}(z)(u\circ \gamma_{n})^{\prime}(\gamma_{n}^{-1}(z)),\ \medm\text{-a.e. }z\in\skeleton,
    \end{equation}
    where
    \begin{equation}\label{e.dfan}
      A_{1}:=\gamma_{1}((0,L_{1})),\ A_{n}:=\gamma_{n}((0,L_{n}))\setminus\left(\bigcup_{j=1}^{n-1}\gamma_{j}((0,L_{j}))\right).
    \end{equation}
  \end{enumerate}
\end{definition}
The following facts follow from\cite[Section~2.2]{BC23} and \cite[Section~2]{BC24}.
\begin{proposition}\label{p.geometry}
  Let $\ambient$ be the unbounded Vicsek set defined in \eqref{e.ambient}, and let $\metric$ be the intrinsic metric. Then $(\ambient,\metric)$ is a proper, separable, geodesic metric space that is also a real tree. Moreover, the metric $\metric$ is bi-Lipschitz equivalent to the restriction of the Euclidean metric to $\ambient$.
\end{proposition}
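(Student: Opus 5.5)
We need to prove Proposition \ref{p.geometry}: $(\ambient,\metric)$ is proper, separable, geodesic and a real tree, with $\metric$ bi-Lipschitz to the Euclidean metric.

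The plan is to reduce everything to the compact Vicsek set $\vicsek$ and its dilates $3^{k}\vicsek$, where the facts are known from \cite[Section~2.2]{BC23}. The first step is to check that the union \eqref{e.ambient} is increasing, $3^{k}\vicsek\subset 3^{k+1}\vicsek$. This holds because $F_{0}(x)=x/3$ gives $\vicsek\supset F_{0}(\vicsek)=\vicsek/3$, and we then multiply by $3^{k+1}$.

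\textbf{Bi-Lipschitz comparison.} We take from \cite{BC23} that on $\vicsek$ the intrinsic metric $\metric_{\vicsek}$ satisfies $\metric_{\bC}\le \metric_{\vicsek}\le C\metric_{\bC}$. By scaling, the same constant works on every $3^{k}\vicsek$. For $x,y\in\ambient$ we pick $k$ with $x,y\in 3^{k}\vicsek$.
\begin{itemize}
\item The upper bound: a path inside $3^{k}\vicsek$ is a path in $\ambient$, so $\metric(x,y)\le C\metric_{\bC}(x,y)$.
\item The lower bound: $\metric\ge\metric_{\bC}$ is trivial, since path length dominates Euclidean distance.
\end{itemize}
Consequently $\metric$ is finite, and it induces the Euclidean topology on $\ambient$.

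\textbf{Properness and separability.} $\ambient$ is closed in $\bC$. Indeed, any bounded subset of $\ambient$ lies in some $3^{k}\vicsek$: since $\vicsek$ contains $B_{\bC}(0,r_{0})\cap\ambient$ for some $r_{0}>0$, scaling gives $\ambient\cap B_{\bC}(0,3^{k}r_{0})\subset 3^{k}\vicsek$. So each bounded set of $\ambient$ sits inside a compact set $3^{k}\vicsek$. Together with the bi-Lipschitz comparison, this shows that closed $\metric$-balls are compact, so $(\ambient,\metric)$ is proper. Separability follows because $\ambient\subset\bC$.

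\textbf{Geodesic property.} Since $(\ambient,\metric)$ is a complete, locally compact length space, Hopf--Rinow \cite{BBI01} gives that it is geodesic.

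\textbf{Real tree property.} This is the main obstacle. The first goal is to show that each $3^{k}\vicsek$ is a real tree with its intrinsic metric, which is again known from \cite{BC23}. We would then reduce uniqueness of arcs in $\ambient$ to uniqueness inside one $3^{k}\vicsek$.
\begin{itemize}
\item An injective path $\kappa:[0,1]\to\ambient$ has compact image, so its image lies in some $3^{k}\vicsek$.
\item Geodesics between points of $3^{k}\vicsek$ also stay there, because their length is bounded by $C\metric_{\bC}$.
\item The metric of $\ambient$ restricted to $3^{k}\vicsek$ equals its intrinsic metric $\metric_{k}$. Given a near-minimal path in $\ambient$, we enlarge to some $3^{k'}\vicsek$ containing it. The retraction of the tree $3^{k'}\vicsek$ onto the subtree $3^{k}\vicsek$ is $1$-Lipschitz along arcs, since $3^{k}\vicsek$ meets the rest only at finitely many attachment points, by the self-similar structure and Proposition~\ref{p.nest}. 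This retraction maps the path into $3^{k}\vicsek$ without increasing its length.
\end{itemize}
Hence conditions (i) and (ii) of Definition~\ref{d.rtree0} follow from those in $3^{k}\vicsek$, and $(\ambient,\metric)$ is a real tree.
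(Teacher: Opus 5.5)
Your argument is correct in outline, but it takes a different route from the paper. The paper does not prove Proposition~\ref{p.geometry}. It attributes all of it, including the bi-Lipschitz comparison on $\ambient$ itself, to \cite[Section~2.2]{BC23} and \cite[Section~2]{BC24}. You import only two facts about the compact set $\vicsek$: that it is a real tree, and that $\metric_{\bC}\le\metric_{\vicsek}\le C\metric_{\bC}$. You then derive the unbounded statements from four ingredients:
\begin{itemize}
\item the nesting $3^{k}\vicsek\subset3^{k+1}\vicsek$;
\item the fact that bounded subsets of $\ambient$ lie in some $3^{k}\vicsek$;
\item Hopf--Rinow;
\item the identity $\metric|_{3^{k}\vicsek}=\metric_{k}$.
\end{itemize}
The second ingredient is correct: $3^{k}\vicsek\setminus3^{k-1}\vicsek\subset\bigcup_{j=1}^{4}3^{k-1}(\vicsek+2q_{j})$, and $\vicsek$ lies in the diamond $\{|x|+|y|\le1\}$, so $\ambient\setminus3^{k}\vicsek$ misses $B_{\bC}(0,3^{k})$.

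The identity $\metric|_{3^{k}\vicsek}=\metric_{k}$ is the real content, and your gate-projection argument for it works. Let $\pi$ be the nearest-point projection of the real tree $(3^{k'}\vicsek,\metric_{k'})$ onto $3^{k}\vicsek$; it is $1$-Lipschitz by Lemma~\ref{l.projt}-\ref{it.proj4}. Then for every path $\gamma$ in $3^{k'}\vicsek$ joining two points of $3^{k}\vicsek$,
$\Length_{\metric_{\bC}}(\pi\circ\gamma)\le\Length_{\metric_{k'}}(\pi\circ\gamma)\le\Length_{\metric_{k'}}(\gamma)=\Length_{\metric_{\bC}}(\gamma)$.
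Your route makes explicit the compact-to-unbounded passage that the citation hides; the paper's route buys brevity.

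One step needs repair. The bound $\Length\le C\metric_{\bC}(x,y)$ does not keep a geodesic between points of $3^{k}\vicsek$ inside $3^{k}\vicsek$. It only confines the geodesic to a bounded set, that is, to some larger $3^{k'}\vicsek$. For $x,y$ near an attachment vertex, it does not rule out a short excursion through that vertex. Argue instead as follows:
\begin{itemize}
\item A $\metric$-geodesic from $u$ to $v$ has compact image, so it lies in some $3^{k'}\vicsek$ with $k'\ge k$, where it is a $\metric_{k'}$-geodesic.
\item The $\metric_{k}$-geodesic is also a $\metric_{k'}$-geodesic, because $\metric_{k}=\metric=\metric_{k'}$ on $3^{k}\vicsek$.
\item Uniqueness of geodesics in the real tree $(3^{k'}\vicsek,\metric_{k'})$ forces the two to coincide.
\end{itemize}
This gives the uniqueness in condition (i) of Definition~\ref{d.rtree0}, with confinement to $3^{k}\vicsek$ as a by-product.
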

The following definitions of Sobolev spaces on the unbounded Vicsek metric measure space are based on \cite[Section~2]{BC24}; see in particular \cite[Definitions~2.3 and~2.6, Proposition~2.5, Lemma~2.7, and Proposition~2.8]{BC24}.
\begin{definition}
  Let $(\ambient,\metric,\meas)$ be the unbounded Vicsek metric measure space.
  \begin{enumerate}[label=\textup{(\arabic*)},align=left,leftmargin=*,topsep=5pt,parsep=0pt,itemsep=2pt]
    \item For each $n\in\bZ$, define
    \begin{equation}\label{e.coredef}
      \core_{n}:=\Biggl\{u\in C_{c}(\ambient)\Biggm|
      \begin{minipage}{230pt}
        the function $\restr{u}{J}$ is affine for every $J\in\collectA_{n}$; and the function $\restr{u}{\ambient\setminus G_{n}}$ is locally constant
      \end{minipage}
      \Biggr\}.
    \end{equation}
    Clearly, $\core_{n}\subset \core_{n+1}\subset \abscon(\ambient,\metric)$ for all $n\in\bZ$. Define $\core:=\bigcup_{n\in\bZ}\core_{n}$.
    \item For each $Q\in\collectQ$, we define
    \begin{equation}\label{e.coreq}
      \core(Q):=\Sett{u\in C(Q)}{\text{there exists $\wt{u}\in\core$ such that }\restr{\wt{u}}{\cellatt Q}=0 \text{ and }u=\restr{\wt{u}}{Q}},
    \end{equation}
    and
    \begin{equation}
      \abscon(Q):=\Sett{u\in C(Q)}{\text{there exists $\wt{u}\in \abscon(\ambient,\metric)$ such that $u=\restr{\wt{u}}{Q}$}}.
    \end{equation}
    \item Fix an orientation $\mathfrak{O}$ on $(\ambient,\metric)$. For each $p\in [ 1,\infty)$, let
    \begin{equation}\label{e.sobdef}
      \sobolev{p}(\ambient):=\Biggl\{u\in L^{p}(\ambient,\meas)\Biggm|
      \begin{minipage}{230pt}
        there exists $\wt{u}\in \abscon(\ambient,\metric)$ such that $u=\wt{u}$ $\meas$-a.e., and $\wgrad_{\mathfrak{O}}\wt{u}\in L^{p}(\skeleton,\medm)$
      \end{minipage}
      \Biggr\},
    \end{equation}
    and for each $Q\in\collectQ$, we define
    \begin{equation}
      \sobolev{p}_{0}(Q):=\Biggl\{u\in L^{p}(Q,\restr{\meas}{Q})\Biggm|
      \begin{minipage}{230pt}
        there exists $\wt{u}\in \abscon(\ambient,\metric)$ such that $u=\restr{\wt{u}}{Q}$ $\meas$-a.e., $\wgrad_{\mathfrak{O}}\wt{u}\in L^{p}(\skeleton,\medm)$, and $\restr{\wt{u}}{\cellatt Q}=0$.
      \end{minipage}
      \Biggr\},
    \end{equation}
  \end{enumerate}
\end{definition}
\begin{remark}
  \begin{enumerate}[label=\textup{(\arabic*)}, align=right, leftmargin=*, topsep=5pt, parsep=0pt, itemsep=2pt]
  \item By \cite[Proposition 2.6]{BCC26}, the Sobolev spaces $\sobolev{p}(\ambient)$ and $\sobolev{p}_{0}(Q)$ are independent of the choice of the orientation of the tree $\ambient$.
    \item For each $u\in \sobolev{p}_{0}(Q)$, we identify $u$ with its \emph{zero extension} $\wh{u}\in \sobolev{p}(\ambient)$, that is
    \begin{equation}
      \wh{u}(x):=
      \begin{cases}
        u(x),\ & x\in Q,\\
        0,\ & x\in \ambient\setminus Q.
      \end{cases}
    \end{equation}
    Indeed, choose $\wt{u}\in\abscon(\ambient,\metric)$ as in the definition of $\sobolev{p}_{0}(Q)$. Since $Q$ is a closed connected subset of the real tree $(\ambient,\metric)$, it is convex. Hence, for every local arc chart $\gamma:[0,L]\to\ambient$, the set $\gamma^{-1}(Q)$ is either empty or a closed interval $[a,b]$. If $a>0$, then $\gamma(a)\in\cellatt Q$, and if $b<L$, then $\gamma(b)\in\cellatt Q$. Consequently, $\wt{u}(\gamma(a))=0$ whenever $a>0$ and $\wt{u}(\gamma(b))=0$ whenever $b<L$. In the nonempty case, these endpoint values give
    \begin{equation}
      \wh{u}(\gamma(t))-\wh{u}(\gamma(0)) =\int_{0}^{t}\one_{(a,b)}(s)(\wt{u}\circ\gamma)'(s)\dif s, \qquad 0\leq t\leq L.
    \end{equation}
    The integrand belongs to $L^{1}([0,L])$, so $\wh{u}\circ\gamma$ is absolutely continuous. Moreover, $\wh{u}$ is continuous on $\ambient$, and $ \wgrad\wh{u} = \one_{\skeleton\cap Q}\wgrad\wt{u}$, $\medm\text{-a.e. on }\skeleton$. Therefore,
    \begin{equation}
      \|\wh{u}\|_{L^{p}(\ambient,\meas)}^{p} + \|\wgrad\wh{u}\|_{L^{p}(\skeleton,\medm)}^{p} = \|u\|_{L^{p}(Q,\meas)}^{p} + \int_{\skeleton\cap Q}|\wgrad\wt{u}|^{p}\dif\medm <\infty,
    \end{equation}
    and hence $\wh{u}\in\sobolev{p}(\ambient)$.
    \item For each $u\in \sobolev{p}(\ambient)$, we always choose its absolutely continuous version as its representative. We use the same convention for functions in $\sobolev{p}_{0}(Q)$.
  \end{enumerate}
\end{remark}
\begin{theorem}\label{t.dirichlet}
  Let $(\ambient,\metric,\meas)$ be the unbounded Vicsek metric measure space. Fix an arbitrary orientation $\mathfrak{O}$. Let $\wgrad:=\wgrad_{\mathfrak{O}}$. Let
  \begin{equation}\label{e.formdef}
    \domain:=\sobolev{2}(\ambient),\ \text{ and }\ \form(f,g):=\int_{\skeleton}\wgrad f\wgrad g\dif\medm\ \text{ for every }(f,g)\in\domain\times\domain.
  \end{equation}
  Then $(\form,\domain)$ is independent of the choice of the orientation $\mathfrak{O}$, and the following assertions hold:
  \begin{enumerate}[label=\textup{(\arabic*)}, align=right, leftmargin=*, topsep=5pt, parsep=0pt, itemsep=2pt]
    \item\label{it.consDF} $(\form,\domain)$ is a conservative, strongly local, regular symmetric Dirichlet form on $L^{2}(\ambient,\meas)$.
    \item\label{it.HKE} Let $\{P_{t}\}_{t\in(0,\infty)}$ be the corresponding strongly continuous Markovian semigroup on $L^{2}(\ambient,\meas)$, with non-positive self-adjoint generator $ ( \gen , \allowbreak\Dom ( \gen )) $ such that $\Ker(\gen)=\{0\}$. Then there exists a jointly continuous function $p:=p_{t}(x,y):(0,\infty)\times\ambient\times\ambient\to(0,\infty)$ such that for every $t\in(0,\infty)$ and every $f\in L^{2}(\ambient,\meas)$,
    \begin{equation}\label{e.heatkernel}
      P_{t}f(x)=\int_{\ambient}p_{t}(x,y)f(y)\dif\meas(y),\ \meas\text{-a.e. }x\in\ambient.
    \end{equation}
    There exist constants $C_{1}\in(1,\infty)$ and $c_{2},c_{3}\in(0,\infty)$ such that, for all $(t,x,y)\in (0,\infty)\times\ambient\times\ambient$, we have
    \begin{equation}\label{e.HKE}
      \frac{C_{1}^{-1}}{t^{\dimf/\dimw}} \exp\left(-c_{3}\left(\frac{\metric(x,y)^{\dimw}}{t}\right)^{\frac{1}{\dimw-1}}\right)\leq p_{t}(x,y)\leq \frac{C_{1}}{t^{\dimf/\dimw}} \exp\left(-c_{2}\left(\frac{\metric(x,y)^{\dimw}}{t}\right)^{\frac{1}{\dimw-1}}\right).
    \end{equation}
    \item\label{it.ddt} For every $(x,y)\in \ambient\times\ambient$, the function $t\mapsto p_{t}(x,y)$ is real-analytic on $(0,\infty)$. The function $(t,x,y)\mapsto \frac{\dif}{\dif t}p_{t}(x,y)$ is jointly measurable on $(0,\infty)\times \ambient\times\ambient$, and there exists $C_{2}\in(1,\infty)$ such that for all $(t,x,y)\in (0,\infty)\times\ambient\times\ambient$,
    \begin{equation}\label{e.ddt}
      \abs{\frac{\dif}{\dif t}p_{t}(x,y)}\leq \frac{C_{2}}{t^{1+\frac{\dimf}{\dimw} }} \exp\left(-c_{2}\left(\frac{\metric(x,y)^{\dimw}}{t}\right)^{\frac{1}{\dimw-1}}\right).
    \end{equation}
    \item\label{it.deri<2} For $g\in L^{2}(\ambient,\meas)$ and $t\in(0,\infty)$, one has $P_{t}g\in\Dom(-\gen)\subset\domain$. Moreover,
    \begin{equation}\label{e.kernel1}
      (-\gen P_{t}g)(y) =-\int_{\ambient}\frac{\dif}{\dif t}p_{t}(x,y)g(x) \dif\meas(x) \  \text{ for $\meas$-a.e. }y\in\ambient.
    \end{equation}
    This integral converges absolutely.
  \end{enumerate}
\end{theorem}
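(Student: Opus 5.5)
The plan is to assemble Theorem~\ref{t.dirichlet} from the compact-case results of \cite{BC23,BC24} and standard Dirichlet form theory, transferring them to $\ambient$ through the exhaustion $\ambient=\bigcup_{k}3^{k}\vicsek$ and self-similarity.

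\textbf{Orientation independence and item \ref{it.consDF}.} For two orientations $\mathfrak{O},\mathfrak{O}'$, the local arc charts agree up to reversal on each arm. Hence $\wgrad_{\mathfrak{O}}u=\pm\wgrad_{\mathfrak{O}'}u$ $\medm$-a.e., the sign being fixed on each arm. The products $\wgrad f\,\wgrad g$ therefore coincide, and by \cite[Proposition~2.6]{BCC26} the domain does not change either.
\begin{itemize}
\item \emph{Closedness.} If $u_n$ is $\form_1$-Cauchy, then $\wgrad u_n$ converges in $L^2(\skeleton,\medm)$. Along each arc the $u_n$ converge locally uniformly, since $|u(x)-u(y)|\le d(x,y)^{1/2}\|\wgrad u\|_{L^2}$ and $L^2(\meas)$ convergence fixes the constants. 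The limit is then absolutely continuous with the right gradient.
\item \emph{Markov property.} Normal contractions act arc-wise on absolutely continuous functions and satisfy $|\wgrad(\phi\circ u)|\le|\wgrad u|$.
\item \emph{Strong locality.} If $u$ is constant on a neighbourhood of $\supp v$, then $\wgrad u=0$ $\medm$-a.e. there.
\item \emph{Regularity.} $\core\subset\domain\cap C_c(\ambient)$. It is uniformly dense in $C_c$ by piecewise-affine interpolation on $V_n$ and Stone--Weierstrass, and $\form_1$-dense by approximating gradients arm by arm, using that the truncation pieces have small energy at large scales.
\item \emph{Conservativeness.} Take cutoffs $\varphi_R$ equal to $1$ on $3^k\vicsek$ and decaying linearly along the arms leaving it, so that $\form(\varphi_R)\to0$. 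Then apply \cite[Theorem~1.6.3]{FOT11}. Alternatively, it follows from the heat kernel bounds through volume doubling and the sub-Gaussian tail.
\end{itemize}

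\textbf{Item \ref{it.HKE}.} The key input is the resistance estimate. On a real tree the effective resistance between $x$ and $y$ equals $\metric(x,y)$, because a point of the tree cut out by the geodesic carries no energy and the energy reduces to $\int|\wgrad u|^2$ along $[x,y]$. Combined with Ahlfors regularity \eqref{e.ahlfors}, this makes the space a resistance form with $R\asymp d$ and $V\asymp r^{\dimf}$. Kumagai's / Barlow--Coulhon--Kumagai theory of resistance forms then gives two-sided sub-Gaussian estimates with $\dimw=\dimf+1$ (see \cite[Section~2]{BC24}). The same framework provides joint continuity of $p_t$.

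$\Ker(\gen)=\{0\}$ follows because $\gen u=0$ forces $\form(u,u)=0$. Then $u$ is constant, and since $\meas(\ambient)=\infty$ the only constant in $L^2$ is $0$.

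\textbf{Items \ref{it.ddt}--\ref{it.deri<2}.} Analyticity in $t$ comes from the analyticity of the symmetric semigroup on $L^2$. Writing $p_t(x,y)=\langle p_{t/2}(x,\cdot),p_{t/2}(y,\cdot)\rangle$ extends it to a complex sector.

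The time-derivative bound \eqref{e.ddt} is obtained from Cauchy's integral formula on a disc of radius $t/2$. This requires complex-time Gaussian-type bounds, for which I would use the Davies / Grigor'yan argument: combine the upper bound in \eqref{e.HKE} with the $L^2$ analyticity to get $|p_z(x,y)|\lesssim (\Re z)^{-\dimf/\dimw}\exp(-c(d^{\dimw}/\Re z)^{1/(\dimw-1)})$ in a sector. \textbf{This is the main obstacle}: the exponential factor deteriorates to a slightly smaller $c_2$, which the statement absorbs by renaming constants.

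Given \eqref{e.ddt}, the kernel is integrable in $x$ uniformly in $y$ by Ahlfors regularity. Dominated convergence then identifies $\gen P_tg=\frac{\dif}{\dif t}P_tg$ with the integral \eqref{e.kernel1}, which converges absolutely by Schur's test. Finally, $P_tg\in\Dom(\gen)$ holds by the spectral theorem.
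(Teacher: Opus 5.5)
Your proposal is correct in outline, but it follows a genuinely different and much more self-contained route than the paper. The paper's proof consists entirely of citations:
\begin{itemize}
\item orientation independence from \cite[Proposition~2.15]{BCC26};
\item the Dirichlet form properties from \cite[Theorem~3.1]{BC24}, \cite[Theorem~7.14]{Bar98} and \cite[Theorem~2.7]{FHK94};
\item conservativeness from \cite[Theorem~7.4 and Lemma~7.3-(b)]{GT12} and \cite[Theorem~3.2]{Lie15};
\item the estimate \eqref{e.HKE} from \cite[Theorem~3.1]{BC24}, \cite[Theorem~8.18]{Bar98} and \cite[Theorem~1]{FHK94};
\item items~(3)--(4) from \cite[Lemmas~3.7 and~3.8]{BCC26}.
\end{itemize}

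You instead argue most of these steps directly:
\begin{itemize}
\item You verify the Dirichlet form axioms by hand. The $\form_{1}$-density of $\core$ you sketch is what the paper itself later proves in Corollary~\ref{c.coredense}.
\item You get conservativeness from recurrence, using energy-decaying cutoffs of the type $\chi_{k}$ in Lemma~\ref{l.densg}. Note that \cite[Theorem~1.6.3]{FOT11} only gives recurrence; you also need the implication ``recurrent $\Rightarrow$ conservative'' from the same section of \cite{FOT11}.
\item You derive \eqref{e.HKE} intrinsically from the tree identity $R=\metric$ and Ahlfors regularity, via resistance-form heat kernel theory.
\item You prove \eqref{e.ddt} by a Davies-type Phragm\'en--Lindel\"of extension to a complex sector followed by Cauchy's formula, and \eqref{e.kernel1} by dominated convergence.
\end{itemize}

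Your route buys transparency and generality. It uses only the tree structure and the volume growth, so it applies to any real tree with $R=\metric$ and polynomial volume growth. It also explains $\dimw=\dimf+1$ as the time scale $V(r)R(r)\asymp r^{\dimf}\cdot r$, and it makes conservativeness elementary.

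The cost is that the heavy lifting moves elsewhere. You must check the hypotheses of the resistance-form theorem you invoke. In particular, the $L^{2}(\meas)$ Dirichlet form generated by the resistance form must coincide with $(\form,\sobolev{2}(\ambient))$; this is exactly where your density argument is needed. The chain condition for the lower bound is automatic, since $\metric$ is geodesic. The complex-time sub-Gaussian bound with exponent $1/(\dimw-1)=1/\dimf\in(0,1)$ is a genuine lemma, which the paper outsources to \cite{BCC26}. As you note, the resulting loss in $c_{2}$ is harmless, because shrinking $c_{2}$ keeps the upper bound in \eqref{e.HKE} valid.
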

\begin{proof}
  By \cite[Proposition 2.15]{BCC26}, the domain $\domain$ and the form $\form$ are independent of the orientation.
  \begin{enumerate}[label=\textup{(\arabic*)}, align=right, leftmargin=*, topsep=5pt, parsep=0pt, itemsep=2pt]
    \item[\ref{it.consDF}] That $(\form,\domain)$ is a strongly local regular symmetric Dirichlet form follows from \cite[Theorem 3.1]{BC24}, \cite[Theorem 7.14]{Bar98} and \cite[Theorem 2.7]{FHK94}. By \cite[Theorem 7.4 and Lemma 7.3-(b)]{GT12} and \cite[Theorem 3.2]{Lie15}, we know that $(\form,\domain)$ is conservative.
    \item[\ref{it.HKE}] The existence and joint continuity of the heat kernel and the two-sided sub-Gaussian estimate \eqref{e.HKE} are recorded in \cite[Theorem~3.1]{BC24}, see also \cite[Theorem~8.18]{Bar98} and \cite[Theorem~1]{FHK94}.
    \item[\ref{it.ddt}] This is proved in \cite[Lemma 3.7]{BCC26}.
    \item[\ref{it.deri<2}] This is proved in \cite[Lemma 3.8]{BCC26}.
  \end{enumerate}
\end{proof}

Throughout this work, we call the quintuple $(\ambient,\metric,\meas,\form,\domain)$ the \emph{unbounded Vicsek metric measure Dirichlet space}. By \cite[Theorem 1.3.1]{FOT11}, there exists a unique non-positive self-adjoint operator $(\gen,\Dom(\gen))$ on $L^{2}(\ambient,\meas)$ corresponding to $(\form,\domain)$. Let $\proj$ be the spectral measure on the Borel $\sigma$-algebra of $\bR$ associated with $(-\gen,\Dom(-\gen))$, so that $\{\proj_{\lambda}:=\proj(( -\infty,\lambda])\}_{\lambda\in[0,\infty)}$ is the spectral family of projection operators associated with $(-\gen,\Dom(-\gen))$. For every Borel-measurable function $f:[0,\infty)\to[-\infty,\infty]$ such that $\proj(\{\abs{f}=\infty\})=0$, the spectral calculus defines a linear operator $(f(-\gen),\Dom(f(-\gen)))$ by
\begin{equation}\label{e.spec}
  \begin{dcases}
    \Dom(f(-\gen)):=\Sett{u\in L^{2}(\ambient,\meas)}{\int_{[0,\infty)}\abs{f(\lambda)}^{2}\dif\langle \proj_{\lambda}u,u\rangle<\infty};\\
    \langle f(-\gen)u,v\rangle_{L^{2}(\ambient,\meas)} :=\int_{[0,\infty)}f(\lambda)\dif\langle \proj_{\lambda}u,v\rangle, \ u\in \Dom(f(-\gen)) \text{ and } v\in L^{2}(\ambient,\meas).
  \end{dcases}
\end{equation}
Here and throughout the remainder of the paper, all inner products involving the projection operators $\{\proj_{\lambda}\}_{\lambda\in[0,\infty)}$ are taken in the Hilbert space $L^{2}(\ambient,\meas)$. In the rest of this paper, for any $\theta\in(0,\infty)$ and $t\in(0,\infty)$, we denote the bounded linear operator on $L^{2}(\ambient,\meas)$ obtained from \eqref{e.spec} by taking $f(\lambda)=\lambda^{\theta}\exp(-t\lambda)$, $\lambda\in[0,\infty)$, by $(-\gen)^{\theta}P_{t}$.

We now specify a choice of orientation. Let $V_{*}:=\bigcup_{n\in\bZ}V_{n}$. For every $v\in V_{*}\cap \skeleton $, choose a nondegenerate horizontal or vertical closed segment $I_{v}\subset\skeleton$ such that $v$ belongs to the relative interior of $ I _{v}$. Such a segment exists by the definition of $\skeleton$. Set
\begin{equation}
  \rL := \allowbreak\{ I _{v} : v \in V _{*} \cap \skeleton \} \cup \collectA.
\end{equation}
The collection $\rL $ is countable. Enumerate it as $\rL=\{L_{j}:j\in\bN\}$. For every horizontal $L_{j}$, denote its left and right endpoints by $L_{j}^{-}$ and $L_{j}^{+}$, respectively; for every vertical $L_{j}$, denote its lower and upper endpoints by $L_{j}^{-}$ and $L_{j}^{+}$, respectively. We define
\begin{equation}
  \gamma_{j}(t):= L_{j}^{-}+\frac{t}{\medm(L_{j})}(L_{j}^{+}-L_{j}^{-}), \qquad t\in[0,\medm(L_{j})].
\end{equation}
Moreover, $\skeleton=\bigcup_{j\in\bN}\gamma_{j}((0,\medm(L_{j})))$. In the rest of this paper, we fix the orientation $\mathfrak {O}:=(\gamma_{j})_{j\in\bN}$. If two of the local arc charts overlap in a segment, then they are both horizontal or both vertical and induce the same direction on their intersection. Hence, this chosen orientation is constant on every arm and is compatible with refinement.

For every $J\in\collectA$, let $J^{-}$ and $J^{+}$ be its endpoints ordered from left to right if $J$ is horizontal and from bottom to top if $J$ is vertical. Therefore, for every $u\in\abscon(\ambient,\metric)$,
\begin{equation}\label{e.oriented-fundamental}
  \int_{J}\wgrad_{\mathfrak {O}}u\dif\medm = u(J^{+})-u(J^{-}).
\end{equation}
\begin{figure}
	\centering
	\begin{tikzpicture}[
    x=1.45cm, y=1.45cm,
    line cap=round, line join=round,
    skeleton/.style={draw=black!80,line width=.28pt},
    horizontal/.style={draw=orientred,line width=.85pt},
    vertical/.style={draw=orientblue,line width=.85pt},
    direction/.style={>={Stealth[length=1.65mm,width=1.1mm]},->},
    every node/.style={font=\normalsize,inner sep=3pt}
]
    \definecolor{orientred}{RGB}{205,38,44}
    \definecolor{orientblue}{RGB}{37,93,165}

    \foreach \ux/\uy in {0/0,2/0,-2/0,0/2,0/-2} {
        \foreach \vx/\vy in {0/0,2/0,-2/0,0/2,0/-2} {
            \foreach \wx/\wy in {0/0,2/0,-2/0,0/2,0/-2} {
                \pgfmathsetmacro{\px}{2*(\ux/3+\vx/9+\wx/27)}
                \pgfmathsetmacro{\py}{2*(\uy/3+\vy/9+\wy/27)}
                \draw[skeleton]
                    ({\px-2/27},\py) -- ({\px+2/27},\py)
                    (\px,{\py-2/27}) -- (\px,{\py+2/27});
            }
        }
    }

    \draw[horizontal] (-2,0) -- (2,0);
    \foreach \xx in {-1.70,-.75,.75,1.70} {
        \draw[horizontal,direction] ({\xx-.18},0) -- (\xx,0);
    }
    \draw[vertical] (0,-2) -- (0,2);
    \foreach \yy in {-1.70,-.75,.75,1.70} {
        \draw[vertical,direction] (0,{\yy-.18}) -- (0,\yy);
    }
    \foreach \xx/\yy in {-2/0,2/0,0/-2,0/2} {
        \fill[black] (\xx,\yy) circle[radius=1.2pt];
    }
    \fill[black] (0,0) circle[radius=1.6pt];
    \node at (.30,.32) {$v$};
    \node[text=orientred,anchor=east] at (-2.03,0) {$L_j^-$};
    \node[text=orientred,anchor=west] at (2.03,0) {$L_j^+$};
    \node[text=orientblue,anchor=north] at (0,-2.04) {$L_k^-$};
    \node[text=orientblue,anchor=south] at (0,2.04) {$L_k^+$};
\end{tikzpicture}
\label{f.orin}
\caption{Horizontal and vertical segments}
\end{figure}
\begin{lemma}\label{l.zerot}
  Let $p\in[1,\infty)$, $Q\in\collectQ$, and $b\in \sobolev{p}_{0}(Q)$. We have
  \begin{equation}
  	\sup_{Q}\abs{b} \leq(2\cellsize{Q})^{1-1/p}\|\wgrad b\|_{L^{p}(\skeleton\cap Q,\medm)},\label{e.zerin}
  \end{equation}
  and \begin{equation}
  	   \|b\|_{L^{p}(Q,\meas)} \leq2^{1-1/p}\cellsize{Q}^{\dimw\critic_{p}}\|\wgrad b\|_{L^{p}(\skeleton\cap Q,\medm)}. \label{e.zerlp}
  \end{equation}
\end{lemma}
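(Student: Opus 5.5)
The plan is to prove \eqref{e.zerin} by integrating along a geodesic from a point of $\cellatt Q$ to an arbitrary $x\in Q$, and then to obtain \eqref{e.zerlp} by integrating the resulting sup bound over $Q$ against $\meas$.

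\textbf{Step 1: the sup bound.} Fix $x\in Q$. Choose a point $q\in\cellatt Q\subset\cellvert{Q}$, where $b(q)=0$. (If $\cellatt Q$ happened to be empty, which does not occur for cells of the unbounded set, one would instead use compact support.) Since $Q$ is convex in the real tree, the geodesic $[q,x]$ lies in $Q$. Its length is at most $\diam(Q)$. With the normalisation $\cellsize{Q}=\diam(Q)/\diam(\vicsek)$ and the intrinsic diameter $\diam(\vicsek)=2$ (from $q_1$ to $q_3$ through $q_0$), we get $d(q,x)\le 2\cellsize{Q}$.

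Parametrise $[q,x]$ by a local arc chart $\gamma$, splitting into finitely many horizontal and vertical pieces if needed. Then $b\circ\gamma$ is absolutely continuous and $|(b\circ\gamma)'|=|\wgrad b|\circ\gamma$ a.e., up to the orientation sign. The segment $(q,x)$ lies in $\skeleton\cap Q$ up to a $\medm$-null set of vertices, and $\medm$ restricted to it is arc length. Hence
\begin{equation}
|b(x)|=|b(x)-b(q)|\le\int_{[q,x]}|\wgrad b|\dif\medm\le \metric(q,x)^{1-1/p}\|\wgrad b\|_{L^{p}(\skeleton\cap Q,\medm)},
\end{equation}
by H\"older's inequality. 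Using $\metric(q,x)\le 2\cellsize{Q}$ gives \eqref{e.zerin}.

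\textbf{Step 2: the $L^p$ bound.} We have $\|b\|_{L^p(Q,\meas)}\le \meas(Q)^{1/p}\sup_Q|b|$. From Definition~\ref{d.cells}, $\meas(Q)=5^{k-|w|}=\cellsize{Q}^{\dimf}$. Combining with Step 1 gives the constant
\begin{equation}
2^{1-1/p}\,\cellsize{Q}^{\dimf/p+1-1/p}.
\end{equation}
The exponent simplifies as
\begin{equation}
\frac{\dimf}{p}+1-\frac1p=\frac{\dimf+p-1}{p}=\dimw\critic_p,
\end{equation}
using \eqref{e.critp}. This is exactly \eqref{e.zerlp}.

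\textbf{Main obstacle.} The only delicate point is the metric normalisation in Step 1: justifying that the intrinsic diameter of $\vicsek$ is $2$, so that every point of $Q$ lies within distance $2\cellsize{Q}$ of a zero of $b$. This needs the tree structure together with the bi-Lipschitz remark, and one could even take the nearest vertex of $\cellatt Q$ to improve the constant. A second, milder point is checking that the geodesic lies in $\skeleton$ up to $\medm$-null sets. This holds because geodesics between points of the tree are contained in the closure of $\skeleton$, and any non-skeleton points on them are endpoints, hence countable.
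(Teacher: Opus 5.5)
Your proposal is correct and is essentially the paper's proof: integrate $\wgrad b$ along the geodesic $[a,x]\subset Q$ starting from a zero $a\in\cellatt Q$, apply H\"older's inequality with $\metric(a,x)\le\diam(Q)=2\cellsize{Q}$, and then multiply by $\meas(Q)^{1/p}=\cellsize{Q}^{\dimf/p}$. One small correction: when $x$ is not a skeleton point, $[q,x]$ can turn infinitely often, so the splitting needs countably many horizontal and vertical pieces rather than finitely many; this costs nothing, because the turning points form a countable, $\medm$-null set.
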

\begin{proof}
  For $x\in Q$ and $a\in \cellatt Q$, we have $b(a)=0$ by the definition of $\sobolev{p}_{0}(Q)$. The arc $[a,x]$ lies in $Q$ by Lemma~\ref{l.projt}-\ref{it.proj1}. If $x\in Q\setminus\{a\}$, then by the fundamental theorem of calculus,
    \begin{align}
      |b(x)|&=|b(x)-b(a)| \allowbreak \allowbreak\leq \int_{(a,x)} | \wgrad b | \dif \medm \\
      &\leq \metric(x,a)^{1-1/p} \left(\int_{[a,x]}|\wgrad b|^{p}\dif\medm\right)^{1/p}\leq(2\cellsize{Q})^{1-1/p}\|\wgrad b\|_{L^{p}(\skeleton\cap Q,\medm)};
    \end{align}
  this proves \eqref{e.zerin}. Therefore,
    \begin{align}
      \|b\|_{L^{p}(\ambient,\meas)} &\leq\meas(Q)^{1/p}\sup_{Q}\abs{b}\overset{\eqref{e.zerin}}{\leq}\cellsize{Q}^{\dimf/p}(2\cellsize{Q})^{1-1/p} \|\wgrad b\|_{L^{p}(\skeleton\cap Q,\medm)}\\
      &=2^{1-1/p}\cellsize{Q}^{(\dimf+p-1)/p} \|\wgrad b\|_{L^{p}(\skeleton\cap Q,\medm)}=2^{1-1/p}\cellsize{Q}^{\dimw\critic_{p}}\|\wgrad b\|_{L^{p}(\skeleton\cap Q,\medm)}.
    \end{align}
  This proves \eqref{e.zerlp}.
\end{proof}
\begin{lemma}\label{l.semes}
  Let $p\in[1,\infty]$.
  \begin{enumerate}[label=\textup{(\arabic*)}, align=right, leftmargin=*, topsep=5pt, parsep=0pt, itemsep=2pt]
    \item\label{it.analy} For $\theta\in(0,1]$, there exists $C_{p,\theta}\in(0,\infty)$ such that, for all $t\in(0,\infty)$, we have
    \begin{equation}\label{e.analy}
      \|(-\gen)^{\theta} P_{t} h\|_{L^{p}(\ambient,\meas)} \leq C_{p,\theta}t^{-\theta}\|h\|_{L^{p}(\ambient,\meas)},\ \text{ for all }h\in L^{p}(\ambient,\meas)\cap L^{2}(\ambient,\meas).
    \end{equation}
    If $p\in[1,\infty)$, then $(-\gen)^{\theta} P_{t}$ extends uniquely to a bounded operator on $L^{p}(\ambient,\meas)$.
    \item\label{it.ultra} There exists $C_{p,1}\in(0,\infty)$ such that, for all $t\in(0,\infty)$,
    \begin{equation}\label{e.ultra}
      \|P_{t} h\|_{L^{p}(\ambient,\meas)}\leq C_{p,1}t^{-\dimf(1-1/p)/\dimw}\|h\|_{L^{1}(\ambient,\meas)},\ \text{ for all }h\in L^{1}(\ambient,\meas)\cap L^{2}(\ambient,\meas).
    \end{equation}
    \item\label{it.fracker} For every $\theta\in(0,1)$ and $t\in(0,\infty)$, the operator $(-\gen)^{\theta} P_{t}$ admits a measurable kernel $q_{t}^{(\theta)}:\ambient\times\ambient\to\bR$ such that
    \begin{equation}\label{e.fracker}
      \left|q_{t}^{(\theta)}(x,y)\right| \leq \frac{C_{\theta}}{\left(t^{1/\dimw}+\metric(x,y)\right)^{\dimf+\theta \dimw}}, \ \text {for all } x,y\in\ambient\ \text{and all }t\in(0,\infty),
    \end{equation}
    for some constant $C_{\theta}\in(0,\infty)$ depending only on $\theta$. In particular,
    \begin{equation}\label{e.frackeroff}
      \left|q_{t}^{(\theta)}(x,y)\right| \leq C_{\theta} \metric(x,y)^{-\dimf-\theta \dimw},\ \text{ if $\metric(x,y)^{\dimw}\geq t$}.
    \end{equation}
    Moreover, for every $h\in L^{1}(\ambient,\meas)\cap L^{2}(\ambient,\meas)$,
    \begin{equation}\label{e.fracint}
      (-\gen)^{\theta} P_{t} h(x) = \int_{\ambient} q_{t}^{(\theta)}(x,y)h(y)\dif\meas(y),\ \text{ for $\meas$-a.e.\ $x\in \ambient$}.
    \end{equation}
  \end{enumerate}
\end{lemma}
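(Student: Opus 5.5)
The approach is to derive all three parts from the heat kernel bounds in Theorem~\ref{t.dirichlet}, using the subordination representation of fractional powers. The plan is to prove \ref{it.fracker} first and then deduce \ref{it.analy} from it, handling $\theta=1$ separately. For $\theta\in(0,1)$ and $\lambda\geq0$ we have
\begin{equation}
\lambda^{\theta}e^{-t\lambda}=\frac{1}{\Gamma(1-\theta)}\int_{0}^{\infty}s^{-\theta}\,\lambda e^{-(t+s)\lambda}\dif s .
\end{equation}
By the spectral calculus, this gives $(-\gen)^{\theta}P_{t}=\Gamma(1-\theta)^{-1}\int_{0}^{\infty}s^{-\theta}(-\gen)P_{t+s}\dif s$ strongly on $L^{2}(\ambient,\meas)$. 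Combining this with \eqref{e.kernel1}, we define the kernel
\begin{equation}
q_{t}^{(\theta)}(x,y):=-\frac{1}{\Gamma(1-\theta)}\int_{0}^{\infty}s^{-\theta}\frac{\dif}{\dif r}p_{r}(x,y)\Big|_{r=t+s}\dif s .
\end{equation}
Joint measurability of this kernel follows from Theorem~\ref{t.dirichlet}-\ref{it.ddt}.

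\textbf{Step 1: the kernel bound \eqref{e.fracker}.} Write $r=t+s$ and $\rho=\metric(x,y)$, and apply \eqref{e.ddt}. This gives
\begin{equation}
|q_{t}^{(\theta)}(x,y)|\lesssim\int_{0}^{\infty}s^{-\theta}(t+s)^{-1-\dimf/\dimw}\exp\bigl(-c_{2}(\rho^{\dimw}/(t+s))^{1/(\dimw-1)}\bigr)\dif s .
\end{equation}
Split the integral according to whether $s\leq t$ or $s>t$.

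On $s\leq t$, we have $t+s\asymp t$. The integral is then bounded by $t^{1-\theta}\,t^{-1-\dimf/\dimw}e^{-c(\rho^{\dimw}/t)^{1/(\dimw-1)}}$, which is at most $C\,t^{-\theta-\dimf/\dimw}\bigl(1+\rho/t^{1/\dimw}\bigr)^{-N}$ for any $N$.

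On $s>t$, we have $t+s\asymp s$. We bound $\int_{t}^{\infty}s^{-\theta-1-\dimf/\dimw}e^{-c(\rho^{\dimw}/s)^{1/(\dimw-1)}}\dif s$ by the full integral over $(0,\infty)$. The substitution $s=\rho^{\dimw}u$ shows this is $\lesssim\rho^{-\dimf-\theta\dimw}$; the $u$-integral converges at $0$ because of the stretched exponential and at $\infty$ because $\theta+\dimf/\dimw>0$. The same integral is also bounded by $\int_{t}^{\infty}s^{-\theta-1-\dimf/\dimw}\dif s\lesssim t^{-\theta-\dimf/\dimw}$.

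Taking the minimum of these two bounds yields $|q_{t}^{(\theta)}(x,y)|\lesssim(t^{1/\dimw}+\rho)^{-\dimf-\theta\dimw}$. The bound \eqref{e.frackeroff} follows immediately.

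\textbf{Step 2: the representation \eqref{e.fracint}.} Let $h\in L^{1}\cap L^{2}$. Pair $(-\gen)^{\theta}P_{t}h$ with $v\in L^{2}$ and use the subordination formula. We then use \eqref{e.kernel1} for $(-\gen)P_{t+s}h$ and exchange the order of integration by Fubini. Fubini applies because the Step~1 integrand, integrated against $|h(y)||v(x)|$, is finite: for fixed $t$ the kernel is bounded by $Ct^{-\theta-\dimf/\dimw}$ and, by Ahlfors regularity \eqref{e.ahlfors}, it is integrable in each variable.

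\textbf{Step 3: parts \ref{it.analy} and \ref{it.ultra}.} For $\theta\in(0,1)$, Step~1 and \eqref{e.ahlfors} give
\begin{equation}
\sup_{x}\int_{\ambient}(t^{1/\dimw}+\metric(x,y))^{-\dimf-\theta\dimw}\dif\meas(y)\lesssim t^{-\theta},
\end{equation}
by summing over dyadic annuli. The same bound holds with $x$ and $y$ interchanged. Schur's test then yields \eqref{e.analy} for every $p\in[1,\infty]$. For $\theta=1$, the same Schur argument applies with the kernel $\frac{\dif}{\dif t}p_{t}$, using \eqref{e.ddt} and the integrated bound $\int t^{-1-\dimf/\dimw}e^{-c(\cdot)}\dif\meas\lesssim t^{-1}$.

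For \ref{it.ultra}, the bound \eqref{e.HKE} gives $\sup_{x,y}p_{t}(x,y)\lesssim t^{-\dimf/\dimw}$ and $\int_{\ambient} p_{t}(x,y)\dif\meas(y)\lesssim1$. Hence $\|P_{t}\|_{1\to\infty}\lesssim t^{-\dimf/\dimw}$ and $\|P_{t}\|_{1\to1}\lesssim1$, and Riesz--Thorin interpolation gives the exponent $\dimf(1-1/p)/\dimw$.

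The unique extension to $L^{p}$ for $p<\infty$ follows from the density of $L^{p}\cap L^{2}$ in $L^{p}$. The main obstacle is the careful two-regime estimate in Step~1, together with justifying the Fubini exchange in Step~2; everything else is routine.
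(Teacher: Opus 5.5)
Your argument is correct, but it reverses the paper's order. The paper proves \ref{it.analy} for $\theta\in(0,1)$ without any kernel estimate. From \eqref{e.ddt} it gets $\|(-\gen)P_{s}\|_{L^{p}\to L^{p}}\lesssim s^{-1}$; your $\theta=1$ Schur step and your part \ref{it.ultra} essentially coincide with this. It then inserts that bound into the subordination identity $\langle(-\gen)^{\theta}P_{t}h,g\rangle=\Gamma(1-\theta)^{-1}\int_{0}^{\infty}s^{-\theta}\langle(-\gen)P_{t+s}h,g\rangle\dif s$, uses $\int_{0}^{\infty}s^{-\theta}(t+s)^{-1}\dif s=\Gamma(\theta)\Gamma(1-\theta)t^{-\theta}$, and concludes by duality against $g\in L^{p'}\cap L^{2}$. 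The kernel $q_{t}^{(\theta)}$ and \eqref{e.fracker} come only afterwards, via Lemma~\ref{l.integral-estimate}, which splits according to $\metric(x,y)^{\dimw}\lessgtr t$ rather than $s\lessgtr t$. They are needed for the off-diagonal bound in Lemma~\ref{l.varia}.

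The paper's route is soft: it works for any semigroup with $\|(-\gen)P_{s}\|_{p\to p}\lesssim s^{-1}$, and it treats $h\in L^{p}\cap L^{2}$ directly. Yours obtains \ref{it.analy} and \ref{it.fracker} from a single computation. Your minimum-of-two-bounds estimate in Step~1 is a tidy substitute for Lemma~\ref{l.integral-estimate}.

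You must add one thing. Schur's test bounds $(-\gen)^{\theta}P_{t}h$ only when this function is given by the kernel, but Step~2 establishes \eqref{e.fracint} only for $h\in L^{1}\cap L^{2}$. For $h\in (L^{p}\cap L^{2})\setminus L^{1}$ (e.g.\ $p=\infty$) you need the representation on all of $L^{2}$. This follows from your own Step~2, in either of two ways:
\begin{itemize}
\item Step~1 actually bounds the $s$-integral with $\frac{\dif}{\dif r}p_{r}$ replaced by its absolute value. Together with the Schur bounds, the triple integral in your Fubini exchange is then $\lesssim t^{-\theta}\|h\|_{L^{2}}\|v\|_{L^{2}}$. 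Since \eqref{e.kernel1} holds for every $g\in L^{2}$, the exchange is valid for all $h\in L^{2}$.
\item Alternatively, extend from $L^{1}\cap L^{2}$ to $L^{2}$ by density, since both the kernel operator and $(-\gen)^{\theta}P_{t}$ are bounded on $L^{2}$.
\end{itemize}
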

\begin{proof}
  \begin{enumerate}[label=\textup{(\arabic*)}, align=right, leftmargin=*, topsep=5pt, parsep=0pt, itemsep=2pt]
    \item[\ref{it.analy}] By \eqref{e.ddt}, there exists $C\in(0,\infty)$ such that
    \begin{equation}\label{e.Lpt0}
      \sup_{y\in\ambient}\int_{\ambient}\abs{\frac{\dif}{\dif t} p_{t}(x,y)}\dif\meas(x)\leq \frac{C}{t},
    \end{equation}
    which, together with \eqref{e.kernel1}, gives
    \begin{equation}
      \norm{(-\gen) P_{t} h}_{L^{1}(\ambient,\meas)}\leq \frac{C}{t}\norm{h}_{L^{1}(\ambient,\meas)},\ \text{ for all }h\in L^{1}(\ambient,\meas)\cap L^{2}(\ambient,\meas),
    \end{equation}
    and
    \begin{equation}
      \norm{(-\gen) P_{t} h}_{L^{\infty}(\ambient,\meas)}\leq \frac{C}{t}\norm{h}_{L^{\infty}(\ambient,\meas)},\ \text{ for all }h\in L^{\infty}(\ambient,\meas)\cap L^{2}(\ambient,\meas).
    \end{equation}
   By interpolation, we have for every $p\in[1,\infty]$,
    \begin{equation}\label{e.Lpt1}
      \norm{(-\gen) P_{t} h}_{L^{p}(\ambient,\meas)}\leq \frac{C}{t}\norm{h}_{L^{p}(\ambient,\meas)},\ \text{ for all }h\in L^{p}(\ambient,\meas)\cap L^{2}(\ambient,\meas).
    \end{equation}
    Therefore, since $L^{p}(\ambient,\meas)\cap L^{2}(\ambient,\meas)$ is dense in $L^{p}(\ambient,\meas)$ for $p\in[1,\infty)$, we know that $(-\gen) P_{t}$ uniquely extends to a bounded linear operator on $L^{p}(\ambient,\meas)$ when $p\in[1,\infty)$, and this proves the assertion for $\theta=1$.

    Let $\theta\in(0,1)$. For $h,g\in L^{2}(\ambient,\meas)$, the spectral theorem and Fubini's theorem give
      \begin{align}
        \int_{[0,\infty)}\int_{0}^{\infty} s^{-\theta}\lambda e^{-(t+s)\lambda}\dif s \bigl|\dif\langle\proj_{\lambda}h,g\rangle\bigr| &=\Gamma(1-\theta)\int_{[0,\infty)}\lambda^{\theta}e^{-t\lambda} \bigl|\dif\langle\proj_{\lambda}h,g\rangle\bigr|\\
        &\leq\Gamma(1-\theta)\left(\theta e^{-1}t^{-1}\right)^{\theta} \|h\|_{L^{2}}\|g\|_{L^{2}}<\infty,
      \end{align}
    and therefore,
    \begin{align}
      \langle (-\gen)^{\theta} P_{t} h,g\rangle_{L^{2}(\ambient,\meas)}&=\int_{[0,\infty)}\lambda^{\theta}\exp(-t\lambda)\dif\langle\proj_{\lambda}h,g\rangle\\
      &=\int_{[0,\infty)}\frac{1}{\Gamma(1-\theta)} \left(\int_{0}^{\infty} s^{-\theta}\lambda e^{-(t+s)\lambda}\dif s\right)\dif\langle\proj_{\lambda}h,g\rangle\\
      &=\frac{1}{\Gamma(1-\theta)} \int_{0}^{\infty} s^{-\theta} \langle -\gen P_{t+s}h,g\rangle_{L^{2}(\ambient,\meas)}\dif s.\label{e.Lpt2}
    \end{align}

    Take $h\in L^{p}(\ambient,\meas)\cap L^{2}(\ambient,\meas)$ and $g\in L^{p'}(\ambient,\meas)\cap L^{2}(\ambient,\meas)$, where $1 / p + 1 / p'= 1 $. Then
    \begin{equation}\label{e.Lpt2+}
      \int_{0}^{\infty}\frac{s^{-\theta}}{t+s}\dif s =t^{-\theta}\int_{0}^{1}u^{-\theta}(1-u)^{\theta-1}\dif u =t^{-\theta}\Gamma(1-\theta)\Gamma(\theta)<\infty,\ \text{ for }\theta\in(0,1).
    \end{equation}
    By \eqref{e.Lpt2} and H\"{o}lder's inequality,
      \begin{align}
        \abs{\langle (-\gen)^{\theta} P_{t} h,g\rangle_{L^{2}(\ambient,\meas)}}&\overset{\eqref{e.Lpt1},\eqref{e.Lpt2}}{\lesssim}\frac{1}{\Gamma(1-\theta)} \int_{0}^{\infty} \frac{s^{-\theta}}{t+s}\dif s \norm{h}_{L^{p}(\ambient,\meas)}\norm{g}_{L^{p'}(\ambient,\meas)}\\
        &\overset{\eqref{e.Lpt2+}}{\lesssim} \Gamma(\theta)t^{-\theta}\norm{h}_{L^{p}(\ambient,\meas)}\norm{g}_{L^{p'}(\ambient,\meas)}.
      \end{align}
    This gives \eqref{e.analy} by duality. If $p\in[1,\infty)$, again by the density of $L^{p}(\ambient,\meas)\cap L^{2}(\ambient,\meas)$ in $L^{p}(\ambient,\meas)$, $(-\gen)^{\theta}P_{t}$ can be extended uniquely on $L^{p}(\ambient,\meas)$.
    \item[\ref{it.ultra}] By \eqref{e.HKE}, $\sup_{x,y\in\ambient}p_{t}(x,y)\lesssim t^{-\dimf/\dimw}$, which implies
    \begin{equation}
      \|P_{t}h\|_{L^{\infty}(\ambient,\meas)} \lesssim t^{-\dimf/\dimw}\|h\|_{L^{1}(\ambient,\meas)},\ \text{ for all }h\in L^{1}(\ambient,\meas)\cap L^{2}(\ambient,\meas).
    \end{equation}
    Interpolating with the $L^{1}$-contraction property $\norm{P_{t}h}_{L^{1}(\ambient,\meas)}\leq \|h\|_{L^{1}(\ambient,\meas)}$ gives \eqref{e.ultra}.
    \item[\ref{it.fracker}] We write $k(t,x,y):=\frac{\dif}{\dif t}p_{t}(x,y)$. Then $k$ is jointly measurable by Theorem~\ref{t.dirichlet}-\ref{it.ddt}. Fix $t\in(0,\infty)$ and $\theta\in(0,1)$. Define
    \begin{equation}\label{e.qdef}
      q_{t}^{(\theta)}(x,y) := -\frac{1}{\Gamma(1-\theta)} \int_{0}^{\infty} s^{-\theta}k(t+s,x,y) \dif s.
    \end{equation}
    The integral is absolutely convergent. Indeed, by \eqref{e.ddt} and Lemma \ref{l.integral-estimate},
      \begin{align}
        |q_{t}^{(\theta)}(x,y)| &\leq C_{\theta} \int_{0}^{\infty} \frac{s^{-\theta}}{(t+s)^{1+\dimf/\dimw}} \exp\left( -c\left(\frac{\metric(x,y)^{\dimw}}{t+s}\right)^{\frac{1}{\dimw-1}} \right)\dif s\\
        &\overset{\eqref{e.integral-estimate-metric}}{\leq} \frac{C_{\theta}}{\left(t^{1/\dimw}+\metric(x,y)\right)^{\dimf+\theta \dimw}}.
      \end{align}
    Since $k$ is measurable, $q_{t}^{(\theta)}$ is measurable. The estimate \eqref{e.frackeroff} follows immediately when $\metric(x,y)^{\dimw}\geq t$. By \eqref{e.fracker},  \eqref{e.ahlfors} and a layer-cake decomposition,
      \begin{align}
        &\phantom{\ \leq}\sup_{x\in\ambient}\int_{\ambient}|q_{t}^{(\theta)}(x,y)|\dif\meas(y)\\
         &\leq C_{\theta}\left(Ct^{-\frac{\dimf+\theta\dimw}{\dimw}}t^{\frac{\dimf}{\dimw}} +\sum_{j=1}^{\infty}(2^{j-1}t^{1/\dimw})^{-\dimf-\theta\dimw} C(2^{j}t^{1/\dimw})^{\dimf}\right)\leq C_{\theta}t^{-\theta}.\label{e.qschur-r2}
      \end{align}
    The same estimate holds with $x$ and $y$ interchanged. Thus for $h\in L^{2}(\ambient,\meas)$, by the Cauchy--Schwarz inequality and Fubini's theorem,
      \begin{align}
        &\phantom{\ \leq}\int_{\ambient}\left(\int_{\ambient}|q_{t}^{(\theta)}(x,y)||h(y)|\dif\meas(y)\right)^{2}\dif\meas(x)\\
        &\overset{\eqref{e.qschur-r2}}{\leq} C_{\theta}t^{-\theta}\int_{\ambient}\int_{\ambient} |q_{t}^{(\theta)}(x,y)||h(y)|^{2}\dif\meas(y)\dif\meas(x) \overset{\eqref{e.qschur-r2}}{\leq} C_{\theta}t^{-2\theta}\|h\|_{L^{2}(\ambient,\meas)}^{2}.\label{e.qschur-r2+}
      \end{align}
    In particular, the inner integral converges absolutely for $\meas$-a.e. $x\in\ambient$ and defines an $L^{2}(\ambient,\meas)$ function. For $h \in L ^{1}(\ambient,\meas) \cap L ^{2} (\ambient,\meas)$ and $\phi\in C_{c}(\ambient)$,
      \begin{align}
        &\phantom{\ \leq}\int_{0}^{\infty}\int_{\ambient}\int_{\ambient} s^{-\theta}|k(t+s,x,y)||h(y)||\phi(x)|\dif\meas(y)\dif\meas(x)\dif s\\
        &\overset{\eqref{e.Lpt0}}{\leq}C\|h\|_{L^{1}(\ambient,\meas)}\|\phi\|_{L^{\infty}(\ambient,\meas)} \int_{0}^{\infty}\frac{s^{-\theta}}{t+s}\dif s \\
        &=C\Gamma(1-\theta)\Gamma(\theta)t^{-\theta} \|h\|_{L^{1}(\ambient,\meas)}\|\phi\|_{L^{\infty}(\ambient,\meas)}<\infty.
      \end{align}
    Fubini's theorem therefore gives
      \begin{align}
        &\phantom{\ \leq}\left\langle\int_{\ambient}q_{t}^{(\theta)}(\cdot,y)h(y)\dif\meas(y),\phi\right\rangle_{L^{2}(\ambient,\meas)}\\
        &\overset{\eqref{e.qdef}}{=}-\frac{1}{\Gamma(1-\theta)} \int_{\ambient}\left(\int_{\ambient}\left(\int_{0}^{\infty} s^{-\theta}k(t+s,x,y) \dif s\right) h(y)\dif\meas(y)\right)\phi(x)\dif \meas(x)\\
        &=-\frac{1}{\Gamma(1-\theta)}\int_{0}^{\infty} s^{-\theta}\left(\int_{\ambient}\phi(x)\left(\int_{\ambient}k(t+s,x,y)h(y)\dif\meas(y)\right)\dif \meas(x)\right)\dif s\\
        &=-\frac{1}{\Gamma(1-\theta)}\int_{0}^{\infty} s^{-\theta}\langle\gen P_{t+s}h,\phi\rangle_{L^{2}(\ambient,\meas)}\dif s\overset{\eqref{e.Lpt2}}{=}	\langle (-\gen)^{\theta} P_{t} h,\phi\rangle_{L^{2}(\ambient,\meas)}.
      \end{align}
    Both functions in this identity belong to $L^{2}(\ambient,\meas)$ by \eqref{e.qschur-r2+} and \eqref{e.analy}. Since $C_{c}(\ambient)$ is dense in $ \allowbreak L ^{ \allowbreak 2 }( \allowbreak \ambient , \allowbreak \meas ) $, this proves \eqref{e.fracint}.
  \end{enumerate}
\end{proof}
\begin{lemma}\label{l.directheat}
  There exists a constant $C\in(1,\infty)$ such that, for all $n\in\bZ$, $b\in\core_{n}$, and $t\in(0,\ell_{n}^{\dimw})$,
  \begin{equation}\label{e.directsmall}
    \|(I-P_{t})b\|_{L^{2}(\ambient,\meas)} \leq C\ell_{n}^{-1/2}t^{1-\dims/4} \|\wgrad b\|_{L^{2}(\skeleton ,\medm)}.
  \end{equation}
\end{lemma}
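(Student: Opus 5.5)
The proof rests on viewing $-\gen b$ as a measure. Since $b\in\core_{n}$ is affine on every arm $J\in\collectA_{n}$ and locally constant off $G_{n}$, the distributional generator $-\gen b$ should be the signed measure
\begin{equation}
  \mu_{b}:=\sum_{v\in V_{n}}\mu_{v}\,\delta_{v},
\end{equation}
where $\mu_{v}$ is the sum of outward slopes of $b$ over the (at most four) arms of $\collectA_{n}$ meeting $v$. It is singular with respect to $\meas$, so I do not use $\gen b\in L^{2}$. Instead I represent $(I-P_{t})b=\int_{0}^{t}P_{s}\mu_{b}\dif s$, with $P_{s}\mu_{b}(x):=\int p_{s}(x,y)\dif\mu_{b}(y)=\sum_{v}\mu_{v}p_{s}(x,v)$. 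The spectral dimension is $\dims=2\dimf/\dimw$, so $t^{1-\dims/4}=\int_{0}^{t}s^{-\dimf/(2\dimw)}\dif s$ up to a constant. The whole point is therefore to show
\begin{equation}
  \|P_{s}\mu_{b}\|_{L^{2}(\ambient,\meas)}\lesssim s^{-\dimf/(2\dimw)}\Bigl(\sum_{v}\mu_{v}^{2}\Bigr)^{1/2}
  \quad\text{and}\quad
  \sum_{v}\mu_{v}^{2}\lesssim\ell_{n}^{-1}\|\wgrad b\|_{L^{2}(\skeleton,\medm)}^{2},
\end{equation}
for $s<\ell_{n}^{\dimw}$.

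\textbf{Step 1 (representation).} Take $\varphi\in L^{2}(\ambient,\meas)$. Since $b\in\domain$ and $P_{s}\varphi\in\Dom(\gen)$,
\begin{equation}
  \langle (I-P_{t})b,\varphi\rangle=\int_{0}^{t}\form(b,P_{s}\varphi)\dif s .
\end{equation}
For $g=P_{s}\varphi$, which is continuous with a continuous kernel, the fundamental theorem of calculus \eqref{e.oriented-fundamental} applied arm by arm gives $\form(b,g)=\sum_{J\in\collectA_{n}}\mathrm{slope}_{J}\,(g(J^{+})-g(J^{-}))$. On $\skeleton\setminus G_{n}$ we have $\wgrad b=0$, so only the arms of $G_{n}$ contribute. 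Regrouping the sum by vertices gives $\sum_{v}\mu_{v}g(v)=\int P_{s}\varphi\dif\mu_{b}$. The sum is finite because $b$ has compact support. Fubini with the symmetric kernel $p_{s}$ then yields the representation.

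\textbf{Step 2 (the $L^{2}$ bound).} By the semigroup property, $\|P_{s}\mu_{b}\|_{2}^{2}=\sum_{v,w}\mu_{v}\mu_{w}p_{2s}(v,w)$. I bound this by a Schur test. Distinct points of $V_{n}$ are at distance $\geq c\ell_{n}$, and by Ahlfors regularity \eqref{e.ahlfors} the number of $w\in V_{n}$ with $\metric(v,w)\in[k\ell_{n},(k+1)\ell_{n})$ is $\lesssim k^{\dimf-1}+1$. Using \eqref{e.HKE},
\begin{equation}
  \sup_{v}\sum_{w}p_{2s}(v,w)\lesssim s^{-\dimf/\dimw}\Bigl(1+\sum_{k\geq1}k^{\dimf}e^{-c(k^{\dimw}\ell_{n}^{\dimw}/s)^{1/(\dimw-1)}}\Bigr)\lesssim s^{-\dimf/\dimw}.
\end{equation}
The last bound is uniform precisely because $s<\ell_{n}^{\dimw}$, and this is the step where that hypothesis enters. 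Hence $\|P_{s}\mu_{b}\|_{2}^{2}\lesssim s^{-\dimf/\dimw}\sum_{v}\mu_{v}^{2}$.

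\textbf{Step 3 (flux bound).} Each vertex meets at most four arms, so Cauchy--Schwarz gives $\mu_{v}^{2}\leq4\sum_{J\ni v}\mathrm{slope}_{J}^{2}$. Each arm has two endpoints, so
\begin{equation}
  \sum_{v}\mu_{v}^{2}\leq8\sum_{J}\mathrm{slope}_{J}^{2}=8\,\ell_{n}^{-1}\sum_{J}\mathrm{slope}_{J}^{2}\,\medm(J)=8\,\ell_{n}^{-1}\|\wgrad b\|_{L^{2}(\skeleton,\medm)}^{2}.
\end{equation}

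Combining Steps 1--3 by Minkowski's integral inequality gives
\begin{equation}
  \|(I-P_{t})b\|_{2}\leq\int_{0}^{t}\|P_{s}\mu_{b}\|_{2}\dif s\lesssim\ell_{n}^{-1/2}t^{1-\dimf/(2\dimw)}\|\wgrad b\|_{L^{2}(\skeleton,\medm)}=\ell_{n}^{-1/2}t^{1-\dims/4}\|\wgrad b\|_{L^{2}(\skeleton,\medm)},
\end{equation}
which is the claimed estimate. I expect the main obstacle to be the rigorous justification in Step 1: the integration by parts against $P_{s}\varphi$, the interchange of $\int_{0}^{t}$ with the vertex sum, and the pointwise evaluation of $p_{s}(\cdot,v)$. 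The key facts there are the joint continuity of the heat kernel from Theorem~\ref{t.dirichlet} and the finiteness of the vertex sum.
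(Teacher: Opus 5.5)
Your proposal is correct and follows the paper's proof essentially step for step. The paper likewise writes $-\gen P_{s}b=\sum_{z}c_{b}(z)p_{s}(z,\cdot)$ (your $P_{s}\mu_{b}$) using $\form(b,P_{s}f)$ and the arm-wise fundamental theorem of calculus. It then bounds $\sum_{z}|c_{b}(z)|^{2}\le 8\ell_{n}^{-1}\|\wgrad b\|_{L^{2}(\skeleton,\medm)}^{2}$, applies the same Schur-type bound with $p_{2s}$ and the separation of vertices when $s<\ell_{n}^{\dimw}$, and integrates $\|{-\gen}P_{s}b\|_{L^{2}}$ over $s\in(\varepsilon,t)$ before letting $\varepsilon\downarrow0$.

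One small remark: Ahlfors regularity only yields the ball count $\#\{w\in V_{n}:\metric(v,w)<R\}\lesssim(1\vee R/\ell_{n})^{\dimf}$, not the annulus count $k^{\dimf-1}$ you state. However, the weight $k^{\dimf}$ you actually use in the sum is all that is needed.
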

Set $\dims:2\dimf/\dimw$ be the spectral dimension.
\begin{proof}
  Let $n\in\bZ$ and $b\in\core_{n}$. Let $\collectI_{b}:=\{I\in\collectA_{n}:\restr{(\wgrad b)}{I}\neq0\}$. This is a finite set because $b$ has compact support. Since $b$ is affine on every $I\in\collectI_{b}$,
  \begin{equation}\label{e.energyarms}
    \|\wgrad b\|_{L^{2}(\skeleton,\medm)}^{2} = \sum_{I\in\collectI_{b}}\int_{I}|\wgrad b|^{2}\dif\medm = \ell_{n}\sum_{I\in\collectI_{b}}\abs{\frac{b(I^{+})-b(I^{-})}{\ell_{n}}}^{2}.
  \end{equation}
  Fix $s\in(0,\infty)$. For $f\in L^{2}(\ambient,\meas)$, by spectral calculus, the symmetry of the semigroup, and \eqref{e.oriented-fundamental}, we have
    \begin{align}
      &\phantom{\ \leq}\langle -\gen P_{s}b,f\rangle_{L^{2}(\ambient,\meas)}=\form(b,P_{s}f) \overset{\eqref{e.formdef}}{=} \sum_{I\in\collectI_{b}} \frac{b(I^{+})-b(I^{-})}{\ell_{n}}\int_{I}\wgrad(P_{s}f)\dif\medm\\
      &= \sum_{I\in\collectI_{b}} \frac{b(I^{+})-b(I^{-})}{\ell_{n}}\bigl(P_{s}f(I^{+})-P_{s}f(I^{-})\bigr)\text{\ (by the fundamental theorem of calculus)}\\
      &= \int_{\ambient} f(y) \left(\sum_{I\in\collectI_{b}} \frac{b(I^{+})-b(I^{-})}{\ell_{n}}\bigl(p_{s}(I^{+},y)-p_{s}(I^{-},y)\bigr)\right) \dif\meas(y). \label{e.armrepresentation}
    \end{align}
  Let $Z_{b}:=\bigcup_{I\in\collectI_{b}}(V_{n}\cap I)$ be the set of distinct endpoints of elements in $\collectI_{b}$, and define $\sigma(I,I^{+}):=1$, $\sigma(I,I^{-}):=-1$, and define $c_{b}(z):=\ell_{n}^{-1}\sum_{I\in\collectI_{b}:\,z\in I}\sigma(I,z)(b(I^{+})-b(I^{-}))$. Since \eqref{e.armrepresentation} holds for all $f\in L^{2}(\ambient,\meas)$, we have
  \begin{equation}\label{e.directLkernel}
    -\gen P_{s}b = \sum_{z\in Z_{b}}c_{b}(z)p_{s}(z,\cdot) \quad\meas\text{-a.e. on }\ambient.
  \end{equation}
  The graph with vertex set $V_{n}$ and edge set $ \allowbreak\collectA _{n}$ has degree at most four. Each arm has two endpoints. Consequently,
  \begin{equation}
    \sum_{z\in Z_{b}}|c_{b}(z)|^{2} \leq4\sum_{z\in Z_{b}}\sum_{\substack{I\in\collectI_{b}\\z\in I}} \left|\frac{b(I^{+})-b(I^{-})}{\ell_{n}}\right|^{2} =8\sum_{I\in\collectI_{b}} \left|\frac{b(I^{+})-b(I^{-})}{\ell_{n}}\right|^{2},
  \end{equation}
 which, combined with \eqref{e.energyarms}, gives
  \begin{equation}\label{e.cof1}
    \sum_{z\in Z_{b}}|c_{b}(z)|^{2} \leq C\sum_{I\in\collectI_{b}}\left|\frac{b(I^{+})-b(I^{-})}{\ell_{n}}\right|^{2} =C\ell_{n}^{-1}\|\wgrad b\|_{L^{2}(\skeleton,\medm)}^{2}.
  \end{equation}
  Note that
  \begin{equation}\label{e.separation}
    \metric(z,z')\geq\ell_{n},\ \text{ for all }z,z'\in Z_{b} \text{ with } z\neq z'.
  \end{equation}
  For $w\in Z_{b}$ the balls $B(z,\ell_{n}/3)$, $z\in Z_{b}\cap B(w,R)$, are pairwise disjoint by \eqref{e.separation}. They are contained in $B(w,R+\ell_{n}/3)$. Hence \eqref{e.ahlfors} gives
  \begin{equation}
    \#(Z_{b}\cap B(w,R))\,C^{-1}(\ell_{n}/3)^{\dimf} \leq C(R+\ell_{n}/3)^{\dimf}.
  \end{equation}
  This proves $\#(Z_{b}\cap B(w,R))\leq C(1\vee R/\ell_{n})^{\dimf}$. For $0<s<\ell_{n}^{\dimw}$, by splitting $ Z _{b} $ into $ \allowbreak \{ z \} $ and the annuli $ \allowbreak 2 ^{k} \ell _{n} \leq \metric ( \allowbreak z , \allowbreak w )< 2 ^{k+1} \ell _{n} $, $k\in\bNN$, we obtain
  \begin{equation}\label{e.coeffsum1}
    \sup_{z\in Z_{b}}\sum_{w\in Z_{b}}p_{2s}(z,w)\lesssim s^{-\dimf/\dimw} \sum_{k=0}^{\infty} 2^{k\dimf} \exp\left(-c2^{\frac{k \dimw}{\dimw-1}}\right)\lesssim s^{-\dimf/\dimw}.
  \end{equation}
  Hence, using the symmetry of the heat kernel and the inequality $2ab\leq a^{2}+b^{2}$, we have
    \begin{align}
      \|-\gen P_{s}b\|_{L^{2}(\ambient,\meas)}^{2} &\overset{\eqref{e.directLkernel}}{\leq} \sum_{z,w\in Z_{b}}|c_{b}(z)c_{b}(w)|p_{2s}(z,w)\leq \sum_{z\in Z_{b}}|c_{b}(z)|^{2} \sum_{w\in Z_{b}}p_{2s}(z,w)\\
      &\overset{\eqref{e.coeffsum1}}{\lesssim}s^{-\dimf/\dimw}\sum_{z\in Z_{b}}|c_{b}(z)|^{2}\overset{\eqref{e.cof1}}{\lesssim} \ell_{n}^{-1}s^{-\dims/2}\|\wgrad b\|_{L^{2}(\skeleton,\medm)}^{2}.\label{e.Lbound}
    \end{align}
  For $0<\varepsilon<t\leq\ell_{n}^{\dimw}$, by spectral calculus and \eqref{e.Lbound}, we have,
    \begin{align}
      &\phantom{\ \leq}\|P_{\varepsilon} b-P_{t}b\|_{L^{2}(\ambient,\meas)}\leq \int_{\varepsilon}^{t}\norm{-\gen P_{s}b}_{L^{2}(\ambient,\meas)}\dif s \\
      &\overset{\eqref{e.Lbound}}{\lesssim} \ell_{n}^{-1/2}\|\wgrad b\|_{L^{2}(\skeleton,\medm)}\int_{\varepsilon}^{t} s^{-\dims/4}\dif s\lesssim\ell_{n}^{-1/2}t^{1-\dims/4}\|\wgrad b\|_{L^{2}(\skeleton,\medm)}.
    \end{align}
  Since $\dims<2$ and $P_{\varepsilon} b\to b$ in $L^{2}(\ambient,\meas)$ as $\varepsilon\downarrow0$, we obtain \eqref{e.directsmall}.

\end{proof}
\begin{lemma}\label{l.sobol}
  Let $\sigma\in(0,2-\frac{1}{2}\dims)$. Then there exists a constant $C_{\sigma}\in(0,\infty)$ such that, for any finitely supported sequence $\{b_{n}\}_{n\in\bZ}$ with $b_{n}\in\core_{n}$ for each $n\in\bZ$, if we denote $h:=\sum_{n\in\bZ}b_{n}$, then $h\in\Dom((-\gen)^{\sigma/2})$ and
  \begin{equation}\label{e.sobgr}
    \|(-\gen)^{\sigma/2}h\|_{L^{2}(\ambient,\meas)}^{2} \leq C_{\sigma}\sum_{n\in\bZ} \left( \ell_{n}^{-\dimw(\sigma-1)}\|\wgrad b_{n}\|_{L^{2}(\skeleton,\medm)}^{2} +\ell_{n}^{-\dimw\sigma}\|b_{n}\|_{L^{2}(\ambient,\meas)}^{2} \right).
  \end{equation}
  In particular,
  \begin{equation}\label{e.C<dom}
    \core\subset \Dom((-\gen)^{\sigma/2})\ \text{ whenever }\sigma\in(0,2-\frac{1}{2}\dims).
  \end{equation}
\end{lemma}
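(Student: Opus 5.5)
The plan is to express the fractional norm through the semigroup and then estimate each scale separately, using Lemma~\ref{l.directheat} for short times and the trivial $L^{2}$ bound for long times. The identity I would use is, for $\sigma\in(0,2)$ and $\lambda\geq0$,
\begin{equation}
  \int_{0}^{\infty}t^{-\sigma-1}\bigl(1-e^{-t\lambda}\bigr)^{2}\dif t=c_{\sigma}\lambda^{\sigma},\qquad c_{\sigma}\in(0,\infty).
\end{equation}
By the spectral calculus \eqref{e.spec} and Tonelli's theorem, for every $h\in L^{2}(\ambient,\meas)$ this gives
\begin{equation}
  \int_{[0,\infty)}\lambda^{\sigma}\dif\langle\proj_{\lambda}h,h\rangle=c_{\sigma}^{-1}\int_{0}^{\infty}t^{-\sigma-1}\|(I-P_{t})h\|_{L^{2}(\ambient,\meas)}^{2}\dif t .
\end{equation}
Hence $h\in\Dom((-\gen)^{\sigma/2})$ as soon as the right-hand side is finite, and in that case it equals $c_{\sigma}^{-1}\|(-\gen)^{\sigma/2}h\|_{L^{2}}^{2}$. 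Note that $\sigma<2-\dims/2<2$, so the identity applies.

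\textbf{Per-scale bound.} By the triangle inequality, $\|(I-P_{t})h\|_{L^{2}}\leq\sum_{n}\|(I-P_{t})b_{n}\|_{L^{2}}$. For each $n$ I would use two estimates.
\begin{itemize}
  \item For $t<\ell_{n}^{\dimw}$, Lemma~\ref{l.directheat} gives $\|(I-P_{t})b_{n}\|_{L^{2}}\leq C\ell_{n}^{-1/2}t^{1-\dims/4}\|\wgrad b_{n}\|_{L^{2}(\skeleton,\medm)}$.
  \item For all $t$, contractivity gives $\|(I-P_{t})b_{n}\|_{L^{2}}\leq 2\|b_{n}\|_{L^{2}}$.
\end{itemize}
Substitute $t=s\,\ell_{n}^{\dimw}$ and use $\dimw\dims/4=\dimf/2$ and $\dimw=\dimf+1$. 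The short-time factor then becomes $\ell_{n}^{-1/2+\dimw-\dimf/2-\dimw\sigma/2}s^{1-\dims/4-\sigma/2}=\ell_{n}^{-\dimw(\sigma-1)/2}s^{1-\dims/4-\sigma/2}$. Therefore
\begin{equation}
  t^{-\sigma/2}\|(I-P_{t})b_{n}\|_{L^{2}}\leq C\,a_{n}\,\varphi\bigl(t\ell_{n}^{-\dimw}\bigr),
\end{equation}
where
\begin{equation}
  a_{n}:=\ell_{n}^{-\dimw(\sigma-1)/2}\|\wgrad b_{n}\|_{L^{2}(\skeleton,\medm)}+\ell_{n}^{-\dimw\sigma/2}\|b_{n}\|_{L^{2}(\ambient,\meas)},
\end{equation}
and $\varphi(s):=s^{\alpha}$ for $s<1$ and $\varphi(s):=s^{-\sigma/2}$ for $s\geq1$, with $\alpha:=1-\dims/4-\sigma/2$. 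The hypothesis $\sigma<2-\dims/2$ is exactly the condition $\alpha>0$, and $\sigma>0$ gives decay at infinity.

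\textbf{Summing the scales (main step).} The task is then to show
\begin{equation}
  \int_{0}^{\infty}\Bigl(\sum_{n}a_{n}\varphi(t3^{n\dimw})\Bigr)^{2}\frac{\dif t}{t}\lesssim\sum_{n}a_{n}^{2}.
\end{equation}
This is an almost-orthogonality (Schur test) statement on the lattice of scales. Write $t=3^{-k\dimw}u$ with $u\in[1,3^{\dimw})$ and $k\in\bZ$. Then $\varphi(t3^{n\dimw})\lesssim 3^{-\beta|n-k|}$ with $\beta:=\dimw\min(\alpha,\sigma/2)>0$. The left side is therefore bounded by $C\sum_{k}\bigl(\sum_{n}a_{n}3^{-\beta|n-k|}\bigr)^{2}$, which Young's inequality on $\bZ$ (an $\ell^{1}$ kernel convolved with an $\ell^{2}$ sequence) bounds by $C\sum_{n}a_{n}^{2}$. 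Expanding $a_{n}^{2}\leq 2(\cdots)$ gives \eqref{e.sobgr}. The sum is finite because the sequence is finitely supported, so membership in the domain follows as well. This step carries the real content: the two power-law tails of $\varphi$, which come from Lemma~\ref{l.directheat} and the constraint on $\sigma$, are what make different scales nearly orthogonal. Everything else is bookkeeping.

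\textbf{Inclusion \eqref{e.C<dom}.} Given $b\in\core$, choose $n$ with $b\in\core_{n}$ and apply the estimate to the sequence with $b_{n}=b$ and all other entries zero. The function $b$ lies in $L^{2}(\ambient,\meas)$ and satisfies $\wgrad b\in L^{2}(\skeleton,\medm)$, because it is continuous, compactly supported, and affine on finitely many arms.
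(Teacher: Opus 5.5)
Your proof is correct and follows the paper's argument essentially step by step: the same semigroup characterization $\int_{0}^{\infty}t^{-\sigma-1}\|(I-P_{t})h\|_{L^{2}}^{2}\dif t=c_{\sigma}\|(-\gen)^{\sigma/2}h\|_{L^{2}}^{2}$, and the same per-scale bound $t^{-\sigma/2}\|(I-P_{t})b_{n}\|_{L^{2}}\lesssim a_{n}\psi(\ell_{n}^{-\dimw}t)$. In that bound $\psi(s)=s^{1-\dims/4-\sigma/2}\wedge s^{-\sigma/2}$, obtained from Lemma~\ref{l.directheat} for short times and from $L^{2}$-contractivity for long times. The only cosmetic difference is in summing over scales: the paper applies Cauchy--Schwarz with weights $\psi(\ell_{n}^{-\dimw}t)$, using $\sup_{t}\sum_{n}\psi(\ell_{n}^{-\dimw}t)<\infty$ and $\int_{0}^{\infty}\psi(\ell_{n}^{-\dimw}t)\frac{\dif t}{t}<\infty$, whereas you discretize $t$ in blocks of ratio $3^{\dimw}$ and apply Young's inequality on $\bZ$; these are two forms of the same Schur test.
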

\begin{proof}
  For $v\in L^{2}(\ambient,\meas)$, Fubini's theorem gives
    \begin{align}
      &\phantom{\ \leq}\int_{0}^{\infty} t^{-\sigma}\|(I-P_{t})v\|_{L^{2}(\ambient,\meas)}^{2}\frac{\dif t}{t}=\int_{[0,\infty)}\int_{0}^{\infty} t^{-\sigma-1}(1-e^{-t\lambda})^{2}\dif t\dif\|\proj_{\lambda} v\|_{L^{2}(\ambient,\meas)}^{2}\\
      &=\left(\int_{0}^{\infty} s^{-\sigma-1}(1-e^{-s})^{2}\dif s\right) \int_{[0,\infty)}\lambda^{\sigma}\dif\|\proj_{\lambda} v\|_{L^{2}(\ambient,\meas)}^{2}=c_{\sigma}\|(-\gen)^{\sigma/2}v\|_{L^{2}(\ambient,\meas)}^{2}. \label{e.speci}
    \end{align}
  Here $\|(-\gen)^{\sigma/2}v\|_{L^{2}(\ambient,\meas)}$ is interpreted as the spectral integral, with value $\infty$ if $v\notin\Dom((-\gen)^{\sigma/2})$.

  Since $0<\sigma<2$, the inequalities $0 < 1 - e ^{-s} \leq s \wedge 1 $ give
  \begin{equation}
    0<c_{\sigma}\leq\int_{0}^{1}s^{1-\sigma}\dif s +\int_{1}^{\infty}s^{-1-\sigma}\dif s =\frac{1}{2-\sigma}+\frac{1}{\sigma}<\infty.
  \end{equation}
  Define a non-negative sequence $\{a_{n}\}_{n\in\bZ}\subset [0,\infty)$ and $\theta\in(0,\infty)$ by
  \begin{equation}\label{e.defathe}
    a_{n}^{2}:=\ell_{n}^{-\dimw(\sigma-1)}\|\wgrad b_{n}\|_{L^{2}(\skeleton,\medm)}^{2} +\ell_{n}^{-\dimw\sigma}\|b_{n}\|_{L^{2}(\ambient,\meas)}^{2},\ \text{ and }\theta:=1-\frac{\dims}{4}-\frac{\sigma}{2}>0.
  \end{equation}
  For $t\in[\ell_{n}^{\dimw},\infty)$, we have $\|(I-P_{t})b_{n}\|_{L^{2}(\ambient,\meas)} \leq 2\|b_{n}\|_{L^{2}(\ambient,\meas)}$, so
    \begin{align}
      t^{-\sigma/2}\|(I-P_{t})b_{n}\|_{L^{2}(\ambient,\meas)} &\leq 2t^{-\sigma/2}\|b_{n}\|_{L^{2}(\ambient,\meas)}=2\ell_{n}^{-\dimw\sigma/2}\|b_{n}\|_{L^{2}(\ambient,\meas)} \left(\frac{t}{\ell_{n}^{\dimw}}\right)^{-\sigma/2}\\
      &\overset{\eqref{e.defathe}}{\leq}2a_{n}\left(\frac{t}{\ell_{n}^{\dimw}}\right)^{-\sigma/2}.\label{e.defathe1}
    \end{align}
  For $t\in(0,\ell_{n}^{\dimw})$, Lemma \ref{l.directheat} gives
    \begin{align}
      &\phantom{\ \leq}t^{-\sigma/2}\|(I-P_{t})b_{n}\|_{L^{2}(\ambient,\meas)}\\
      &\overset{\eqref{e.directsmall}}{\leq} C\ell_{n}^{-1/2}\|\wgrad b_{n}\|_{L^{2}(\skeleton,\medm)} t^{1-\dims/4-\sigma/2}=C\ell_{n}^{-1/2+\dimw\theta}\|\wgrad b_{n}\|_{L^{2}(\skeleton,\medm)} \left(\frac{t}{\ell_{n}^{\dimw}}\right)^{\theta}\overset{\eqref{e.defathe}}{\leq}C a_{n} \left(\frac{t}{\ell_{n}^{\dimw}}\right)^{\theta}.\label{e.defathe2}
    \end{align}
  Here we use the identity $-\frac{1}{2}+\dimw\theta=-\frac{\dimw(\sigma-1)}{2}$. Inequalities \eqref{e.defathe1} and \eqref{e.defathe2} together give
  \begin{equation}\label{e.profi}
    t^{-\sigma/2}\|(I-P_{t})b_{n}\|_{L^{2}(\ambient,\meas)} \leq Ca_{n}\psi(\ell_{n}^{-\dimw}t),\ \text{ where } \psi(s):=s^{\theta}\wedge s^{-\sigma/2}.
  \end{equation}
  Since $\ell_{n}=3^{-n}$, $\ell_{n}^{-\dimw}t=3^{n\dimw}t$. Fix $t\in(0,\infty)$ and choose $n_{t}\in\bZ$ such that $3^{n_{t}\dimw}t\leq1<3^{(n_{t}+1)\dimw}t$. Then
    \begin{align}
      \sup_{t\in(0,\infty)}\sum_{n\in\bZ}\psi(\ell_{n}^{-\dimw}t) &\leq \sup_{t\in(0,\infty)}\sum_{n\leq n_{t}}(3^{n\dimw}t)^{\theta} + \sup_{t\in(0,\infty)}\sum_{n\geq n_{t}+1}(3^{n\dimw}t)^{-\sigma/2} \\
      &\leq \sum_{j=0}^{\infty}3^{-j\dimw\theta} + \sum_{j=0}^{\infty}3^{-j\dimw\sigma/2}= \frac{1}{1-3^{-\dimw\theta}} + \frac{1}{1-3^{-\dimw\sigma/2}},\label{e.prof2}
    \end{align}
and for every $n\in\bZ$,
  \begin{equation}
    \int_{0}^{\infty}\psi(\ell_{n}^{-\dimw}t)\frac{\dif t}{t} =\int_{0}^{1}s^{\theta-1}\dif s+\int_{1}^{\infty}s^{-1-\sigma/2}\dif s =\frac{1}{\theta}+\frac{2}{\sigma}<\infty.\label{e.prof2+}
  \end{equation}
  By \eqref{e.profi} and the Cauchy--Schwarz inequality,
    \begin{align}
      &\phantom{\ \leq}t^{-\sigma}\|(I-P_{t})h\|_{L^{2}(\ambient,\meas)}^{2} \overset{\eqref{e.profi}}{\lesssim} \left(\sum_{n}a_{n}\psi(\ell_{n}^{-\dimw}t)\right)^{2}\\
      &\lesssim \left(\sum_{n}\psi(\ell_{n}^{-\dimw}t)\right) \left(\sum_{n}a_{n}^{2}\psi(\ell_{n}^{-\dimw}t)\right)\overset{\eqref{e.prof2}}{\lesssim} \sum_{n}a_{n}^{2}\psi(\ell_{n}^{-\dimw}t).\label{e.prof3}
    \end{align}
  Consequently,
    \begin{equation}
      \int_{0}^{\infty} t^{-\sigma}\|(I-P_{t})h\|_{L^{2}(\ambient,\meas)}^{2}\frac{\dif t}{t}\overset{\eqref{e.prof3}}{\lesssim}\sum_{n}a_{n}^{2}\int_{0}^{\infty}\psi(\ell_{n}^{-\dimw}t)\frac{\dif t}{t}\overset{\eqref{e.prof2+}}{\lesssim}\sum_{n}a_{n}^{2}.\label{e.prof4}
    \end{equation}
  The sequence is finitely supported, so the right side of \eqref{e.sobgr} is finite. Equations \eqref{e.prof4} and \eqref{e.speci} thus prove that $h \in \Dom ( ( - \gen ) ^{\sigma/2} ) $ and \eqref{e.sobgr}.
\end{proof}
\begin{lemma}\label{l.coreconv}
  Let $p\in[1,2)$ and $h\in\core$. Then $h\in\Dom\bigl(( -\gen)^{\critic_{p}}\bigr)$ and $(-\gen)^{\critic_{p}}h\in L^{p}(\ambient,\meas)$. Moreover,
  \begin{equation}\label{e.coreconv}
    \lim_{t\downarrow0}(-\gen)^{\critic_{p}}P_{t}h= (-\gen)^{\critic_{p}}h \ \ \text{in }L^{p}(\ambient,\meas).
  \end{equation}
\end{lemma}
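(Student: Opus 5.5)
The plan is to handle the two assertions separately: the domain statement comes from Lemma~\ref{l.sobol}, and the $L^{p}$ statements come from a subordination formula together with the explicit kernel form \eqref{e.directLkernel} of $-\gen P_{s}h$ for $h\in\core$.

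\textbf{Step 1: domain.} Since $\critic_{p}=\frac{1}{\dimw}+\frac{\dimf-1}{p\dimw}$ is decreasing in $p$, we have $\critic_{p}\le\critic_{1}=\dimf/\dimw$ for $p\in[1,2)$. With $\sigma:=2\critic_{p}$, the condition $\sigma<2-\frac12\dims=2-\dimf/\dimw$ holds provided $2\dimf/\dimw<2-\dimf/\dimw$. This is equivalent to $3\dimf<2\dimw=2\dimf+2$, i.e.\ $\dimf<2$, which is true since $\dimf=\log_{3}5$. So \eqref{e.C<dom} gives $h\in\Dom((-\gen)^{\critic_{p}})$.

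\textbf{Step 2: $L^{p}$ bound.} For $p\in[1,2)$ we have $\critic_{p}\in(1/2,1)$. As in the proof of Lemma~\ref{l.semes}, the spectral calculus gives, in $L^{2}$,
\begin{equation}
(-\gen)^{\critic_{p}}h=\frac{1}{\Gamma(1-\critic_{p})}\int_{0}^{\infty}s^{-\critic_{p}}\,(-\gen P_{s}h)\dif s,
\end{equation}
since $\int_{0}^{\infty}s^{-\theta}\lambda e^{-s\lambda}\dif s=\Gamma(1-\theta)\lambda^{\theta}$. I will show that this Bochner integral also converges absolutely in $L^{p}(\ambient,\meas)$, splitting the integral at $s=1$.
\begin{itemize}
\item For $s\ge1$, apply \eqref{e.Lpt1}: $\|\gen P_{s}h\|_{L^{p}}\lesssim s^{-1}\|h\|_{L^{p}}$, and $s^{-\critic_{p}-1}$ is integrable at infinity.
\item For $s<1$, pick $n$ with $h\in\core_{n}$. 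By \eqref{e.directLkernel}, $-\gen P_{s}h=\sum_{z\in Z_{h}}c_{h}(z)p_{s}(z,\cdot)$ is a finite sum. Estimate \eqref{e.ultra}, applied to the kernel (or directly \eqref{e.HKE} with Ahlfors regularity), gives $\|p_{s}(z,\cdot)\|_{L^{p}}\lesssim s^{-\dimf(1-1/p)/\dimw}$. Hence $\|\gen P_{s}h\|_{L^{p}}\lesssim_{h}s^{-\dimf(1-1/p)/\dimw}$.
\end{itemize}
The total exponent near $0$ is
\begin{equation}
\critic_{p}+\frac{\dimf(p-1)}{p\dimw}=\frac{\dimf+1-1/p}{\dimw}<1,
\end{equation}
because $\dimw=\dimf+1$. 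So the small-$s$ integral converges as well.

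Since the integral converges both in $L^{2}$ and in $L^{p}$, the two limits agree $\meas$-a.e. (pass to a.e.-convergent subsequences of Riemann-type approximations, or pair with $C_{c}$ test functions). This gives $(-\gen)^{\critic_{p}}h\in L^{p}$. I expect this identification, together with the small-time kernel bound, to be the main technical point.

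\textbf{Step 3: convergence.} By the spectral calculus, $(-\gen)^{\critic_{p}}P_{t}h=P_{t}(-\gen)^{\critic_{p}}h$ in $L^{2}$. The extension of $P_{t}$ to $L^{p}$ agrees with the $L^{2}$ operator on $L^{p}\cap L^{2}$, because both are given by integration against the heat kernel \eqref{e.heatkernel}. So it suffices to know that $\{P_{t}\}$ is strongly continuous on $L^{p}(\ambient,\meas)$ for $p\in[1,\infty)$.

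This is standard for a symmetric Markovian semigroup, and here it also follows from the heat kernel bounds:
\begin{itemize}
\item $P_{t}$ is an $L^{p}$-contraction.
\item $P_{t}\phi\to\phi$ in $L^{p}$ for $\phi\in C_{c}(\ambient)$. This uses conservativeness, uniform continuity of $\phi$, and the sub-Gaussian tail in \eqref{e.HKE} to control both mass escaping to infinity and the local error.
\item $C_{c}(\ambient)$ is dense in $L^{p}$, so the convergence extends to all of $L^{p}$.
\end{itemize}
Applying this to $g:=(-\gen)^{\critic_{p}}h\in L^{p}$ yields \eqref{e.coreconv}.
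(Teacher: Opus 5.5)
Your proposal is correct and follows the paper's proof essentially step by step. The domain assertion comes from Lemma~\ref{l.sobol} with $\sigma=2\critic_{p}$. The $L^{p}$ bound uses the subordination formula, the finite kernel sum \eqref{e.directLkernel} for small $s$, and the exponent count $\critic_{p}+\dimf(p-1)/(p\dimw)=1-1/(p\dimw)<1$. The convergence then follows from $(-\gen)^{\critic_{p}}P_{t}h=P_{t}(-\gen)^{\critic_{p}}h$ and strong continuity of $P_{t}$ on $L^{p}(\ambient,\meas)$.

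The differences are cosmetic. For $s\geq1$ you use \eqref{e.Lpt1} directly, whereas the paper adds ultracontractivity, which is not needed. You also conclude via an $L^{p}$-valued integral identified by pairing with test functions, whereas the paper pairs the scalar identity with $v\in L^{p'}\cap L^{2}$ and uses duality. If you read the $L^{2}$ formula as an absolutely convergent Bochner integral, that needs your kernel bound with $p=2$, which holds since $\critic_{p}+\dimf/(2\dimw)<1$; the weak form of the identity already suffices for your identification.
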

\begin{proof}
  Choose $n\in\bZ $ such that $h\in\core_{n}$. Since $ \allowbreak \dimf < 2 $ and $\dimw=\dimf+1$,
  \begin{equation}
    0<2\critic_{p}\leq\frac{2\dimf}{\dimw} <2-\frac{\dimf}{\dimw}=2-\frac{\dims}{2}.
  \end{equation}
  Thus \eqref{e.C<dom} in Lemma~\ref{l.sobol}, with $\sigma=2\critic_{p}$, gives $h\in\Dom\bigl((-\gen)^{\critic_{p}}\bigr)$. We first estimate the $L^{p}(\ambient,\meas)$ norm of $(-\gen)P_{s}h$. Fix $z\in\ambient$ and $s>0$, and let
  \begin{equation}
    A_{0}:=B(z,s^{1/\dimw}), \ \text{ and } A_{j}:=B(z,2^{j}s^{1/\dimw})\setminus B(z,2^{j-1}s^{1/\dimw}),\ j\in\bN.
  \end{equation}
  Using Ahlfors regularity \eqref{e.ahlfors} and \eqref{e.HKE},
    \begin{align}
      &\phantom{\ \leq}\|p_{s}(z,\cdot)\|_{L^{p}(\ambient,\meas)}^{p}=\int_{\ambient}p_{s}(z,y)^{p}\dif\meas(y)\\
      &\overset{\eqref{e.ahlfors},\eqref{e.HKE}}{\leq} C_{p}s^{-p\dimf/\dimw} \sum_{j=0}^{\infty} \int_{A_{j}} \exp(-c_{p}\left(\frac{\metric(z,y)^{\dimw}}{s}\right)^{1/(\dimw-1)}) \dif\meas(y) \\
      &\leq C_{p}s^{-p\dimf/\dimw} s^{\dimf/\dimw} \sum_{j=0}^{\infty} 2^{j\dimf} \exp(-c_{p}2^{j\dimw/(\dimw-1)})\leq C_{p}s^{-(p-1)\dimf/\dimw}.\label{e.hklp}
    \end{align}
  By \eqref{e.directLkernel}, with the finite index set $Z_{h}$ and coefficients $c_{h}$ defined in the proof of Lemma~\ref{l.directheat},
  \begin{equation}\label{e.vertexheat}
    (-\gen)P_{s}h = \sum_{z\in Z_{h}}c_{h}(z)p_{s}(z,\cdot) \ \ \text{$\meas$-a.e. on $\ambient$}.
  \end{equation}
  Combining \eqref{e.vertexheat} and \eqref{e.hklp}, for $0<s\leq1$,
  \begin{equation}\label{e.smalltime}
    \|(-\gen)P_{s}h\|_{L^{p}(\ambient,\meas)} \leq C_{p,h}s^{-\dimf(1-1/p)/\dimw}.
  \end{equation}
  For $s\in(0,\infty)$, by the semigroup property and Lemma~\ref{l.semes},
    \begin{align}
      &\phantom{\ \leq}\|(-\gen)P_{s}h\|_{L^{p}(\ambient,\meas)}= \|(-\gen)P_{s/2}P_{s/2}h\|_{L^{p}(\ambient,\meas)} \\
      &\overset{\eqref{e.analy}}{\leq} C_{p}s^{-1}\|P_{s/2}h\|_{L^{p}(\ambient,\meas)} \overset{\eqref{e.ultra}}{\leq} C_{p}s^{-1-\dimf(1-1/p)/\dimw}\|h\|_{L^{1}(\ambient,\meas)}. \label{e.largetime}
    \end{align}
  We have
    \begin{align}
      \critic_{p} + \frac{\dimf(p-1)}{p\dimw} = \frac{p(\dimf+1)-1}{p\dimw} = 1-\frac{1}{p\dimw} <1. \label{e.sumexp}
    \end{align}
  Hence \eqref{e.smalltime} and \eqref{e.largetime} imply
  \begin{equation}
  	\int_{0}^{1} s^{-\critic_{p}}\|(-\gen)P_{s}h\|_{L^{p}(\ambient,\meas)}\dif s \overset{\eqref{e.smalltime}}{\leq} C_{p,h} \int_{0}^{1}s^{-\critic_{p}-\dimf(1-1/p)/\dimw}\dif s <\infty, \label{e.intsmall}
  \end{equation}
  and \begin{equation}
  	 \int_{1}^{\infty} s^{-\critic_{p}}\|(-\gen)P_{s}h\|_{L^{p}(\ambient,\meas)}\dif s\overset{\eqref{e.largetime}}{\leq} C_{p}\|h\|_{L^{1}(\ambient,\meas)} \int_{1}^{\infty} s^{-1-\critic_{p}-\dimf(1-1/p)/\dimw}\dif s <\infty. \label{e.intlarge}
  \end{equation}
  Here,
  \begin{equation}
    \int_{0}^{1}s^{-1+1/(p\dimw)}\dif s=p\dimw, \qquad \int_{1}^{\infty}s^{-2+1/(p\dimw)}\dif s =\frac{1}{1-1/(p\dimw)}<\infty.
  \end{equation}
  Let $v\in L^{p'}(\ambient,\meas)\cap L^{2}(\ambient,\meas)$, where $1 / p + 1 / p'= 1 $. Since $ \allowbreak h \in \Dom ( ( - \gen ) ^{\critic_{p}} ) $, Fubini's theorem therefore gives
  \begin{equation}\label{e.scalaridentity}
    \left\langle (-\gen)^{\critic_{p}}h,v\right\rangle_{L^{2}(\ambient,\meas)} = \frac{1}{\Gamma(1-\critic_{p})} \int_{0}^{\infty} s^{-\critic_{p}} \left\langle(-\gen)P_{s}h,v\right\rangle_{L^{2}(\ambient,\meas)} \dif s,
  \end{equation}
 which, combined with H\"older's inequality, \eqref{e.intsmall}, and \eqref{e.intlarge}, gives
    \begin{align}
      \left| \left\langle(-\gen)^{\critic_{p}}h,v\right\rangle \right| &\leq \frac{\|v\|_{L^{p'}(\ambient,\meas)}}{\Gamma(1-\critic_{p})} \int_{0}^{\infty} s^{-\critic_{p}} \|(-\gen)P_{s}h\|_{L^{p}(\ambient,\meas)}\dif s\leq C_{p,h}\|v\|_{L^{p'}(\ambient,\meas)}. \label{e.uniformdual}
    \end{align}
  Since this holds for all $v\in L^{p'}(\ambient,\meas)\cap L^{2}(\ambient,\meas)$, duality gives $(-\gen)^{\critic_{p}}h\in L^{p}(\ambient,\meas)$.

  Finally, by spectral calculus, $(-\gen)^{\critic_{p}}P_{t}h = P_{t}(-\gen)^{\critic_{p}}h\in L^{p}(\ambient,\meas)\cap L^{2}(\ambient,\meas)$. Since $\{P_{t}\}_{t\in(0,\infty)}$ is strongly continuous on $L^{p}(\ambient,\meas)$,
    \begin{align}
      \|(-\gen)^{\critic_{p}}P_{t}h-(-\gen)^{\critic_{p}}h\|_{L^{p}(\ambient,\meas)} = \|P_{t}(-\gen)^{\critic_{p}}h-(-\gen)^{\critic_{p}}h\|_{L^{p}(\ambient,\meas)} \to 0,\ \text{ as $t\downarrow0$.}
    \end{align}
  This proves \eqref{e.coreconv}.
\end{proof}

\section{Harmonic functions and the Calder\'{o}n--Zygmund decomposition}\label{s.reverse}

In this section, we use the Calder\'{o}n--Zygmund decomposition of Sobolev functions on the Vicsek set to prove Theorem~\ref{t.strfl}-\ref{it.scale} and \ref{it.p=2}, and Theorem~\ref{t.mainw}.
\subsection{Harmonic functions with given boundary data}
\begin{definition}\label{d.harmo}
  Let $Q\in\collectQ$ and $u\in C(Q)$. We define
  \begin{equation}
    m_{Q}(u):=\frac{1}{\# \cellvert{Q}}\sum_{a\in\cellvert{Q}}u(a).
  \end{equation}
  We also define a function $\harmonic_{Q}u:Q\to \bR$ by
  \begin{equation}\label{e.HQ}
    \harmonic_{Q}u(x):=
    \begin{dcases}
      m_{Q}(u) +\frac{\metric(\cellctr{Q},x)}{\cellsize{Q}} \bigl(u(a)-m_{Q}(u)\bigr), \  & \ \text{ if } x\in[\cellctr{Q},a],\ a\in\cellvert{Q};\\
      \harmonic_{Q}u(\pi_{\celltree{Q}}(x)),\ &\ \text{ if } x\in Q\setminus\celltree{Q},
    \end{dcases}
  \end{equation}
  where $\pi_{\celltree{Q}}$ is defined in Lemma \ref{l.projt}. For $p\in[1,\infty)$ and $u\in\abscon(Q)$ with $\wgrad u\in L^{p}(\skeleton\cap Q,\medm)$, we define
  \begin{equation}
    \varepsilon_{p}(u;Q) := \|\wgrad(u-\harmonic_{Q}u)\|_{L^{p}(\skeleton\cap Q,\medm)}^{p}.
  \end{equation}
  For $p\in[1,\infty)$ and $u\in\abscon(\ambient,\metric)$ with $\wgrad u\in L^{p}(\skeleton,\medm)$, we define
  \begin{equation}\label{e.defden}
    \density_{p}(u) := \sup_{Q\in\collectQ} \frac{\varepsilon_{p}(u;Q)}{\meas(Q)}.
  \end{equation}
\end{definition}
\begin{lemma}\label{l.harmo}
  Fix $Q\in\collectQ$ and $u\in C(Q)$. Let $\harmonic_{Q}u:Q\to\bR$ be a function defined by \eqref{e.HQ}.
  \begin{enumerate}[label=\textup{(\arabic*)}, align=right, leftmargin=*, topsep=5pt, parsep=0pt, itemsep=2pt]
    \item\label{it.hmini} We have $\harmonic_{Q}u\in\abscon(Q)$, $\restr{(\harmonic_{Q}u)}{\cellvert{Q}}=\restr{u}{\cellvert{Q}}$, and
    \begin{equation}\label{e.henergy}
      \int_{\skeleton\cap Q}|\wgrad\harmonic_{Q}u|^{2}\dif\medm = \frac{1}{\cellsize{Q}} \sum_{a\in\cellvert{Q}}|u(a)-m_{Q}(u)|^{2}.
    \end{equation}
    Moreover,
    \begin{equation}\label{e.hmini}
      \int_{\skeleton\cap Q}|\wgrad (\harmonic_{Q}u)|^{2}\dif\medm=\inf\Sett{\int_{\skeleton\cap Q}|\wgrad f|^{2}\dif\medm}{f\in \abscon(Q) \text{ and }\restr{f}{\cellvert{Q}}=\restr{u}{\cellvert{Q}}} .
    \end{equation}
    The function $\harmonic_{Q}u$ is the unique minimizer of the minimization problem \eqref{e.hmini}; that is, if $f\in \abscon(Q)$ satisfies $\restr{f}{\cellvert{Q}}=\restr{u}{\cellvert{Q}}$ and $\int_{\skeleton\cap Q}|\wgrad f|^{2}\dif\medm=\int_{\skeleton\cap Q}|\wgrad (\harmonic_{Q}u)|^{2}\dif\medm$, then $f=\harmonic_{Q}u$.
    \item\label{it.hpbdp} For any $p\in[1,\infty)$, there exists a constant $C_{p}\in(0,\infty)$ such that for any $u\in\abscon(Q)$ with $\wgrad u\in L^{p}(\skeleton\cap Q,\medm)$, we have
    \begin{equation}\label{e.hpbdp}
      \|\wgrad\harmonic_{Q}u\|_{L^{p}(\skeleton\cap Q,\medm)} \leq C_{p}\|\wgrad u\|_{L^{p}(\skeleton\cap Q,\medm)}.
    \end{equation}
    \item\label{it.hnest} If $R\in\collectQ$ and $R\subset Q$, then $\harmonic_{R}(\harmonic_{Q}u)=\restr{(\harmonic_{Q}u)}{R}$.
  \end{enumerate}
\end{lemma}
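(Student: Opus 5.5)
The plan is to work directly with the explicit tree structure of $Q$ and the projection $\pi_{\celltree{Q}}$ from Lemma~\ref{l.projt}. We take $Q$ to be a closed convex subtree containing $\celltree{Q}$, with $\pi_{\celltree{Q}}$ continuous and locally constant off $\celltree{Q}$ along the skeleton. The tree $\celltree{Q}$ is the union of four segments $[\cellctr{Q},a]$, $a\in\cellvert{Q}$, each of length $\cellsize{Q}$.

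\textbf{Part (1).} On each segment $[\cellctr{Q},a]$, formula \eqref{e.HQ} is affine in arclength and equals $m_{Q}(u)$ at the center, so the four pieces agree there; hence $\harmonic_{Q}u$ is continuous on $\celltree{Q}$. Composing with the continuous retraction $\pi_{\celltree{Q}}$ gives continuity on $Q$. Along any local arc chart, $\harmonic_{Q}u\circ\gamma$ is piecewise affine with finitely many pieces, and it is constant on the parts outside $\celltree{Q}$; so it is absolutely continuous. Its extension to $\ambient$ (constant on the components of $\ambient\setminus Q$, using $\cellatt Q\subset\cellvert{Q}$) gives $\harmonic_{Q}u\in\abscon(Q)$. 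The boundary values follow from $\metric(\cellctr{Q},a)=\cellsize{Q}$. The gradient vanishes $\medm$-a.e.\ off $\celltree{Q}$ and has modulus $|u(a)-m_{Q}(u)|/\cellsize{Q}$ on $[\cellctr{Q},a]$, which gives \eqref{e.henergy}.

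For minimality, take $f\in\abscon(Q)$ with the same boundary values and let $c=f(\cellctr{Q})$. By Cauchy--Schwarz on each segment,
\[
\int_{\skeleton\cap Q}|\wgrad f|^{2}\dif\medm\ge\sum_{a}\frac{|u(a)-c|^{2}}{\cellsize{Q}}\ge\sum_{a}\frac{|u(a)-m_{Q}(u)|^{2}}{\cellsize{Q}}.
\]
The second inequality holds because the mean minimizes the quadratic in $c$. Uniqueness comes from tracking the equality cases. Equality forces $c=m_{Q}(u)$ and $f$ affine on each segment, so $f=\harmonic_{Q}u$ on $\celltree{Q}$. It also forces $\wgrad f=0$ $\medm$-a.e.\ on $Q\setminus\celltree{Q}$. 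Since every point $x$ is joined to $\pi_{\celltree{Q}}(x)$ by an arc in that region, we get $f(x)=f(\pi_{\celltree{Q}}(x))$.

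\textbf{Part (2).} By Hölder on each segment,
\[
|u(a)-u(\cellctr{Q})|\le\cellsize{Q}^{1-1/p}\|\wgrad u\|_{L^{p}}.
\]
Therefore $|u(a)-m_{Q}(u)|\le 2\cellsize{Q}^{1-1/p}\|\wgrad u\|_{L^{p}}$, because $m_{Q}(u)-u(\cellctr{Q})$ is an average of such differences. Then
\[
\|\wgrad\harmonic_{Q}u\|_{L^{p}}^{p}=\cellsize{Q}^{1-p}\sum_{a}|u(a)-m_{Q}(u)|^{p}\le 4\cdot2^{p}\|\wgrad u\|_{L^{p}}^{p}.
\]

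\textbf{Part (3).} This is the main obstacle, since it needs the geometry of how $R$ sits in $Q$. Set $h=\harmonic_{Q}u$. I would use the address structure: $R=3^{N}F_{u'r}(\vicsek)$ with $Q=3^{N}F_{u'}(\vicsek)$, so $\celltree{R}$ lies in some level-$|u'r|$ subcell. Two cases arise.

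\emph{Case $R\cap\celltree{Q}$ is a single point or empty.} Then $h$ is constant on $R$ (equal to $h(\pi_{\celltree{Q}}(R))$). Indeed $R$ is connected, and by convexity its projection onto $\celltree{Q}$ is one point. $\harmonic_{R}$ of a constant is that constant.

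\emph{Case $R\cap\celltree{Q}$ is nondegenerate.} Self-similarity (the map $F_{0}$ or $F_{j}$ preserves the diagonal cross) shows that $R\cap\celltree{Q}$ is a straight segment through $\cellctr{R}$ joining two opposite vertices of $R$, namely a diameter of $\celltree{R}$. The other two arms of $\celltree{R}$ are perpendicular, so they lie off $\celltree{Q}$ and project to $\cellctr{R}$. Thus $h$ is affine along the diameter and constant, equal to $h(\cellctr{R})$, on the other arms. The key check is $m_{R}(h)=h(\cellctr{R})$: the two diameter endpoints average to the midpoint value by affinity, and the other two vertices carry the value $h(\cellctr{R})$. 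Hence $\harmonic_{R}h$ agrees with $h$ on $\celltree{R}$. Off $\celltree{R}$, both functions are constant along the projection fibres of $\pi_{\celltree{R}}$, by the same projection argument as in part (1), so they agree on all of $R$.

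I expect the verification that $R\cap\celltree{Q}$ is exactly a vertex-to-vertex diameter of $\celltree{R}$ (when nondegenerate) to require the most care. I would prove it by induction on $|r|$, using that $F_{0}$ maps $\celltree{\vicsek}$ onto the central sub-cross, while $F_{j}$ ($j\ne0$) meets it in one diagonal segment.
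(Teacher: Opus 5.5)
Your parts (1) and (2) are correct and are essentially the paper's argument. The gap is in part (3), Case~B.

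You assert that whenever $R\cap\celltree{Q}$ is nondegenerate, it is a single diameter of $\celltree{R}$, and that the other two arms of $\celltree{R}$ project to $\cellctr{R}$. This fails whenever $\cellctr{R}=\cellctr{Q}$, i.e.\ $R=3^{N}F_{u'0^{k}}(\vicsek)$ with $k\geq0$ (including $R=Q$). For such $R$, $R\cap\celltree{Q}=\celltree{R}$ is the full cross. This is exactly what your own induction step ``$F_{0}$ maps $\celltree{\vicsek}$ onto the central sub-cross'' produces.

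In that case:
\begin{itemize}
\item $h=\harmonic_{Q}u$ is in general not affine along either diameter of $\celltree{R}$, since it has a kink at $\cellctr{Q}=\cellctr{R}$;
\item the two remaining arms lie inside $\celltree{Q}$, not off it;
\item the four values $h(a')$, $a'\in\cellvert{R}$, are in general all different from $h(\cellctr{R})$.
\end{itemize}
So your verification of $m_{R}(h)=h(\cellctr{R})$ (two endpoints averaging to the midpoint, two vertices carrying $h(\cellctr{R})$) does not apply, and these cells are not covered.

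The repair is short. The vertices of $R$ are the points $a'\in[\cellctr{Q},a]$, $a\in\cellvert{Q}$, with $\metric(\cellctr{Q},a')=\cellsize{R}$. For these, $h(a')-m_{Q}(u)=\frac{\cellsize{R}}{\cellsize{Q}}\bigl(u(a)-m_{Q}(u)\bigr)$. These differences average to $0$ over $a$, so $m_{R}(h)=m_{Q}(u)=h(\cellctr{R})$, and formula \eqref{e.HQ} then gives $\harmonic_{R}h=h$ on every arm of $\celltree{R}$. Your fibre argument off $\celltree{R}$ still works, because $R\cap\celltree{Q}\subset\celltree{R}$ in every case. With this case added, and the intersection classification proved as you sketch, your direct route is correct.

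For comparison, the paper avoids the geometric classification entirely with a variational argument. Suppose some $v\in\abscon(R)$ with $v=h$ on $\cellvert{R}$ had strictly smaller energy on $\skeleton\cap R$ than $h$. Set $w:=v\one_{R}+h\one_{Q\setminus R}$. Then:
\begin{itemize}
\item $w\in\abscon(Q)$, because $\cellatt R\subset\cellvert{R}$;
\item $w=u$ on $\cellvert{Q}$, because $R\cap\cellvert{Q}\subset\cellvert{R}$ (Proposition~\ref{p.nest});
\item $w$ has strictly smaller energy than $h$, contradicting \eqref{e.hmini}.
\end{itemize}
Hence $\restr{h}{R}$ attains the minimum for the data $\restr{h}{\cellvert{R}}$, and the uniqueness in part (1) gives $\harmonic_{R}h=\restr{h}{R}$. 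This needs only those two inclusions, not how $R$ sits relative to $\celltree{Q}$. Your route gives an explicit picture of $h$ on $R$, but it depends on getting the case analysis complete, which is exactly where it slipped.
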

\begin{proof}
  \begin{enumerate}[label=\textup{(\arabic*)}, align=right, leftmargin=*, topsep=5pt, parsep=0pt, itemsep=2pt]
    \item[\ref{it.hmini}] Note that the function $\harmonic_{Q}u$ defined in \eqref{e.HQ} is piecewise affine, and is constant on every component of $Q\setminus\celltree{Q}$. Since $ \allowbreak \pi _{\celltree{Q}} $ is $1$-Lipschitz by Lemma~\ref{l.projt}, we have
    \begin{equation}
      |\harmonic_{Q}u(x)-\harmonic_{Q}u(y)| \leq \frac{\max_{a\in\cellvert{Q}}|u(a)-m_{Q}(u)|}{\cellsize{Q}}\metric(x,y), \ \text{ for all } x,y\in Q
    \end{equation}
    and
    \begin{equation}
     |\wgrad\harmonic_{Q}u| =\sum_{a\in\cellvert{Q}}\frac{|u(a)-m_{Q}(u)|}{\cellsize{Q}} \one_{[\cellctr{Q},a]},\  \medm\text{-a.e. on }\skeleton\cap Q.\label{e.hmin00}
    \end{equation}
    Thus $ \allowbreak \harmonic _{Q} u \in \abscon ( Q ) $ and $ \allowbreak \harmonic _{ \allowbreak Q } \allowbreak u ( a ) = u ( a ) $ for $a\in\cellvert{Q}$. By \eqref{e.hmin00} and the fact that $\medm[\cellctr{Q},a]=\cellsize{Q}$, we obtain \eqref{e.henergy}. Let $f\in \abscon(Q)$ such that $\restr{f}{\cellvert{Q}}=\restr{u}{\cellvert{Q}}$. By the Cauchy--Schwarz inequality,
      \begin{align}
       &\phantom{\ \leq} \int_{\skeleton\cap Q}|\wgrad f|^{2}\dif\medm\geq \sum_{a\in\cellvert{Q}} \int_{[\cellctr{Q},a]}|\wgrad f|^{2}\dif\medm\geq \sum_{a\in\cellvert{Q}}\frac{1}{\cellsize{Q}} \abs{\int_{[\cellctr{Q},a]}\wgrad f\dif\medm}^{2}\\
        &= \frac{1}{\cellsize{Q}} \sum_{a\in\cellvert{Q}}|u(a)-f(\cellctr{Q})|^{2}=\frac{1}{\cellsize{Q}} \sum_{a\in\cellvert{Q}}|u(a)-m_{Q}(u)|^{2} +\frac{\#\cellvert{Q}}{\cellsize{Q}}|m_{Q}(u)-f(\cellctr{Q})|^{2}\\
        &\overset{\eqref{e.henergy}}{=}\int_{\skeleton\cap Q}|\wgrad\harmonic_{Q}u|^{2}\dif\medm+ \frac{\#\cellvert{Q}}{\cellsize{Q}}|m_{Q}(u)-f(\cellctr{Q})|^{2}.\label{e.hmini1}
      \end{align}
    This proves \eqref{e.hmini}. If $f\in \abscon(Q)$ satisfies $\restr{f}{\cellvert{Q}}=\restr{u}{\cellvert{Q}}$ and $\int_{\skeleton\cap Q}|\wgrad f|^{2}\dif\medm=\int_{\skeleton\cap Q}|\wgrad (\harmonic_{Q}u)|^{2}\dif\medm$, then all inequalities in \eqref{e.hmini1} are equalities and $f(\cellctr{Q})=m_{Q}(u)$. Equality in the first inequality forces $\wgrad f=0$ off $\celltree{Q}$, and equality in the Cauchy--Schwarz inequality forces $\wgrad f$ to be constant on each arm $[\cellctr{Q},a]$. Consequently, $f=\harmonic_{Q}u$ on $\celltree{Q}$. For $x\in Q\setminus\celltree{Q}$, the arc $[\pi_{\celltree{Q}}(x),x]$ intersects $\celltree{Q}$ only at $\pi_{\celltree{Q}}(x)$, and hence
    \begin{equation}
      |f(x)-\harmonic_{Q}u(x)| =|f(x)-f(\pi_{\celltree{Q}}(x))| \leq\int_{[\pi_{\celltree{Q}}(x),x]}|\wgrad f|\dif\medm=0.
    \end{equation}
    Thus $f=\harmonic_{Q}u$ on $ \allowbreak Q $.
    \item[\ref{it.hpbdp}] A direct calculation gives
      \begin{align}
        &\phantom{\ \leq}\|\wgrad\harmonic_{Q}u\|_{L^{p}(\skeleton\cap Q,\medm)}^{p}= \cellsize{Q}^{1-p}\sum_{a\in\cellvert{Q}}|u(a)-m_{Q}u|^{p}\\
        &=\cellsize{Q}^{1-p}\sum_{a\in\cellvert{Q}}\left|\frac{1}{\#\cellvert{Q}}\sum_{b\in\cellvert{Q}}(u(a)-u(b))\right|^{p}\overset{\text{\scriptsize(Jensen)}}{\leq} \frac{\cellsize{Q}^{1-p}}{\#\cellvert{Q}}\sum_{a,b\in\cellvert{Q}}|u(a)-u(b)|^{p}\\
        &\leq \frac{\cellsize{Q}^{1-p}}{\#\cellvert{Q}} \cdot (\#\cellvert{Q})^{2}(2\cellsize{Q})^{p-1} \int_{\skeleton\cap Q}|\wgrad u|^{p}\dif\medm \ \text{ (H\"older's inequality) }\\
        &= 2^{p-1} \#\cellvert{Q}\int_{\skeleton\cap Q}|\wgrad u|^{p}\dif\medm.
      \end{align}
    This gives \eqref{e.hpbdp}.
    \item[\ref{it.hnest}] Suppose that there exists $v\in\abscon(R)$ such that $\restr{v}{\cellvert{R}}=\restr{(\harmonic_{Q}u)}{\cellvert{R}}$ and $\int_{\skeleton\cap R}\abs{\wgrad v}^{2}\dif\medm<\int_{\skeleton\cap R}\abs{\wgrad(\harmonic_{Q}u)}^{2}\dif\medm$. Since $\cellatt R\subset\cellvert{R}$, the function $w:=v\one_{R}+(\harmonic_{Q}u)\one_{Q\setminus R}$ belongs to $\abscon(Q)$. Moreover, Proposition~\ref{p.nest}-\ref{it.nest2} gives $R\cap\cellvert{Q}\subset\cellvert{R}$, so $\restr{w}{\cellvert{Q}}=\restr{u}{\cellvert{Q}}$. Therefore,
      \begin{align}
        \int_{\skeleton\cap Q}|\wgrad w|^{2}\dif\medm &=\int_{\skeleton\cap R}|\wgrad v|^{2}\dif\medm +\int_{\skeleton\cap(Q\setminus R)}|\wgrad\harmonic_{Q}u|^{2}\dif\medm\\
        &<\int_{\skeleton\cap R}|\wgrad\harmonic_{Q}u|^{2}\dif\medm +\int_{\skeleton\cap(Q\setminus R)}|\wgrad\harmonic_{Q}u|^{2}\dif\medm.
      \end{align}
    Hence
    \begin{equation}
      \int_{\skeleton\cap Q}|\wgrad w|^{2}\dif\medm < \int_{\skeleton\cap Q}|\wgrad (\harmonic_{Q}u)|^{2}\dif\medm,
    \end{equation}
    contradicting the uniqueness part in \ref{it.hmini}. Therefore $\harmonic_{R}(\harmonic_{Q}u)=\restr{(\harmonic_{Q}u)}{R}$.
  \end{enumerate}
\end{proof}
The interpolation operators below are the unbounded analogues of the piecewise-affine approximations used in \cite[Section~2.5]{BC23}.
\begin{definition}\label{d.inter}
  For $u\in C(\ambient)$ and $n\in\bZ$, we denote by $I_{n}u\in C(\ambient)$ the unique function such that $\restr{(I_{n}u)}{V_{n}}=\restr{u}{V_{n}}$, $\restr{(I_{n}u)}{J}$ is affine for every $J\in\collectA_{n}$, and $I_{n}u$ is constant on every component of $\ambient\setminus G_{n}$; we denote $D_{n}u:=I_{n}u-I_{n-1}u$.

  For $Q\in\collectQ_{k} $, $u\in C(Q)$, and $n \allowbreak \in [k,\infty)\cap\bZ$, define $I_{n}u\in C(Q)$ by the same interpolation on the level-$n$ vertices and arms contained in $Q$. Define $D_{n}u \allowbreak: = I _{n} u - I _{n-1} u \in C(Q)$ for $n\geq k+1$.
\end{definition}
\begin{lemma}\label{l.inter}
  Let $p\in[1,\infty)$, let $u\in \abscon(\ambient,\metric)\cap C_{c}(\ambient)$, and suppose $\wgrad u\in L^{p}(\skeleton,\medm)$. Then:
  \begin{enumerate}[label=\textup{(\arabic*)}, align=right, leftmargin=*, topsep=5pt, parsep=0pt, itemsep=2pt]
    \item\label{it.intat} For every $n\in\bZ$, $I_{n}u,D_{n}u\in\core_{n}$. Moreover,
    \begin{equation}\label{e.intco}
      \sup_{n\in\bZ}\|\wgrad I_{n}u\|_{L^{p}(\skeleton,\medm)} \leq \|\wgrad u\|_{L^{p}(\skeleton,\medm)}.
    \end{equation}
    \item\label{it.intun} We have
    \begin{equation}\label{e.intun}
      \|I_{n}u-u\|_{\sup} \leq 2\sup_{\metric(x,y)\leq 2\ell_{n}}|u(x)-u(y)|.
    \end{equation}
    Consequently, $I_{n}u\to u$ in $L^{q}(\ambient,\meas)$, as $n\to\infty$, for every $q\in[1,\infty)$.
    \item\label{it.intha} If $Q\in\collectQ_{n}$, then
    \begin{equation}\label{e.intha}
      I_{n}(\harmonic_{Q}u)=I_{n+1}(\harmonic_{Q}u)=\harmonic_{Q}u \ \text{on }Q.
    \end{equation}
    Let $h_{Q}:=(u-\harmonic_{Q}u)\one_{Q}$. Then
    \begin{equation}\label{e.dloca}
      \wgrad D_{n+1}u=\wgrad D_{n+1}h_{Q} \ \ \medm\text{-a.e. on }\skeleton\cap Q.
    \end{equation}
    \item\label{it.intla} There exists a constant $C\in(0,\infty)$ independent of $u$ and an integer $N_{0}\in\bN$ depending only on $\diam(\supp(u))$, such that
    \begin{equation}\label{e.bddspt}
      \#\Sett{Q\in\collectQ_{n}}{\supp(I_{n}u)\cap Q\neq\emptyset}\leq C,\ \text{ for all $n\leq -N_{0}$}.
    \end{equation}
    For every $q\in[1,\infty)$, there exists a constant $C_{q}\in(0,\infty)$ independent of $u$ such that, for every $n\leq-N_{0}$,
    \begin{equation}
    	 \|\wgrad I_{n}u\|_{L^{q}(\skeleton,\medm)} \leq C_{q}\|u\|_{\sup}\ell_{n}^{1/q-1},\label{e.intlq}
    \end{equation}
    and \begin{equation}
    	\|I_{n}u\|_{L^{2}(\ambient,\meas)}^{2} \leq C\|u\|_{\sup}^{2}\ell_{n}^{\dimf}. \label{e.intlt}
    \end{equation}
  \end{enumerate}
\end{lemma}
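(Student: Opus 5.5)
The plan is to verify the four parts in order. Every assertion reduces to three facts: $I_{n}u$ is determined arm by arm by the two endpoint values; it is constant on the components of $\ambient\setminus G_{n}$; and the level-$n$ arms, as well as the cells of $\collectQ_{n}$, overlap only in the $\medm$-null, respectively $\meas$-null, vertex set.

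\emph{Part \ref{it.intat}.} Let $J\in\collectA_{n}$, which has length $\ell_{n}$. On $J$ the function $I_{n}u$ is affine with slope $(u(J^{+})-u(J^{-}))/\ell_{n}$. By \eqref{e.oriented-fundamental} and H\"older's inequality,
\begin{equation}
\int_{J}|\wgrad I_{n}u|^{p}\dif\medm=\ell_{n}^{1-p}\Bigl|\int_{J}\wgrad u\dif\medm\Bigr|^{p}\leq\int_{J}|\wgrad u|^{p}\dif\medm .
\end{equation}
Off $G_{n}$ we have $\wgrad I_{n}u=0$. Summing over the $\medm$-essentially disjoint arms then gives \eqref{e.intco}. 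Next, $I_{n}u$ vanishes on every arm whose endpoints lie outside $\supp(u)$, and on every complementary component attached only to such vertices. Hence $I_{n}u$ has compact support, and therefore $I_{n}u\in\core_{n}$. Since $\core_{n-1}\subset\core_{n}$, it follows that $D_{n}u\in\core_{n}$.

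\emph{Part \ref{it.intun}.} Fix $x\in\ambient$ and pick $Q\in\collectQ_{n}$ with $x\in Q$; note $\diam Q=2\ell_{n}$. The value $I_{n}u(x)$ is a convex combination of the values of $u$ at the two endpoints of one level-$n$ arm contained in $Q$, or at one vertex when $x\notin G_{n}$. All of these points lie within distance $2\ell_{n}$ of $x$, which gives \eqref{e.intun} (the factor $2$ leaves room). Since $u\in C_{c}$ is uniformly continuous, we get $I_{n}u\to u$ uniformly. For $n\geq0$ the supports of the $I_{n}u$ all lie in a fixed compact set, namely the $2$-neighbourhood of $\supp(u)$. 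Dominated convergence then yields $I_{n}u\to u$ in every $L^{q}(\ambient,\meas)$ with $q<\infty$.

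\emph{Part \ref{it.intha}.} The arms $[\cellctr{Q},a]$, $a\in\cellvert{Q}$, are exactly the level-$n$ arms in $Q$. By \eqref{e.HQ}, $\harmonic_{Q}u$ is affine on each of them and constant on each component of $Q\setminus\celltree{Q}$, and by Lemma~\ref{l.harmo}-\ref{it.hmini} it agrees with itself on $V_{n}\cap Q=\cellvert{Q}\cup\{\cellctr{Q}\}$. Uniqueness of the interpolant therefore gives $I_{n}(\harmonic_{Q}u)=\harmonic_{Q}u$ on $Q$.

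For level $n+1$, each level-$(n+1)$ arm in $Q$ lies either inside some $[\cellctr{Q},a]$, where an affine function restricts to an affine function, or in a component of $Q\setminus\celltree{Q}$, where $\harmonic_{Q}u$ is constant. So $\harmonic_{Q}u$ itself satisfies the defining properties of $I_{n+1}(\harmonic_{Q}u)$, and uniqueness gives $I_{n+1}(\harmonic_{Q}u)=\harmonic_{Q}u$.

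Next, $h_{Q}$ is continuous on $\ambient$: indeed $u=\harmonic_{Q}u$ on $\cellvert{Q}\supset\cellatt Q$, so $h_{Q}$ vanishes on $\cellatt Q$. We need a locality property: for $k\geq n$, the restriction of $I_{k}$ of a global function to $Q$ depends only on its values at $V_{k}\cap Q$. This holds because every level-$k$ arm and every complementary component meeting $\operatorname{int}Q$ lies in $Q$. By linearity of $I_{k}$, on $Q$ we have
\begin{equation}
D_{n+1}u=D_{n+1}(\harmonic_{Q}u)+D_{n+1}h_{Q},
\end{equation}
and the first term vanishes by \eqref{e.intha}. Taking gradients gives \eqref{e.dloca}.

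\emph{Part \ref{it.intla}.} Choose $N_{0}$ with $\ell_{-N_{0}}\geq\diam(\supp u)$. Then $I_{n}u$ can be nonzero only on arms with an endpoint in $\supp(u)$ and on complementary components attached to such vertices. All of these lie in cells of $\collectQ_{n}$ within distance $2\ell_{n}$ of a fixed point of $\supp(u)$. These cells have measure about $\ell_{n}^{\dimf}$ by \eqref{e.ahlfors}, are $\meas$-essentially disjoint, and sit inside a ball of radius $C\ell_{n}$, so there are at most $C$ of them. This proves \eqref{e.bddspt}.

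Consequently only boundedly many level-$n$ arms carry gradient, each with slope at most $2\|u\|_{\sup}/\ell_{n}$. This gives
\begin{equation}
\|\wgrad I_{n}u\|_{L^{q}}^{q}\lesssim(\|u\|_{\sup}/\ell_{n})^{q}\,\ell_{n},
\end{equation}
which is \eqref{e.intlq}. Also $|I_{n}u|\leq\|u\|_{\sup}$ on a set of measure at most $C\ell_{n}^{\dimf}$, which gives \eqref{e.intlt}.

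The main obstacle is making the locality statement in \ref{it.intha} precise, namely that $\restr{(I_{k}v)}{Q}$ depends only on $\restr{v}{V_{k}\cap Q}$ for $k\geq n$. This is also where the support bookkeeping of \ref{it.intla} needs care: one has to check that every complementary component attached at a vertex of $Q$ lies inside $Q$ or is counted among the neighbouring cells.
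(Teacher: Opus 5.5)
Your proposal is correct and takes essentially the same route as the paper: arm-by-arm Jensen/H\"older for \eqref{e.intco}, a convex-combination bound for \eqref{e.intun}, and the identity $I_{n}(\harmonic_{Q}u)=I_{n+1}(\harmonic_{Q}u)=\harmonic_{Q}u$ plus linearity for \eqref{e.dloca}. For part (4) you use measure counting of cells in a ball of radius $C\ell_{n}$, as the paper does; your explicit locality of $I_{k}$ on $Q$ makes precise a step the paper uses tacitly when it applies \eqref{e.intha} to the global function $u-h_{Q}$.

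One small slip in part (2): for $x\notin G_{n}$, $I_{n}u(x)$ equals the value at the gate point $\pi_{G_{n}}(x)\in\celltree{Q}$, which may be interior to an arm rather than a vertex. It is still a convex combination of values of $u$ at points of $Q$, so the estimate is unaffected.
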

\begin{proof}
  \begin{enumerate}[label=\textup{(\arabic*)}, align=right, leftmargin=*, topsep=5pt, parsep=0pt, itemsep=2pt]
    \item[\ref{it.intat}] Fix $n\in\bZ$ and write $A:=\supp(u)$. By definition, $\restr{(I_{n}u)}{V_{n}}=\restr{u}{V_{n}}$, and the interpolant is affine on every $J\in\collectA_{n}$, and it is constant on every component of $\ambient\setminus G_{n}$. Let $\sQ_{n}(A):=\Sett{Q\in\collectQ_{n}}{Q\cap A\neq\emptyset}$. By the local finiteness of $\collectQ_{n}$, we have $\#\sQ _{n}(A)<\infty$. If $Q\notin\sQ_{n}(A)$, then $u$ vanishes at $\cellvert{Q}$, and therefore $\restr{(I_{n}u)}{Q}=0$. Hence
    \begin{equation}\label{e.support-neighborhood}
      \supp(I_{n}u)\subset\bigcup_{Q\in\sQ_{n}(A)}Q\subset\Sett{x\in\ambient}{\dist(x,A)\leq2\ell_{n}}.
    \end{equation}
    Thus $I_{n}u \in C_{c}(\ambient)$. Since it is piecewise affine on $G_{n}$ and locally constant off $G_{n}$, $I_{n}u\in\core_{n}$. Since $\core_{n-1}\subset\core_{n}$ and $\core_{n}$ is a linear subspace, $D_{n}u=I_{n}u-I_{n-1}u\in\core_{n}$. Let $J\in\collectA_{n}$. Since $u\in\abscon(\ambient,\metric)$, and $\medm(J)=\ell_{n}$, Jensen's inequality gives
      \begin{align}
        \int_{J}|\wgrad I_{n}u|^{p}\dif\medm &= \ell_{n}\left|\frac{1}{\ell_{n}}\int_{J}\wgrad u\dif\medm\right|^{p} \leq \int_{J}|\wgrad u|^{p}\dif\medm.
      \end{align}
    Summing over $J\in\collectA_{n}$ gives \eqref{e.intco}.
    \item[\ref{it.intun}] Fix $x\in\ambient$ and let $z=\pi_{G_{n}}(x)$. Then there exists $Q\in\collectQ_{n}$ such that $x,z\in Q$, and thus $\metric(x,z)\leq 2\ell_{n}$. Moreover, $I_{n}u(x)=I_{n}u(z)$ by definition. If $z\in V_{n}$, then $I_{n}u(z)=u(z)$. If $z\notin V_{n}$, then there exists $J=[J^{-},J^{+}]\in\collectA_{n}$ such that $I_{n}u(z)$ is a convex combination of $u(J^{-})$ and $u(J^{+})$. Therefore
    \begin{equation}
      |I_{n}u(z)-u(z)|\leq\sup_{\metric(x,y)\leq \ell_{n}}|u(x)-u(y)|.
    \end{equation}
    Therefore, since $\metric(x,z)\leq 2\ell_{n}$, we have
      \begin{align}
        |I_{n}u(x)-u(x)| &\leq |I_{n}u(z)-u(z)|+|u(z)-u(x)|\\
        &\leq \sup_{\metric(x,y)\leq \ell_{n}}|u(x)-u(y)|+\sup_{\metric(x,y)\leq 2\ell_{n}}|u(x)-u(y)|\\
        &\leq 2\sup_{\metric(x,y)\leq 2\ell_{n}}|u(x)-u(y)|,
      \end{align}
    which is \eqref{e.intun}. Since $u\in C_{c}(\ambient)$, it is uniformly continuous, so $\sup_{\metric(x,y)\leq 2\ell_{n}}|u(x)-u(y)|\to0$ as $n\uparrow\infty$. Thus $I_{n}u\to u$ in $\norm{\cdot}_{\sup}$. Fix $q\in[1,\infty)$ and $n_{1}\in\bZ$. The set $A_{1}:=\Sett{x\in\ambient}{\dist(x,\supp u)\leq2\ell_{n_{1}}}$ is compact by Proposition~\ref{p.geometry}, so $\meas(A_{1})<\infty$. For every $ \allowbreak n \geq n _{1} $,
    \begin{equation}
      \|I_{n}u-u\|_{L^{q}(\ambient,\meas)} \overset{\eqref{e.support-neighborhood}}{\leq} \meas(A_{1})^{1/q}\|I_{n}u-u\|_{\sup} \overset{\eqref{e.intun}}{\leq}2\meas(A_{1})^{1/q}\sup_{\metric(x,y)\leq2\ell_{n}}|u(x)-u(y)| \to 0.
    \end{equation}
    \item[\ref{it.intha}] Let $Q\in\collectQ_{n}$. Then \eqref{e.intha} is a direct consequence of \eqref{e.HQ}. Let
    \begin{equation}
      w:=u-h_{Q}=u-(u-\harmonic_{Q}u)\one_{Q}=u\one_{\ambient\setminus Q}+(\harmonic_{Q}u)\one_{Q}.
    \end{equation}
    Since $u-\harmonic_{Q}u=0$ on $\cellvert{Q}$, we have $h_{Q}\in C(\ambient)$. On $Q$, we have $w=\harmonic_{Q}u$. Therefore, by \eqref{e.intha}, we have $I_{n}w=I_{n+1}w=w$ on $Q$. By linearity,
    \begin{equation}
      D_{n+1}u-D_{n+1}h_{Q} =(I_{n+1}-I_{n})(u-h_{Q}) =D_{n+1}w=0 \ \ \text{on }Q.
    \end{equation}
    This proves \eqref{e.dloca}.
    \item[\ref{it.intla}] If $u=0$, the assertions are immediate. Assume $u\neq0$. Let $A=\supp(u)$ and fix $x_{0}\in A$. Take $n$ sufficiently negative that $\ell_{n}\geq\diam(A)$. Let $\sQ_{n}(A)$ be defined in the proof of \ref{it.intat}. Then $\bigcup_{Q\in\sQ_{n}(A)}Q\subset B(x_{0},3\ell_{n})$. Using the bounded overlap of $\collectQ_{n}$, the identity $\meas(Q)=\ell_{n}^{\dimf}$, and Ahlfors regularity,
    \begin{equation}
      \#\sQ_{n}(A)\,\ell_{n}^{\dimf}=\sum_{Q\in\sQ_{n}(A)}\meas(Q)\leq C\meas(B(x_{0},3\ell_{n})) \leq C\ell_{n}^{\dimf}.
    \end{equation}
    Thus $\#\sQ_{n}(A)\leq C$. This proves \eqref{e.bddspt}. Since $I_{n}u$ is obtained from values of $u$ by interpolation, $\|I_{n}u\|_{\sup}\leq\|u\|_{\sup}$. The union $\bigcup_{Q\in\sQ_{n}(A)}Q$ has measure at most $C\ell_{n}^{\dimf}$, and hence $\|I_{n}u\|_{L^{2}(\ambient,\meas)}^{2} \leq C\|u\|_{\sup}^{2}\ell_{n}^{\dimf}$, which proves \eqref{e.intlt}. On every $J\in\collectA_{n}$,
    \begin{equation}
      \restr{|\wgrad I_{n}u|}{J} =\frac{|u(J^{+})-u(J^{-})|}{\ell_{n}} \leq 2\|u\|_{\sup}\ell_{n}^{-1}.
    \end{equation}
    Since $\medm(J)=\ell_{n}$, we have
    \begin{equation}
        \|\wgrad I_{n}u\|_{L^{q}(\skeleton,\medm)}^{q} \overset{\eqref{e.bddspt}}{\leq} C\ell_{n}\left(2\|u\|_{\sup}\ell_{n}^{-1}\right)^{q} \leq C_{q}\|u\|_{\sup}^{q}\ell_{n}^{1-q}.
    \end{equation}
    Taking the $q$-th root proves \eqref{e.intlq} and completes the proof.
  \end{enumerate}
\end{proof}
\begin{definition}\label{d.n0}
  Fix $n_{0}\in\bZ$.
  \begin{enumerate}[label=\textup{(\arabic*)}, align=right, leftmargin=*, topsep=5pt, parsep=0pt, itemsep=2pt]
    \item We define the collections of sets
    \begin{equation}\label{e.birth}
      \collectE_{n_{0}}:=\collectA_{n_{0}}, \ \text{ and }\ \collectE_{k} :=\{I\in\collectA_{k}:(I\setminus V_{k})\cap G_{k-1}=\emptyset\},\quad k\in(n_{0},\infty)\cap\bZ.
    \end{equation}
    \item Let $I\in\collectE_{k}$ for some $k\geq n_{0}$ and let $m\ge k$. Set $\sP_{m}^{I}:=\{J\in\collectA_{m}:J\subset I\}$, which has finite cardinality. We define a $\sigma$-algebra $\sF_{m}^{I}$ on $I$ by $\sF_{m}^{I}:=\sigma\left(\sP_{m}^{I}\right)$. Clearly, $\sF_{m}^{I}$ is a sub-$\sigma$-algebra of the Borel $\sigma$-algebra on $I$, and $\{\sF_{m}^{I}\}_{m=k}^{\infty}$ is an increasing filtration.
  \end{enumerate}
\end{definition}
\begin{lemma}\label{l.birth}
  Fix $n_{0}\in \bZ$. With the notation in Definition \ref{d.n0}, we have
  \begin{enumerate}[label=\textup{(\arabic*)}, align=right, leftmargin=*, topsep=5pt, parsep=0pt, itemsep=2pt]
    \item\label{it.birth1} For distinct $I,J\in\bigcup_{k\ge n_{0}}\collectE_{k}$, we have $\medm(I\cap J)=0$. For every $N\ge n_{0}$, we have $G_{N}= \bigcup_{k=n_{0}}^{N}\bigcup_{I\in\collectE_{k}}I$.
    \item\label{it.birth2} Let $I\in\collectE_{k}$ for some $k\geq n_{0}$. Let $u\in \abscon(\ambient,\metric)\cap C_{c}(\ambient)$. Then, for every $m\ge n_{0}$,
    \begin{equation}\label{e.condi}
      \restr{(\wgrad(I_{m} u))}{I} =
      \begin{cases}
        0,& \text{if }n_{0}\le m<k,\\[2mm]
        \bE_{I}\left[\restr{\wgrad u}{I}\mid\sF_{m}^{I}\right],& \text{if }m\ge k,
      \end{cases}
      \ \ \medm\text{-a.e. on }I,
    \end{equation}
    where $\bE_{I}$ denotes expectation on the probability space $(I,\sB (I),\medm ( I ) ^{-1} \restr{\medm}{I})$ and $\sB (I)$ is the Borel $\sigma$-algebra on $I$.
  \end{enumerate}
\end{lemma}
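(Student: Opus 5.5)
The proof is an induction on the level $k$ together with the self-similar structure of the arms. For part \ref{it.birth1}, I will first show that every $J\in\collectA_{k+1}$ is either contained in some arm of $\collectA_{k}$ or satisfies $(J\setminus V_{k+1})\cap G_{k}=\emptyset$. This comes from the cell structure. Each $Q\in\collectQ_{k}$ splits into five subcells of size $\ell_{k+1}$, and $\celltree{Q}$ is made of the four segments $[\cellctr{Q},a]$, $a\in\cellvert{Q}$. Each such segment is subdivided into three level-$(k+1)$ arms lying on the trees of the central subcell and the corner subcell. The remaining level-$(k+1)$ arms inside $Q$ meet $\celltree{Q}$ only at vertices of $V_{k+1}$. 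These are the arms of the corner subcells pointing away from the diagonal segments, and they are exactly the newly born arms.

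With this dichotomy, $G_{k+1}=G_{k}\cup\bigcup_{I\in\collectE_{k+1}}I$ follows, and an induction from $G_{n_{0}}=\bigcup_{I\in\collectA_{n_{0}}}I$ gives $G_{N}=\bigcup_{k=n_{0}}^{N}\bigcup_{I\in\collectE_{k}}I$. For the $\medm$-a.e. disjointness, take $I\in\collectE_{k}$ and $J\in\collectE_{j}$ with $j>k$. The relative interior of $J$ avoids $G_{j-1}\supset I$, so $I\cap J\subset V_{j}$, which is finite on $J$. When $j=k$, distinct arms of the same level meet only in endpoints.

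For part \ref{it.birth2}, fix $I\in\collectE_{k}$ and first take $m<k$. Since $I\setminus V_{k}$ does not meet $G_{k-1}\supset G_{m}$, the open arm lies in $\ambient\setminus G_{m}$. Being connected, it sits inside one component, where $I_{m}u$ is constant, so $\wgrad I_{m}u=0$ $\medm$-a.e. on $I$. For $m=k=n_{0}$ there is nothing to check, since then $k\le m$.

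Now take $m\ge k$. I will check that $\sP_{m}^{I}$ is a partition of $I$ into $3^{m-k}$ arms of length $\ell_{m}$, up to $\medm$-null endpoints. This holds because $I$, viewed inside its level-$k$ cell tree, is subdivided by the level-$m$ arms, by the same self-similar subdivision as in part \ref{it.birth1}. On each $J\in\sP_{m}^{I}$ the interpolant $I_{m}u$ is affine and agrees with $u$ at $J^{\pm}\in V_{m}$. By \eqref{e.oriented-fundamental} and the fact that the orientation is constant on arms and compatible with refinement,
\[
\restr{\wgrad I_{m}u}{J}=\frac{u(J^{+})-u(J^{-})}{\ell_{m}}=\frac{1}{\medm(J)}\int_{J}\wgrad u\dif\medm .
\]
This is the conditional expectation of $\restr{\wgrad u}{I}$ with respect to the finite partition $\sigma$-algebra $\sF_{m}^{I}$, and $\wgrad u\in L^{1}(I)$ because $u$ is absolutely continuous on the arc $I$.

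I expect the main obstacle to be the combinatorial geometry. One must justify that every level-$(k+1)$ arm is either nested in a level-$k$ arm or newborn, and that descendants of an arm exactly tile it, including for the unbounded negative levels with the $3^{k}$ scaling. I would handle this by working inside a single cell using the address description $3^{k}F_{w}$ and Proposition~\ref{p.nest}, reducing everything to the explicit first-level picture of $\vicsek$.
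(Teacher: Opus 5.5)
Your proposal is correct and follows essentially the same route as the paper. The nested-or-newborn dichotomy for level-$m$ arms gives both the $\medm$-null overlaps and the inductive identity $G_{m}=G_{m-1}\cup\bigcup_{I\in\collectE_{m}}I$. Part (2) then comes from the constancy of $I_{m}u$ on components of $\ambient\setminus G_{m}$ and the affine identity $\restr{\wgrad I_{m}u}{J}=\medm(J)^{-1}\int_{J}\wgrad u\dif\medm$ on each $J\in\sP_{m}^{I}$.

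The paper only asserts two steps that you plan to justify: the dichotomy itself, which you reduce to the first-level picture of $\vicsek$ and Proposition~\ref{p.nest}, and the fact that $\sP_{m}^{I}$ tiles $I$ up to endpoints.
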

\begin{proof}
  \begin{enumerate}[label=\textup{(\arabic*)}, align=right, leftmargin=*, topsep=5pt, parsep=0pt, itemsep=2pt]
    \item[\ref{it.birth1}] For every $m>n_{0}$ and every $J\in\collectA_{m}$, either there exists a unique $\wh {J} \in\collectA_{m-1}$ such that $J\subset\wh {J} $, or $(J\setminus V_{m})\cap G_{m-1}=\emptyset$. Let $I\in\collectE_{k}$ and $J\in\collectE_{l}$ be distinct. If $k=l$, then $I$ and $J$ are closures of two distinct connected components of $G_{k}\setminus V_{k}$, and hence $I\cap J\subset V_{k}$. If $k<l$, then $I\subset G_{k}\subset G_{l-1}$, whereas $(J\setminus V_{l})\cap G_{l-1}=\emptyset$, so $I\cap J\subset V_{l}$. Since $\medm(V_{m})=0$ for every $m\in\bZ$, we obtain $\medm(I\cap J)=0$.

    Note that, for every $ \allowbreak m > n _{0} $,
    \begin{equation}
        G_{m} =\Bigg(\bigcup_{\wh {J}\in\collectA_{m-1}} \bigcup_{\substack{J\in\collectA_{m}\\
        J\subset\wh {J}}}J \Bigg) \cup\ \bigcup_{I\in\collectE_{m}}I =G_{m-1}\cup\bigcup_{I\in\collectE_{m}}I,\ \text{ and }\ G_{n_{0}}=\bigcup_{I\in\collectE_{n_{0}}}I.
    \end{equation}
   By induction, we have $G_{N}=\bigcup_{k=n_{0}}^{N}\bigcup_{I\in\collectE_{k}}I$ for all $N\geq n_{0}$.

    \item[\ref{it.birth2}] If $n_{0}\le m<k$, then $G_{m}\subset G_{k-1}$ and $(I\setminus V_{k})\cap G_{m}=\emptyset$; by the definition of $I_{m} u$, the function $I_{m} u$ is constant on $I\setminus V_{k}$. Hence $\restr{(\wgrad I_{m} u)}{I}=0$, $\medm\text{-a.e.}$, which proves the first line of \eqref{e.condi}. Let $m\ge k$ and $J=[J^{-},J^{+}]\in\sP_{m}^{I}$. Since $I_{m} u$ is affine on $J$,
      \begin{align}
        \restr{\wgrad I_{m} u}{J}= \frac{u(J^{+})-u(J^{-})}{\medm(J)} = \frac{1}{\medm(J)}\int_{J}\wgrad u\dif\medm =\restr{\bE_{I}[\restr{(\wgrad u)}{I}\mid\sF_{m}^{I}]}{J}, \ \text{$\medm$-a.e.}.
      \end{align}
    This proves the second line of \eqref{e.condi}.
  \end{enumerate}
\end{proof}

\begin{lemma}
  For every $p\in[1,\infty)$ and every $u\in C_{c}(\ambient)\cap\sobolev{p}(\ambient)$,
  \begin{equation}\label{e.interfullconv}
    \lim_{m\to\infty} \left( \lVert I_{m} u-u\rVert_{L^{p}(\ambient,\meas)} + \lVert\wgrad I_{m} u-\wgrad u\rVert_{L^{p}(\skeleton,\medm)} \right)=0.
  \end{equation}
\end{lemma}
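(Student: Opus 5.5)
The $L^{p}(\ambient,\meas)$ part is already done: $u\in C_{c}(\ambient)$, so Lemma~\ref{l.inter}-\ref{it.intun} gives $I_{m}u\to u$ in $L^{p}(\ambient,\meas)$ as $m\to\infty$. It remains to prove $\wgrad I_{m}u\to\wgrad u$ in $L^{p}(\skeleton,\medm)$. I will do this with martingale convergence, arm by arm, through the decomposition of Lemma~\ref{l.birth}.

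Fix $\varepsilon>0$. Since $\supp(u)$ is compact, choose $n_{0}\in\bZ$ so negative that $\supp(u)$ lies in a single cell of size $\ell_{n_{0}}$, or at least in a bounded union of level-$n_{0}$ cells. By Lemma~\ref{l.birth}-\ref{it.birth1}, the arms $I\in\bigcup_{k\geq n_{0}}\collectE_{k}$ cover $\skeleton$ up to a $\medm$-null set (since $\skeleton\subset\bigcup_{N}G_{N}$ up to vertices), and they overlap only on null sets. Hence
\begin{equation}
\|\wgrad I_{m}u-\wgrad u\|_{L^{p}(\skeleton,\medm)}^{p}=\sum_{k\geq n_{0}}\sum_{I\in\collectE_{k}}\int_{I}|\wgrad I_{m}u-\wgrad u|^{p}\dif\medm .
\end{equation}

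Now take $m\geq n_{0}$. By \eqref{e.condi}, on each $I\in\collectE_{k}$ with $k\leq m$ we have $\wgrad I_{m}u=\bE_{I}[\wgrad u\mid\sF_{m}^{I}]$. On arms with $k>m$ we have $\wgrad I_{m}u=0$, so their contribution is $\int_{I}|\wgrad u|^{p}\dif\medm$. This gives the split
\begin{equation}
\sum_{k\leq m}\sum_{I\in\collectE_{k}}\bigl\|\bE_{I}[\wgrad u\mid\sF_{m}^{I}]-\wgrad u\bigr\|_{L^{p}(I)}^{p}+\sum_{k>m}\sum_{I\in\collectE_{k}}\|\wgrad u\|_{L^{p}(I)}^{p}.
\end{equation}
The second sum is a tail of the convergent series $\sum_{k,I}\|\wgrad u\|_{L^{p}(I)}^{p}=\|\wgrad u\|_{L^{p}}^{p}<\infty$, so it tends to $0$ as $m\to\infty$.

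For the first sum, fix $I$. The diameters of the atoms of $\sF_{m}^{I}$ equal $\ell_{m}\to0$, so $\bigvee_{m}\sF_{m}^{I}=\sB(I)$. The $L^{p}$ martingale convergence theorem for $p\in[1,\infty)$, with the closed martingale $\bE_{I}[f\mid\sF_{m}^{I}]$ and $f\in L^{p}$, then gives convergence to $\wgrad u$ in $L^{p}(I)$. To pass to the infinite sum I use dominated convergence for series. Each term is bounded by $2^{p}\|\wgrad u\|_{L^{p}(I)}^{p}$ by the conditional Jensen contraction, and these bounds are summable. Since each individual term tends to $0$, the whole sum tends to $0$. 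Alternatively, without the martingale theorem, one can approximate $\wgrad u|_{I}$ in $L^{p}$ by continuous functions, for which uniform convergence of the averages is clear, and use the uniform $L^{p}$ contractivity of the averaging operators.

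The main obstacle is justifying the use of Lemma~\ref{l.birth}: that the family $\collectE_{k}$, $k\geq n_{0}$, really exhausts $\skeleton$ up to $\medm$-null sets, with every skeleton point lying in some $G_{N}$. The second half of Lemma~\ref{l.birth}-\ref{it.birth1} gives $G_{N}$ as a union of these arms, and $\skeleton\setminus\bigcup_{N}G_{N}$ is contained in the countable vertex set, hence $\medm$-null. The case $p=1$ needs no separate treatment: the martingale argument works in $L^{1}$ for closed martingales, and the density argument works for $p=1$ too.
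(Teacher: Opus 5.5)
Your proposal is correct and follows essentially the same route as the paper. You decompose $\skeleton$ into the arms of $\bigcup_{k\ge n_0}\collectE_k$ via Lemma~\ref{l.birth}, identify $\wgrad I_m u$ on each arm with a conditional expectation (or with $0$ before the arm is born), prove convergence arm by arm, and sum using dominated convergence with the bound $2^p\|\wgrad u\|_{L^p(I,\medm)}^p$. The differences are cosmetic. The paper uses your fallback argument (approximation by continuous functions plus contractivity of conditional expectation) rather than the $L^p$ martingale convergence theorem. Your special choice of $n_0$ is not needed, since any $n_0\in\bZ$ works.
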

\begin{proof}
  We first fix $n_{0}\in\bZ$ and use the notation from Lemma~\ref{l.birth}. For $I \in \allowbreak \collectE _{k} $ and $m \allowbreak \geq n _{0} $, define
  \begin{equation}
    \wt {M}_{m}^{I} : =
    \begin{cases}
      0, & n_{0}\le m<k,\\[1mm]
      \bE_{I}[\restr{(\wgrad u)}{I}\mid\sF_{m}^{I}], & m\ge k.
    \end{cases}
  \end{equation}
  By Lemma~\ref{l.birth}-\ref{it.birth2},
  \begin{equation}\label{e.intercondexp}
    (\wgrad I_{m} u)|_{I}=\wt {M}_{m}^{I} \qquad \medm\text{-a.e. on }I.
  \end{equation}

We first verify convergence in $L^{p}(I,\medm)$. Fix $\varepsilon>0$ and choose $v\in C(I)$ with $\|\wgrad u-v\|_{L^{p}(I,\medm)}<\varepsilon/2$. By the uniform continuity of $v$, there exists $N\geq k $ such that the function $h:=\bE_{I}[v\mid\sF_{N}^{I}]$ satisfies
  \begin{equation}
    \|v-h\|_{L^{p}(I,\medm)} \leq\medm(I)^{1/p} \sup_{\substack{x,y\in I\\\metric(x,y)\leq\ell_{N}}}|v(x)-v(y)| <\frac{\varepsilon}{2}.
  \end{equation}
  Then $h$ is $\sF_{N}^{I}$-measurable and $ \allowbreak \| \wgrad u - h \allowbreak \| _{L^{p}(I,\medm)} < \varepsilon $. For $m\ge N$, by the contractivity of conditional expectation, 
    \begin{align}
      \left\lVert \bE_{I}[\wgrad u\mid\sF_{m}^{I}]-\wgrad u \right\rVert_{L^{p}(I,\medm)} &\le \left\lVert \bE_{I}[\wgrad u-h\mid\sF_{m}^{I}] \right\rVert_{L^{p}(I,\medm)} +\lVert h-\wgrad u\rVert_{L^{p}(I,\medm)} \\
      &\le 2\lVert \wgrad u-h\rVert_{L^{p}(I,\medm)} <2\varepsilon.
    \end{align}
  Since
    \begin{equation}
      \sum_{k\geq n_{0}}\sum_{I\in\collectE_{k}} \lVert\wgrad u\rVert_{L^{p}(I,\medm)}^{p}=\lVert\wgrad u\rVert_{L^{p}(\skeleton,\medm)}^{p}, \text{ and }
      \lVert\wt {M}_{m}^{I}-\wgrad u\rVert_{L^{p}(I,\medm)}^{p} \leq2^{p}\lVert\wgrad u\rVert_{L^{p}(I,\medm)}^{p},\text{ for } m\geq n_{0},\label{e.DCT1}
    \end{equation}
  we have
  \begin{equation}
    \lim_{m\to\infty}\lVert \wgrad I_{m} u-\wgrad u\rVert_{L^{p}(\skeleton,\medm)}^{p} \overset{\eqref{e.intercondexp}}{=}\lim_{m\to\infty} \sum_{k= n_{0}}^{\infty}\sum_{I\in\collectE_{k}} \lVert \wt {M}_{m}^{I}-\wgrad u\rVert_{L^{p}(I,\medm)}^{p}\overset{\eqref{e.DCT1}}{=}0.\label{e.DCT2}
  \end{equation}
  The last equality follows from the dominated convergence theorem, using \eqref{e.DCT1}. By \eqref{e.DCT2} and Lemma~\ref{l.inter}-\ref{it.intun}, we obtain \eqref{e.interfullconv}.
\end{proof}
\begin{lemma}\label{l.bottl}
  Let $p\in[1,\infty)$ and $Q\in\collectQ$. Define
  \begin{equation}\label{e.cutof}
    \phi_{Q}(x):=\one_{Q}(x)\min\left( 1, \frac{5\dist\bigl(\pi_{\celltree{Q}}(x),\cellvert{Q}\bigr)}{\cellsize{Q}} \right)\in[0,1], \ \text{for }x\in \ambient.
  \end{equation}
  Then $\phi_{Q}\in\sobolev{p}_{0}(Q)$,
  	\begin{align}
      \|\phi_{Q}\|_{L^{1}(\ambient,\meas)}\leq \meas(Q) = \cellsize{Q}^{\dimf},\ \text{ and }\ \|\wgrad\phi_{Q}\|_{L^{p}(\skeleton,\medm)}^{p}\leq C_{p}\cellsize{Q}^{1-p},\label{e.cutgr}
    \end{align}
    for some constant $C_{p}$ depending only on $p$. If we denote $E_{Q}:=\Sett{x\in Q}{\phi_{Q}(x)=1}$, then
  \begin{equation}\label{e.eqmas}
    \meas(E_{Q})\geq\frac{1}{5}\meas(Q), \ \text{ and }\ \dist(E_{Q},\ambient\setminus Q)\geq\frac{1}{5}\cellsize{Q}.
  \end{equation}
\end{lemma}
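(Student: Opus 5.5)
The plan is to verify the claims directly from the geometry of the cell $Q$, using the projection $\pi_{\celltree{Q}}$ onto the tree $\celltree{Q}$ (Lemma~\ref{l.projt}, which is $1$-Lipschitz). By self-similarity, $Q=3^{k}F_{w}(\vicsek)$ is an isometric copy of $\vicsek$ scaled by $\cellsize{Q}$. It therefore suffices to understand $\phi_{Q}$ as the composition of the scalar function $s\mapsto\min(1,5s/\cellsize{Q})$ with $x\mapsto\dist(\pi_{\celltree{Q}}(x),\cellvert{Q})$.

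\textbf{Regularity and boundary values.} On $\celltree{Q}$, the function $y\mapsto\dist(y,\cellvert{Q})$ is $1$-Lipschitz, and on each arm $[\cellctr{Q},a]$ it equals $\metric(y,a)$, which is affine. Hence $\phi_{Q}$ restricted to $Q$ is Lipschitz with constant $5/\cellsize{Q}$. It is constant on components of $Q\setminus\celltree{Q}$, and it vanishes on $\cellvert{Q}\supset\cellatt Q$. The points of $\cellvert{Q}$ project to themselves, so $\phi_{Q}=0$ there.

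Continuity of the zero extension across $\cellatt Q$ then follows, because $\cellatt Q\subset\cellvert{Q}$. The extension is Lipschitz on $\ambient$, hence lies in $\abscon(\ambient,\metric)$. So $\phi_{Q}\in\sobolev{p}_{0}(Q)$, as in the remark on zero extensions. I also need $\phi_{Q}\in L^{p}$, which is clear since $0\le\phi_{Q}\le1$ and $\meas(Q)<\infty$.

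\textbf{The two estimates \eqref{e.cutgr}.} The $L^{1}$ bound is immediate from $0\le\phi_{Q}\le\one_{Q}$ and $\meas(Q)=\cellsize{Q}^{\dimf}$. For the gradient, $\wgrad\phi_{Q}$ vanishes $\medm$-a.e. off $\celltree{Q}$, since $\phi_{Q}$ is locally constant there. On $\celltree{Q}$ it is nonzero only on the segments of length $\cellsize{Q}/5$ adjacent to each of the four vertices, where $|\wgrad\phi_{Q}|=5/\cellsize{Q}$. Thus
\[
\|\wgrad\phi_{Q}\|_{L^{p}}^{p}\le 4\cdot\frac{\cellsize{Q}}{5}\Bigl(\frac{5}{\cellsize{Q}}\Bigr)^{p}=C_{p}\cellsize{Q}^{1-p}.
\]
The one point to check is that off-tree pieces do not contribute. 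This uses the fact that $\wgrad$ of a function factoring through $\pi_{\celltree{Q}}$ is zero on arcs leaving $\celltree{Q}$.

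\textbf{The set $E_{Q}$ \eqref{e.eqmas}; this is the main step.} The central subcell $Q_{0}=3^{k}F_{w0}(\vicsek)$ has size $\cellsize{Q}/3$ and is centered at $\cellctr{Q}$. Its vertices lie on $\celltree{Q}$ at distance $\cellsize{Q}/3$ from $\cellctr{Q}$, hence at distance $2\cellsize{Q}/3\ge\cellsize{Q}/5$ from $\cellvert{Q}$. Every $x\in Q_{0}$ projects into $\celltree{Q}\cap Q_{0}$, which lies within distance $\cellsize{Q}/3$ of $\cellctr{Q}$. I expect to check this via Lemma~\ref{l.projt}, using convexity of $Q_{0}$ and the fact that $\cellctr{Q}\in Q_{0}$. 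Consequently $\phi_{Q}=1$ on $Q_{0}$, and $\meas(E_{Q})\ge\meas(Q_{0})=\meas(Q)/5$.

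For the distance bound, take $x\in E_{Q}$ and $y\notin Q$. The geodesic $[x,y]$ exits $Q$ through some $a\in\cellatt Q\subset\cellvert{Q}$. Since $\pi_{\celltree{Q}}$ is $1$-Lipschitz and fixes $a$, and since $\dist(\pi_{\celltree{Q}}(x),\cellvert{Q})\ge\cellsize{Q}/5$, we get
\[
\metric(x,y)\ge\metric(x,a)\ge\metric\bigl(\pi_{\celltree{Q}}(x),a\bigr)\ge\cellsize{Q}/5.
\]

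The expected obstacle is bookkeeping around the projection lemma: confirming that subcells project consistently, and that geodesics leaving $Q$ pass through $\cellvert{Q}$. Both are tree-convexity facts that I would cite from Lemma~\ref{l.projt}.
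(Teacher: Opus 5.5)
Your proposal is correct and follows essentially the paper's proof. The Lipschitz bound comes from the $1$-Lipschitz projection $\pi_{\celltree{Q}}$ together with the exit point $a\in\cellatt Q\subset\cellvert{Q}$ on geodesics leaving $Q$. The gradient is computed explicitly on the four segments of length $\cellsize{Q}/5$ at the vertices. The measure bound uses the central subcell $Q_0$ projecting into $\celltree{Q_0}$; the paper isolates this as Lemma~\ref{l.proj6} and proves it by the branch-point and convexity argument you anticipate.

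The only cosmetic difference is in the bound $\dist(E_Q,\ambient\setminus Q)\geq\cellsize{Q}/5$. You use that $\pi_{\celltree{Q}}$ is $1$-Lipschitz and fixes $a$. The paper instead uses the decomposition $\metric(x,a)=\metric(x,\pi_{\celltree{Q}}(x))+\metric(\pi_{\celltree{Q}}(x),a)$. Both give the same estimate.
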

\begin{proof}
   Since $0\leq\phi_{Q}\leq1$ and $\supp(\phi_{Q})\subset Q$, we have $\|\phi_{Q}\|_{L^{1}(\ambient,\meas)} \leq \meas(Q) = \cellsize{Q}^{\dimf}$, and this proves the first assertion in \eqref{e.cutgr}. Define $\psi_{Q}(z):=1\wedge(\frac{5}{\cellsize{Q}}\dist(z,\cellvert{Q}))$ for $z\in\celltree{Q}$. By Lemma~\ref{l.projt}, $\pi_{\celltree{Q}}$ is $1$-Lipschitz, while $z\mapsto\dist(z,\cellvert{Q})$ is $1$-Lipschitz. Hence
  \begin{equation}\label{e.philipQ}
    |\psi_{Q}(\pi_{\celltree{Q}}(x))-\psi_{Q}(\pi_{\celltree{Q}}(y))| \leq \frac{5}{\cellsize{Q}}\metric(x,y), \ \text{ for all } x,y\in Q.
  \end{equation}
  For every $a\in\cellvert{Q}$, $\phi_{Q}(a)=0$. Since $\cellatt Q\subset\cellvert{Q}$, $\phi_{Q}\in C(\ambient)$. Moreover, if $x\in Q$ and $y\in\ambient\setminus Q$, then the geodesic $[x,y]$ meets $\cellatt Q$ at some $a\in\cellvert{Q}$, and
    \begin{align}
      |\phi_{Q}(x)-\phi_{Q}(y)| &=|\phi_{Q}(x)-\phi_{Q}(a)| \overset{\eqref{e.philipQ}}{\leq} \frac{5}{\cellsize{Q}}\metric(x,a) \leq \frac{5}{\cellsize{Q}}\metric(x,y).\label{e.slop2}
    \end{align}
  Thus $\phi_{Q}\in\Lip(\ambient,\metric)\subset\abscon(\ambient,\metric)$, is constant on every component of $Q\setminus\celltree{Q}$, and
    \begin{align}
      |\wgrad\phi_{Q}| &=5\cellsize{Q}^{-1}\one_{\Sett{z\in\celltree{Q}}{\dist(z,\cellvert{Q})<\cellsize{Q}/5}}\leq 5\cellsize{Q}^{-1}\one_{\celltree{Q}}, \qquad\medm\text{-a.e. on }\skeleton,\\
      \|\wgrad\phi_{Q}\|_{L^{p}(\skeleton,\medm)}^{p} &=4\frac{\cellsize{Q}}{5}\left(\frac{5}{\cellsize{Q}}\right)^{p} =4\cdot5^{p-1}\cellsize{Q}^{1-p}\lesssim\cellsize{Q}^{-p}\medm(\celltree{Q})\lesssim \cellsize{Q}^{1-p},
    \end{align}
  which proves the second assertion in \eqref{e.cutgr}. For all sufficiently large $m$, we have $I_{m}\phi_{Q}\in\core(Q)$. By \eqref{e.intun} and \eqref{e.interfullconv},
  \begin{equation}
    \lim_{m\to\infty}\left(\|I_{m}\phi_{Q}-\phi_{Q}\|_{L^{p}(\ambient,\meas)} + \|\wgrad I_{m}\phi_{Q}-\wgrad\phi_{Q}\|_{L^{p}(\skeleton,\medm)} \right)=0.
  \end{equation}
  The fact that $\phi_{Q}\in\sobolev{p}_{0}(Q)$ follows directly from its definition: $\phi_{Q}\in\abscon(\ambient,\metric)$, $\wgrad\phi_{Q}\in L^{p}(\skeleton,\medm)$ by \eqref{e.cutgr}, $\phi_{Q}\in L^{p}(\ambient,\meas)$ because $0\leq\phi_{Q}\leq\one_{Q}$, and $\phi_{Q}=0$ on $ \allowbreak \cellatt Q$. Let $x\in E_{Q}$. Then the definition of $\phi_{Q}$ gives $\dist\bigl(\pi_{\celltree{Q}}(x),\cellvert{Q}\bigr) \geq \frac{1}{5}\cellsize{Q}$. By Lemma \ref{l.proj6}, $\meas(E_{Q})\geq\frac{1}{5}\meas(Q)$. For every $a\in\cellvert{Q}$, Lemma~\ref{l.projt}-\ref{it.proj3} gives
    \begin{equation}
      \metric(x,a)= \metric\bigl(x,\pi_{\celltree{Q}}(x)\bigr) + \metric\bigl(\pi_{\celltree{Q}}(x),a\bigr)\geq \dist\bigl(\pi_{\celltree{Q}}(x),\cellvert{Q}\bigr) \geq \frac{1}{5}\cellsize{Q},\ \text{ for $x\in E_{Q}$}. \label{e.xvertsep}
    \end{equation}
  If $y\in\ambient\setminus Q$, then $[x,y]$ meets $\cellatt Q\subset\cellvert{Q}$ at some point $a$, and hence $ \metric(x,y) \geq \metric(x,a) \overset{\eqref{e.xvertsep}}{\geq} \frac{\cellsize{Q}}{5}$. Taking the infimum over $y\in\ambient\setminus Q$ and then over $x\in E_{Q}$ proves the second assertion in \eqref{e.eqmas}.
\end{proof}

\begin{corollary}\label{c.coredense}
  For every $p\in[1,\infty)$, the space $\core$ is dense in $\sobolev{p}(\ambient)$ for the norm $\|\cdot\|_{L^{p}(\ambient,\meas)}+\|\wgrad \cdot\|_{L^{p}(\skeleton,\medm)}$.
\end{corollary}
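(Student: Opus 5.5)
The density follows in two approximation steps: first truncate to compact support, then discretize with the interpolation operators. The second step is already handled by \eqref{e.interfullconv}, so the real work is the first.

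\textbf{Step 1: cutoff to compact support.} Fix $u\in\sobolev{p}(\ambient)$, using its absolutely continuous representative, and let $\varepsilon>0$. For $R>0$ and a base point $x_{0}$, define the cutoff
\[
\eta_{R}(x):=\min\bigl(1,\max(0,2-\metric(x_{0},x)/R)\bigr).
\]
This function is $R^{-1}$-Lipschitz, hence lies in $\abscon(\ambient,\metric)$. It equals $1$ on $B(x_{0},R)$, vanishes outside $B(x_{0},2R)$, and satisfies $|\wgrad\eta_{R}|\le R^{-1}$ $\medm$-a.e.

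On each local arc chart the product rule holds, so $u\eta_{R}\in\abscon(\ambient,\metric)\cap C_{c}(\ambient)$ with
\[
\wgrad(u\eta_{R})=\eta_{R}\wgrad u+u\wgrad\eta_{R}.
\]
Convergence $u\eta_{R}\to u$ in $L^{p}(\meas)$ and $\eta_{R}\wgrad u\to\wgrad u$ in $L^{p}(\medm)$ follow by dominated convergence. The remaining term is
\[
\|u\wgrad\eta_{R}\|_{L^{p}(\medm)}^{p}\le R^{-p}\int_{\skeleton\cap(B(x_{0},2R)\setminus B(x_{0},R))}|u|^{p}\dif\medm .
\]

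\textbf{The main obstacle} is that $u$ is controlled in $L^{p}(\meas)$, while this integral is taken against $\medm$, and the two measures are mutually singular. To bridge them, I would use cell-wise oscillation control. For a cell $Q$ of size comparable to $R$, and any $x,y\in Q$, the fundamental theorem along $[x,y]$ gives
\[
|u(x)-u(y)|\le \int_{[x,y]}|\wgrad u|\dif\medm\le (2\ell(Q))^{1-1/p}\|\wgrad u\|_{L^{p}(Q\cap\skeleton,\medm)}.
\]
Consequently,
\[
\sup_{Q}|u|\lesssim \meas(Q)^{-1/p}\|u\|_{L^{p}(Q,\meas)}+\ell(Q)^{1-1/p}\|\wgrad u\|_{L^{p}(Q)} .
\]
Since $\medm(\skeleton\cap Q)$ may be infinite, I would avoid integrating $|u|^{p}$ over all of $\skeleton\cap Q$. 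Instead I would modify the cutoff so that $\wgrad\eta_{R}$ is supported only on the tree $\celltree{Q}$ of cells $Q\in\collectQ_{n}$ with $\ell_{n}=R$. Concretely, I would take $\eta=\sum_{Q}\phi$-type functions built as in Lemma~\ref{l.bottl}: equal to $1$ on the union of level-$n$ cells meeting $B(x_{0},R)$, and decaying linearly along $\celltree{Q}$ of the neighbouring cells. Then $|\wgrad\eta|\lesssim R^{-1}\one_{\bigcup\celltree{Q}}$, each $\medm(\celltree{Q})=4\ell(Q)$, and the number of transition cells is $\lesssim1$ up to scale-invariant counting by Ahlfors regularity. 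This gives
\[
\|u\wgrad\eta\|_{L^{p}}^{p}\lesssim\sum_{Q\text{ transition}}R^{1-p}\sup_{Q}|u|^{p}\lesssim \sum_{Q}\bigl(R^{1-p-\dimf}\|u\|_{L^{p}(Q,\meas)}^{p}+\|\wgrad u\|_{L^{p}(Q)}^{p}\bigr).
\]
Both sums run over cells leaving every compact set as $R\to\infty$, so they tend to $0$ because $u\in L^{p}(\meas)$ and $\wgrad u\in L^{p}(\medm)$. The factor $R^{1-p-\dimf}\le1$ for $R\ge1$.

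\textbf{Step 2: discretization.} The function $v:=u\eta$ lies in $C_{c}(\ambient)\cap\sobolev{p}(\ambient)$. By \eqref{e.interfullconv}, $I_{m}v\to v$ in the $\sobolev{p}$ norm. By Lemma~\ref{l.inter}-\ref{it.intat}, $I_{m}v\in\core_{m}\subset\core$. A diagonal argument in $\varepsilon$ then finishes the proof.
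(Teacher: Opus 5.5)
Your proposal is correct and has the paper's overall architecture: reduce to $C_{c}(\ambient)\cap\sobolev{p}(\ambient)$ by a cutoff, then use \eqref{e.interfullconv}, Lemma~\ref{l.inter}-\ref{it.intat} and a diagonal argument. The difference lies in the cutoff step.

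The paper cites \cite[Proposition~3.6]{BCY25} for $p>1$. For $p=1$ it first shows $f\in C_{0}(\ambient)$ through a unit-scale pointwise bound, and then uses $\chi_{j}=\phi_{3^{j}\vicsek}$, whose gradients have uniformly bounded $L^{1}(\skeleton,\medm)$ norm.

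You instead give one argument valid for all $p\in[1,\infty)$. Your cutoff has gradient $\lesssim R^{-1}$ supported only on the trees $\celltree{Q}$, each of length $4R$, of scale-$R$ cells lying outside $B(x_{0},R)$. You combine this with the cellwise bound
\[
\sup_{Q}|u|^{p}\lesssim \meas(Q)^{-1}\|u\|_{L^{p}(Q,\meas)}^{p}+\cellsize{Q}^{p-1}\|\wgrad u\|_{L^{p}(\skeleton\cap Q,\medm)}^{p}.
\]
Same-level cells overlap only in vertices (Proposition~\ref{p.nest}), so the error is dominated by the tails of $\|u\|_{L^{p}(\meas)}$ and $\|\wgrad u\|_{L^{p}(\medm)}$ outside $B(x_{0},R)$. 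This makes the proof self-contained for $p>1$ and gives a quantitative error bound. The paper's route is shorter on the page but relies on an external reference for $p>1$. Note that the Ahlfors count of transition cells is not actually needed for the estimate; the null overlaps already turn the cellwise sums into tail norms.

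Two remarks on the write-up. First, drop the radial cutoff $\eta_{R}$ entirely. Its gradient is spread over $\skeleton$ in an annulus of infinite $\medm$-measure, so $u\eta_{R}$ need not even lie in $\sobolev{p}(\ambient)$; you noticed the infinite-measure issue, so make the modified cutoff the only one. Second, your transition-cell construction, which is left somewhat vague, can be replaced by the paper's $\phi_{3^{j}\vicsek}$. Its gradient sits on the portions of the four arms of $\celltree{3^{j}\vicsek}$ within distance $\cellsize{3^{j}\vicsek}/5$ of the vertices. These portions lie inside the four corner subcells, each of which meets $3^{j-1}\vicsek$ in a single point. Your cellwise sup estimate applies to these subcells verbatim, and the tails vanish because $3^{j-1}\vicsek\uparrow\ambient$.
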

\begin{proof}
  It suffices to prove that $C_{c}(\ambient)\cap\sobolev{p}(\ambient)$ is dense in $\sobolev{p}(\ambient)$. Indeed, by \eqref{e.interfullconv} and using the fact that $I_{m} u\in\core$ from Lemma~\ref{l.inter}-\ref{it.intat}, a diagonal argument gives the density of $ \allowbreak\core $. 

  Suppose first that $p\in(1,\infty)$. Then the density of $C_{c}(\ambient)\cap\sobolev{p}(\ambient)$ in $\sobolev{p}(\ambient)$ is proved by a same argument as in \cite[Proposition~3.6]{BCY25}.

  Suppose that $p=1$. Let $f\in\sobolev{1}(\ambient)$. Let $x\in\ambient$. If $y\in B(x,1)$, then $[x,y]\subset B(x,1)$, and the fundamental theorem of calculus gives $| f ( x ) | \le | f ( y ) | + \int_{\skeleton\cap B(x,1)} | \wgrad f | \dif \medm $, which, combines with \eqref{e.ahlfors} gives
  \begin{equation}\label{e.wonevanish}
    |f(x)| \le C\int_{B(x,1)}|f|\dif\meas + \int_{\skeleton\cap B(x,1)}|\wgrad f|\dif\medm,\ \text{ for all }x\in\ambient.
  \end{equation}
  Fix $ \allowbreak x _{0} \in \ambient $. If $ \allowbreak \metric ( \allowbreak x , \allowbreak x _{0} ) > R + 1 $, then $B(x,1)\subset\ambient\setminus B(x_{0},R)$, and \eqref{e.wonevanish} implies
  \begin{equation}
    \sup_{\metric(x,x_{0})>R+1}|f(x)| \overset{\eqref{e.wonevanish}}{\leq }C\int_{\ambient\setminus B(x_{0},R)}|f|\dif\meas +\int_{\skeleton\setminus B(x_{0},R)}|\wgrad f|\dif\medm \to 0,\ \text{ as }R\to\infty,
  \end{equation}
and therefore $f\in C_{0}(\ambient)$.

  Let $Q_{j}:=3^{j}\vicsek$ and let $\chi_{j}:=\phi_{Q_{j}}$ be the function defined in \eqref{e.cutof} with respect to $Q_{j}$. Then
  \begin{equation}\label{e.PropQ}
    0 \le \chi _{j} \le 1 , \  \restr{\chi _{j}}{Q _{j-1}} = 1,\ \supp ( \chi _{j} ) \subset Q _{j} , \ \text{ and } \ \sup_{j\in\bN} \lVert \wgrad \chi _{j} \rVert _{L^{1}(\skeleton,\medm)} < \infty .
  \end{equation}
  Since $Q_{j}\uparrow\ambient$, $u_{j}:=\chi_{j}f\in C_{c}(\ambient)\cap\sobolev{1}(\ambient)$ and
  \begin{equation}
    \lim_{j\to\infty}\lVert u_{j}-f\rVert_{L^{1}(\ambient,\meas)} \overset{\eqref{e.PropQ}}{\leq}\lim_{j\to\infty}\int_{\ambient\setminus Q_{j-1}}|f|\dif\meas= 0.
  \end{equation}
By the product rule, we have $\wgrad ( \chi _{j} f ) - \wgrad f = ( \chi _{j} - 1 ) \wgrad f + f \, \wgrad \chi _{j} $. Since $\wgrad\chi_{j}$ vanishes on $Q_{j-1}$,
    \begin{equation}
      \lVert\wgrad(\chi_{j}f)-\wgrad f\rVert_{L^{1}(\skeleton,\medm)} \overset{\eqref{e.PropQ}}{\leq} \int_{\skeleton\setminus Q_{j-1}}|\wgrad f|\dif\medm + 16\sup_{\ambient\setminus Q_{j-1}}|f| \to0,\ \text{ as }j\to\infty.
    \end{equation}
  Thus $C_{c}(\ambient)\cap\sobolev{1}(\ambient)$ is dense in $\sobolev{1}(\ambient)$.
\end{proof}
\begin{lemma}
  Let $p\in[1,2)$. Let $u\in\abscon(\ambient,\metric)\cap C_{c}(\ambient)$ satisfy $\wgrad u\in L^{p}(\skeleton,\medm)$. Then there exists a constant $C_{p}\in(0,\infty)$ depending only on $p$ such that,
  \begin{equation}\label{e.differ-a}
    \sum_{j\in\bZ} \ell_{j}^{-\frac{(\dimf-1)(2-p)}{p}} \|\wgrad D_{j}u\|_{L^{2}(\skeleton,\medm)}^{2}\leq C_{p} \density_{p}(u)^{\frac{2-p}{p}}\|\wgrad u\|_{L^{p}(\skeleton,\medm)}^{p}.
  \end{equation}
\end{lemma}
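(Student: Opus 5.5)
The plan is to localize \eqref{e.differ-a} cell by cell at scale $\ell_{j-1}$, bound $\wgrad D_{j}u$ on each such cell by the harmonic defect $\varepsilon_{p}(u;Q)$, and then trade $2-p$ powers of the gradient for $\density_{p}(u)$. The scale weight $\ell_{j}^{-\frac{(\dimf-1)(2-p)}{p}}$ should cancel exactly in this trade.

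\textbf{Step 1 (localization).} Fix $j\in\bZ$. The arms of $\collectA_{j}$ are covered, up to $\medm$-null sets, by the cells $Q\in\collectQ_{j-1}$, and these overlap only at vertices (Proposition~\ref{p.nest}). Hence
\begin{equation}
  \|\wgrad D_{j}u\|_{L^{2}(\skeleton,\medm)}^{2}=\sum_{Q\in\collectQ_{j-1}}\int_{\skeleton\cap Q}|\wgrad D_{j}u|^{2}\dif\medm .
\end{equation}
On each such $Q$, \eqref{e.dloca} gives $\wgrad D_{j}u=\wgrad D_{j}h_{Q}$ with $h_{Q}=(u-\harmonic_{Q}u)\one_{Q}$.

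\textbf{Step 2 (arm-wise bound).} On a level-$j$ arm $J\subset Q$, the value $\wgrad D_{j}h_{Q}$ is a difference of two averages of $\wgrad h_{Q}$: one over $J$, and one over the level-$(j-1)$ arm containing $J$, if it exists (Lemma~\ref{l.birth}). The second average has length $3\ell_{j}$. By Jensen's inequality,
\begin{equation}
  |\wgrad D_{j}u|\big|_{J}\lesssim\Bigl(\ell_{j}^{-1}\int_{J'}|\wgrad h_{Q}|^{p}\dif\medm\Bigr)^{1/p}\leq\bigl(\ell_{j}^{-1}\varepsilon_{p}(u;Q)\bigr)^{1/p},
\end{equation}
where $J'$ is the parent arm. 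Since $\varepsilon_{p}(u;Q)\leq\density_{p}(u)\meas(Q)=\density_{p}(u)\ell_{j-1}^{\dimf}$, this yields the pointwise bound
\begin{equation}
  |\wgrad D_{j}u|^{2-p}\lesssim\density_{p}(u)^{\frac{2-p}{p}}\ell_{j}^{\frac{(\dimf-1)(2-p)}{p}}\quad\text{on }\skeleton\cap Q .
\end{equation}
Writing $|\wgrad D_{j}u|^{2}=|\wgrad D_{j}u|^{2-p}|\wgrad D_{j}u|^{p}$, the weight cancels, and we get
\begin{equation}
  \ell_{j}^{-\frac{(\dimf-1)(2-p)}{p}}\|\wgrad D_{j}u\|_{L^{2}(\skeleton,\medm)}^{2}\lesssim\density_{p}(u)^{\frac{2-p}{p}}\|\wgrad D_{j}u\|_{L^{p}(\skeleton,\medm)}^{p}.
\end{equation}

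\textbf{Step 3 (summation over scales; main obstacle).} It remains to show
\begin{equation}
  \sum_{j}\|\wgrad D_{j}u\|_{L^{p}(\skeleton,\medm)}^{p}\lesssim\|\wgrad u\|_{L^{p}(\skeleton,\medm)}^{p}.
\end{equation}
By Lemma~\ref{l.birth}, the restrictions of $\wgrad D_{j}u$ to each $I\in\collectE_{k}$ are martingale differences for the filtration $\{\sF_{m}^{I}\}_{m\geq k}$. For $p<2$, however, $\sum_{j}\|d_{j}\|_{p}^{p}$ is \emph{not} controlled by $\|f\|_{p}^{p}$ for general martingales. Step 2 therefore wastes too much, and this is the crux.

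What I would try first is to avoid putting all of the $p$-th power on a single scale. Instead, I would use $\sum_{j}w_{j}|d_{j}|^{2}$ against the Burkholder square function $S=(\sum_{j}|d_{j}|^{2})^{1/2}$, and then apply $\int S^{p}\dif\medm\lesssim\|\wgrad u\|_{L^{p}}^{p}$ (valid for $p>1$; $p=1$ needs separate care, e.g.\ via an $h^{1}$/atomic bound). The factor $2-p$ would be absorbed through the geometric decay of $\ell_{j}$.

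Concretely, split the sum according to the finest cell at which the pointwise bound of Step 2 is nearly attained. Then apply the bound with the cell $Q$ of level $j-1$ for coarse $j$, and with a smaller cell for fine $j$. This should produce a convergent geometric series in $j$ with ratio $3^{-(\dimf-1)(2-p)/p}<1$. Here one uses $\dimf>1$ and $p<2$.

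If that fails, the fallback is a Calder\'on--Zygmund-type stopping argument on the cells $Q$ with $\varepsilon_{p}(u;Q)>\lambda\meas(Q)$. One would then sum over levels only for cells below the stopping time, where the harmonic defects of nested cells are essentially disjoint.
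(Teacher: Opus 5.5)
Your Steps~1--2 are correct and reproduce the paper's pointwise cap \eqref{e.capdeD}, $|\wgrad D_{j}u|\le b_{j}:=C_{p}\density_{p}(u)^{1/p}\ell_{j-1}^{(\dimf-1)/p}$. The gap is everything after that.

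As you concede, the scale-by-scale trade in Step~2 reduces the problem to $\sum_{j}\|\wgrad D_{j}u\|_{L^{p}(\skeleton,\medm)}^{p}\lesssim\|\wgrad u\|_{L^{p}(\skeleton,\medm)}^{p}$. This is false for $p<2$ even when the caps hold: $N$ increments $\pm a$ with $a\le b_{N}$ give a ratio of order $N^{1-p/2}$. Step~3 then only lists strategies (``should produce'', ``if that fails'') without proving an estimate, and the ``concretely'' paragraph does not specify a split that closes.

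The missing idea is a capped weighted square-function inequality for martingales. If $M_{0}=0$ and $|M_{n}-M_{n-1}|\le b_{n}=b_{0}q^{n}$ with $q\in(0,1)$, then $\sum_{n}b_{n}^{p-2}\|M_{n}-M_{n-1}\|_{2}^{2}\le C_{p,q}\|M_{\infty}\|_{p}^{p}$ (Lemma~\ref{lem:capped-p}). Apply this on each arm $I\in\collectE_{k}$ to $M_{m}^{I}=\bE_{I}[\wgrad u\mid\sF_{m}^{I}]$, whose increments are $\wgrad D_{m}u$ by Lemma~\ref{l.birth}. Since $b_{m}^{p-2}\asymp\density_{p}(u)^{-(2-p)/p}\ell_{m}^{-(\dimf-1)(2-p)/p}$, this yields \eqref{e.differ-a} at once. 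In other words, the $2-p$ powers must be traded inside the martingale estimate, not one scale at a time.

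For $1<p<2$ your square-function idea can be made to work, but the split must be pointwise. Fix $z$ and let $S(z)^{2}=\sum_{j}|\wgrad D_{j}u(z)|^{2}$. On $\{b_{j}\ge S(z)\}$ you have $b_{j}^{p-2}\le S(z)^{p-2}$, so this part contributes at most $S(z)^{p}$. On $\{b_{j}<S(z)\}$ you have $b_{j}^{p-2}|\wgrad D_{j}u(z)|^{2}\le b_{j}^{p}$, and the geometric tail is $\lesssim S(z)^{p}$. Burkholder's inequality then finishes.

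This route breaks at $p=1$, which the lemma includes and which the case $p=1$ of Theorem~\ref{t.strfl} needs. The bound $\|S\|_{1}\lesssim\|M_{\infty}\|_{1}$ fails even for capped martingales. For example, a lazy random walk of step $a$, stopped on hitting $-a$ and run for $n\approx\log_{1/q}(b_{0}/a)$ steps, has $\bE S\gtrsim a\log n$ but $\bE|M_{\infty}|\le 2a$. There is also no $h^{1}$ hypothesis on $\wgrad u$ to appeal to.

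The paper avoids square functions altogether. It uses $\Phi_{p,b}(x)=(x^{2}+b^{2})^{p/2}-b^{p}$, which satisfies $0\le\Phi_{p,b}\le|x|^{p}$, is nonincreasing in $b$, and has $\Phi_{p,b}''\gtrsim b^{p-2}$ on $[-2b,2b]$. With the stopping time $\tau=\inf\{n:|M_{n-1}|>b_{n}\}$, telescoping $\bE\Phi_{p,b_{n}}(M_{n\wedge(\tau-1)})$ controls $\sum_{n}b_{n}^{p-2}\bE[|M_{n}-M_{n-1}|^{2};n<\tau]$. After $\tau$, the caps give $\sum_{n\ge\tau}b_{n}^{p}\lesssim b_{\tau}^{p}<|M_{\tau-1}|^{p}$, whose expectation is at most $\bE|M_{\infty}|^{p}$. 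This argument is uniform in $p\in[1,2)$.
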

\begin{proof}
Fix $n\in\bZ$ and $Q\in\collectQ_{n-1}$. Let $h_{Q}:=(u-\harmonic_{Q}u)\one_{Q}$. Since $\restr{(u-\harmonic_{Q}u)}{\cellvert{Q}}=0$, we have $\wgrad h_{Q}=\one_{\skeleton\cap Q}\wgrad(u-\harmonic_{Q}u)$. Therefore,
\begin{align}
\|\wgrad D_{n}u\|_{L^{p}(\skeleton\cap Q,\medm)}^{p}&\overset{\eqref{e.dloca}}{=}\|\wgrad D_{n}h_{Q}\|_{L^{p}(\skeleton\cap Q,\medm)}^{p}=\|\wgrad I_{n}h_{Q}-\wgrad I_{n-1}h_{Q}\|_{L^{p}(\skeleton\cap Q,\medm)}^{p}\\
&\leq 2^{p-1}\left(\|\wgrad I_{n}h_{Q}\|_{L^{p}(\skeleton\cap Q,\medm)}^{p}+\|\wgrad I_{n-1}h_{Q}\|_{L^{p}(\skeleton\cap Q,\medm)}^{p}\right).\label{e.detail1D}
\end{align}
Let $j\in\{n-1,n\}$. For every $J\in\collectA_{j}$ such that $J\subset Q$, by Jensen's inequality,
\begin{align}
\int_{J}|\wgrad I_{j}h_{Q}|^{p}\dif\medm&=\medm(J)\left|\frac{1}{\medm(J)}\int_{J}\wgrad h_{Q}\dif\medm\right|^{p}\leq\int_{J}|\wgrad h_{Q}|^{p}\dif\medm.
\end{align}
Summing over all such $J$, we have
\begin{equation}\label{e.localcontractD}
\|\wgrad I_{j}h_{Q}\|_{L^{p}(\skeleton\cap Q,\medm)}^{p}\leq\|\wgrad h_{Q}\|_{L^{p}(\skeleton\cap Q,\medm)}^{p},\quad j\in\{n-1,n\}.
\end{equation}
Therefore, since $Q\in\collectQ_{n-1}$,
\begin{align}
\|\wgrad D_{n}u\|_{L^{p}(\skeleton\cap Q,\medm)}^{p}&\overset{\eqref{e.detail1D},\eqref{e.localcontractD}}{\leq}2^{p}\|\wgrad h_{Q}\|_{L^{p}(\skeleton\cap Q,\medm)}^{p}=2^{p}\|\wgrad(u-\harmonic_{Q}u)\|_{L^{p}(\skeleton\cap Q,\medm)}^{p}\\
&\overset{\eqref{e.defden}}{=}2^{p}\varepsilon_{p}(u;Q)\leq2^{p}\density_{p}(u)\meas(Q)=2^{p}\density_{p}(u)\ell_{n-1}^{\dimf}.\label{e.detailcellD}
\end{align}
By Lemma \ref{l.inter}-\ref{it.intat}, $D_{n}u\in\core_{n}$. Since both interpolants agree with $u$ on $V_{n-1}$, we also have $\restr{(D_{n}u)}{V_{n-1}}=0$. In particular, $\restr{\wgrad(D_{n}u)}{J}$ is constant over each $J\in\collectA_{n}$ and vanishes $\medm$-a.e. on $\skeleton\setminus G_{n}$. Let $J\in\collectA_{n}$ be contained in $Q\in\collectQ_{n-1}$. Since $\medm(J)=\ell_{n}$, we have
\begin{align}
&\phantom{\ \leq}|\wgrad D_{n}u(z)|^{p}=\frac{1}{\medm(J)}\int_{J}|\wgrad D_{n}u|^{p}\dif\medm\ \text{(since $\restr{\wgrad(D_{n}u)}{J}$ is constant)}\\
&\leq\frac{1}{\ell_{n}}\int_{\skeleton\cap Q}|\wgrad D_{n}u|^{p}\dif\medm\overset{\eqref{e.detailcellD}}{\lesssim}\density_{p}(u)\ell_{n-1}^{\dimf-1},\quad\medm\text{-a.e. }z\in J.
\end{align}
Since all $Q\in\collectQ_{n-1}$ cover $\ambient$, we obtain a constant $C_{p}$ such that
\begin{equation}\label{e.capdeD}
|\wgrad D_{n}u(z)|\leq C_{p}\density_{p}(u)^{1/p}\ell_{n-1}^{(\dimf-1)/p},\quad\medm\text{-a.e. }z\in\skeleton,\quad\text{for all }n\in\bZ.
\end{equation}
If $\density_{p}(u)=0$, \eqref{e.capdeD} gives $\wgrad D_{n}u=0$ for every $n$, and \eqref{e.differ-a} follows. If $\density_{p}(u)=\infty$, the estimate is vacuous. We may therefore assume $0<\density_{p}(u)<\infty$.

Fix $n_{0}\in\bZ$, and let $(\collectE_{k})_{k\geq n_{0}}$ be the families in Definition~\ref{d.n0}. By Lemma \ref{l.birth}-\ref{it.birth1} and Fubini's theorem, we have  \begin{align}
	&\phantom{\ \leq} \sum_{m=n_{0}+1}^{\infty}\ell_{m}^{-\frac{(\dimf-1)(2-p)}{p}}\norm{\wgrad D_{m}u}_{L^{2}(\skeleton,\medm)}^{2}\\
	 &=\sum_{m=n_{0}+1}^{\infty}\ell_{m}^{-\frac{(\dimf-1)(2-p)}{p}}\sum_{I\in\collectE_{n_{0}}}\norm{\wgrad D_{m}u}_{L^{2}(I,\medm)}^{2}+\sum_{m=n_{0}+1}^{\infty}\ell_{m}^{-\frac{(\dimf-1)(2-p)}{p}}\sum_{k=n_{0}+1}^{m}\sum_{I\in\collectE_{k}}\norm{\wgrad D_{m}u}_{L^{2}(I,\medm)}^{2}\\
	 &=\sum_{I\in\collectE_{n_{0}}}\sum_{m=n_{0}+1}^{\infty}\ell_{m}^{-\frac{(\dimf-1)(2-p)}{p}}\norm{\wgrad D_{m}u}_{L^{2}(I,\medm)}^{2}+\sum_{k=n_{0}+1}^{\infty}\sum_{I\in\collectE_{k}}\sum_{m=k}^{\infty}\ell_{m}^{-\frac{(\dimf-1)(2-p)}{p}}\norm{\wgrad D_{m}u}_{L^{2}(I,\medm)}^{2}.
	 \label{e.capdeD0}
\end{align}
\begin{enumerate}[label=\textup{(\alph*)}, align=right, leftmargin=*, topsep=5pt, parsep=0pt, itemsep=2pt]
  \item Let $I\in\collectE_{k}$ with $k\geq n_{0}+1$, and set $M_{m}^{I}:=\bE_{I}[\restr{(\wgrad u)}{I}\mid\sF_{m}^{I}]$ for $m\geq k$. By \eqref{e.condi},
\begin{equation}\label{e.capdeD1}
\restr{(\wgrad(I_{m}u))}{I}=\begin{cases}0,&\text{if }n_{0}\leq m<k,\\ M_{m}^{I},&\text{if }m\geq k,\end{cases}\quad\medm\text{-a.e. on }I.
\end{equation}
Let $M_{k-1}^{I}:=0$. By \eqref{e.capdeD},
\begin{equation}\label{e.capde2}
\abs{M_{m}^{I}-M_{m-1}^{I}}\overset{\eqref{e.capdeD1}}{=}\abs{\wgrad D_{m}u}\overset{\eqref{e.capdeD}}{\leq}C_{p}\density_{p}(u)^{1/p}\ell_{m-1}^{(\dimf-1)/p},\quad m\geq k,\quad\medm\text{-a.e. on }I.
\end{equation}
By the proof of \eqref{e.interfullconv},
\begin{equation}
M_{m}^{I}\to\restr{\wgrad u}{I}\quad\text{in }L^{p}(I,\medm),\quad\text{and}\quad\sup_{m\geq k}\|M_{m}^{I}\|_{L^{p}(I,\medm)}\leq\|\wgrad u\|_{L^{p}(I,\medm)},
\end{equation}
and thus $\{M_{m}^{I}\}_{m\geq k}$ is uniformly integrable on the probability space in Lemma~\ref{l.birth}.
Let $b_{m}:=C_{p}\density_{p}(u)^{1/p}\ell_{m-1}^{(\dimf-1)/p}$ and let $q:=3^{-(\dimf-1)/p}\in(0,1)$. Then $b_{k+j-1}=(b_{k}/q)q^{j}$ for $j\geq1$. Apply Lemma~\ref{lem:capped-p} to $(M_{k+j-1}^{I})_{j\geq1}$ with filtration $(\sF_{k+j-1}^{I})_{j\geq1}$, artificially adjoined initial value $M_{k-1}^{I}=0$, and cap parameter $b_{k}/q$. Multiplying the resulting estimate by $\medm(I)$, we obtain
\begin{equation}
\sum_{m=k}^{\infty}b_{m}^{p-2}\|\wgrad D_{m}u\|_{L^{2}(I,\medm)}^{2}\overset{\eqref{e.capdeD1}}{=}\sum_{m=k}^{\infty}b_{m}^{p-2}\|M_{m}^{I}-M_{m-1}^{I}\|_{L^{2}(I,\medm)}^{2}\leq C_{p,q}\|\wgrad u\|_{L^{p}(I,\medm)}^{p}.
\end{equation}
Since $q=3^{-(\dimf-1)/p}$ depends only on $p$, by rearranging terms and using the definitions of $\{\ell_{m}\}$ and $\{b_{n}\}$, we have
\begin{equation}
\sum_{m=k}^{\infty}\ell_{m}^{-\frac{(\dimf-1)(2-p)}{p}}\norm{\wgrad D_{m}u}_{L^{2}(I,\medm)}^{2}\leq C_{p}\density_{p}(u)^{(2-p)/p}\norm{\wgrad u}_{L^{p}(I,\medm)}^{p}.\label{e.capdeD0a}
\end{equation}
\item  Let $I\in\collectE_{n_{0}}$, put $\wh{M}_{m}^{I}:=\bE_{I}[\restr{\wgrad u}{I}\mid\sF_{m}^{I}]$ for $m\geq n_{0}$. By \eqref{e.condi} and \eqref{e.capdeD}, the same argument applies to $(\wh{M}_{n_{0}+j}^{I}-\wh{M}_{n_{0}}^{I})_{j\geq1}$ with filtration $(\sF_{n_{0}+j}^{I})_{j\geq1}$. Since it converges in $L^{p}(I,\medm)$ to $\restr{\wgrad u}{I}-\wh{M}_{n_{0}}^{I}$, whose norm is at most $2\|\wgrad u\|_{L^{p}(I,\medm)}$, the same argument gives \begin{equation}\label{e.capdeD0b}
	\sum_{m=n_{0}+1}^{\infty}\ell_{m}^{-\frac{(\dimf-1)(2-p)}{p}}\norm{\wgrad D_{m}u}_{L^{2}(I,\medm)}^{2}\leq C_{p}\density_{p}(u)^{(2-p)/p}\norm{\wgrad u}_{L^{p}(I,\medm)}^{p}.
\end{equation}
\end{enumerate}
By \eqref{e.capdeD0}, \eqref{e.capdeD0a}, \eqref{e.capdeD0b} and Lemma~\ref{l.birth}-\ref{it.birth1},
\begin{equation}
\sum_{m=n_{0}+1}^{\infty}\ell_{m}^{-(\dimf-1)(2-p)/p}\norm{\wgrad D_{m}u}_{L^{2}(\skeleton,\medm)}^{2}\leq C_{p}\density_{p}(u)^{(2-p)/p}\norm{\wgrad u}_{L^{p}(\skeleton,\medm)}^{p}.
\end{equation}
Letting $n_{0}\downarrow-\infty$ and applying monotone convergence, we obtain \eqref{e.differ-a}.
\end{proof}

\begin{proposition}\label{p.goodp}
  Let $p\in[1,2)$. There exists $C_{p}\in(0,\infty)$ such that, if $u\in \sobolev{p}(\ambient)\cap C_{c}(\ambient)$ and if $\density_{p}(u)<\infty$, then $u\in\Dom((-\gen)^{\critic_{p}})$ and
  \begin{equation}\label{e.goodp}
    \|(-\gen)^{\critic_{p}}u\|_{L^{2}(\ambient,\meas)}^{2} \leq C_{p}\density_{p}(u)^{(2-p)/p} \|\wgrad u\|_{L^{p}(\skeleton,\medm)}^{p}.
  \end{equation}
  Moreover,
  \begin{equation}\label{e.goodt}
    \sup_{t\in(0,\infty)} \|(-\gen)^{\critic_{p}}P_{t}u\|_{L^{2}(\ambient,\meas)}^{2} \leq C_{p}\density_{p}(u)^{(2-p)/p} \|\wgrad u\|_{L^{p}(\skeleton,\medm)}^{p}.
  \end{equation}
\end{proposition}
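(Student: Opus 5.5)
Write $u=\sum_{j\in\bZ}D_{j}u$ (telescoping of the interpolants) and apply Lemma~\ref{l.sobol} with $\sigma=2\critic_{p}$ to the decomposition $b_{j}:=D_{j}u\in\core_{j}$. First check that $\sigma=2\critic_{p}<2-\dims/2$; this was already verified in the proof of Lemma~\ref{l.coreconv}. The exponent bookkeeping then goes as follows. We have $\dimw(\sigma-1)=\dimw(2\critic_{p}-1)=\frac{2(\dimf+p-1)}{p}-\dimw$. Using $\dimw=\dimf+1$, this equals $\frac{2\dimf+2p-2-p\dimf-p}{p}=\frac{(\dimf-1)(2-p)}{p}$. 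Hence the gradient term $\ell_{j}^{-\dimw(\sigma-1)}\|\wgrad D_{j}u\|_{L^{2}}^{2}$ is exactly the summand in \eqref{e.differ-a}.

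For the $L^{2}$ term, note that $D_{j}u$ vanishes on $V_{j-1}$ and is affine on level-$j$ arms. Applying Lemma~\ref{l.zerot} with $p=2$ on each $Q\in\collectQ_{j-1}$ (where $D_{j}u\in\sobolev{2}_{0}(Q)$) gives $\|D_{j}u\|_{L^{2}(Q)}^{2}\lesssim \ell_{j}^{2\dimw\critic_{2}}\|\wgrad D_{j}u\|_{L^{2}(Q)}^{2}=\ell_{j}^{\dimw}\|\wgrad D_{j}u\|^{2}_{L^{2}(Q)}$. Summing over $Q$ yields $\ell_{j}^{-\dimw\sigma}\|D_{j}u\|_{L^{2}}^{2}\lesssim \ell_{j}^{-\dimw(\sigma-1)}\|\wgrad D_{j}u\|_{L^{2}}^{2}$, the same quantity. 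Thus formally \eqref{e.sobgr} together with \eqref{e.differ-a} gives \eqref{e.goodp}.

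The main obstacle is that Lemma~\ref{l.sobol} is stated for finitely supported sequences, while $u$ is the infinite sum. I would handle this by truncation. Set $h_{N,M}:=\sum_{j=-M+1}^{N}D_{j}u=I_{N}u-I_{-M}u\in\core$ and apply \eqref{e.sobgr}; the bound is uniform in $N,M$. As $N\to\infty$, $I_{N}u\to u$ in $L^{2}$ by Lemma~\ref{l.inter}-\ref{it.intun}. As $M\to\infty$, $I_{-M}u\to0$ in $L^{2}$ by \eqref{e.intlt}, since $\ell_{-M}^{\dimf}$ does \emph{not} go to zero. I must instead be careful here. I would rather treat the low-frequency piece $I_{-M}u$ separately. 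By \eqref{e.intlq} with $q=2$ and the same Lemma~\ref{l.zerot}-free bound \eqref{e.intlt}, apply \eqref{e.sobgr} to the single function $I_{-M}u\in\core_{-M}$. This gives $\|(-\gen)^{\critic_{p}}I_{-M}u\|_{2}^{2}\lesssim \ell_{-M}^{-\dimw(\sigma-1)}\|u\|_{\sup}^{2}\ell_{-M}^{-1}+\ell_{-M}^{-\dimw\sigma+\dimf}\|u\|_{\sup}^{2}$. Both exponents of $\ell_{-M}=3^{M}$ are negative, since $\dimw\sigma-\dimf=\dimf(2-p)/p+ (2p-2)/p-\dimf+\dots$; concretely $\dimw(\sigma-1)+1>0$ and $\dimw\sigma>\dimf$ because $\critic_{p}>\dimf/(2\dimw)$ for $p<2$. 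Hence this piece tends to $0$. So $(-\gen)^{\critic_{p}}I_{N}u$ is bounded in $L^{2}$ by the right side of \eqref{e.goodp} plus $o(1)$. Since $I_{N}u\to u$ in $L^{2}$ and $(-\gen)^{\critic_{p}}$ is closed, weak compactness gives $u\in\Dom((-\gen)^{\critic_{p}})$ with the bound, by weak lower semicontinuity.

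Finally, \eqref{e.goodt} follows from spectral calculus: $\|(-\gen)^{\critic_{p}}P_{t}u\|_{2}=\|P_{t}(-\gen)^{\critic_{p}}u\|_{2}\le\|(-\gen)^{\critic_{p}}u\|_{2}$, since $P_{t}$ is an $L^{2}$ contraction.
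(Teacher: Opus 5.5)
Your proof is correct and is essentially the paper's. The paper also telescopes $I_{M}u=I_{-N}u+\sum_{n=-N+1}^{M}D_{n}u$ and applies Lemma~\ref{l.sobol} with $\sigma=2\critic_{p}$, with one difference: it counts $I_{-N}u$ as one of the $b_{n}$ in a single application instead of using your triangle inequality. It then converts the $L^{2}$ terms with Lemma~\ref{l.zerot}, invokes \eqref{e.differ-a}, and removes the low-level piece with \eqref{e.intlq} and \eqref{e.intlt} using $2\dimw\critic_{p}-\dimf>0$. It concludes by weak compactness, closedness, and the $L^{2}$-contractivity of $P_{t}$.

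In a write-up, delete the abandoned claim that $I_{-M}u\to0$ in $L^{2}$. Also fix the garbled formula for $\dimw\sigma-\dimf$, which equals $(\dimf(2-p)+2p-2)/p>0$.
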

\begin{proof}
  By definition, $u\in \abscon(\ambient,\metric)\cap C_{c}(\ambient)$. Suppose first that $\density_{p}(u)^{1/p}=0$. Fix $Q\in\collectQ$. By \eqref{e.defden} in Definition \ref{d.harmo}, for every $Q\in\collectQ$, $\varepsilon_{p}(u;Q) = \int_{\skeleton\cap Q} |\wgrad(u-\harmonic_{Q}u)|^{p}\dif\medm =0$. For every $x\in Q$ and every $a\in\cellvert{Q}$, since $\restr{(u-\harmonic_{Q}u)}{\cellvert{Q}}=0$,
    \begin{align}
      |u(x)-\harmonic_{Q}u(x)| = |(u-\harmonic_{Q}u)(x)-(u-\harmonic_{Q}u)(a)|\leq \int_{[a,x]}|\wgrad(u-\harmonic_{Q}u)|\dif\medm =0.
    \end{align}
  Hence $u=\harmonic_{Q}u$ on $Q$ for every cell $Q\in\collectQ$. If $Q\in\collectQ_{n}$, then by Lemma~\ref{l.inter}-\ref{it.intha}, $\restr{I_{n}u}{Q}=u$. Since $\ambient=\bigcup_{Q\in\collectQ_{n}}Q$, we obtain $I_{n}u=u$ for all $n\in\bZ$. In particular, $u\in\core\subset\sobolev{2}(\ambient)$. Applying \eqref{e.intlq} with $q=2$ gives
  \begin{equation}
    \|\wgrad u\|_{L^{ 2 }(\skeleton,\medm)} = \liminf_{n\to-\infty}\|\wgrad I_{n}u\|_{L^{ 2 }(\skeleton,\medm)} \overset{\eqref{e.intlq}}{\leq} \liminf_{n\to-\infty} C_{ 2 }\|u\|_{L^{\infty}(\ambient)}\ell_{n}^{- 1/ \allowbreak2 }=0.
  \end{equation}
  Therefore $u$ is constant on $\skeleton$. Since $\skeleton$ is dense in $\ambient$ and $u$ is continuous, $u$ is constant on $\ambient$. Since $u$ is compactly supported and $\meas(\ambient)=\infty$, we must have $u=0$. Hence \eqref{e.goodp} and \eqref{e.goodt} are immediate.

  Suppose that $\density_{p}(u)>0$.  Since $1<\dimf<2$, $\dimw=\dimf+1$, and $\dims=2\dimf/\dimw$, we have
  \begin{equation}\label{e.calindx}
  	   \begin{aligned}
      \dimw(2\critic_{p}-1)&=\frac{(\dimf-1)(2-p)}{p},\\
      2\dimw\critic_{p}-\dimf&=2-\dimf+\frac{2(\dimf-1)}{p}>0,\\
      2-\frac{\dims}{2}-2\critic_{p} &=\frac{\dimf-2(\dimf-1)/p}{\dimw} \geq\frac{2-\dimf}{\dimw}>0.
    \end{aligned}
  \end{equation}
  Thus $2\critic_{p}\in(0,2-\frac{\dims}{2})$. Fix $M,N\in\bN $ such that $-N<M$. Then
  \begin{equation}\label{e.telscD}
    I_{M}u = I_{-N}u + \sum_{n=-N+1}^{M}D_{n}u.
  \end{equation}

  By Lemma~ \ref{l.inter}-\ref{it.intat}, $I_{-N}u\in\core_{-N}$ and $D_{n}u\in\core_{n}$, so Lemma~\ref{l.sobol} applies to \eqref{e.telscD} with $\sigma=2\critic_{p}$ . Also $D _{n} u = 0 $ on $V_{n-1}$; thus its restriction to each $ \allowbreak Q \in \collectQ _{n - 1 } $ vanishes at $\cellatt Q$. Applying \eqref{e.zerlp}, we obtain
    \begin{equation}\label{e.telscD1}
      \ell_{n}^{-2\dimw\critic_{p}}\|D_{n}u\|_{L^{2}(\ambient,\meas)}^{2} \overset{\eqref{e.zerlp}}{\leq}2\cdot3^{\dimw}\ell_{n}^{-\dimw(2\critic_{p}-1)} \|\wgrad D_{n}u\|_{L^{2}(\skeleton,\medm)}^{2}.
    \end{equation}
  Consequently, {\footnotesize
    \begin{align}
      &\phantom{\ \leq} \|(-\gen)^{\critic_{p}}I_{M}u\|_{L^{2}(\ambient,\meas)}^{2}\\
      & \overset{\eqref{e.sobgr},\eqref{e.telscD}}{\lesssim} \ell_{-N}^{-\dimw(2\critic_{p}-1)} \|\wgrad I_{-N}u\|_{L^{2}(\skeleton,\medm)}^{2}+ \sum_{n=-N+1}^{M} \ell_{n}^{-\dimw(2\critic_{p}-1)} \|\wgrad D_{n}u\|_{L^{2}(\skeleton,\medm)}^{2} \\
      &\quad\qquad+ \ell_{-N}^{-2\dimw\critic_{p}} \|I_{-N}u\|_{L^{2}(\ambient,\meas)}^{2}+ \sum_{n=-N+1}^{M}\ell_{n}^{-2\dimw\critic_{p}}\|D_{n}u\|_{L^{2}(\ambient,\meas)}^{2}\\
      &\overset{\eqref{e.telscD1},\eqref{e.calindx}}{\lesssim}\ell_{-N}^{-\frac{(\dimf-1)(2-p)}{p}} \|\wgrad I_{-N}u\|_{L^{2}(\skeleton,\medm)}^{2}+ \sum_{n=-N+1}^{M} \ell_{n}^{-\frac{(\dimf-1)(2-p)}{p}} \|\wgrad D_{n}u\|_{L^{2}(\skeleton,\medm)}^{2}+ \ell_{-N}^{-2\dimw\critic_{p}} \|I_{-N}u\|_{L^{2}(\ambient,\meas)}^{2}\\
      &\overset{\eqref{e.differ-a}}{\lesssim}\ell_{-N}^{-\frac{(\dimf-1)(2-p)}{p}} \|\wgrad I_{-N}u\|_{L^{2}(\skeleton,\medm)}^{2}+ \density_{p}(u)^{\frac{2-p}{p}}\|\wgrad u\|_{L^{p}(\skeleton,\medm)}^{p}+\ell_{-N}^{-2\dimw\critic_{p}} \|I_{-N}u\|_{L^{2}(\ambient,\meas)}^{2}\\
      &\overset{\eqref{e.intlq},\eqref{e.intlt}}{\lesssim} \left(\ell_{-N}^{-\frac{(\dimf-1)(2-p)}{p}-1}+\ell_{-N}^{-2\dimw\critic_{p}+\dimf}\right)\norm{u}_{L^{\infty}(\ambient,\meas)}^{2}+\density_{p}(u)^{\frac{2-p}{p}}\|\wgrad u\|_{L^{p}(\skeleton,\medm)}^{p}. \label{e.scaleapplyD}
    \end{align}
  }
  Letting $N\to\infty$ in \eqref{e.scaleapplyD}, noting $2\dimw\critic_{p}-\dimf>0$, and then taking the supremum over all $M\in\bN$, we have
  \begin{equation}\label{e.unifmD}
    \sup_{M\in\bN} \|(-\gen)^{\critic_{p}}I_{M}u\|_{L^{2}(\ambient,\meas)}^{2} \leq C_{p}\density_{p}(u)^{\frac{2-p}{p}} \|\wgrad u\|_{L^{p}(\skeleton,\medm)}^{p}.
  \end{equation}
  By the Banach--Alaoglu theorem, Lemma~\ref{l.inter}-\ref{it.intun} and \eqref{e.unifmD}, there exist a sequence $ M _{j} \allowbreak \uparrow \infty $ and $G\in L^{2}(\ambient,\meas)$ such that
  \begin{equation}\label{e.wekL2}
    I_{M_{j}}u\to u\quad\text{in }L^{2}(\ambient,\meas),\ \text{ and }\ (-\gen)^{\critic_{p}}I_{M_{j}}u\rightharpoonup G \quad\text{weakly in }L^{2}(\ambient,\meas).
  \end{equation}
  For every $ \allowbreak v \in \Dom ( ( - \gen ) ^{ \allowbreak \critic _{p} } ) $, by the self-adjointness, we have
    \begin{align}
      \langle u,(-\gen)^{\critic_{p}}v\rangle_{L^{2}(\ambient,\meas)}&=\lim_{j\to\infty}\langle I_{M_{j}}u,(-\gen)^{\critic_{p}}v\rangle_{L^{2}(\ambient,\meas)}\\
      &=\lim_{j\to\infty}\langle(-\gen)^{\critic_{p}}I_{M_{j}}u,v\rangle_{L^{2}(\ambient,\meas)} =\langle G,v\rangle_{L^{2}(\ambient,\meas)}.
    \end{align}
  By the definition of the domain of the adjoint operator, we know that \begin{equation}
  u\in\Dom((( -\gen)^{\critic_{p}})^{*} ) = \Dom ( ( - \gen ) ^{\critic_{p}} )\ \text{ and }(-\gen)^{\critic_{p}}u=G.
  \end{equation}By the weak lower semicontinuity,
    \begin{align}
      \|(-\gen)^{\critic_{p}}u\|_{L^{2}(\ambient,\meas)}^{2} & = \|G\|_{L^{2}(\ambient,\meas)}^{2}\overset{\eqref{e.wekL2}}{\leq} \liminf_{j\to\infty} \|(-\gen)^{\critic_{p}}I_{M_{j}}u\|_{L^{2}(\ambient,\meas)}^{2}\\
      &\overset{\eqref{e.unifmD}}{\leq} C_{p}\density_{p}(u)^{(2-p)/p} \|\wgrad u\|_{L^{p}(\skeleton,\medm)}^{p}.
    \end{align}
  This proves \eqref{e.goodp}. For every $ \allowbreak t \in(0,\infty)$, the spectral calculus gives
  \begin{equation}
    \int_{[0,\infty)}\lambda^{2\critic_{p}}\exp(-2t\lambda) \dif\langle\proj_{\lambda}u,u\rangle \leq\int_{[0,\infty)}\lambda^{2\critic_{p}} \dif\langle\proj_{\lambda}u,u\rangle<\infty
  \end{equation}
  and $(-\gen)^{\critic_{p}}P_{t}u=P_{t}(-\gen)^{\critic_{p}}u$. Hence
    \begin{align}
      \|(-\gen)^{\critic_{p}}P_{t}u\|_{L^{2}(\ambient,\meas)}^{2} &= \|P_{t}(-\gen)^{\critic_{p}}u\|_{L^{2}(\ambient,\meas)}^{2}\leq \|(-\gen)^{\critic_{p}}u\|_{L^{2}(\ambient,\meas)}^{2}\\
      & \overset{\eqref{e.goodp}}{\leq}C_{p}\density_{p}(u)^{(2-p)/p}\|\wgrad u\|_{L^{p}(\skeleton,\medm)}^{p}.
    \end{align}
  Taking the supremum over $t\in(0,\infty)$ proves \eqref{e.goodt}.
\end{proof}
\subsection{Calder\'{o}n--Zygmund decomposition on the unbounded Vicsek set}
In this subsection, we develop the Calder\'{o}n--Zygmund decomposition on the unbounded Vicsek set, motivated by \cite{CZ52}, \cite[Proposition~1.1]{AC05}, and \cite[Lemma~2.15]{DR26}.
\begin{definition}\label{d.stopping}
  Let $p\in[1,\infty)$, let $f\in\abscon(\ambient,\metric)$ satisfy $\wgrad f\in \allowbreak L ^{p} ( \skeleton , \medm )$, and let $\lambda\in(0,\infty)$.
  \begin{enumerate}[label=\textup{(\arabic*)}, align=right, leftmargin=*, topsep=5pt, parsep=0pt, itemsep=2pt]
    \item An element $Q\in\collectQ$ is called \emph{$(f,\lambda)$-bad} if $\varepsilon_{p}(f;Q)>\lambda^{p}\meas(Q)$.
    \item An element $Q\in\collectQ$ is called \emph{maximal $(f,\lambda)$-bad} if it is $(f,\lambda)$-bad and every cell $R\in\collectQ$ that strictly contains $Q$ is not $(f,\lambda)$-bad; that is,
    \begin{equation}\label{e.badcell}
      \varepsilon_{p}(f;R)\leq\lambda^{p}\meas(R)\ \text{ for every $R\in\collectQ$ with $R\supsetneq Q$.}
    \end{equation}
  \end{enumerate}
\end{definition}
\begin{proposition}\label{p.stopd}
  Let $p\in [1,\infty)$, let $f\in\core$, and let $\lambda\in(0,\infty)$. Let $\collectB_{\lambda}$ be the collection of maximal $(f,\lambda)$-bad cells. For every $Q\in\collectB_{\lambda}$, define
  \begin{equation}\label{e.stopd}
    b_{Q}:=(f-\harmonic_{Q}f)\one_{Q}, \ \text{ and }\ g:=f-\sum_{Q\in\collectB_{\lambda}}b_{Q}.
  \end{equation}
  Then the following assertions hold.
  \begin{enumerate}[label=\textup{(\arabic*)}, align=right, leftmargin=*, topsep=5pt, parsep=0pt, itemsep=2pt]
    \item\label{it.stcon} For distinct $Q,R\in\collectB_{\lambda}$, $\meas(Q\cap R)=0$ and $\medm(Q\cap R)=0$.
    \item\label{it.stpac} There exists $C_{p}\in(0,\infty)$ depending only on $p$ such that
    \begin{equation}\label{e.stpac}
      \sum_{Q\in\collectB_{\lambda}}\meas(Q) \leq C_{p}\lambda^{-p} \|\wgrad f\|_{L^{p}(\skeleton,\medm)}^{p},
    \end{equation}
    \begin{equation}\label{e.stbad}
      \sum_{Q\in\collectB_{\lambda}} \|\wgrad b_{Q}\|_{L^{p}(\skeleton\cap Q,\medm)}^{p} \leq C_{p}\|\wgrad f\|_{L^{p}(\skeleton,\medm)}^{p}
    \end{equation}
    and
    \begin{equation}\label{e.stbap}
      \sum_{Q\in\collectB_{\lambda}} \cellsize{Q}^{-p\dimw\critic_{p}} \|b_{Q}\|_{L^{p}(\ambient,\meas)}^{p} \leq C_{p}\|\wgrad f\|_{L^{p}(\skeleton,\medm)}^{p}.
    \end{equation}
    \item\label{it.stcon+} $b_{Q}\in\sobolev{p}_{0}(Q)$ for every $Q\in\collectB_{\lambda}$, the series in \eqref{e.stopd} converges in $\sobolev{p}(\ambient)$ and in $(C(\ambient),\norm{\cdot}_{\sup})$, and $g\in \sobolev{p}(\ambient)\cap C_{c}(\ambient)$.
    \item\label{it.stgoo} The function $g$ belongs to $\sobolev{p}(\ambient)\cap C_{c}(\ambient)$, and there exists $C_{p}\in(0,\infty)$ depending only on $p$ such that
    \begin{equation}\label{e.stgoo}
      \|\wgrad g\|_{L^{p}(\skeleton,\medm)}^{p} \leq C_{p}\|\wgrad f\|_{L^{p}(\skeleton,\medm)}^{p}
    \end{equation}
    and
    \begin{equation}\label{e.stcar}
      \density_{p}(g)\leq C_{p}\lambda^{p}.
    \end{equation}
  \end{enumerate}
\end{proposition}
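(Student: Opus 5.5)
Maximal $(f,\lambda)$-bad cells exist and are finite in number. Since $f\in\core$, Lemma~\ref{l.harmo}-\ref{it.hpbdp} gives $\varepsilon_{p}(f;Q)\le C_{p}\|\wgrad f\|_{L^{p}(\skeleton\cap Q,\medm)}^{p}$, so $Q$ can be bad only if $\meas(Q)<C_{p}\lambda^{-p}\|\wgrad f\|_{L^{p}}^{p}$. Bad cells therefore have bounded size, and every bad cell lies in a maximal one. Only finitely many cells meeting the support of $\wgrad f$ are bad: below the level $n$ with $f\in\core_{n}$, $f$ is affine on arms, and on a small cell meeting only one arm $f$ agrees with $\harmonic_{Q}f$. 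On a small cell containing a level-$n$ vertex the deviation scales like $\ell(Q)$ while $\meas(Q)$ scales like $\ell(Q)^{\dimf}$; I expect $\varepsilon_{p}$ to vanish or be small there, and I plan to check this directly from \eqref{e.HQ}.

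\ref{it.stcon} comes from the nested structure of cells. Two distinct cells either meet only in vertices (Proposition~\ref{p.nest}-\ref{it.nest1}, after refining to equal level), or one contains the other. Maximality rules out containment, and a finite set of vertices is null for both $\meas$ and $\medm$.

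For \ref{it.stpac}, \eqref{e.stpac} follows from $\meas(Q)<\lambda^{-p}\varepsilon_{p}(f;Q)\le C_{p}\lambda^{-p}\|\wgrad f\|_{L^{p}(\skeleton\cap Q)}^{p}$, summed using \ref{it.stcon}. For \eqref{e.stbad}, $\|\wgrad b_{Q}\|_{L^{p}}^{p}=\varepsilon_{p}(f;Q)\le C_{p}\|\wgrad f\|_{L^{p}(\skeleton\cap Q)}^{p}$, summed the same way. \eqref{e.stbap} then follows from Lemma~\ref{l.zerot} \eqref{e.zerlp}, since $b_{Q}\in\sobolev{p}_{0}(Q)$: indeed $f=\harmonic_{Q}f$ on $\cellvert{Q}\supset\cellatt Q$. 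The series is finite, which gives \ref{it.stcon+}; alternatively, if $\collectB_{\lambda}$ is infinite, convergence follows from \eqref{e.stbad} and \eqref{e.zerin}. Then $g=f$ off $\bigcup Q$, and $g=\harmonic_{Q}f$ on each $Q\in\collectB_{\lambda}$, so $g$ is continuous and compactly supported. \eqref{e.stgoo} follows from $\wgrad g=\wgrad f$ off the bad cells and $\wgrad g=\wgrad\harmonic_{Q}f$ on them, together with \eqref{e.hpbdp}.

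The main obstacle is \eqref{e.stcar}: showing $\varepsilon_{p}(g;R)\lesssim\lambda^{p}\meas(R)$ for every cell $R$. I split into three cases.
\begin{enumerate}[label=\textup{(\alph*)}, align=right, leftmargin=*, topsep=5pt, parsep=0pt, itemsep=2pt]
  \item If $R\subset Q$ for some $Q\in\collectB_{\lambda}$, then $g=\harmonic_{Q}f$ on $R$, and Lemma~\ref{l.harmo}-\ref{it.hnest} gives $\varepsilon_{p}(g;R)=0$.
  \item If $R$ meets no bad cell's interior, then $g=f$ on $R$ and $R$ is not bad, so $\varepsilon_{p}(g;R)\le\lambda^{p}\meas(R)$.
  \item Otherwise $R$ strictly contains some maximal bad cells, so $R$ itself is not bad and $\varepsilon_{p}(f;R)\le\lambda^{p}\meas(R)$. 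Since $g=f$ on $\cellvert{R}$ (the vertices of $R$ lie outside the interiors of the contained bad cells), $\harmonic_{R}g=\harmonic_{R}f$. Then
\begin{equation}
\varepsilon_{p}(g;R)\lesssim\varepsilon_{p}(f;R)+\sum_{Q\subset R}\|\wgrad b_{Q}\|_{L^{p}}^{p}.
\end{equation}
Each term satisfies $\|\wgrad b_{Q}\|_{L^{p}}^{p}=\varepsilon_{p}(f;Q)$, which must be controlled by $\lambda^{p}\meas(\wh{Q})$, where $\wh{Q}$ is the parent cell of $Q$ and is not bad. I will prove this comparison, $\varepsilon_{p}(f;Q)\le C\,\varepsilon_{p}(f;\wh Q)$ up to harmonic corrections, using \eqref{e.hpbdp} and the minimization in Lemma~\ref{l.harmo}. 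Since $\meas(\wh Q)=5\meas(Q)$, summing over the disjoint $Q\subset R$ gives $\lesssim\lambda^{p}\meas(R)$.
\end{enumerate}
The parent comparison in case (c) is the delicate point. I would try bounding $\varepsilon_{p}(f;Q)\le 2^{p-1}\bigl(\|\wgrad(f-\harmonic_{\wh Q}f)\|_{L^{p}(Q)}^{p}+\|\wgrad\harmonic_{Q}(f-\harmonic_{\wh Q}f)\|_{L^{p}(Q)}^{p}\bigr)$, using $\harmonic_{Q}\harmonic_{\wh Q}f=\harmonic_{\wh Q}f$ on $Q$ by Lemma~\ref{l.harmo}-\ref{it.hnest}. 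The first term is at most $\varepsilon_{p}(f;\wh{Q})$, and the second is controlled by $C_{p}\,\varepsilon_{p}(f;\wh{Q})$ via \eqref{e.hpbdp}.
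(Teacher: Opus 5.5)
Your proposal is correct. Parts (1)--(3) and \eqref{e.stgoo} run essentially as in the paper. The one variation there is the uniform convergence of $\sum_{Q}b_Q$. You get it from \eqref{e.zerin} and \eqref{e.stbad}: bad cells have bounded size, so $\|b_Q\|_{\sup}\lesssim\|\wgrad b_Q\|_{L^p(\skeleton,\medm)}\to0$ along any enumeration, and the supports overlap only at vertices, where every $b_Q$ vanishes. The paper instead uses the modulus of continuity of $f$ together with the fact that only finitely many bad cells have size $\ge\delta$. Both work.

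The genuine difference is in \eqref{e.stcar} for cells $R$ not contained in a maximal bad cell. You estimate
\[
\varepsilon_p(g;R)\le 2^{p-1}\Bigl(\varepsilon_p(f;R)+\sum_{Q\in\collectB_\lambda,\,Q\subset R}\varepsilon_p(f;Q)\Bigr)
\]
and then bound each maximal bad cell by its parent $\wh Q$, which is not bad. On $Q$ you write $f-\harmonic_Qf=(f-\harmonic_{\wh Q}f)-\harmonic_Q(f-\harmonic_{\wh Q}f)$, using linearity of $\harmonic_Q$ and Lemma~\ref{l.harmo}-\ref{it.hnest}. With \eqref{e.hpbdp} this gives $\varepsilon_p(f;Q)\le C\,\varepsilon_p(f;\wh Q)\le 5C\lambda^p\meas(Q)$, and summing over the disjoint $Q\subset R$ finishes. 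This is the classical Calder\'on--Zygmund stopping argument, and it is correct.

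The paper avoids parents by using $R$ itself as the reference cell. With $v:=f-\harmonic_Rf$ one has $g-\harmonic_Rg=\harmonic_Qf-\harmonic_Rf=\harmonic_Qv$ on each $Q\subset R$. Then \eqref{e.hpbdp} gives $\varepsilon_p(g;R)\le C_p\,\varepsilon_p(f;R)\le C_p\lambda^p\meas(R)$ directly. Maximality is used only through the fact that $R$ is not bad, and your cases (b) and (c) merge into one. Your route is one step longer, but it also yields the two-sided bound $\lambda^p\meas(Q)<\varepsilon_p(f;Q)\le C\lambda^p\meas(Q)$ for every $Q\in\collectB_\lambda$, which the paper never needs. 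The paper's route gives the cleaner structural fact that the oscillation of $g$ on such $R$ is dominated by that of $f$.

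One correction to your first paragraph: it is not true that only finitely many cells meeting $\supp\wgrad f$ are bad. Take $f\in\core_n$ and a level-$n$ center $c=\cellctr{Q_n}$, and suppose the slopes of $f$ on the four level-$n$ arms issuing from $c$ have nonzero mean $\bar s$. Then every cell $Q$ with $\cellctr{Q}=c$ and $\cellsize{Q}=\ell<\ell_n$ has $\varepsilon_p(f;Q)=4\ell|\bar s|^p$, while $\meas(Q)=\ell^{\dimf}$ with $\dimf>1$. So all sufficiently small such cells are bad, which is what your own scaling remark actually predicts.

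This is harmless. These cells form one decreasing chain, so they lie in a single maximal bad cell; all other cells of size $<\ell_n$ satisfy $f=\harmonic_Qf$ on $Q$. Hence $\collectB_\lambda$ is still finite. In any case, your fallback argument for an infinite family, which is what the paper effectively uses, makes finiteness unnecessary.
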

\begin{proof}
  \begin{enumerate}[label=\textup{(\arabic*)}, align=right, leftmargin=*, topsep=5pt, parsep=0pt, itemsep=2pt]
    \item[\ref{it.stcon}] Let $R\in\collectQ$ be $(f,\lambda)$-bad. By the definition of $(f,\lambda)$-bad, the triangle inequality, and Lemma~\ref{l.harmo}-\ref{it.hpbdp},
      \begin{align}
        \lambda^{p}\cellsize{R}^{\dimf}=\lambda^{p}\meas(R)&<\varepsilon_{p}(f;R)=\|\wgrad(f-\harmonic_{R}f)\|_{L^{p}(\skeleton\cap R,\medm)}^{p}\\
        &\overset{\eqref{e.hpbdp}}{\leq} C_{p}\int_{\skeleton\cap R}|\wgrad f|^{p}\dif\medm. \label{e.maxbd}
      \end{align}
    Consequently,
    \begin{equation}\label{e.badsize}
      \cellsize{R} \overset{\eqref{e.maxbd}}{\leq} C_{p}\lambda^{-p/\dimf}\|\wgrad f\|_{L^{p}(\skeleton,\medm)}^{p/\dimf},\ \text{ for all $(f,\lambda)$-bad $R\in\collectQ$}.
    \end{equation}
    Choose $\{R_{j}\}_{j\in\bNN}\subset \collectQ$ such that $R_{0}=R$, $R_{j}\subset R_{j+1}$, and $\cellsize{R_{j}}=3^{j}\cellsize{R}$. By \eqref{e.badsize},
    \begin{equation}
      \text{if }R_{j}\text{ is $(f,\lambda)$-bad},\ \text{ then }\ 3^{j\dimf}\lambda^{p}\cellsize{R}^{\dimf} < C_{p}\|\wgrad f\|_{L^{p}(\skeleton,\medm)}^{p}.
    \end{equation}
    Thus $ \allowbreak j _{ \allowbreak 0 }: = \max \Sett{j\in\bNN}{\varepsilon_{p}(f;R_{j})>\lambda^{p}\meas(R_{j})} $ is finite, $R_{j_{0}}$ is maximal $(f,\lambda)$-bad, and contains $ \allowbreak R $ . Let $Q,R\in\collectB_{\lambda}$ be distinct. Assume without loss of generality that $\cellsize{R}\leq\cellsize{Q}$. By the nesting property of cells, we have either $R\subset Q$ or $R\cap Q\subset\cellvert{R}\cap\cellvert{Q}$. The first alternative contradicts the maximality of $R$ and $Q$. Therefore, we must have $R\cap Q\subset\cellvert{R}\cap\cellvert{Q}$. Since the vertex set is finite and both $\meas$ and $\medm$ are non-atomic, $\meas(R\cap Q)=\medm(R\cap Q)=0$, which proves the first assertion in \ref{it.stcon}. If $\collectB_{\lambda}=\emptyset$, the preceding argument shows that no cell is bad. Thus $g=f$, $\density_{p}(g)\leq\lambda^{p}$, and all remaining assertions follow automatically. Henceforth we assume $\collectB_{\lambda}\neq\emptyset$.
    \item[\ref{it.stpac}] For $Q\in\collectB_{\lambda}$, the triangle inequality and Lemma~\ref{l.harmo}-\ref{it.hpbdp} give
      \begin{align}
        \lambda^{p}\meas(Q)<\varepsilon_{p}(f;Q)=\|\wgrad b_{Q}\|_{L^{p}(\skeleton\cap Q,\medm)}^{p}\overset{\eqref{e.hpbdp}}{\leq} C_{p}\int_{\skeleton\cap Q}|\wgrad f|^{p}\dif\medm. \label{e.badne}
      \end{align}
    Summing \eqref{e.badne} over $Q\in\collectB_{\lambda}$ and using the first assertion in \ref{it.stcon},
      \begin{align}
        \lambda^{p}\sum_{Q\in\collectB_{\lambda}}\meas(Q) &\overset{\eqref{e.badne}}{<} C_{p}\sum_{Q\in\collectB_{\lambda}} \int_{\skeleton\cap Q}|\wgrad f|^{p}\dif\medm\leq C_{p}\int_{\skeleton}|\wgrad f|^{p}\dif\medm,
      \end{align}
    which is \eqref{e.stpac}. Also, \eqref{e.badne} gives
    \begin{equation}
      \sum_{Q\in\collectB_{\lambda}}\|\wgrad b_{Q}\|_{L^{p}(\skeleton\cap Q,\medm)}^{p} \overset{\eqref{e.badne}}{\leq} C_{p}\sum_{Q\in\collectB_{\lambda}}\int_{\skeleton\cap Q}|\wgrad f|^{p}\dif\medm \leq C_{p}\|\wgrad f\|_{L^{p}(\skeleton,\medm)}^{p},
    \end{equation}
    which is \eqref{e.stbad}. Since $f-\harmonic_{Q}f=0$ on $\cellvert{Q}$,
      \begin{align}
        \sum_{Q\in\collectB_{\lambda}} \cellsize{Q}^{-p\dimw\critic_{p}} \|b_{Q}\|_{L^{p}(\ambient,\meas)}^{p} &\overset{\eqref{e.zerlp}}{\lesssim} \sum_{Q\in\collectB_{\lambda}} \|\wgrad b_{Q}\|_{L^{p}(\skeleton\cap Q,\medm)}^{p}=\sum_{Q\in\collectB_{\lambda}} \|\wgrad(f-\harmonic_{Q}f)\|_{L^{p}(\skeleton\cap Q,\medm)}^{p}\\
        &\overset{\eqref{e.maxbd}}{\lesssim} \|\wgrad f\|_{L^{p}(\skeleton,\medm)}^{p},
      \end{align}
    which proves \eqref{e.stbap}.
    \item[\ref{it.stcon+}] Since $f-\harmonic_{Q}f=0$ on $\cellvert{Q}$, we have $b_{Q}\in\sobolev{p}_{0}(Q)$. Let $\sF \subset\collectB_{\lambda}$ be a finite collection of maximal $(f,\lambda)$-bad elements. By \ref{it.stcon},
    \begin{equation}\label{e.dislp}
      \left\|\sum_{Q\in\sF}b_{Q}\right\|_{L^{p}(\ambient,\meas)}^{p} = \sum_{Q\in\sF}\|b_{Q}\|_{L^{p}(\ambient,\meas)}^{p}, \ \text{ and }\ \left\|\wgrad\left(\sum_{Q\in\sF}b_{Q}\right)\right\|_{L^{p}(\skeleton,\medm)}^{p} = \sum_{Q\in\sF} \|\wgrad b_{Q}\|_{L^{p}(\skeleton,\medm)}^{p}.
    \end{equation}
    By \eqref{e.badsize}, $L_{*}:=\sup_{Q\in\collectB_{\lambda}}\cellsize{Q}<\infty$. Since $\critic_{p}>0$, \eqref{e.stbap} gives
      \begin{align}
        \sum_{Q\in\collectB_{\lambda}}\|b_{Q}\|_{L^{p}(\ambient,\meas)}^{p} &= \sum_{Q\in\collectB_{\lambda}} \cellsize{Q}^{p\dimw\critic_{p}} \left( \cellsize{Q}^{-p\dimw\critic_{p}} \|b_{Q}\|_{L^{p}(\ambient,\meas)}^{p} \right)\\
        &\overset{\eqref{e.stbap}}{\leq} L_{*}^{p\dimw\critic_{p}} \sum_{Q\in\collectB_{\lambda}} \cellsize{Q}^{-p\dimw\critic_{p}} \|b_{Q}\|_{L^{p}(\ambient,\meas)}^{p} <\infty.\label{e.fntBQCZ}
      \end{align}
    Choose finite sets $\sF_{N}\uparrow\collectB_{\lambda}$ and put $B_{N}:=\sum_{Q\in\sF_{N}}b_{Q}$. By \eqref{e.dislp}, for $M>N$,
      \begin{equation}
        \|B_{M}-B_{N}\|_{L^{p}(\ambient,\meas)}^{p} +\|\wgrad B_{M}-\wgrad B_{N}\|_{L^{p}(\skeleton,\medm)}^{p}\leq\sum_{Q\in\collectB_{\lambda}\setminus\sF_{N}} \left(\|b_{Q}\|_{L^{p}(\ambient,\meas)}^{p} +\|\wgrad b_{Q}\|_{L^{p}(\skeleton,\medm)}^{p}\right),
      \end{equation}
    which tends to $0$ as $M,N\to\infty$. Therefore $\{B_{n}\}$ is Cauchy in $\sobolev{p}(\ambient)$. We now identify the limit. Let $\omega _{f} ( r ) : = \sup _{ \allowbreak \metric ( x , y ) \allowbreak \leq r }| f ( x ) - f ( y ) | $ for $r\in(0,\infty)$. Since $ \allowbreak \ambient $ is proper and $f \in C _{c} ( \allowbreak \ambient ) $, $\omega_{f}(r)\to0$ as $r\downarrow0$. By \eqref{e.HQ}, every $ \allowbreak \harmonic _{Q} f ( x ) $ is a convex combination of $f(a)$, $a \in \allowbreak \cellvert{Q} $. Since $ \allowbreak \diam ( Q ) \allowbreak \leq 2 \allowbreak \cellsize{Q} $,
    \begin{equation}
      \| b _{Q} \| _{\sup} = \| f - \allowbreak \harmonic _{Q} f \| _{L^{\infty}(Q,\meas)} \le \omega _{f} ( 2 \allowbreak \cellsize{Q} ) .
    \end{equation}

    For every $\delta>0$, \eqref{e.stpac} gives
    \begin{equation}
      \#\Sett{Q\in\collectB_{\lambda}}{\cellsize{Q}\geq\delta} \leq\delta^{-\dimf}\sum_{Q\in\collectB_{\lambda}}\meas(Q) \overset{\eqref{e.stpac}}{\leq} C_{p}\delta^{-\dimf}\lambda^{-p}\|\wgrad f\|_{L^{p}(\skeleton,\medm)}^{p}<\infty.
    \end{equation}
    Thus, for every finite $\sF \subset \{ Q \in \collectB _{\lambda} : \allowbreak \cellsize{Q} < \delta \} $, we have
    \begin{equation}
      \left\|\sum_{Q\in\sF}b_{Q}\right\|_{\sup} \le \sup _{Q\in\sF} \| b _{Q} \| _{\sup} \le \omega _{f} ( 2 \delta ) \to 0,\ \text{ as }\delta\downarrow 0.
    \end{equation}
    Hence $\sum _{Q \in \allowbreak \collectB _{\lambda} }b _{Q} $ converges uniformly. Finally, since
    \begin{equation}
      \bigcup _{Q \in \allowbreak \collectB _{\lambda} }Q \subset \allowbreak \Sett{x\in\ambient}{\dist(x,\supp(f))\leq2L_{*}} ,
    \end{equation}
    which is compact, the uniform limit $B:=\lim_{N\to\infty}B_{N}$ belongs to $C_{c}(\ambient)$ and agrees with the $L^{p}(\ambient,\meas)$ limit. Thus $B_{N}\to B$ in $\sobolev{p}(\ambient)$, and $g = f - B \in \allowbreak \sobolev{p} ( \ambient ) \cap C _{c} ( \ambient ) $.

    \item[\ref{it.stgoo}] By \eqref{e.stopd},
    \begin{equation}\label{e.gradg-}
      \restr{g}{Q}=\harmonic_{Q}f,\ \text{ for any $Q\in\collectB_{\lambda}$, and } g=f\ \text{on }\ambient\setminus\bigcup_{Q\in\collectB_{\lambda}}Q.
    \end{equation}
    Therefore,
    \begin{equation}\label{e.gradg}
      \wgrad g = \one_{\skeleton\setminus\bigcup_{Q\in\collectB_{\lambda}}Q}\wgrad f + \sum_{Q\in\collectB_{\lambda}} \one_{\skeleton\cap Q}\wgrad\harmonic_{Q}f,\ \text{ $\medm$-a.e. on $\skeleton$}.
    \end{equation}
    Using \ref{it.stcon} and Lemma \ref{l.harmo}-\ref{it.hpbdp},
      \begin{align}
        \|\wgrad g\|_{L^{p}(\skeleton,\medm)}^{p} &\overset{\eqref{e.gradg}}{=}\int_{\skeleton\setminus\bigcup_{Q\in \collectB_{\lambda}}Q}|\wgrad f|^{p}\dif\medm + \sum_{Q\in\collectB_{\lambda}} \int_{\skeleton\cap Q}|\wgrad\harmonic_{Q}f|^{p}\dif\medm\\
        &\overset{\eqref{e.hpbdp}}{\leq} \int_{\skeleton}|\wgrad f|^{p}\dif\medm + C_{p}\sum_{Q\in\collectB_{\lambda}} \int_{\skeleton\cap Q}|\wgrad f|^{p}\dif\medm\lesssim \|\wgrad f\|_{L^{p}(\skeleton,\medm)}^{p}.
      \end{align}
    This proves \eqref{e.stgoo}. It remains to prove \eqref{e.stcar}. Fix $R\in\collectQ$. We distinguish two cases.
    \begin{itemize}
      \item Suppose first that $R\subset Q$ for some $Q\in\collectB_{\lambda}$. Then $g=\harmonic_{Q}f$ on $R$ by \eqref{e.stopd}. By Lemma \ref{l.harmo}-\ref{it.hnest}, $\harmonic_{R}g =\harmonic_{R}(\harmonic_{Q}f) =\restr{\harmonic_{Q}f}{R} =g $ on $R$. Consequently, $\varepsilon_{p}(g;R)=0$.
      \item Suppose now that $R$ is not contained in any element of $\collectB_{\lambda}$. Then $R$ cannot be $(f,\lambda)$-bad, because every $(f,\lambda)$-bad element is contained in a maximal $(f,\lambda)$-bad element, as we have proved in the proof of \ref{it.stcon}. Hence $\varepsilon_{p}(f;R)\leq\lambda^{p}\meas(R)$. If $Q\in\collectB_{\lambda}$ and $Q\cap R^{\circ}\neq\emptyset$, then we must have $Q\subset R$. Let $\collectB_{\lambda}(R):=\Sett{Q\in\collectB_{\lambda}}{Q\subset R}$. For $Q\in\collectB_{\lambda}(R)$, Proposition \ref{p.nest}-\ref{it.nest2} gives $Q\cap\cellvert{R}\subset\cellvert{Q}$. Since $b_{Q}=0$ on $\cellvert{Q}$, we have $\restr{g}{\cellvert{R}}=\restr{f}{\cellvert{R}}$. Consequently, $\harmonic_{R}g=\harmonic_{R}f$. Set $v:=f-\harmonic_{R}f$. If $Q\in\collectB_{\lambda}(R)$, then by linearity,
      \begin{equation}
        \begin{aligned}
          \restr{(g-\harmonic_{R}g)}{Q} &\overset{\eqref{e.stopd}}{=}\harmonic_{Q}f-\harmonic_{R}f\\
          &=\harmonic_{Q}f-\harmonic_{Q}(\harmonic_{R}f) \ \text{ (by Lemma \ref{l.harmo}-\ref{it.hnest})}\\
          &=\harmonic_{Q}(f-\harmonic_{R}f)=\harmonic_{Q}v\ \text{ (by definition of $v$)}. \label{e.gminusH}
        \end{aligned}
      \end{equation}
      By \eqref{e.gradg-}, $g-\harmonic_{R}g=v$ on $(\skeleton\cap R)\setminus\bigcup_{Q\in\collectB_{\lambda}(R)}Q$. Thus, by \ref{it.stcon} and Lemma \ref{l.harmo},
      \begin{equation}
        \begin{aligned}
          \varepsilon_{p}(g;R) &=\int_{\skeleton\cap R} |\wgrad(g-\harmonic_{R}g)|^{p}\dif\medm\\
          &\overset{\eqref{e.gradg-},\eqref{e.gminusH}}{=} \int_{(\skeleton\cap R)\setminus\bigcup_{Q\in\collectB_{\lambda}(R)}Q} |\wgrad v|^{p}\dif\medm + \sum_{Q\in\collectB_{\lambda}(R)} \int_{\skeleton\cap Q}|\wgrad\harmonic_{Q}v|^{p}\dif\medm\\
          &\overset{\eqref{e.hpbdp}}{\leq} \int_{(\skeleton\cap R)\setminus\bigcup_{Q\in\collectB_{\lambda}(R)}Q} |\wgrad v|^{p}\dif\medm + C_{p}\sum_{Q\in\collectB_{\lambda}(R)} \int_{\skeleton\cap Q}|\wgrad v|^{p}\dif\medm\\
          &\lesssim\int_{\skeleton\cap R}|\wgrad v|^{p}\dif\medm=\varepsilon_{p}(f;R)\leq\lambda^{p}\meas(R), \text{ (since $R$ cannot be $(f,\lambda)$-bad).}
        \end{aligned}
      \end{equation}
    \end{itemize}
    Combining the two cases, we obtain \eqref{e.stcar}.
  \end{enumerate}
\end{proof}
\subsection{Estimates on the bad functions}
\begin{lemma}\label{l.varia}
  Let $p\in[1,\infty)$ and $t\in(0,\infty)$. Let $(Q_{i})_{i\in\sI}\subset\collectQ$ be cells with pairwise $\meas$-null intersections. Let $b_{i}\in L^{p}(\ambient,\meas)$ be supported in $Q_{i}$. Assume
  \begin{equation}\label{e.varlpconv}
    \sum_{i\in\sI}\|b_{i}\|_{L^{p}(\ambient,\meas)}^{p}<\infty.
  \end{equation}
  Choose $x_{i}\in Q_{i}$ and fix $A\in(8,\infty)$. Define $\Omega^{*}:=\bigcup_{i\in\sI}B(x_{i},A\cellsize{Q_{i}})$. Then
  \begin{equation}\label{e.varia}
    \left\| (-\gen)^{\critic_{p}}P_{t} \big(\sum_{i\in\sI}b_{i}\big) \right\|_{L^{p}(\ambient\setminus\Omega^{*},\meas)}^{p} \leq C_{p}\sum_{i\in\sI} \cellsize{Q_{i}}^{-p\dimw\critic_{p}} \|b_{i}\|_{L^{p}(\ambient,\meas)}^{p}.
  \end{equation}
  The constant $C_{p}$ is independent of $t$ and of the family $(Q_{i},b_{i})_{i\in\sI}$.
\end{lemma}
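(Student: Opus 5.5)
We estimate the operator away from the enlarged cells using only the kernel bound of Lemma~\ref{l.semes}-\ref{it.fracker} with $\theta=\critic_{p}\in(0,1)$, and then sum the resulting bounds with an almost-orthogonality argument. No cancellation of the $b_{i}$ is needed, since the exponent $\critic_{p}$ is tuned to the size normalization $\cellsize{Q_{i}}^{\dimw\critic_{p}}$.

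\emph{Step 1: reduction to finitely many pieces.} First assume $\sI$ is finite and each $b_{i}\in L^{1}\cap L^{2}$; this holds automatically for $b_{i}\in L^{p}$ supported in the compact set $Q_{i}$, at least when $p\geq 2$. For general $p$ we truncate: bounded compactly supported approximants are dense, and since $(-\gen)^{\critic_{p}}P_{t}$ is bounded on $L^{p}$ by Lemma~\ref{l.semes}-\ref{it.analy}, the truncations converge in $L^{p}$. The general case then follows by letting the finite subfamilies increase. The sum converges in $L^{p}$ by \eqref{e.varlpconv} and the disjointness of supports, and the $L^{p}$-boundedness at fixed $t$ lets us pass to the limit, with Fatou's lemma on the left-hand side of \eqref{e.varia}.

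\emph{Step 2: pointwise bound.} By \eqref{e.fracint},
\[
(-\gen)^{\critic_{p}}P_{t}b_{i}(x)=\int_{Q_{i}}q_{t}^{(\critic_{p})}(x,y)b_{i}(y)\dif\meas(y).
\]
Let $x\notin B(x_{i},A\cellsize{Q_{i}})$ and $y\in Q_{i}$. Since $\diam Q_{i}\leq 2\cellsize{Q_{i}}$ and $A>8$, we get $\metric(x,y)\geq \tfrac12\metric(x,x_{i})$. Note that \eqref{e.fracker} holds uniformly in $t$, so we do not need the condition $\metric^{\dimw}\geq t$. Combined with H\"older's inequality and $\meas(Q_{i})\asymp\cellsize{Q_{i}}^{\dimf}$, this gives
\[
|(-\gen)^{\critic_{p}}P_{t}b_{i}(x)|\lesssim \metric(x,x_{i})^{-\dimf-\critic_{p}\dimw}\,\cellsize{Q_{i}}^{\dimf(1-1/p)}\|b_{i}\|_{L^{p}}=:\metric(x,x_{i})^{-\dimf-\critic_{p}\dimw}\cellsize{Q_{i}}^{\dimf+\critic_{p}\dimw}\,a_{i},
\]
where
\[
a_{i}:=\cellsize{Q_{i}}^{-\dimf/p-\critic_{p}\dimw}\|b_{i}\|_{L^{p}}.
\]
The right-hand side of \eqref{e.varia} is then $\sum_{i}a_{i}^{p}\meas(Q_{i})$. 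Setting $\ell_{i}:=\cellsize{Q_{i}}$ and $\beta:=\dimf+\critic_{p}\dimw$, we obtain on $\ambient\setminus\Omega^{*}$ the pointwise bound
\[
|(-\gen)^{\critic_{p}}P_{t}b(x)|\lesssim \sum_{i}a_{i}\Bigl(\frac{\ell_{i}}{\ell_{i}+\metric(x,x_{i})}\Bigr)^{\beta}.
\]

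\emph{Step 3: summation, the main obstacle.} We must show
\[
\Bigl\|\sum_{i}a_{i}\Bigl(\frac{\ell_{i}}{\ell_{i}+\metric(\cdot,x_{i})}\Bigr)^{\beta}\Bigr\|_{L^{p}}^{p}\lesssim\sum_{i}a_{i}^{p}\ell_{i}^{\dimf}.
\]
Because $\beta>\dimf$, the kernel is dominated by the Hardy--Littlewood maximal function: for every $x$,
\[
\Bigl(\frac{\ell_{i}}{\ell_{i}+\metric(x,x_{i})}\Bigr)^{\beta}\lesssim \sum_{k\geq0}2^{-k(\beta-\dimf)}\,\frac{1}{\meas(B(x,2^{k+2}\ell_{i}))}\int_{B(x,2^{k+2}\ell_{i})}\one_{Q_{i}}\dif\meas.
\]
The next step would apply Fefferman--Stein to $\sum_{i}a_{i}\one_{Q_{i}}$. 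Since the $Q_{i}$ are essentially disjoint, this function has $L^{p}$ norm $(\sum_{i} a_{i}^{p}\ell_{i}^{\dimf})^{1/p}$. For $p>1$ one could hope to bound the left side by $\mathcal{M}(\sum_{i} a_{i}\one_{Q_{i}})$ and use the $L^{p}$-boundedness of $\mathcal M$ on the Ahlfors regular space $\ambient$. However, the ball radius depends on $i$, so the maximal function does not directly dominate the sum.

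For $p=1$ the maximal-function route fails, and we would instead integrate each term directly:
\[
\int_{\ambient}\Bigl(\frac{\ell_{i}}{\ell_{i}+\metric(x,x_{i})}\Bigr)^{\beta}\dif\meas\lesssim\ell_{i}^{\dimf}.
\]
For $p>1$ we would try the cleanest alternative first, namely the Fefferman--Stein vector-valued inequality in its Hardy-type form: duality against $g\in L^{p'}$ together with the estimate
\[
\int\Bigl(\frac{\ell_{i}}{\ell_{i}+\metric(x,x_{i})}\Bigr)^{\beta}|g|\dif\meas\lesssim \ell_{i}^{\dimf}\inf_{Q_{i}}\mathcal{M}g.
\]
This reduces the sum to
\[
\sum_{i}a_{i}\int_{Q_{i}}\mathcal{M}g\dif\meas=\int\Bigl(\sum_{i} a_{i}\one_{Q_{i}}\Bigr)\mathcal{M}g\dif\meas\leq \Bigl\|\sum_{i} a_{i}\one_{Q_{i}}\Bigr\|_{L^{p}}\|\mathcal{M}g\|_{L^{p'}},
\]
and the maximal theorem bounds $\|\mathcal{M}g\|_{L^{p'}}\lesssim\|g\|_{L^{p'}}$. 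All constants depend only on $p$, $A$, and the Ahlfors constants, and in particular are independent of $t$.
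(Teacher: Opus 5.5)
Your proposal is correct and follows essentially the same route as the paper: the uniform-in-$t$ kernel bound \eqref{e.fracker} plus H\"older's inequality give the molecular pointwise bound off $\Omega^{*}$, and the summation is done for $p>1$ by duality against the maximal function (exactly the content of Lemma~\ref{l.molec}) and for $p=1$ by integrating each term directly. Your Step~1 approximation handles two details the paper leaves implicit: truncating the $b_{i}$ so that \eqref{e.fracint} applies when $p<2$, and passing from finite to countable families. The abandoned Fefferman--Stein detour at the start of Step~3 is unnecessary and can be dropped.
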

\begin{proof}
  Let $r_{i}:=\cellsize{Q_{i}}$. Set $N:=\dimf+\dimw\critic_{p}>\dimf$. Let $q_{t}^{(\critic_{p})}$ be the integration kernel of $(-\gen)^{\critic_{p}}P_{t}$ from Lemma~\ref{l.semes}-\ref{it.fracker}. Fix $i$, let $x\in\ambient\setminus\Omega^{*}$ and $y\in Q_{i}$. Since $x\notin B(x_{i},Ar_{i})$, $\metric(x,x_{i})\geq Ar_{i}$. Since $x_{i},y\in Q_{i}$ and $\diam(Q_{i})=2r_{i}$,
    \begin{align}
      \metric(x,y)\geq \metric(x,x_{i})-\metric(x_{i},y)\geq \metric(x,x_{i})-2r_{i}\geq \left(1-\frac{2}{A}\right)\metric(x,x_{i})\geq\frac{3}{4}\metric(x,x_{i}). \label{e.vardist}
    \end{align}
  By \eqref{e.fracker} and \eqref{e.vardist},
    \begin{align}
      |q_{t}^{(\critic_{p})}(x,y)|\overset{\eqref{e.fracker}}{\leq} C_{p}\metric(x,y)^{-N}\overset{\eqref{e.vardist}}{\leq}C_{p}\metric(x,x_{i})^{-N},\ \text{ for all $x\in\ambient\setminus\Omega^{*}$, $y\in Q_{i}$}. \label{e.varkernel}
    \end{align}
  Therefore, by H\"{o}lder's inequality,
    \begin{align}
      &\phantom{\ \leq}\one_{\ambient\setminus\Omega^{*}}(x) \left|(-\gen)^{\critic_{p}}P_{t}b_{i}(x)\right| \overset{\eqref{e.fracint}, \eqref{e.varkernel}}{\leq} C_{p}\metric(x,x_{i})^{-N} \int_{Q_{i}}|b_{i}(y)|\dif\meas(y)\\
      &\overset{\text{(H\"{o}lder)}}{\leq} C_{p}\metric(x,x_{i})^{-N} \meas(Q_{i})^{1-1/p} \|b_{i}\|_{L^{p}(\ambient,\meas)}=C_{p}\metric(x,x_{i})^{-N} r_{i}^{\dimf(1-1/p)} \|b_{i}\|_{L^{p}(\ambient,\meas)}\\
      &\lesssim r_{i}^{-\dimw\critic_{p}-\dimf/p} \|b_{i}\|_{L^{p}(\ambient,\meas)}\left(1+\frac{\metric(x,x_{i})}{r_{i}}\right)^{-N}. \ \text{ (since $\metric(x,x_{i})\geq Ar_{i}$ for $x\notin \Omega^*$)}. \label{e.varholder}
    \end{align}
  We define $c_{i} := r_{i}^{-\dimw\critic_{p}-\dimf/p} \|b_{i}\|_{L^{p}(\ambient,\meas)}$. Then \eqref{e.varholder} reads
  \begin{equation}\label{e.varmol}
    \one_{\ambient\setminus\Omega^{*}}(x) \left|(-\gen)^{\critic_{p}}P_{t}b_{i}(x)\right| \leq C_{p}c_{i} \left(1+\frac{\metric(x,x_{i})}{r_{i}}\right)^{-N}.
  \end{equation}
  For $p>1$, since $ Q _{i} \subset B ( x _{i} , 3r _{i} ) $, $ \allowbreak \meas ( Q _{i} ) = \allowbreak r _{i} ^{\dimf} $, and $ \allowbreak N > \dimf $, by Lemma~\ref{l.molec} with $E_{i}=Q_{i}$, $z_{i}=x_{i}$, and $C_{0}=3$, we have
  \begin{equation}
    \left\|\sum_{i}c_{i}\left(1+\frac{\metric(\cdot,x_{i})}{r_{i}}\right)^{-N}\right\|_{L^{p}(\ambient,\meas)}^{p} \leq C_{p,N}\sum_{i}c_{i}^{p}r_{i}^{\dimf}.
  \end{equation}
  For $p=1$, split the integral into $B(x_{i},r_{i})$ and the annuli $B(x_{i},2^{j+1}r_{i})\setminus B(x_{i},2^{j}r_{i})$, $j\in\bNN$. By \eqref{e.ahlfors},
    \begin{align}
      &\phantom{\ \leq}\int_{\ambient}\left(1+\frac{\metric(x,x_{i})}{r_{i}}\right)^{-N}\dif\meas(x) \leq\meas(B(x_{i},r_{i})) +\sum_{j=0}^{\infty}2^{-jN}\meas(B(x_{i},2^{j+1}r_{i}))\\
      &\leq C r_{i}^{\dimf}\left(1+2^{\dimf}\sum_{j=0}^{\infty}2^{-j(N-\dimf)}\right)= C r_{i}^{\dimf}\left(1+\frac{2^{\dimf}}{1-2^{-(N-\dimf)}}\right).
    \end{align}
  Thus $\int_{\ambient}(1+\metric(x,x_{i})/r_{i})^{-N}\dif\meas(x)\leq C r_{i}^{\dimf}\sum_{j\in\bNN}2^{-j(N-\dimf)}\leq C r_{i}^{\dimf}$, since $N>\dimf$. Integrating the sum in \eqref{e.varmol} gives the bound with $p=1$. In both cases, $N=\dimf+\dimw\critic_{p}$ depends only on $p$, and we obtain
    \begin{align}
      \left\| (-\gen)^{\critic_{p}}P_{t}\left(\sum_{i} b_{i}\right) \right\|_{L^{p}(\ambient\setminus\Omega^{*},\meas)}^{p} &\leq C_{p}\sum_{i} c_{i}^{p}\meas(Q_{i})= C_{p}\sum_{i} r_{i}^{-p\dimw\critic_{p}} \|b_{i}\|_{L^{p}(\ambient,\meas)}^{p},
    \end{align}
  which proves \eqref{e.varia}.
\end{proof}
The proof of the following proposition adapts the bad function estimates in \cite[Section~1.2]{AC05} and \cite[proofs of Lemmas~2.17, 2.18 and 2.19]{DR26}.
\begin{proposition}\label{p.badpt}
  Let $p\in[1,\infty)$, let $f\in\core$, let $\lambda\in(0,\infty)$, and let $t\in(0,\infty)$. Let $(b_{Q})_{Q\in\collectB_{\lambda}}$ be the family defined in Proposition~\ref{p.stopd}. Then there exists $C_{p}$ depending only on $p$ such that
  \begin{equation}\label{e.badpt}
    \meas\left( \Sett{x\in\ambient}{ \left|(-\gen)^{\critic_{p}}P_{t} \sum_{Q\in\collectB_{\lambda}}b_{Q}(x) \right|>\lambda} \right) \leq C_{p}\lambda^{-p} \|\wgrad f\|_{L^{p}(\skeleton,\medm)}^{p}.
  \end{equation}
\end{proposition}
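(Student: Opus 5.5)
The plan is the classical Calder\'on--Zygmund splitting of the level set into a neighborhood of the bad cells and its complement. Fix $A\in(8,\infty)$, say $A=9$, and for every $Q\in\collectB_{\lambda}$ choose $x_{Q}:=\cellctr{Q}\in Q$. Set
\begin{equation}
  \Omega^{*}:=\bigcup_{Q\in\collectB_{\lambda}}B(x_{Q},A\cellsize{Q}),\qquad B:=\sum_{Q\in\collectB_{\lambda}}b_{Q}.
\end{equation}
Then the set in \eqref{e.badpt} is contained in $\Omega^{*}\cup\Sett{x\in\ambient\setminus\Omega^{*}}{|(-\gen)^{\critic_{p}}P_{t}B(x)|>\lambda}$, and I estimate the two pieces separately.

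For $\Omega^{*}$, Ahlfors regularity \eqref{e.ahlfors} gives $\meas(B(x_{Q},A\cellsize{Q}))\leq CA^{\dimf}\cellsize{Q}^{\dimf}=C\meas(Q)$. Summing over $Q$ and using the packing bound \eqref{e.stpac} in Proposition~\ref{p.stopd} yields $\meas(\Omega^{*})\leq C_{p}\lambda^{-p}\|\wgrad f\|_{L^{p}(\skeleton,\medm)}^{p}$.

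For the complement, I apply Chebyshev's inequality in $L^{p}$ together with Lemma~\ref{l.varia}, taking the family $(Q_{i},b_{i})=(Q,b_{Q})_{Q\in\collectB_{\lambda}}$. The hypotheses of Lemma~\ref{l.varia} are available from Proposition~\ref{p.stopd}: item \ref{it.stcon} gives pairwise $\meas$-null intersections, $b_{Q}$ is supported in $Q$, and the summability \eqref{e.varlpconv} was shown in \eqref{e.fntBQCZ}. One should also check that $(-\gen)^{\critic_{p}}P_{t}B$ is meaningful and agrees with the kernel representation. Here $B=f-g\in L^{1}\cap L^{2}$, since it is continuous with compact support by Proposition~\ref{p.stopd}-\ref{it.stcon+}. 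Hence \eqref{e.fracint} applies to $B$, and the pointwise bound in the proof of Lemma~\ref{l.varia} holds for the sum by the triangle inequality, with the kernel integral split over the $Q$'s. Then
\begin{equation}
  \meas\Bigl(\Sett{x\in\ambient\setminus\Omega^{*}}{|(-\gen)^{\critic_{p}}P_{t}B(x)|>\lambda}\Bigr)\leq\lambda^{-p}\|(-\gen)^{\critic_{p}}P_{t}B\|_{L^{p}(\ambient\setminus\Omega^{*},\meas)}^{p}\leq C_{p}\lambda^{-p}\sum_{Q}\cellsize{Q}^{-p\dimw\critic_{p}}\|b_{Q}\|_{L^{p}}^{p},
\end{equation}
and \eqref{e.stbap} bounds the last sum by $C_{p}\|\wgrad f\|_{L^{p}(\skeleton,\medm)}^{p}$. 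Adding the two estimates gives \eqref{e.badpt} with a constant independent of $t$ and $\lambda$.

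The main obstacle is the justification that the kernel formula \eqref{e.fracint} may be applied to the possibly infinite sum $B$ and interchanged with the summation. I would handle this by first proving the estimate for finite partial sums $B_{N}$, which is immediate from the above. I would then pass to the limit using $B_{N}\to B$ in $L^{2}$ (uniform convergence on a fixed compact set) and the $L^{2}$-boundedness of $(-\gen)^{\critic_{p}}P_{t}$. This gives a.e. convergence along a subsequence, and Fatou's lemma applied to the level sets completes the argument.
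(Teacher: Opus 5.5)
Your proposal is correct and follows the paper's proof. The paper uses the same $\Omega^{*}$ with $A=9$, bounds $\meas(\Omega^{*})$ by Ahlfors regularity together with \eqref{e.stpac}, and controls the complement by Chebyshev's inequality, Lemma~\ref{l.varia} and \eqref{e.stbap}. Your extra justification of the kernel representation for the infinite sum $B$ is a detail the paper leaves implicit. Your partial-sum and Fatou argument handles it, but it is also immediate: the cells are pairwise $\meas$-null and the series converges uniformly, so $|B|=\sum_{Q}|b_{Q}|$ $\meas$-a.e., and Tonelli's theorem splits the kernel integral over the cells directly.
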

\begin{proof}
  For each $Q\in\collectB_{\lambda}$, choose a point $x_{Q}\in Q$ and fix $A \allowbreak = 9 $. Let $\Omega^{*}:= \bigcup_{Q\in\collectB_{\lambda}} B(x_{Q},A\cellsize{Q})$. By Ahlfors regularity \eqref{e.ahlfors} and Proposition~\ref{p.stopd}-\ref{it.stpac}, we have
    \begin{align}
      \meas(\Omega^{*})\leq \sum_{Q\in\collectB_{\lambda}} \meas(B(x_{Q},A\cellsize{Q}))\overset{\eqref{e.ahlfors}}{\lesssim} C_{A}\sum_{Q\in\collectB_{\lambda}}\meas(Q)\overset{\eqref{e.stpac}}{\lesssim}\lambda^{-p} \|\wgrad f\|_{L^{p}(\skeleton,\medm)}^{p}. \label{e.badomeg}
    \end{align}
  The cells have pairwise $ \allowbreak \meas $-null intersections by Proposition~\ref{p.stopd}-\ref{it.stcon} and \eqref{e.fntBQCZ} gives $\sum_{Q\in\collectB_{\lambda}}\|b_{Q}\|_{L^{p}(\ambient,\meas)}^{p}<\infty$. Using \eqref{e.stbap}, we obtain
    \begin{equation}
      \left\| (-\gen)^{\critic_{p}}P_{t} \left(\sum_{Q\in\collectB_{\lambda}}b_{Q}\right) \right\|_{L^{p}(\ambient\setminus\Omega^{*},\meas)}^{p} \overset{\eqref{e.varia}}{\lesssim} \sum_{Q\in\collectB_{\lambda}} \cellsize{Q}^{-p\dimw\critic_{p}} \|b_{Q}\|_{L^{p}(\ambient,\meas)}^{p}\overset{\eqref{e.stbap}}{\lesssim} \|\wgrad f\|_{L^{p}(\skeleton,\medm)}^{p}. \label{e.badoff}
    \end{equation}
  Therefore, by Chebyshev's inequality in the second inequality below,
    \begin{align}
      &\phantom{\ \leq}\meas\left( \Sett{x\in\ambient}{ \left|(-\gen)^{\critic_{p}}P_{t} \sum_{Q\in\collectB_{\lambda}}b_{Q}(x) \right|>\lambda} \right)\\
      &\leq \meas(\Omega^{*}) + \meas\left( \Sett{x\in\ambient\setminus\Omega^{*}}{ \left|(-\gen)^{\critic_{p}}P_{t} \sum_{Q\in\collectB_{\lambda}}b_{Q}(x) \right|>\lambda} \right)\\
      &\overset{\eqref{e.badomeg}}{\leq} C_{p}\lambda^{-p} \|\wgrad f\|_{L^{p}(\skeleton,\medm)}^{p}+\lambda^{-p} \left\| (-\gen)^{\critic_{p}}P_{t} \left(\sum_{Q\in\collectB_{\lambda}}b_{Q}\right) \right\|_{L^{p}(\ambient\setminus\Omega^{*},\meas)}^{p}\overset{\eqref{e.badoff}}{\lesssim}\lambda^{-p} \|\wgrad f\|_{L^{p}(\skeleton,\medm)}^{p}.
    \end{align}
  This is \eqref{e.badpt}.
\end{proof}
\subsection{Validity of reverse quasi-Riesz inequality}
\begin{proof}[Proof of Theorem~\ref{t.strfl}-\ref{it.scale} and \ref{it.p=2}]
  \
  \begin{enumerate}[label=\textup{(\arabic*)}, align=right, leftmargin=*, topsep=5pt, parsep=0pt, itemsep=2pt]
    \item[\ref{it.scale}] Let $p\in[1,2)$. We first consider the case $f\in\core$. Fix $t\in(0,\infty)$ and $\lambda\in(0,\infty)$. Apply Proposition~\ref{p.stopd} to $f$ with $\lambda/2$. Thus
    \begin{equation}\label{e.scalegb}
      f=g+B, \ \text{with }B:=\sum_{Q\in\collectB_{\lambda/2}}b_{Q},
    \end{equation}
    where $g$ and $(b_{Q})_{Q\in\collectB_{\lambda/2}}$ are defined as in \eqref{e.stopd}, and satisfy \eqref{e.stgoo}, \eqref{e.stcar}, and Proposition~\ref{p.badpt}. By Proposition~\ref{p.badpt},
      \begin{align}
        \meas\left( \Sett{x\in\ambient}{ |(-\gen)^{\critic_{p}}P_{t}B(x)|>2^{-1}\lambda} \right) \overset{\eqref{e.badpt}}{\lesssim}\lambda^{-p} \|\wgrad f\|_{L^{p}(\skeleton,\medm)}^{p}. \label{e.badme}
      \end{align}
    Proposition~\ref{p.stopd} gives $g\in\sobolev{p}(\ambient)\cap C_{c}(\ambient)$ and $\density_{p}(g)\leq C_{p}(\lambda/2)^{p}<\infty$, so Proposition~\ref{p.goodp} applies to $g$. Chebyshev's inequality gives
      \begin{align}
        &\phantom{\ \leq}\meas\left( \Sett{x\in\ambient}{ |(-\gen)^{\critic_{p}}P_{t}g(x)|>2^{-1}\lambda} \right)\leq \frac{4}{\lambda^{2}} \|(-\gen)^{\critic_{p}}P_{t}g\|_{L^{2}(\ambient,\meas)}^{2}\\
        &\overset{ \eqref{e.goodt} }{\lesssim}\lambda^{-2}\density_{p}(g)^{(2-p)/p} \|\wgrad g\|_{L^{p}(\skeleton,\medm)}^{p}\overset{\eqref{e.stgoo},\eqref{e.stcar}}{\lesssim}\lambda^{-p} \|\wgrad f\|_{L^{p}(\skeleton,\medm)}^{p}. \label{e.goodm}
      \end{align}
    Therefore \eqref{e.scalegb}, \eqref{e.badme} and \eqref{e.goodm} imply
    \begin{equation}\label{e.corew}
      \meas\left( \Sett{x\in\ambient}{|(-\gen)^{\critic_{p}}P_{t}f(x)|>\lambda} \right) \leq C_{p}\lambda^{-p} \|\wgrad f\|_{L^{p}(\skeleton,\medm)}^{p}, \ \text{ for all } f\in\core.
    \end{equation}
    The constant in \eqref{e.corew} is independent of $t\in(0,\infty)$. We now remove the assumption $f\in\core$. Let $f\in\sobolev{p}(\ambient)$. By Corollary~\ref{c.coredense}, there exists $(f_{j})_{j\geq1}\subset\core$ such that
    \begin{equation}\label{e.scaledens}
      \lim_{j\to\infty}\left(\|f_{j}-f\|_{L^{p}(\ambient,\meas)} + \|\wgrad f_{j}-\wgrad f\|_{L^{p}(\skeleton,\medm)}\right)=0.
    \end{equation}
    Fix $t\in(0,\infty)$. By Lemma \ref{l.semes},
      \begin{align}
        \lim_{j\to\infty}\|(-\gen)^{\critic_{p}}P_{t}(f_{j}-f)\|_{L^{p}(\ambient,\meas)} &\overset{\eqref{e.analy}}{\leq}\lim_{j\to\infty} C_{p}t^{-\critic_{p}} \|f_{j}-f\|_{L^{p}(\ambient,\meas)}\overset{\eqref{e.scaledens}}{=}0. \label{e.scalelpconv}
      \end{align}
    For this fixed $t$, we can choose a subsequence, still denoted by $f_{j}$, such that the series $\sum_{j=1}^{\infty}\|(-\gen)^{\critic_{p}}P_{t}(f_{j}-f)\|_{L^{p}(\ambient,\meas)}^{p}<\infty$. Fubini's theorem gives $\sum_{j}|(-\gen)^{\critic_{p}}P_{t}(f_{j}-f)(x)|^{p}<\infty$ for $\meas$-a.e. $x$, and therefore
    \begin{equation}\label{e.scaleae}
      \lim_{j\to\infty}(-\gen)^{\critic_{p}}P_{t}f_{j}(x) = (-\gen)^{\critic_{p}}P_{t}f(x)\ \text{ for $\meas$-a.e. }x\in\ambient.
    \end{equation}
    From \eqref{e.scaleae},
    \begin{equation}
      \Sett{x\in\ambient}{|(-\gen)^{\critic_{p}}P_{t}f(x)|>\lambda} \subset \liminf_{j\to\infty} \Sett{x\in\ambient}{ |(-\gen)^{\critic_{p}}P_{t}f_{j}(x)|>2^{-1}\lambda}.
    \end{equation}
    Hence, by Fatou's lemma, \eqref{e.corew}, and \eqref{e.scaledens}
      \begin{align}
        &\phantom{\ \leq}\meas\left( \Sett{x\in\ambient}{|(-\gen)^{\critic_{p}}P_{t}f(x)|>\lambda} \right)\\
        &\leq \liminf_{j\to\infty} \meas\left( \Sett{x\in\ambient}{ |(-\gen)^{\critic_{p}}P_{t}f_{j}(x)|>2^{-1}\lambda} \right) \\
        &\overset{\eqref{e.corew}}{\leq} C_{p}(2^{-1}\lambda)^{-p} \lim_{j\to\infty} \|\wgrad f_{j}\|_{L^{p}(\skeleton,\medm)}^{p}\overset{\eqref{e.scaledens}}{\lesssim}\lambda^{-p}\|\wgrad f\|_{L^{p}(\skeleton,\medm)}^{p}.
      \end{align}
    This proves \eqref{e.scalw}. Since all constants above are independent of $t$, the estimate is uniform for $t\in(0,\infty)$.
    \item[\ref{it.p=2}] Let $f\in\sobolev{2}(\ambient)=\Dom(( -\gen)^{1/2})$. By the spectral calculus and\cite[Theorem~1.3.1]{FOT11},
    \begin{equation}
      \|(-\gen)^{1/2}P_{t}f\|_{L^{2}(\ambient,\meas)}^{2} =\int_{[0,\infty)}\lambda\exp(-2t\lambda)\dif\langle\proj_{\lambda}f,f\rangle \leq\int_{[0,\infty)}\lambda\dif\langle\proj_{\lambda}f,f\rangle =\form(f,f).
    \end{equation}
    Consequently,
      \begin{align}
        \sup_{t\in(0,\infty)}\norm{(-\gen)^{1/2}P_{t}f}_{L^{2}(\ambient,\meas)}^{2}=\sup_{t\in(0,\infty)}\form(P_{t}f,P_{t}f)\leq \form(f,f)=\norm{\wgrad f}_{L^{2}(\skeleton,\medm)}^{2}.
      \end{align}
  \end{enumerate}
\end{proof}
\subsection{Reverse Riesz inequality for \texorpdfstring{$p\in[1,2]$}{p∈[1,2]}}

\begin{proof}[Proof of Theorem~\ref{t.mainw}]
  Let $p\in [ 1,2)$. By Theorem~\ref{t.strfl}-\ref{it.scale},
  \begin{equation}\label{e.scalecomplete}
    \|(-\gen)^{\critic_{p}}P_{t}f\|_{L^{p,\infty}(\ambient,\meas)} \leq C_{p}\|\wgrad f\|_{L^{p}(\skeleton,\medm)}, \text{ for all } f\in\sobolev{p}(\ambient)\text{ and all } t\in(0,\infty).
  \end{equation}
  For $h\in\core$, Lemma~\ref{l.coreconv} gives $h\in\Dom((-\gen)^{\critic_{p}})$ and $(-\gen)^{\critic_{p}}P_{t}h\to(-\gen)^{\critic_{p}}h$ in $L^{p}(\ambient,\meas)$.  Since $\|v\|_{L^{p,\infty}(\ambient,\meas)}\leq \|v\|_{L^{p}(\ambient,\meas)}$ for all $v\in L^{p}(\ambient,\meas)$, we also have
  \begin{equation}\label{e.coreweakconv}
    \lim_{t\downarrow 0}\|(-\gen)^{\critic_{p}}P_{t}h-(-\gen)^{\critic_{p}}h\|_{L^{p,\infty}(\ambient,\meas)}=0.
  \end{equation}
  By the quasi-norm property in Lemma \ref{l.loren}-\ref{it.loren1} if $p\in(1,2)$, and by the fact that $\{\abs{u+v}>\lambda\}\subset \{\abs{u}>\lambda/2\}\cup\{\abs{v}>\lambda/2\}$ if $p=1$, and using \eqref{e.scalecomplete}, we have
    \begin{align}
      \|(-\gen)^{\critic_{p}}h\|_{L^{p,\infty}(\ambient,\meas)} &\leq \lim_{t\downarrow 0} 2\|(-\gen)^{\critic_{p}}h-(-\gen)^{\critic_{p}}P_{t}h\|_{L^{p,\infty}(\ambient,\meas)} +2C_{p}\|\wgrad h\|_{L^{p}(\skeleton,\medm)}\\
      &\overset{\eqref{e.coreweakconv}}{=}2C_{p}\|\wgrad h\|_{L^{p}(\skeleton,\medm)}, \text{ for all } h\in\core.\label{e.corehomweak}
    \end{align}
  Now we extend \eqref{e.corehomweak} to functions in $\sobolev{p}(\ambient)$. Let $f\in\sobolev{p}(\ambient)$. Choose $(f_{j})_{j\in\bN}\subset\core$ such that
  \begin{equation}\label{e.homdens}
    \lim _{j\to\infty} \allowbreak \left(\|f_{j}-f\|_{L^{p}(\ambient,\meas)} + \|\wgrad f_{j}-\wgrad f\|_{L^{p}(\skeleton,\medm)}\right) = 0 .
  \end{equation}
  By \eqref{e.corehomweak}, for $j,k\in\bN$,
    \begin{align}
      \|(-\gen)^{\critic_{p}}f_{j}-(-\gen)^{\critic_{p}}f_{k}\|_{L^{p,\infty}(\ambient,\meas)} &\leq C_{p}\|\wgrad(f_{j}-f_{k})\|_{L^{p}(\skeleton,\medm)} \to 0\ \text{as $j,k\to\infty$}.
    \end{align}
  Since $L^{p,\infty}(\ambient,\meas)$ is complete \cite[Theorem~1.4.11]{Gra14}, there exists $G_{f}\in L^{p,\infty}(\ambient,\meas)$ such that $(-\gen)^{\critic_{p}}f_{j} \to G_{f}$ in $L^{p,\infty}(\ambient,\meas)$. Moreover, \eqref{e.corehomweak} and \eqref{e.homdens} give
  \begin{equation}\label{e.Gbound}
    \|G_{f}\|_{L ^{p,\infty} ( \allowbreak\ambient , \meas )} \leq C_{p}\|\wgrad f\|_{L^{p}(\skeleton,\medm)}.
  \end{equation}
  We next prove \eqref{e.homog}. Indeed, for every $j$ and every $t\in(0,\infty)$, \eqref{e.scalecomplete} and the quasi-norm property give
    \begin{align}
      &\phantom{\ \leq}\|(-\gen)^{\critic_{p}}P_{t}f-G_{f}\|_{L^{p,\infty}(\ambient,\meas)}\\
      &\lesssim \|(-\gen)^{\critic_{p}}P_{t}(f-f_{j})\|_{L^{p,\infty}(\ambient,\meas)} + \|(-\gen)^{\critic_{p}}P_{t}f_{j}-(-\gen)^{\critic_{p}}f_{j}\|_{L^{p,\infty}(\ambient,\meas)}\\
      &\qquad + \|(-\gen)^{\critic_{p}}f_{j}-G_{f}\|_{L^{p,\infty}(\ambient,\meas)}\\
      &\overset{\eqref{e.scalecomplete}}{\leq} C(\|\wgrad(f-f_{j})\|_{L^{p}(\skeleton,\medm)} + \|(-\gen)^{\critic_{p}}P_{t}f_{j}-(-\gen)^{\critic_{p}}f_{j}\|_{L^{p,\infty}(\ambient,\meas)} \\
      &\qquad + \|(-\gen)^{\critic_{p}}f_{j}-G_{f}\|_{L^{p,\infty}(\ambient,\meas)}). \label{e.homtri}
    \end{align}
Fix $\epsilon\in(0,\infty)$. By \eqref{e.homdens} and the definition of $G_{f}$, choose $j$ sufficiently large that the first and third terms on its right-hand side are smaller than $\epsilon/ C $. For this fixed $j$, \eqref{e.coreweakconv} gives $t_{j}\in(0,\infty)$ such that the middle term is smaller than $\epsilon/ C $ whenever $t\in(0,t_{j})$. Hence
  \begin{equation}
    \|(-\gen)^{\critic_{p}}P_{t}f-G_{f}\|_{L^{p,\infty}(\ambient,\meas)}<3\epsilon, \  \text{for all }t\in(0,t_{j}),
  \end{equation}
  which proves \eqref{e.homog}. Inequality \eqref{e.mainw} is proved in \eqref{e.Gbound}. Finally, suppose also that $f\in\Dom\bigl((-\gen)^{\critic_{p}}\bigr)$. Then
  \begin{equation}
    \int_{[0,\infty)}\lambda^{2\critic_{p}}\dif\langle\proj_{\lambda}f,f\rangle<\infty, \ \text{ and }\ (-\gen)^{\critic_{p}}P_{t}f=P_{t}(-\gen)^{\critic_{p}}f.
  \end{equation}
  Moreover,
    \begin{align}
      \|(-\gen)^{\critic_{p}}P_{t}f-(-\gen)^{\critic_{p}}f\|_{L^{2}(\ambient,\meas)}^{2} &=\int_{[0,\infty)}\lambda^{2\critic_{p}}|\exp(-t\lambda)-1|^{2} \dif\langle\proj_{\lambda}f,f\rangle\to 0\quad(t\downarrow0).
    \end{align}
  For every $\varepsilon>0$, Chebyshev's inequality and \eqref{e.homog} give that
    \begin{align}
      &\phantom{\ \leq}\meas\left(\Sett{x\in\ambient}{|G_{f}(x)-(-\gen)^{\critic_{p}}f(x)|>\varepsilon}\right)\\
      &\leq\left(\frac{2}{\varepsilon}\right)^{p} \|G_{f}-(-\gen)^{\critic_{p}}P_{t}f\|_{L^{p,\infty}(\ambient,\meas)}^{p}+\frac{4}{\varepsilon^{2}} \|(-\gen)^{\critic_{p}}P_{t}f-(-\gen)^{\critic_{p}}f\|_{L^{2}(\ambient,\meas)}^{2}\\
      &\to 0\ \text{ as $t\downarrow0$}.
    \end{align}
  Thus $G_{f}=(-\gen)^{\critic_{p}}f$ $\meas$-a.e. on $\ambient$. In particular, $ (-\gen)^{\critic_{p}}f\in L^{p,\infty}(\ambient,\meas)$ and $\|(-\gen)^{\critic_{p}}f\|_{L^{p,\infty}(\ambient,\meas)} \leq C_{p} \|\wgrad f\|_{L^{p}(\skeleton,\medm)}$.
  
  For $p=2$ and $f\in\sobolev{2}(\ambient)=\Dom(( -\gen)^{1/2})$, set $ \allowbreak G _{f} : = \allowbreak ( - \gen ) ^{1/2} f $. Then $( - \gen ) ^{1/2} P _{t} f = P _{t} G _{f} $, $ \|P_{t}G_{f}-G_{f}\|_{L^{2}(\ambient,\meas)}^{2} \to 0$ as $t\downarrow0$ and $ \|G_{f}\|_{L^{2}(\ambient,\meas)}^{2}=\|\wgrad f\|_{L^{2}(\skeleton,\medm)}^{2}$. Since $\|v\|_{L^{2,\infty}(\ambient,\meas)}\leq\|v\|_{L^{2}(\ambient,\meas)}$, both \eqref{e.homog} and \eqref{e.mainw} follow. This completes the proof.
\end{proof}
\section{Criticality of the value \texorpdfstring{$\critic_{p}$}{γ\_{p}}}\label{s.criticality}
In this section, we use a \emph{blow-up argument} to prove Theorem \ref{t.critx}.
\subsection{Blow-up argument on the unbounded Vicsek set}
\begin{lemma}
  For each $n\in\bZ$, we define $D_{n}:\ambient\to\bC$ by letting $D_{n}(x)=3^{n}x$, $x\in \ambient $. Then
  \begin{enumerate}[label=\textup{(\arabic*)}, align=right, leftmargin=*, topsep=5pt, parsep=0pt, itemsep=2pt]
    \item\label{it.blowup1} For each $n\in\bZ$, we have $D_{n}(\ambient)=\ambient$. In particular, for each Borel measurable $f:\ambient\to[-\infty,\infty]$, if we define $U_{n}f:=f\circ D_{-n}$, then $U_{n}f $ is a Borel measurable function on $\ambient$.
    \item\label{it.blowup2} If $f\in\abscon(\ambient,\metric)$, then $U_{n}f\in \abscon(\ambient,\metric)$ and for each $p\in[1,\infty)$,
    \begin{equation}\label{e.dilation-gradient-p}
      \|\wgrad U_{n}f\|_{L^{p}(\skeleton,\medm)}= 3^{n(1/p-1)} \|\wgrad f\|_{L^{p}(\skeleton,\medm)},
    \end{equation}
    \item\label{it.blowup3} Let $ \allowbreak g $ be a Borel-measurable function on $\ambient$, and let $\lambda$ be the distribution function in \eqref{e.distri}. Then{
    \begin{equation}\label{e.dila-dis}
      \lambda_{U_{n}g}(\alpha)=3^{n\dimf}\lambda_{g}(\alpha),\ \text{ for all } \alpha\in(0,\infty)
    \end{equation}
    } In particular, for each $p\in[1,\infty)$ we have
    \begin{equation}\label{e.dilation-weak-p}
      \|U_{n}g\|_{L^{p,\infty}(\ambient,\meas)}= 3^{n\dimf/p} \|g\|_{L^{p,\infty}(\ambient,\meas)}.
    \end{equation}
    \item\label{it.blowup4} If $f\in L^{2}(\ambient,\meas)$, $t\in(0,\infty)$, and $\critic\in(0,1]$, then
    \begin{equation}\label{e.dilation-operator-p}
      (-\gen)^{\critic}P_{t}(U_{n}f)=3^{-n\dimw\critic} U_{n}((-\gen)^{\critic} P_{3^{-n\dimw}t}f)
    \end{equation}
  \end{enumerate}
\end{lemma}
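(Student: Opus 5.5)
The plan is to verify the four items in order, using only the self-similarity $3\ambient=\ambient$ and how the structures $\metric$, $\meas$, $\skeleton$, $\medm$ transform under the dilation $D_{n}$.

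For \ref{it.blowup1}, since $\ambient=\bigcup_{k\geq0}3^{k}\vicsek$ and $\vicsek\supset F_{0}(\vicsek)=\vicsek/3$, we have $3^{-1}\vicsek\subset\vicsek$. Hence $3^{-1}\ambient=\bigcup_{k\ge0}3^{k-1}\vicsek=\ambient$, and iterating gives $D_{n}(\ambient)=\ambient$ for all $n\in\bZ$. Measurability of $U_{n}f$ follows because $D_{-n}$ is a homeomorphism of $\ambient$.

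For the structural facts, $D_{n}$ scales the intrinsic metric exactly by $3^{n}$: it scales Euclidean lengths of curves in $\ambient$ by $3^{n}$ and maps curves in $\ambient$ bijectively to curves in $\ambient$. Consequently $D_{n}$ maps cells to cells with $\cellsize{D_{n}Q}=3^{n}\cellsize{Q}$, and arms to arms. It follows that $D_{n}(\skeleton)=\skeleton$ and $(D_{n})_{*}\medm=3^{-n}\medm$, or equivalently $\medm(D_{n}A)=3^{n}\medm(A)$; this is checked on arms, where $\medm(J)=\ell_{k}$ and $D_{n}J\in\collectA_{k-n}$. Similarly $\meas(D_{n}A)=3^{n\dimf}\meas(A)$, which is checked on cells using $\meas(3^{k}F_{w}\vicsek)=5^{k-|w|}$ and $5=3^{\dimf}$, and then extended by uniqueness of measures.

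For \ref{it.blowup2}, let $\gamma$ be a local arc chart. Then $D_{-n}\circ\gamma(3^{n}\cdot)$ is again a local arc chart, so $U_{n}f\circ\gamma(s)=f(D_{-n}\gamma(s))$ is absolutely continuous. The dilation is a real positive scalar, so it preserves horizontal and vertical directions and their orientation. Hence $\wgrad U_{n}f(z)=3^{-n}(\wgrad f)(D_{-n}z)$ for $\medm$-a.e.\ $z$. Changing variables with $\medm(D_{n}A)=3^{n}\medm(A)$ gives $\int|\wgrad U_{n}f|^{p}\dif\medm=3^{-np}3^{n}\int|\wgrad f|^{p}\dif\medm$, which is \eqref{e.dilation-gradient-p}.

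For \ref{it.blowup3}, note that $\{|U_{n}g|>\alpha\}=D_{n}\{|g|>\alpha\}$, so the measure scaling gives \eqref{e.dila-dis}. The weak norm $\sup_{\alpha}\alpha\lambda(\alpha)^{1/p}$ then scales by $3^{n\dimf/p}$.

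Item \ref{it.blowup4} is the main step. I would first show that $U_{n}$ maps $\domain$ to itself with
\[
\form(U_{n}f,U_{n}g)=3^{-n}\form(f,g),\qquad \langle U_{n}f,U_{n}g\rangle_{L^{2}}=3^{n\dimf}\langle f,g\rangle_{L^{2}}.
\]
The first identity follows from the gradient formula and the $\medm$-scaling, and the second from the $\meas$-scaling. It follows that $\wt U_{n}:=3^{-n\dimf/2}U_{n}$ is unitary on $L^{2}(\ambient,\meas)$ and satisfies $\form(\wt U_{n}f,\wt U_{n}g)=3^{-n(1+\dimf)}\form(f,g)=3^{-n\dimw}\form(f,g)$. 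Then $\wt U_{n}^{-1}(-\gen)\wt U_{n}=3^{-n\dimw}(-\gen)$, with equality of domains. Indeed, $u\in\Dom(\gen)$ iff $u\in\domain$ and $\form(u,\cdot)$ is $L^{2}$-bounded, and the identity is then verified weakly against $\domain$ via \cite[Theorem~1.3.1]{FOT11}.

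Unitary equivalence transfers to the functional calculus. Applying it with $\phi(\lambda)=\lambda^{\critic}e^{-t\lambda}$ gives
\[
\wt U_{n}^{-1}\phi(-\gen)\wt U_{n}=\phi(3^{-n\dimw}(-\gen)),
\]
that is, $(-\gen)^{\critic}P_{t}U_{n}f=U_{n}\bigl[3^{-n\dimw\critic}(-\gen)^{\critic}P_{3^{-n\dimw}t}f\bigr]$, where the constant normalizations cancel. This is \eqref{e.dilation-operator-p}.

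The care required lies in the unitary-equivalence and domain bookkeeping, and in confirming the orientation is preserved so that no sign appears in $\wgrad U_{n}f$. Sign issues would in any case disappear in $|\wgrad|$ and in $\form$.
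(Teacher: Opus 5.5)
Your proposal is correct and follows essentially the same route as the paper. Both arguments use the self-similarity $D_{n}(\ambient)=\ambient$, the scalings $\meas(D_{n}E)=3^{n\dimf}\meas(E)$, $\medm(D_{n}F)=3^{n}\medm(F)$ and $\wgrad U_{n}f=3^{-n}(\wgrad f)\circ D_{-n}$, then the unitary $3^{-n\dimf/2}U_{n}$ conjugating $-\gen$ to $3^{-n\dimw}(-\gen)$ through the form, and finally the functional calculus with $\lambda^{\critic}e^{-t\lambda}$. The only cosmetic difference is that you invoke unitary invariance of the Borel functional calculus directly, whereas the paper passes through conjugation of the spectral projections and a simple-function approximation.
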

\begin{proof}

  \begin{enumerate}[ label=\textup{(\arabic*)}, align=right, leftmargin=*, topsep=5pt, parsep=0pt, itemsep=2pt ]

    \item[\ref{it.blowup1}] Since $F_{0}(x)=x/3$, we have $3^{-1}\vicsek=F_{0}(\vicsek)\subset\vicsek$. Multiplying this inclusion by $3^{j+1}$ gives $3^{j}\vicsek\subset3^{j+1}\vicsek$ for all $j\in\bZ$. Therefore $\ambient = \bigcup _{k\in\bNN} 3 ^{k} \vicsek = \bigcup _{k\in\bZ} 3 ^{k} \vicsek $.
    Consequently, for every $n\in\bZ$,
    \begin{equation}
      D _{n} ( \ambient ) = 3 ^{n} \ambient = \bigcup _{k\in\bZ} 3 ^{n+k} \vicsek = \bigcup _{j\in\bZ} 3 ^{j} \vicsek = \ambient .
    \end{equation}
    Thus $D_{n}:\ambient\to\ambient$ is a bijection with inverse $D_{-n}$. Moreover, $\metric ( D _{n} x , D _{n} y ) = 3 ^{n} \metric ( x , y )$ for all $x , y \in \ambient$.
    Hence $D_{n}$ is a homeomorphism of $\ambient$. In particular, if $f:\ambient\to[-\infty,\infty]$ is Borel measurable, then $U_{n}f=f\circ D_{-n}$ is Borel measurable.

    The similarity $D_{n}$ also satisfies $D_{n}(\skeleton)=\skeleton$,
    \begin{equation}\label{e.dilation-basic-scaling}
      \meas(D_{n}E)=3^{n\dimf}\meas(E), \text{ and } \medm(D_{n}F)=3^{n}\medm(F),\ \text{for Borel $E\subset\ambient$ and $F\subset\skeleton$.}
    \end{equation}
    Equivalently, for nonnegative Borel functions $\Phi$ and $\Psi$,
      \begin{align}
        \int_{\ambient}\Phi(D_{-n}x)\dif\meas(x)= 3^{n\dimf} \int_{\ambient}\Phi\dif\meas\text{ and }\int_{\skeleton}\Psi(D_{-n}x)\dif\medm(x) &= 3^{n} \int_{\skeleton}\Psi\dif\medm. \label{e.dilation-change-medm}
      \end{align}

    \item[\ref{it.blowup2}] Let $f\in\abscon(\ambient,\metric)$. We first verify that $U_{n}f\in\abscon(\ambient,\metric)$. Let $\eta:[0,L]\to \ambient$ be a local arc chart, and define $\wt{\eta} :[0,3^{-n}L]\to \ambient$ by letting $\wt{\eta}(s):=D_{-n}\eta(3^{n}s)$ for $s\in [0,3^{-n}L]$. For $r,s\in[0,3^{-n}L]$,
      \begin{align}
        \metric\bigl(\wt{\eta}(r),\wt{\eta}(s)\bigr)= 3^{-n}\metric\bigl(\eta(3^{n}r),\eta(3^{n}s)\bigr)=|r-s|.
      \end{align}
    Thus $\wt{\eta}$ is a local arc chart. Furthermore,
    \begin{equation}
      ( U _{n} f ) \circ \eta ( t ) = f ( D _{-n} \eta ( t ) ) = ( f \circ \wt{\eta} ) ( 3 ^{-n} t ) .
    \end{equation}
    Since $f\circ\wt{\eta}$ is absolutely continuous, $(U_{n}f)\circ\eta$ is absolutely continuous. Hence $U_{n}f\in\abscon(\ambient,\metric)$. Since the dilations preserve the chosen horizontal and vertical orientations, the one-dimensional chain rule therefore gives
    \begin{equation}\label{e.dilation-gradient-pointwise}
      \wgrad U_{n}f (x) = 3^{-n} \wgrad f (D_{-n}x) \qquad \medm\text{-a.e. }x\in\skeleton.
    \end{equation}
    Therefore, by \eqref{e.dilation-change-medm},
      \begin{align}
        \|\wgrad U_{n}f\|_{L^{p}(\skeleton,\medm)}^{p} &\overset{\eqref{e.dilation-gradient-pointwise}}{=} 3^{-np} \int_{\skeleton} |\wgrad f(D_{-n}x)|^{p}\dif\medm(x)\overset{\eqref{e.dilation-change-medm}}{=} 3^{-np}3^{n} \int_{\skeleton} |\wgrad f(x)|^{p}\dif\medm(x)\\
        &= 3^{n(1-p)} \|\wgrad f\|_{L^{p}(\skeleton,\medm)}^{p}.
      \end{align}
    Taking $p$-th roots yields \eqref{e.dilation-gradient-p}.

    \item[\ref{it.blowup3}] Since $U_{n}g(x)=g(D_{-n}x)$ and $D_{n}(\ambient)=\ambient$,
    \begin{equation}
      \{ x \in \ambient : | U _{n} g ( x ) | > \alpha \} = D _{n} \{ y \in \ambient : | g ( y ) | > \alpha \} .
    \end{equation}
    Together with \eqref{e.dilation-basic-scaling}, this gives \eqref{e.dila-dis}. Consequently,
      \begin{align}
        &\phantom{\ \leq}\|U_{n}g\|_{L^{p,\infty}(\ambient,\meas)}= \sup_{\alpha\in(0,\infty)}\alpha\lambda_{U_{n}g}(\alpha)^{1/p}\\
        &\overset{\eqref{e.dila-dis}}{=} 3^{n\dimf/p} \sup_{\alpha\in(0,\infty)}\alpha\lambda_{g}(\alpha)^{1/p}= 3^{n\dimf/p} \|g\|_{L^{p,\infty}(\ambient,\meas)}.
      \end{align}
    \item[\ref{it.blowup4}] Let $f,g\in L^{2}(\ambient,\meas)$, $t\in(0,\infty)$, and $\critic\in(0,1]$. We first determine how $-\gen$ transforms under the dilation. By \eqref{e.dilation-change-medm}, we have
    \begin{equation}\label{e.dilation-ltwo-inner-product}
      \langle U_{n}f,U_{n}g\rangle_{L^{2}(\ambient,\meas)} = 3^{n\dimf} \langle f,g\rangle_{L^{2}(\ambient,\meas)}.
    \end{equation}
    If $f,g\in\domain$, then \eqref{e.dilation-gradient-p} shows that $U_{n}\domain=\domain$. Moreover, \eqref{e.dilation-gradient-pointwise} and \eqref{e.dilation-change-medm} give
      \begin{align}
        \form(U_{n}f,U_{n}g)&= \int_{\skeleton} \wgrad U_{n}f\,\wgrad U_{n}g\dif\medm\overset{\eqref{e.dilation-gradient-pointwise}}{=} 3^{-2n} \int_{\skeleton} \wgrad f(D_{-n}x)\, \wgrad g(D_{-n}x)\dif\medm(x) \\
        &\overset{\eqref{e.dilation-change-medm}}{=} 3^{-n}\form(f,g). \label{e.dilation-form-covariance}
      \end{align}

    For $n\in\bZ$, let $T_{n}:=3^{-n\dimf/2}U_{n}$. By \eqref{e.dilation-ltwo-inner-product}, the operator $T_{n}$ is unitary on $L^{2}(\ambient,\meas)$. Since $\dimw=\dimf+1$,
      \begin{align}
        \form(T_{n}f,T_{n}g)= 3^{-n\dimf}\form(U_{n}f,U_{n}g)\overset{\eqref{e.dilation-form-covariance}}{=} 3^{-n(\dimf+1)}\form(f,g)= 3^{-n\dimw}\form(f,g). \label{e.normalized-form-covariance}
      \end{align}

    We claim that
    \begin{equation}\label{e.generator-conjugation}
      T_{n}^{-1}\circ(-\gen)\circ T_{n} = 3^{-n\dimw}(-\gen).
    \end{equation}
    For $f\in\Dom(-\gen)$ and $ \allowbreak v \in \domain $, \eqref{e.normalized-form-covariance} with $g=T_{n}^{-1} v\in\domain$ gives
    \begin{align}
      \form(T_{n}f,v)=3^{-n\dimw}\form(f,T_{n}^{-1}v) &=\langle3^{-n\dimw}(-\gen)f,T_{n}^{-1}v\rangle_{L^{2}(\ambient,\meas)}\\
      &=\langle3^{-n\dimw}T_{n}(-\gen)f,v\rangle_{L^{2}(\ambient,\meas)}.
    \end{align}
    Since this holds for all $v\in\domain$, we know that $T_{n}f\in\Dom(-\gen)$ and $ ( - \gen ) T _{n} f = 3 ^{-n\dimw} T _{n} ( - \gen ) f $. Replacing $n$ by $ \allowbreak - n $ proves $T_{n}(\Dom(-\gen))=\Dom(-\gen)$ for all $n\in\bZ$. In particular, for $g\in\domain$,
      \begin{align}
        \langle(-\gen)T_{n}f,T_{n}g\rangle_{L^{2}(\ambient,\meas)} &= \form(T_{n}f,T_{n}g)\overset{\eqref{e.normalized-form-covariance}}{=} 3^{-n\dimw}\form(f,g)\\
        &= 3^{-n\dimw} \langle(-\gen)f,g\rangle_{L^{2}(\ambient,\meas)}= \left\langle 3^{-n\dimw}T_{n}(-\gen)f,T_{n}g \right\rangle_{L^{2}(\ambient,\meas)}.
      \end{align}
    Since this holds for all $g\in\domain$, we have $((-\gen)\circ T_{n})f = 3^{-n\dimw}(T_{n}\circ(-\gen))f$, which is equivalent to \eqref{e.generator-conjugation}. Let $\Phi:[0,\infty)\to\bC$ be bounded and Borel measurable. The uniqueness of the spectral measure in \eqref{e.generator-conjugation} gives
    \begin{equation}
      T_{n}^{-1}\proj(B)T_{n}=\proj(3^{n\dimw}B),\ \text{ for every Borel set $B\subset[0,\infty)$}.
    \end{equation}
    For bounded Borel $\Phi$, choose simple functions $\Phi_{j}$ with $\sup_{j}\|\Phi_{j}\|_{\sup}\leq\|\Phi\|_{\sup}$ and $\Phi_{j}\to\Phi$ pointwise on $[0,\infty)$. For $u\in L^{2}(\ambient,\meas)$, by the dominated convergence theorem,
    \begin{equation}
     \lim_{j\to\infty} \|\Phi_{j}(-\gen)u-\Phi(-\gen)u\|_{L^{2}(\ambient,\meas)}^{2} =\lim_{j\to\infty}\int_{[0,\infty)}|\Phi_{j}(\lambda)-\Phi(\lambda)|^{2} \dif\|\proj_{\lambda}u\|_{L^{2}(\ambient,\meas)}^{2}=0.
    \end{equation}
The same calculation, with $\lambda$ replaced by $3^{-n\dimw}\lambda$, gives \begin{equation}
T_{n}^{-1}\circ \Phi(-\gen)\circ T_{n}=\Phi\bigl(3^{-n\dimw}(-\gen)\bigr).
\end{equation}
    Taking $\Phi(\lambda):=\lambda^{\critic}e^{-t\lambda}$ for $\lambda\in[0,\infty)$, we obtain
      \begin{align}
        (-\gen)^{\critic}P_{t}(U_{n}f) = U_{n} \left( \bigl(3^{-n\dimw}(-\gen)\bigr)^{\critic} e^{-3^{-n\dimw}t(-\gen)}f \right)= 3^{-n\dimw\critic} U_{n}\left( (-\gen)^{\critic} P_{3^{-n\dimw}t}f \right).
      \end{align}
    Thus
    \begin{equation}
      ( - \gen ) ^{\critic} P _{t} ( U _{n} f ) = 3 ^{-n\dimw\critic} U _{n} \left( (-\gen)^{\critic}P_{3^{-n\dimw}t}f \right) .
    \end{equation}
    Since $t\in(0,\infty)$, the multiplier $\lambda\mapsto\lambda^{\critic}e^{-t\lambda}$ is bounded on $[0,\infty)$, so both sides are well-defined for every $f\in L^{2}(\ambient,\meas)$.
  \end{enumerate}
\end{proof}
\subsection{Proof of Theorem~\ref{t.critx}}
\begin{proof}[Proof of Theorem~\ref{t.critx}]
  \
  \begin{enumerate}[ label=\textup{(\arabic*)}, align=right, leftmargin=*, topsep=5pt, parsep=0pt, itemsep=2pt ]
    \item[\ref{it.neces}] Suppose that $\critic\in(0,1)$ and that there exists $C\in(0,\infty)$ such that \eqref{e.assum} holds. We prove that $\critic\ge\critic_{p}$. Suppose otherwise that $\critic<\critic_{p}$. Since $2 \critic < 2 \critic_{p} \allowbreak \allowbreak\leq 2 \dimf / \dimw < 2 - \dims / 2 $, Lemma~\ref{l.sobol} now gives $\core\subset\Dom\bigl((-\gen)^{\critic}\bigr)$. Choose $0\ne h\in\core$. Since $h\in\Dom((-\gen)^{\critic})$,
    \begin{equation}
      \lim_{s\to0}  \|(-\gen)^{\critic}P_{s}h-(-\gen)^{\critic}h\|_{L^{2}(\ambient,\meas)}^{2}=\lim_{s\to0}\int_{[0,\infty)}\lambda^{2\critic}|e^{-s\lambda}-1|^{2} \dif\|\proj_{\lambda}h\|_{L^{2}(\ambient,\meas)}^{2}= 0.\label{e.L2conv}
    \end{equation}
    Moreover, $(-\gen)^{\critic} h\ne0$ because $\Ker\bigl((-\gen)^{\critic}\bigr) = \Ker(-\gen) = \{0\}$ on the unbounded Vicsek set $(\ambient,\metric)$ by Theorem \ref{t.dirichlet}. Choose $\lambda\in(0,\infty)$ such that
    \begin{equation}
      E : = \set{ x\in\ambient: |(-\gen)^{\critic} h(x)|>2\lambda } \text{\ has positive measure $\meas(E)>0$.}
    \end{equation}
    Since $(-\gen)^{\critic} h\in L^{2}(\ambient,\meas)$, one also has $\meas(E)<\infty$. Chebyshev's inequality gives
    \begin{equation}
      \meas\{|(-\gen)^{\critic}P_{s}h-(-\gen)^{\critic}h|>\lambda\} \leq\lambda^{-2}\|(-\gen)^{\critic}P_{s}h-(-\gen)^{\critic}h\|_{L^{2}(\ambient,\meas)}^{2}\overset{\eqref{e.L2conv}}{\to} 0, \text{ as }s\downarrow0.
    \end{equation}
    Consequently, there exists $s _{0} \allowbreak \in (0,\infty) $ such that
    \begin{equation}
      \meas \left( \set{ |(-\gen)^{\critic} P_{s}h-(-\gen)^{\critic} h|>\lambda } \right) < \frac{1}{2} \meas ( E ) , \  \text{ for all } s \in ( 0 , s _{0} ] .
    \end{equation}
    Since
    \begin{equation}
      E \cap \set{ |(-\gen)^{\critic} P_{s}h-(-\gen)^{\critic} h|\le\lambda } \subset \set{ |(-\gen)^{\critic} P_{s}h|>\lambda } ,
    \end{equation}
    we obtain
    \begin{equation}
      \meas \left( \set{ |(-\gen)^{\critic} P_{s}h|>\lambda } \right) \ge \frac{1}{2} \meas ( E ) , \  \text{ for all } s \in ( 0 , s _{0} ] .
    \end{equation}
    Consequently,
    \begin{equation}\label{e.dilation-lower}
      \|(-\gen)^{\critic} P_{s}h\|_{L^{p,\infty}(\ambient,\meas)} \ge \lambda \left(\frac{\meas(E)}{2}\right)^{1/p} =:c_{p,h}>0,\ \text{ for all }s\in(0,s_{0}].
    \end{equation}
    Since $D_{n}(\collectA_{k})=\collectA_{k-n}$ and $D_{n}(G_{k})=G_{k-n}$, we have $U_{n}\core_{k}=\core_{k-n}$. In particular $U_{n}h\in\core\subset\sobolev{p}(\ambient)$, so \eqref{e.assum} applies. For all sufficiently large $n$, one has $3 ^{-n\dimw} \allowbreak \leq s _{0} $. Then
      \begin{align}
        c_{p,h}\,3^{n(\dimf/p-\dimw\critic)} &\overset{\eqref{e.dilation-lower}}{\leq} 3^{n(\dimf/p-\dimw\critic)} \|(-\gen)^{\critic} P_{3^{-n\dimw}}h\|_{L^{p,\infty}}\\
        &\overset{\eqref{e.dilation-weak-p}}{=}3^{n(\dimf/p-\dimw\critic)}3^{-n\dimf/p}\|U_{n}(-\gen)^{\critic} P_{3^{-n\dimw}}h\|_{L^{p,\infty}}\\
        &\overset{\eqref{e.dilation-operator-p}}{=}3^{n(\dimf/p-\dimw\critic)}3^{-n\dimf/p}3^{n\dimw\critic}\|(-\gen)^{\critic} P_{1}(U_{n}h)\|_{L^{p,\infty}}\\
        &\overset{\eqref{e.assum}}{\leq} C\|\wgrad (U_{n}h) \|_{L^{p}(\skeleton,\medm)}\overset{\eqref{e.dilation-gradient-p}}{=} C3^{n(1/p-1)}\|\wgrad h \|_{L^{p}(\skeleton,\medm)}.
      \end{align}
    Since
    \begin{equation}
      \frac{\dimf}{p} - \dimw \critic - \left(\frac{1}{p}-1\right) = \dimw ( \critic _{p} - \critic ) ,
    \end{equation}
    we obtain $c_{p,h}\, 3^{n\dimw(\critic_{p}-\critic)} \le C\|\wgrad h\|_{L^{p}}$ for all sufficiently large $n$, which implies that $\critic\ge\critic_{p}$.
    \item[\ref{it.suffi}] Fix $\critic\in(\critic_{p},1)$, and let $\rho:=1-\critic$. Let $p'$ be the conjugate exponent, with $p'=\infty$ when $p=1$. Since $\critic_{p}+\critic_{p'}=1$, we have $0<\rho<\critic_{p'}$ and $\critic_{p'}-\rho = \critic-\critic_{p}>0$. By \cite[Theorem~3.12]{BC24}, there exists $C_{p}\in(0,\infty)$ such that, for every $t\in(0,\infty)$ and every $h\in L^{p'}(\ambient,\meas)\cap L^{2}(\ambient,\meas)$, one has $P_{t}h\in\abscon(\ambient,\metric)$, $\wgrad P_{t}h\in L^{p'}(\skeleton,\medm)$, and
    \begin{equation}\label{e.gradient-pprime-all}
      \|\wgrad P_{t}h\|_{L^{p'}(\skeleton,\medm)} \leq C_{p} t^{-\critic_{p'}} \|h\|_{L^{p'}(\ambient,\meas)}.
    \end{equation}

    Let $f\in\core$ and $h\in L^{p'}(\ambient,\meas)\cap L^{2}(\ambient,\meas)$. We have $f\in\core\subset\domain$ and $P_{s+1}h\in\Dom(-\gen)\subset\domain$ by Theorem~\ref{t.dirichlet}-\ref{it.deri<2}. By \eqref{e.Lpt2} with $\theta=\critic$ and $t=1$, and the self-adjointness, we have
      \begin{align}
      &\phantom{\ \lq}  \left\langle (-\gen)^{\critic} P_{1}f,h \right\rangle_{L^{2}(\ambient,\meas)} \overset{\eqref{e.Lpt2}}{=} \frac{1}{\Gamma(\rho)} \int_{0}^{\infty} s^{\rho-1} \left\langle (-\gen)P_{s+1}f,h \right\rangle_{L^{2}} \dif s \\
        &= \frac{1}{\Gamma(\rho)} \int_{0}^{\infty} s^{\rho-1} \form(f,P_{s+1}h)\dif s \overset{\eqref{e.formdef}}{=} \frac{1}{\Gamma(\rho)} \int_{0}^{\infty} s^{\rho-1} \left( \int_{\skeleton} \wgrad f\, \wgrad P_{s+1}h \dif\medm \right)\dif s. \label{e.unified-fractional-duality}
      \end{align}

    By H\"{o}lder's inequality and \eqref{e.gradient-pprime-all},
      \begin{align}
        |\form(f,P_{s+1}h)|\le \|\wgrad f\|_{L^{p}(\skeleton,\medm)} \|\wgrad P_{s+1}h\|_{L^{p'}(\skeleton,\medm)} \overset{\eqref{e.gradient-pprime-all}}{\le} C_{p'} (1+s)^{-\critic_{p'}} \|\wgrad f\|_{L^{p}(\skeleton,\medm)} \|h\|_{L^{p'}(\ambient,\meas)}. \label{e.form-gradient-bound}
      \end{align}
    Therefore,
      \begin{align}
        \left| \left\langle (-\gen)^{\critic} P_{1}f,h \right\rangle \right| &\overset{\eqref{e.unified-fractional-duality},\eqref{e.form-gradient-bound}}{\le} \frac{C_{p'}}{\Gamma(\rho)} \|\wgrad f\|_{L^{p}(\skeleton,\medm)} \|h\|_{L^{p'}(\ambient,\meas)} \int_{0}^{\infty} s^{\rho-1}(1+s)^{-\critic_{p'}} \dif s \\
        &= \frac{C_{p'}}{\Gamma(\rho)} B\bigl(\rho,\critic_{p'}-\rho\bigr) \|\wgrad f\|_{L^{p}(\skeleton,\medm)} \|h\|_{L^{p'}(\ambient,\meas)}. \label{e.unified-duality-bound}
      \end{align}
    Since \eqref{e.unified-duality-bound} holds for all $h\in L^{p'}(\ambient,\meas)\cap L^{2}(\ambient,\meas)$, we have
    \begin{equation}\label{e.unified-core-strong}
      \|(-\gen)^{\critic} P_{1}f\|_{L^{p}(\ambient,\meas)} \le C_{p,\critic} \|\wgrad f\|_{L^{p}(\skeleton,\medm)}, \ \text{ for all } f\in\core.
    \end{equation}

    Finally, let $f\in\sobolev{p}(\ambient)$. By Corollary~\ref{c.coredense}, there exists $(f_{j})_{j\in\bN}\subset\core$ such that
    \begin{equation}
      \lim _{j\to\infty} ( \| f _{j} - f \| _{L^{p}(\ambient,\meas)} + \| \wgrad f _{j} - \wgrad f \| _{L^{p}(\skeleton,\medm)} ) = 0 .
    \end{equation}
    By \eqref{e.analy},
    \begin{equation}
      \limsup _{j\to\infty} \| ( - \gen ) ^{\critic} P _{1} ( f _{j} - f ) \| _{L^{p}(\ambient,\meas)} \le C _{p,\critic} \lim _{j\to\infty} \| f _{j} - f \| _{L^{p}(\ambient,\meas)} = 0 .
    \end{equation}
    Here $(-\gen)^{\critic}P_{1}f$ denotes the bounded $L^{p} $ extension from Lemma~\ref{l.semes}. The triangle inequality, \eqref{e.analy} and \eqref{e.unified-core-strong} now give
      \begin{align}
  & \phantom{\ \leq}   \|(-\gen)^{\critic}P_{1}f\|_{L^{p}(\ambient,\meas)} \leq\|(-\gen)^{\critic}P_{1}(f-f_{j})\|_{L^{p}(\ambient,\meas)} +C_{p,\critic}\|\wgrad f_{j}\|_{L^{p}(\skeleton,\medm)}\\
        &\overset{\eqref{e.unified-core-strong},\eqref{e.analy}}{\leq} C_{p,\critic}\|f-f_{j}\|_{L^{p}(\ambient,\meas)} +C_{p,\critic}\bigl(\|\wgrad f\|_{L^{p}(\skeleton,\medm)} +\|\wgrad f_{j}-\wgrad f\|_{L^{p}(\skeleton,\medm)}\bigr).
      \end{align}
    Letting $\allowbreak j \to \infty $ proves \eqref{e.subcr}.
  \end{enumerate}
\end{proof}
\section{Failure of the reverse quasi-Riesz inequality}\label{s.failure}
In this section, we prove Theorem~\ref{t.strfl}-\ref{it.strfl} and \ref{it.wtrfl} by constructing a sequence of auxiliary `tent' functions.
\subsection{Construction of auxiliary functions}
\begin{definition}\label{d.tent}
  For any $I\in\collectA$, let $Q_{I}\in \collectQ$ and $a_{I}\in \cellvert{Q_{I}}$ be such that $I=[\cellctr{Q_{I}},a_{I}]$ and $\medm(I)=\cellsize{Q_{I}}$. Let $Q_{I}^{\prime}$ be the unique cell such that $Q_{I}^{\prime}\subset Q_{I}$, $3\cellsize{Q_{I}^{\prime}}=\cellsize{Q_{I}}$, and $a_{I}\in Q_{I}^{\prime}$. Let $v_{I}:=\cellctr{Q_{I}^{\prime}}$. Then
  \begin{equation}\label{e.geomp}
    \metric(\cellctr{Q_{I}},v_{I})=\frac{2\cellsize{Q_{I}}}{3}, \ \text{ and }\ \metric(v_{I},a_{I})=\frac{\cellsize{Q_{I}}}{3}.
  \end{equation}
  Define $F_{I}:I\to[0,\medm(I)]$ by
  \begin{equation}\label{e.tentf}
    F_{I}(z):=\dfrac{3}{2}\metric(\cellctr{Q_{I}},z)\one_{[\cellctr{Q_{I}},v_{I}]}(z)+3\metric(z,a_{I})\one_{(v_{I},a_{I}]}(z),\ z\in I.
  \end{equation}
  Let $\sigma_{I}\in\{-1,1\}$ be defined by
  \begin{equation}
    \sigma_{I}:=
    \begin{cases}
      1,&\ \text{ if }(I^{-},I^{+})=(\cellctr{Q_{I}},a_{I}),\\
      -1,&\ \text{ if }(I^{-},I^{+})=(a_{I},\cellctr{Q_{I}}).
    \end{cases}
  \end{equation}
  Define $\psi_{I}:\ambient\to\bR$ by $\psi_{I}(x):=\one_{Q_{I}}(x)F_{I}(\pi_{I}(x))$ for $x\in\ambient$. Then
  \begin{equation}\label{e.tgrad}
    \wgrad\psi_{I} = \sigma_{I}\left( \frac{3}{2}\one_{[\cellctr{Q_{I}},v_{I}]} - 3\one_{[v_{I},a_{I}]} \right), \  \medm\text{-a.e. on }\skeleton.
  \end{equation}
\end{definition}
\begin{remark}
  Since $F_{I}(\cellctr{Q_{I}})=F_{I}(a_{I})=0$ and, by Lemma~\ref{l.projt}, $\pi_{I}$ is $1$-Lipschitz, we have $\psi_{I}\in\abscon(\ambient,\metric)$. Indeed, the projection of every vertex of $Q_{I}$ other than $a_{I}$ is $\cellctr{Q_{I}}$. Thus $F _{I} \circ \pi _{I} = 0 $ on $\cellatt Q_{I}\subset\cellvert{Q_{I}}$ and
  \begin{equation}
    |F_{I}(\pi_{I}(x))-F_{I}(\pi_{I}(y))| \leq3\metric(\pi_{I}(x),\pi_{I}(y))\leq3\metric(x,y),\qquad x,y\in Q_{I}.
  \end{equation}
  If $x\in Q_{I}$ and $y\notin Q_{I}$, the arc $[x,y]$ meets $\cellatt Q_{I}$ at a point $ \allowbreak z $, and
  \begin{equation}
    |\psi_{I}(x)-\psi_{I}(y)|=|F_{I}(\pi_{I}(x))-F_{I}(\pi_{I}(z))| \leq3\metric(x,z)\leq3\metric(x,y).
  \end{equation}
  For $x,y\notin Q_{I}$ the difference is zero. This proves that the zero extension is $3$-Lipschitz. If $I\in\collectA_{n}$, then clearly $\psi_{I}\in\core_{n+1}\subset\core$. Lemma~\ref{l.projt}-\ref{it.proj5} implies that $\pi_{I}$ is constant on each component of $Q_{I}\setminus I$. By \eqref{e.tgrad}, we have
  \begin{equation}
    \|\wgrad\psi_{I}\|_{L^{2}(\skeleton,\medm)}^{2} =\left(\frac{3}{2}\right)^{2}\frac{2\medm(I)}{3}+3^{2}\frac{\medm(I)}{3} =\frac{9}{2}\medm(I)<\infty.
  \end{equation}
  Thus $\psi_{I}\in\domain$. For every $h\in\domain$, \eqref{e.oriented-fundamental} gives
    \begin{align}
      \form(\psi_{I},h)&= \int_{I}\wgrad\psi_{I}\,\wgrad h\dif\medm = \frac{3}{2}\bigl(h(v_{I})-h(\cellctr{Q_{I}})\bigr) -3\bigl(h(a_{I})-h(v_{I})\bigr) \\
      &= -\frac{3}{2}h(\cellctr{Q_{I}})+\frac{9}{2}h(v_{I})-3h(a_{I}). \label{e.tform}
    \end{align}
\end{remark}
\begin{lemma}\label{l.trans}
  Fix $p\in [ 1,\infty)$. There exist $M\in\bN$, $\ell_{0}\in[1,\infty)$, and $c_{0}\in(0,\infty)$ such that, for every $I\in\collectA$ with $\medm(I)\geq\ell_{0}$, the following holds.
  \begin{enumerate}[label=\textup{(\arabic*)}, align=right, leftmargin=*, topsep=5pt, parsep=0pt, itemsep=2pt]
    \item\label{it.trans1} there exists $E_{I}\in\collectQ$ such that
    \begin{align}
    	 E_{I}\cap I&=\emptyset, \label{e.trdis}\\
        3^{-M}\medm(I) \leq \metric(x,v_{I}) &\leq 3^{1-M}\medm(I), \ \text{ for all } x\in E_{I}, \label{e.trdst}\\
        \text{and }\ \meas(E_{I})&=\bigl(3^{-M}\medm(I)\bigr)^{\dimf}. \label{e.trmas}
    \end{align}
    \item\label{it.trans2} For every $x\in E_{I}$, $\pi_{I}(x)=v_{I}$, and
    \begin{equation}\label{e.trans}
      \einf_{E_{I}}(-\gen)^{\critic_{p}}P_{1}\psi_{I} \geq c_{0}\medm(I)^{1-\dimw\critic_{p}}.
    \end{equation}
  \end{enumerate}
\end{lemma}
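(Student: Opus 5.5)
The plan is to compute $(-\gen)^{\critic_{p}}P_{1}\psi_{I}$ pointwise through the heat kernel, using the three-point form identity \eqref{e.tform}, and to show that on a small cell hanging off $v_{I}$ the contribution of $p_{t}(v_{I},\cdot)$ at times near the entry scale dominates everything else.

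\emph{Construction of $E_{I}$.} Since $v_{I}=\cellctr{Q_{I}'}$ is the centre of a cell, four arms of $\celltree{Q_{I}'}$ meet at $v_{I}$. Two of them lie along $I$; the other two are perpendicular to $I$. Pick a perpendicular arm, and inside $Q_{I}'$ descend along it to a cell $E_{I}$ with $\cellsize{E_{I}}=3^{-M}\medm(I)$, placed at distance between $3^{-M}\medm(I)$ and $3^{1-M}\medm(I)$ from $v_{I}$ (for instance, a suitable subcell of level $M$ along that arm, avoiding $v_{I}$). Then $E_{I}\cap I=\emptyset$, and every arc from a point of $E_{I}$ to $I$ enters $I$ at $v_{I}$, so $\pi_{I}=v_{I}$ on $E_{I}$ by Lemma~\ref{l.projt}. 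The mass identity \eqref{e.trmas} is $\meas(E_{I})=\cellsize{E_{I}}^{\dimf}$. The integer $M$ is left free and fixed at the end.

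\emph{Kernel representation.} Put $\critic=\critic_{p}$. By \eqref{e.Lpt2} with $t=1$, self-adjointness, Theorem~\ref{t.dirichlet}-\ref{it.deri<2} and \eqref{e.tform} (applied to $h=P_{1+s}g$), we get for $\meas$-a.e.\ $x$
\begin{equation}
(-\gen)^{\critic}P_{1}\psi_{I}(x)=\frac{1}{\Gamma(1-\critic)}\int_{0}^{\infty}s^{-\critic}\Bigl(\tfrac92 p_{1+s}(v_{I},x)-\tfrac32 p_{1+s}(\cellctr{Q_{I}},x)-3p_{1+s}(a_{I},x)\Bigr)\dif s .
\end{equation}
Set $L:=\medm(I)$ and $r:=3^{1-M}L$. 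For $x\in E_{I}$ we have $\metric(x,v_{I})\le r$, while $\metric(x,\cellctr{Q_{I}})$ and $\metric(x,a_{I})$ are at least $L/3$ by \eqref{e.geomp}. The time axis is split at a fixed small $\delta\,L^{\dimw}$, where $\delta$ is chosen before $M$.

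\emph{Short times $t=1+s\le\delta L^{\dimw}$.} The $v_{I}$-term is nonnegative, so we keep only the window $t\in[r^{\dimw},2r^{\dimw}]$. Choosing $\ell_{0}\ge 3^{M}$ guarantees $r^{\dimw}\ge1$, so this window is reached. On it the lower bound in \eqref{e.HKE} gives $p_{t}(v_{I},x)\gtrsim r^{-\dimf}$. Hence the window contributes at least
\begin{equation}
c\,r^{\dimw(1-\critic)-\dimf}=c\,(3^{1-M}L)^{1-\dimw\critic_{p}} .
\end{equation}
The two negative terms are bounded by the upper bound in \eqref{e.HKE} at distance $\ge L/3$. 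Their integral against $s^{-\critic}$ over $t\le\delta L^{\dimw}$ is at most $\eta(\delta)L^{1-\dimw\critic}$ with $\eta(\delta)\to0$ as $\delta\to0$, because the sub-Gaussian factor is super-polynomially small when $L^{\dimw}/t\ge\delta^{-1}$.

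\emph{Long times $t>\delta L^{\dimw}$.} Here there is no positivity, so we use only absolute bounds: each kernel is at most $C t^{-\dimf/\dimw}$. The tail is therefore bounded by
\begin{equation}
C\int_{\delta L^{\dimw}}^{\infty}t^{-\critic-\dimf/\dimw}\dif t = C_{\delta}L^{1-\dimw\critic},
\end{equation}
which converges because $\critic_{p}+\dimf/\dimw>1$ for $p<\infty$. (One could use that the weights sum to zero to improve this, but it is not needed.)

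\emph{Conclusion and main obstacle.} The key point is that $1-\dimw\critic_{p}<0$, equivalently $\critic_{p}>1/\dimw$. Consequently the window term $(3^{1-M}L)^{1-\dimw\critic_{p}}=3^{(M-1)(\dimw\critic_{p}-1)}L^{1-\dimw\critic_{p}}$ grows geometrically in $M$, whereas the error terms are $O_{\delta}(L^{1-\dimw\critic_{p}})$ uniformly in $M$. The order of choices is therefore: fix $\delta$ (say $\delta=1$), then take $M$ large so that the window beats $(\eta(\delta)+C_{\delta})L^{1-\dimw\critic_{p}}$, then set $\ell_{0}=3^{M}$. This yields \eqref{e.trans} with $c_{0}\approx 3^{M(\dimw\critic_{p}-1)}$ up to constants. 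The main obstacle is this competition between the positive window and the uncontrolled long-time tail. It is resolved precisely by $\critic_{p}>1/\dimw$, and the fact that the window sits at the small scale $3^{-M}L$ rather than at $L$ is what makes it win.
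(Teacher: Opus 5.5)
Your proposal is correct and follows essentially the same route as the paper. The shared ingredients are the perpendicular subcell $E_I$ of the cell of size $3^{1-M}\medm(I)$ centred at $v_I$, the three-point kernel representation through $\int_0^\infty s^{-\critic_p}p_{1+s}\,\dif s$, and the choice of $M$ exploiting $\dimw\critic_p-1=(\dimf-1)/p>0$. The only difference is packaging: the paper proves a separate two-sided estimate $H_\beta(x,z)\asymp\metric(x,z)^{1-\dimw\critic_p}$ (Lemma~\ref{l.intpot}), whose proof is exactly your window lower bound and your split upper bound, whereas you inline that computation.
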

\begin{proof}
  Fix $I\in\collectA$ with $\medm(I)\geq\ell_{0}$, and write $Q:=Q_{I}$, $a:=a_{I}$, $\ell:=\cellsize{Q}=\medm(I)$, $c:=\cellctr{Q}$, and $v:=v_{I}$. Define $\eta:=\dimw\critic_{p}-1$ and $\beta:=1-\critic_{p}$. By the definition of $\critic_{p}$, we have $\eta>0$ and $\dimw\beta-\dimf=-\eta<0$. More explicitly,
  \begin{equation}
    \eta=\frac{\dimf-1}{p},\qquad \beta=\frac{\dimf(p-1)+1}{p\dimw},\qquad \frac{\dimf}{\dimw}-\beta=\frac{\dimf-1}{p\dimw}>0.
  \end{equation}
  In particular, $0<\beta<\frac{\dimf}{\dimw}< 1 $. Let $M\in[3,\infty)\cap\bN$ be large enough to be determined later. Let $\delta:=3^{-M}$ and choose $\ell_{0}\geq\max\{1,\delta^{-1}\}$.
  \begin{enumerate}[label=\textup{(\arabic*)}, align=right, leftmargin=*, topsep=5pt, parsep=0pt, itemsep=2pt]
    \item[\ref{it.trans1}] We first construct $E$. Let $Q^{(0)}:=Q_{I}'$. For $j\in\{1,\ldots,M-2\}$, let $Q^{(j)}$ be the unique element in $\collectQ$ with $Q^{(j)}\subset Q^{(j-1)}$, $3\cellsize{Q^{(j)}}=\cellsize{Q^{(j-1)}}$, and $\cellctr{Q^{(j)}}=\cellctr{Q^{(j-1)}}$. Then $\cellctr{Q^{(j)}}=v$ and $\cellsize{Q^{(j)}}=3^{-j-1}\ell$ for $0\leq j\leq M-2$. Choose a cell $E$ such that $E\subset Q^{(M-2)}$, $\cellsize{E} =\frac{1}{3}\cellsize{Q^{(M-2)}}$, and $[\cellctr{E},v]$ is orthogonal to $I$. Set $E_{I}:=E$. See Figure \ref{f.pickE}.
\begin{figure}
  \centering
  \begin{tikzpicture}[ x=1.85cm, y=1.85cm, line cap=round, line join=round, skeleton/.style={line width=.30pt}, guide/.style={draw=measuregreen, line width=.35pt, dash pattern=on 3pt off 3pt}, dimension/.style={draw=measuregreen, line width=.65pt, >={Stealth[length=2mm,width=1.3mm]}}, leader/.style={line width=.45pt, ->, >={Stealth[length=1.5mm,width=1mm]}, shorten <=2.2pt, shorten >=1.5pt}, every node/.style={font=\normalsize, text=black, inner sep=3pt} ]
    \definecolor{cellorange}{RGB}{239,119,11}
    \definecolor{measuregreen}{RGB}{49,143,65}
    \definecolor{armred}{RGB}{210,35,40}
    \coordinate (c) at (0,0);
    \coordinate (v) at (4,0);
    \coordinate (a) at (6,0);
    \coordinate (qright) at ({14/3},0);
    \draw[guide] (0,-2.04) -- (0,-2.92);
    \draw[guide] (4,-2.04) -- (4,-2.38);
    \draw[guide] (qright) -- ({14/3},-2.38);
    \draw[guide] (a) -- (6,-2.92);
    \foreach \cc in {0,4} {
      \foreach \ux/\uy in {0/0,2/0,-2/0,0/2,0/-2} {
        \colorlet{branchcolor}{black}
        \ifnum\cc=4\relax
          \ifnum\ux=0\relax
            \ifnum\uy=0\relax
              \colorlet{branchcolor}{cellorange}
            \fi
          \fi
        \fi
        \foreach \vx/\vy in {0/0,2/0,-2/0,0/2,0/-2} {
          \foreach \wx/\wy in {0/0,2/0,-2/0,0/2,0/-2} {
            \foreach \zx/\zy in {0/0,2/0,-2/0,0/2,0/-2} {
              \pgfmathsetmacro{\px}{\cc+2*(\ux/3+\vx/9+\wx/27+\zx/81)}
              \pgfmathsetmacro{\py}{2*(\uy/3+\vy/9+\wy/27+\zy/81)}
              \draw[skeleton,draw=branchcolor] (\px,{\py-2/81}) -- (\px,{\py+2/81});
              \ifdim\py pt=0pt
                \ifdim\px pt<0pt
                  \draw[skeleton,draw=branchcolor] ({\px-2/81},\py) -- ({\px+2/81},\py);
                \else
                  \ifdim\px pt=0pt
                    \draw[skeleton,draw=branchcolor] ({-2/81},0) -- (c);
                  \fi
                \fi
              \else
                \draw[skeleton,draw=branchcolor] ({\px-2/81},\py) -- ({\px+2/81},\py);
              \fi
            }
          }
        }
      }
    }
    \draw[draw=cellorange,line width=.85pt] ({4-2/9},{4/9}) -- (4,{2/3}) -- ({4+2/9},{4/9}) -- (4,{2/9}) -- cycle;
    \node[text=cellorange,anchor=west] at (4.28,{4/9}) {$E_I$};
    \draw[draw=armred,line width=.85pt] (c) -- (a);
    \foreach \point in {c,v,a} {
      \fill[black] (\point) circle[radius=1.4pt];
    }
    \node[anchor=south east] (cname) at (-.27,.27) {$c_{Q_I}$};
    \node[anchor=north west] (vname) at (4.25,-.26) {$v_I$};
    \node[text=armred,anchor=south] (Iname) at (2.10,.75) {$I$};
    \draw[leader,draw=black] (c) -- (cname.south east);
    \draw[leader,draw=black] (v) -- (vname.north west);
    \draw[leader,draw=armred,shorten <=.7pt] (2,0) -- (Iname.south);
    \node[above right] at (a) {$a_I$};
    \draw[dimension,<->] (4,-2.27) -- ({14/3},-2.27) node[midway,below=3pt] {$\ell(Q^{(M-2)})$};
    \draw[dimension,<->] (0,-2.80) -- (6,-2.80) node[midway,below=3pt] {$\ell(Q_I)$};
  \end{tikzpicture}
  \caption{A choice of $E_{I}$}
  \label{f.pickE}
\end{figure}
    Then
    \begin{equation}\label{e.Egategeom}
      [v,x]\cap I=\{v\}, \ \text{ and }\ \delta\ell\leq\metric(x,v)\leq3\delta\ell, \ \text{ for all }\ x\in E.
    \end{equation}
    The first identity in \eqref{e.Egategeom} and $\metric(x,v)\geq\delta\ell>0$ imply $E\cap I=\emptyset$, which is \eqref{e.trdis}, while the second part of \eqref{e.Egategeom} is \eqref{e.trdst}. By the definition of $\meas$, $\meas(E) =\cellsize{E}^{\dimf}=(\delta\ell)^{\dimf}$, which proves \eqref{e.trmas}.
    \item[\ref{it.trans2}] Fix $x\in E$ and $y\in I$. By \eqref{e.Egategeom} and uniqueness of geodesics in the tree $(\ambient,\metric)$, we have $[x,y]=[x,v]\cup[v,y]$ and $[x,v]\cap[v,y]=\{v\}$.
    Consequently, $\metric(x,y) =\metric(x,v)+\metric(v,y) \geq\metric(x,v)$, with equality at $y=v$. Therefore $\dist(x,I)=\metric(x,v)$ and $\pi_{I}(x)=v$, which proves the first assertion in \ref{it.trans2}. We next prove \eqref{e.tiden}. By \eqref{e.tgrad}, $\psi_{I}$ is compactly supported and $\wgrad\psi_{I}\in L^{2}(\skeleton,\medm)$. Hence $\psi_{I}\in\domain$. For $t\in(0,\infty)$ and $h\in L^{2}(\ambient,\meas)$,
    \begin{equation}
      \|(-\gen)P_{t}h\|_{L^{2}(\ambient,\meas)}\leq\frac{1}{et}\|h\|_{L^{2}(\ambient,\meas)},\ \text{ and }\ \|p_{t}(z,\cdot)\|_{L^{2}(\ambient,\meas)}^{2}=p_{2t}(z,z)\leq Ct^{-\dimf/\dimw}.
    \end{equation}
    Thus $P_{t}h\in\Dom(-\gen)\subset\domain$. The self-adjointness of $P_{t}$ and \eqref{e.tform} give
      \begin{align}
        \left\langle(-\gen)P_{t}\psi_{I},h\right\rangle_{L^{2}(\ambient,\meas)} &=\form(\psi_{I},P_{t}h) \overset{\eqref{e.tform}}{=} -\frac{3}{2}P_{t}h(c)+\frac{9}{2}P_{t}h(v)-3P_{t}h(a) \\
        &= \int_{\ambient} \left( -\frac{3}{2}p_{t}(c,y)+\frac{9}{2}p_{t}(v,y)-3p_{t}(a,y) \right)h(y)\dif\meas(y).\label{e.poten-}
      \end{align}
    Define
    \begin{equation}\label{e.poten}
      H_{\beta}(x,z) := \frac{1}{\Gamma(\beta)} \int_{0}^{\infty} s^{\beta-1}p_{s+1}(x,z)\dif s, \qquad x,z\in\ambient.
    \end{equation}
    Note that
    \begin{equation}\label{e.mAB}
      \lambda^{\critic_{p}}e^{-\lambda} = \frac{1}{\Gamma(\beta)} \int_{0}^{\infty} s^{\beta-1}\lambda e^{-(s+1)\lambda}\dif s, \qquad \lambda\in[0,\infty).
    \end{equation}
    Fix $h\in L^{\infty}(\ambient,\meas)\cap L^{2}(\ambient,\meas)$ with compact support. For $s>0$,
      \begin{align}
        \|(-\gen)P_{s+1}\psi_{I}\|_{L^{2}(\ambient,\meas)} &\leq\sup_{\lambda\in[0,\infty)}(\lambda e^{-(s+1)\lambda})\|\psi_{I}\|_{L^{2}(\ambient,\meas)} =\frac{\|\psi_{I}\|_{L^{2}(\ambient,\meas)}}{e(s+1)},\\
        \int_{0}^{\infty}s^{\beta-1}\|(-\gen)P_{s+1}\psi_{I}\|_{L^{2}(\ambient,\meas)}\dif s &\leq\frac{\Gamma(\beta)\Gamma(1-\beta)}{e}\|\psi_{I}\|_{L^{2}(\ambient,\meas)}<\infty.
      \end{align}
    By spectral calculus and Fubini's theorem,
      \begin{align}
        &\phantom{\ \leq}\left\langle(-\gen)^{\critic_{p}}P_{1}\psi_{I},h\right\rangle_{L^{2}(\ambient,\meas)}=\frac{1}{\Gamma(\beta)}\int_{0}^{\infty} s^{\beta-1}\left\langle(-\gen)P_{s+1}\psi_{I},h\right\rangle_{L^{2}(\ambient,\meas)}\dif s\\
        &\overset{\eqref{e.poten-}}{=}\frac{1}{\Gamma(\beta)}\int_{0}^{\infty} s^{\beta-1}\left(\int_{\ambient} \left( -\frac{3}{2}p_{s+1}(c,y)+\frac{9}{2}p_{s+1}(v,y)-3p_{s+1}(a,y) \right)h(y)\dif\meas(y) \right)\dif s\\
        &\overset{\eqref{e.poten}}{=}\int_{\ambient}\left(-\frac{3}{2}H_{\beta}(y,\cellctr{Q}) +\frac{9}{2}H_{\beta}(y,v) -3H_{\beta}(y,a)\right)h(y)\dif\meas(y).
      \end{align}
    Hence
    \begin{equation}\label{e.tiden}
      (-\gen)^{\critic_{p}}P_{1}\psi_{I}(x) = -\frac{3}{2}H_{\beta}(x,\cellctr{Q}) +\frac{9}{2}H_{\beta}(x,v) -3H_{\beta}(x,a), \  \text{for $\meas$-a.e. }x\in\ambient.
    \end{equation}
    It remains to prove \eqref{e.trans}. By \eqref{e.geomp},
      \begin{align}
        \metric(x,c) &=\metric(x,v)+\metric(v,c) =\metric(x,v)+\frac{2\ell}{3}, \label{e.xcdist}\\
        \metric(x,a) &=\metric(x,v)+\metric(v,a) =\metric(x,v)+\frac{\ell}{3}. \label{e.xadist}
      \end{align}
    By \eqref{e.trdst}, \eqref{e.xcdist}, \eqref{e.xadist} and the choice of $\ell_{0}$, for $\ell\geq\ell_{0}$ and $x\in E$, we have $\metric(x,y)\geq1$ for all $y\in \{v,c,a\}$. For each $z\in\{v,c,a\}$, apply Lemma~\ref{l.intpot} with $R=\metric(x,z)^{\dimw}$, $a=\dimf/\dimw$, and $q=1/(\dimw-1)$. Hence, by \eqref{e.HKE}, Lemma \ref{l.intpot}, and \eqref{e.trdst}, there is a constant $C\in(1,\infty)$, depending only on $p$ and the constants in \eqref{e.HKE}, such that
      \begin{align}
        H_{\beta}(x,v) &\geq C^{-1}\metric(x,v)^{-\eta} \geq C^{-1}(3\delta\ell)^{-\eta}, \label{e.Hvlow}\\
        H_{\beta}(x,c) &\leq C\metric(x,c)^{-\eta} \overset{\eqref{e.xcdist}}{\leq} C\left(\frac{2\ell}{3}\right)^{-\eta}, \label{e.Hcup}\\
        H_{\beta}(x,a) &\leq C\metric(x,a)^{-\eta} \overset{\eqref{e.xadist}}{\leq} C\left(\frac{\ell}{3}\right)^{-\eta}. \label{e.Haup}
      \end{align}
    Recalling that $\delta:=3^{-M}$, we can choose $M\in\bN$ large enough that
    \begin{equation}\label{e.Mchoice}
      \frac{9}{2}C^{-1}(3\delta)^{-\eta} \geq 2C \left( \frac{3}{2}\left(\frac{2}{3}\right)^{-\eta} +3\left(\frac{1}{3}\right)^{-\eta}\right).
    \end{equation}
    Combining \eqref{e.tiden} with \eqref{e.Hvlow}, \eqref{e.Hcup}, \eqref{e.Haup} and \eqref{e.Mchoice},
      \begin{align}
        (-\gen)^{\critic_{p}}P_{1}\psi_{I}(x) &\geq \left(\frac{9}{2}C^{-1}(3\delta)^{-\eta} -C \left( \frac{3}{2}\left(\frac{2}{3}\right)^{-\eta} +3\left(\frac{1}{3}\right)^{-\eta} \right) \right)\ell^{-\eta}\\
        &\overset{\eqref{e.Mchoice}}{\geq} \frac{9}{4}C^{-1}(3\delta)^{-\eta}\ell^{-\eta}.
      \end{align}
    Since $-\eta=1-\dimw\critic_{p}$, \eqref{e.trans} follows with $c_{0}:=\frac{9}{4}C^{-1}(3\delta)^{-\eta}>0$.
  \end{enumerate}
\end{proof}
Let $n\in\bZ$ and $I_{0}\in\collectA_{n}$. For $k\in\bNN$, let $\sP_{k}(I_{0}):=\Sett{J\in \collectA_{n+k}}{J\subset I_{0}}$. Then $\#\sP _{k}(I_{0})=3^{k}$.
\begin{lemma}\label{l.packi}
  Let $p\in [ 1,\infty)$ and $\ell_{0}\in[1,\infty)$ be the constant in Lemma \ref{l.trans}. Let $k,m\in\bNN$, $I\in\sP_{k}(I_{0})$, and $J\in\sP_{m}(I_{0})$. If $I\neq J$ and $\medm(I),\medm(J)\in[\ell_{0},\infty)$, then $E_{I}\cap E_{J}=\emptyset$.
\end{lemma}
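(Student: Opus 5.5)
We want: for distinct sub-arms $I\in\sP_k(I_0)$, $J\in\sP_m(I_0)$ with lengths at least $\ell_0$, the small cells $E_I,E_J$ from Lemma~\ref{l.trans} are disjoint. The plan is to use the tree geometry. Every point of $E_I$ lies close to $v_I$ but off the arm $I_0$, and different arms have different points $v_I$. We then separate the two cases by how the positions and scales compare.

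\emph{Step 1 (locating $E_I$).} Recall from the construction that $E_I\subset Q^{(M-2)}$, which is a cell centred at $v_I$ of size $3^{1-M}\medm(I)$. Also $[v_I,x]\cap I=\{v_I\}$ for $x\in E_I$, and the segment $[\cellctr{E_I},v_I]$ is orthogonal to $I$. Since $I\subset I_0$ and all these arms lie on one straight segment, $I_0$ is horizontal or vertical, and $E_I$ hangs off $I_0$ in the orthogonal direction. I would first check that $v_I$ lies in the relative interior of $I_0$. Indeed $\metric(v_I,a_I)=\medm(I)/3>0$, and $\metric(v_I,\cellctr{Q_I})=2\medm(I)/3>0$, so $v_I$ is interior to $I\subset I_0$. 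Next I would show $\pi_{I_0}(x)=v_I$ for every $x\in E_I$. The argument of Lemma~\ref{l.trans}-\ref{it.trans2} transfers directly: the orthogonal branch at $v_I$ leaves the line of $I_0$ there, and for $M\geq3$ the branch $Q^{(M-2)}$ is far smaller than $\medm(I)$, so it cannot reach past the endpoints of $I$ along $I_0$. Consequently $E_I\cap E_J\neq\emptyset$ forces $v_I=\pi_{I_0}(x)=v_J$ for a common point $x$.

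\emph{Step 2 (distinct arms give distinct $v$).} It then remains to show that $v_I\neq v_J$ whenever $I\neq J$. If $k=m$, then $I$ and $J$ are distinct same-level subarms of $I_0$ overlapping only at endpoints. Since $v_I$ is interior to $I$ and $v_J$ is interior to $J$, the two points differ. If $k<m$, the point $v_J$ lies in $V_{n+m+1}$ at the level of $\cellctr{Q'_J}$. I would use the ternary position of $v$ along $I_0$. Parametrize $I_0$ by arclength, with the coordinate measured from the centre endpoint of each arm. Then $v_I$ sits at relative position $2/3$ inside $I$, measured from $\cellctr{Q_I}$. Write the $I$-coordinates in base $3$ with respect to the level. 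The position of $v_I$ has a last nonzero ternary digit at scale $3^{-(n+k+1)}$, and that of $v_J$ at the finer scale $3^{-(n+m+1)}$. Two points whose exact ternary expansions terminate at different depths cannot coincide, so $v_I\neq v_J$.

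\emph{Main obstacle.} The delicate step is making this ternary-depth argument rigorous despite the orientation issue. The centre $\cellctr{Q_I}$ may sit at either end of $I$, so the digit might be $1$ rather than $2$. I would handle this by noting that $v_I$ is always a level-$(n+k+1)$ vertex that is not a level-$(n+k)$ vertex, with $V_{n+k}\subset V_{n+k+1}$. A point of $V_{n+m+1}\setminus V_{n+m}$ is then not in $V_{n+k+1}$ when $m>k$, since $V_{n+k+1}\subset V_{n+m}$. So I only need $v_I\in V_{n+k+1}\setminus V_{n+k}$. This holds because $v_I$ is a centre of a level-$(n+k+1)$ cell and lies in the interior of the level-$(n+k)$ arm $I$, where no level-$(n+k)$ vertices lie. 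This avoids coordinates entirely, and I would take it as the final argument.
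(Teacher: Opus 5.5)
Your proposal is correct and follows essentially the paper's route. You first show $\pi_{I_0}(E_L)=\{v_L\}$ for $L\in\{I,J\}$. You then show $v_I\neq v_J$ via $v_I\in V_{n+k+1}\subset V_{n+m}$, while $v_J$ lies in the interior of the level-$(n+m)$ arm $J$ and hence outside $V_{n+m}$; this final vertex-level argument is exactly the paper's, so the ternary-digit detour can be dropped.

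For the gate step the paper argues slightly more directly than your size remark. Since $v_L$ is in the relative interior of $L\subset I_0$, any nontrivial common piece of $[x,v_L]$ and $[v_L,y]$ with $y\in I_0$ would meet $L\setminus\{v_L\}$, contradicting $[x,v_L]\cap L=\{v_L\}$. Your observation that $Q^{(M-2)}\cap I_0\subset I$ reaches the same conclusion through Lemma~\ref{l.projt}, using the convexity of $Q^{(M-2)}$.
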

\begin{proof}
  We first check that $v_{I}\neq v_{J}$. If $k=m$, then $v_{I}\in I\setminus V_{n+k}$, $v_{J}\in J\setminus V_{n+k}$ and $(I\setminus V_{n+k})\cap(J\setminus V_{n+k})=\emptyset$. If $k<m$, then $v_{I}\in V_{n+k+1}\subset V_{n+m}$ and $v_{J}\in J\setminus V_{n+m}$, so again $v_{I}\neq v_{J}$. The case $m<k$ follows in a similar way. Fix $ \allowbreak L \in \{ I , J \} $, $ \allowbreak x \in E _{L} $ and $ \allowbreak y \in I _{0} $. Since $v _{ \allowbreak L }$ belongs to the relative interior of $L$, \eqref{e.Egategeom} implies $[x,v_{L}]\cap[v_{L},y]=\{v_{L}\}$. Indeed, a point $ \allowbreak z \neq v _{L} $ in this intersection would satisfy $[v_{L},z]\subset[x,v_{L}]\cap I_{0}$ and $[v_{L},z]\cap(L\setminus\{v_{L}\})\neq\emptyset$, contrary to $ \allowbreak [ x , v _{ \allowbreak L }] \cap L = \{ v _{L} \} $ in \eqref{e.Egategeom}. The uniqueness of geodesics now gives
  \begin{equation}
    [x,y]=[x,v_{L}]\cup[v_{L},y],\ \text{ and }\ \metric(x,y)=\metric(x,v_{L})+\metric(v_{L},y).
  \end{equation}
  Minimizing over $y\in I_{0}$ proves $\pi_{I_{0}}(x)=v_{L}$. Also $ \allowbreak x \notin I _{ \allowbreak 0 }$, since otherwise $ \allowbreak x = \pi _{I_{0} } \allowbreak ( x ) = v _{ \allowbreak L } $ would contradict \eqref{e.trdst}. Thus $E_{I}\cap I_{0}=\emptyset$, $\pi_{I_{0}}(E_{I})=\{v_{I}\}$, $E_{J}\cap I_{0}=\emptyset$ and $\pi_{I_{0}}(E_{J})=\{v_{J}\}$. If $ \allowbreak x \in E _{I } \allowbreak \cap E _{ \allowbreak J }$, these identities imply $v_{I}=\pi_{I_{0}}(x)=v_{J}$, a contradiction. Hence $ \allowbreak E _{I} \cap E _{ \allowbreak J }= \emptyset $.
\end{proof}
\subsection{Failure of reverse quasi-Riesz inequality}
\begin{proof}[Proof of Theorem~\ref{t.strfl}-\ref{it.strfl} and \ref{it.wtrfl}]
  For each $p\in [ 1,\infty)$, let $M\in\bN$, $\ell_{0}\in[1,\infty)$, and $c_{0}\in(0,\infty)$ be the constants from Lemma \ref{l.trans}.
  \begin{enumerate}[label=\textup{(\arabic*)}, align=right, leftmargin=*, topsep=5pt, parsep=0pt, itemsep=2pt]
    \item[\ref{it.strfl}] Let $p\in[1,2)$. Choose $n_{0}\in\bN$ such that $3^{n_{0}}\geq\ell_{0}$ and, for $N\in\bN$, define $R_{N}:=3^{N+n_{0}}$. Let $I_{N}\in \collectA_{-N-n_{0}}$; then $\medm(I_{N})=R_{N}$. If $0\leq k\leq N-1$ and $I\in\sP_{k}(I_{N})$, then $\medm(I)=R_{N}3^{-k}\geq \ell_{0}$. Thus Lemma~\ref{l.trans} applies to every such $I$. Let $(\Omega,\bP)$ be a probability space, and let $\Sett{\varepsilon_{I}:\Omega\to\{-1,1\}}{I\in\bigcup_{k=0}^{N-1}\sP_{k}(I_{N})}$ be a family of independent random variables such that $\bP(\varepsilon_{I}=1)=\bP(\varepsilon_{I}=-1)=1/2$ for all $I\in\bigcup_{k=0}^{N-1}\sP_{k}(I_{N})$. Define
    \begin{equation}\label{e.fneps}
      f_{\omega}^{(k)}(x) := \sum_{j=0}^{k} \sum_{I\in\sP_{j}(I_{N})} \varepsilon_{I}(\omega)\psi_{I}(x),\ (\omega,x)\in\Omega\times\ambient,\ k\in\{0,\ldots, N-1\}.
    \end{equation}
    Since each $\psi_{I}\in\core$ and the sums are finite, Lemma~\ref{l.coreconv} gives $f_{\omega}^{(k)}\in\core\cap \Dom((-\gen)^{\critic_{p}})$ for all $k$ and $\omega$. Since $\wgrad\psi_{I}$ is supported on $I$,
    \begin{equation}
      \wgrad f_{\omega}^{(k)} = \sum_{j=0}^{k}\left(\sum_{I\in\sP_{j}(I_{N})} \varepsilon_{I}(\omega)\wgrad\psi_{I}\right) \ \ \medm\text{-a.e. on }\skeleton.
    \end{equation}

    By Khintchine's inequality \cite[Appendix~C.2]{Gra14}, since $\abs{\wgrad\psi_{I}}\leq3$ $\medm$-a.e. on $\skeleton$,
      \begin{align}
        &\phantom{\ \leq}\left(\int_{\Omega}\abs{\wgrad f_{\omega}^{(N-1)}(x)}^{p}\dif\bP(\omega)\right)^{1/p} =\left(\int_{ \Omega }\abs{\sum_{I\in\bigcup_{j=0}^{N-1} \allowbreak\sP _{j}(I_{N})}\varepsilon_{I}(\omega)\wgrad\psi_{I}(x)}^{p}\dif\bP(\omega)\right)^{1/p}\\
        &\lesssim\left(\sum_{I\in\bigcup_{j=0}^{N-1} \allowbreak\sP _{j}(I_{N})}\abs{\wgrad \psi_{I}(x)}^{2}\right)^{1/2}\lesssim 3N^{1/2}\one_{I_{N}}(x).\label{e.fngra}
      \end{align}
    Integrating it over $\skeleton$ and using Fubini's theorem, we obtain
    \begin{equation}
      \int_{\Omega}\norm{\wgrad f_{\omega}^{( \allowbreak N - 1 )}}_{L^{p}(\skeleton,\medm)}^{p}\dif\bP(\omega)\overset{\eqref{e.fngra}}{\leq} C_{p}N^{p/2}\medm(I_{N})=C_{p}N^{p/2}R_{N}.
    \end{equation}
    For $\meas$-a.e. $x\in\ambient$, the Khintchine inequality \cite[Appendix~C.2]{Gra14} gives
    \begin{equation}
      \int_{\Omega} \left|(-\gen)^{\critic_{p}}P_{1}f^{(N-1)}_{\omega}(x) \right|^{p}\dif\bP(\omega) \geq c_{p} \left( \sum_{k=0}^{N-1} \sum_{I\in\sP_{k}(I_{N})} |(-\gen)^{\critic_{p}}P_{1}\psi_{I}(x)|^{2} \right)^{p/2}.\label{e.fngra1}
    \end{equation}
    Since $\meas(E_{I})=3^{-M\dimf}\medm(I)^{\dimf}$ by \eqref{e.trmas} and $p(1-\dimw\critic_{p})+\dimf=1$, \eqref{e.trans} gives
      \begin{align}
        \int_{E_{I}}|(-\gen)^{\critic_{p}}P_{1}\psi_{I}|^{p}\dif\meas &\geq c_{0}^{p}\medm(I)^{p(1-\dimw\critic_{p})}\meas(E_{I})\\
        &=c_{0}^{p}3^{-M\dimf}\medm(I)^{p(1-\dimw\critic_{p})+\dimf} =c_{0}^{p}3^{-M\dimf}\medm(I).
      \end{align}

    Integrating with respect to $\meas$ and using Fubini's theorem,
      \begin{align}
        &\phantom{\ \leq}\int_{\Omega} \|(-\gen)^{\critic_{p}}P_{1}f^{(N-1)}_{\omega}\|_{L^{p}(\ambient,\meas)}^{p}\dif\bP(\omega) \\
        &\overset{\eqref{e.fngra1}}{\geq} c_{p} \int_{\ambient} \left( \sum_{k=0}^{N-1} \sum_{I\in\sP_{k}(I_{N})} |(-\gen)^{\critic_{p}}P_{1}\psi_{I}|^{2} \right)^{p/2}\dif\meas\\
        &\geq c_{p} \sum_{k=0}^{N-1} \sum_{I\in\sP_{k}(I_{N})} \int_{E_{I}}|(-\gen)^{\critic_{p}}P_{1}\psi_{I}|^{p}\dif\meas \ \text{ (by Lemma~\ref{l.packi} and Fubini's theorem)}\\
        &\overset{\eqref{e.trans},\eqref{e.trmas}}{\geq} c_{p} \sum_{k=0}^{N-1} \sum_{I\in\sP_{k}(I_{N})} \medm(I) =c_{p}NR_{N}. \label{e.fnexpect}
      \end{align}

    The norms in these expectations are finite by Lemma~\ref{l.coreconv}. Therefore, for $\epsilon:=2^{-1}C_{p}^{-1}c_{p}$, we have
      \begin{align}
        &\phantom{\ \leq}\int_{\Omega}\left(\|(-\gen)^{\critic_{p}}P_{1}f^{(N-1)}_{\omega}\|_{L^{p}(\ambient,\meas)}^{p}- \allowbreak\epsilon N ^{-\frac{p}{2}+1}\norm{\wgrad f_{\omega}^{(N-1)}}_{L^{p}(\skeleton,\medm)}^{p}\right)\dif\bP(\omega)\\
        &\overset{\eqref{e.fngra},\eqref{e.fnexpect}}{ \allowbreak \geq }c_{p}NR_{N}-\epsilon\cdot C_{p}NR_{N}=(c_{p}-\epsilon\cdot C_{p})NR_{N}>0.
      \end{align}
    Choose $\omega_{0}\in\Omega$ with { $\|(-\gen)^{\critic_{p}}P_{1}f^{(N-1)}_{\omega_{0}}\|^{p}_{L^{p}(\ambient,\meas)}>\epsilon N^{\frac{2-p}{2}}\|\wgrad f_{\omega_{0}}^{(N-1)}\|_{L^{p}(\skeleton,\medm)}^{p}$ }. Take
    \begin{equation}
      g_{N}:=\frac{f^{(N-1)}_{\omega_{0}}}{\|\wgrad f_{\omega_{0} }^{(N-1)}\|_{L^{p}(\skeleton,\medm)}}\in \core\cap \Dom((-\gen)^{\critic_{p}}), \ N\in[2,\infty)\cap \bN.
    \end{equation}
    Then $\norm{\wgrad g_{N}}_{L^{p}(\skeleton,\medm)}=1$, while $\|(-\gen)^{\critic_{p}}P_{1}g_{N}\|_{L^{p}(\ambient,\meas)}\gtrsim N^{\frac{2-p}{2p}}\uparrow\infty$ as $N\uparrow \infty$. This proves \eqref{e.strfl}.
    \item[\ref{it.wtrfl}] Let $p\in(2,\infty)$. Choose $n_{0}\in\bN$ such that $3^{n_{0}}\geq\ell_{0}$ and let $Q_{N}:=3^{N+n_{0}}\vicsek$ and $R_{N}:=\cellsize{Q_{N}}=3^{N+n_{0}}$. For $k\in\{1,\ldots,N\}$, let
    \begin{equation}
      \collectJ_{k}:=\Sett{J\in\collectA_{k-N-n_{0}}}{J\subset Q_{N}\ \text{ and }\ (J\setminus V_{k-N-n_{0}})\cap G_{k-N-n_{0}-1}=\emptyset}.
    \end{equation}
    Then $\#\collectJ_{k} =8\cdot5^{k-1}$. Moreover, every $I\in\collectJ_{k}$ has $\medm(I)=3^{-k}R_{N}$, so
    \begin{equation}\label{e.jsumm}
      \sum_{I\in\collectJ_{k}}\medm(I)^{\dimf} =8\cdot5^{k-1}R_{N}^{\dimf}3^{-k\dimf} =\frac{8}{5}R_{N}^{\dimf}.
    \end{equation}
    For every $I\in\collectJ_{k}$, the construction in Lemma~\ref{l.trans} gives
    \begin{equation}\label{e.contq}
      \supp(\psi_{I})\cup E_{I} \subset Q_{I} \subset Q_{N}.
    \end{equation}
    Let $(\Omega,\bP)$ be a probability space, and let $\Sett{\varepsilon_{I}:\Omega\to\{-1,1\}}{I\in\bigcup_{k=1}^{N}\collectJ_{k}}$ be independent random variables such that $\bP(\varepsilon_{I}=1)= \bP(\varepsilon_{I}=-1)=1/2$. For each $I\in\collectJ_{k}$, define $\varphi_{I}:=\medm(I)^{\dimw\critic_{p}-1}\psi_{I}$, and define
    \begin{equation}\label{e.uneps}
      u^{(N)}_{\omega}(x) := \sum_{i=1}^{N} \sum_{I\in\collectJ_{i}} \varepsilon_{I}(\omega)\varphi_{I}(x),\ \ (\omega,x)\in\Omega\times \ambient.
    \end{equation}
    The arms in $\bigcup_{k=1}^{N}\collectJ_{k}$ have pairwise $\medm$-null intersections. The identity $p(\dimw\critic_{p}-1)+1=\dimf$ and \eqref{e.tgrad} give $\|\wgrad\varphi_{I}\|_{p}^{p}\leq C_{p}\medm(I)^{\dimf}$ , since
      \begin{align}
        \|\wgrad\varphi_{I}\|_{L^{p}(\skeleton,\medm)}^{p} &=\medm(I)^{p(\dimw\critic_{p}-1)} \left(\left(\frac{3}{2}\right)^{p}\frac{2\medm(I)}{3}+3^{p}\frac{\medm(I)}{3}\right)\\
        &=\left(\frac{2}{3}\left(\frac{3}{2}\right)^{p}+3^{p-1}\right)\medm(I)^{\dimf}.
      \end{align}
    Hence, for all $\omega\in\Omega$,
      \begin{align}
        \|\wgrad u^{(N)}_{\omega}\|_{L^{p}(\skeleton,\medm)}^{p} &= \sum_{i=1}^{N} \sum_{I\in\collectJ_{i}} \|\wgrad\varphi_{I}\|_{L^{p}(\skeleton,\medm)}^{p} \overset{\eqref{e.tgrad}}{\leq} C_{p} \sum_{k=1}^{N} \sum_{I\in\collectJ_{k}} \medm(I)^{\dimf} \overset{\eqref{e.jsumm}}{\leq} C_{p}NR_{N}^{\dimf}. \label{e.ungra}
      \end{align}
    Moreover, the definition of $\varphi_{I}$ and \eqref{e.trans} give $\einf_{E_{I}}(-\gen)^{\critic_{p}}P_{1}\varphi_{I}\geq c_{p}$. Therefore,
      \begin{align}
        &\phantom{\ \leq}\int_{\Omega}\|\one_{Q_{N}}(-\gen)^{\critic_{p}}P_{1}u^{(N)}_{\omega}\|_{L^{2}(\ambient,\meas)}^{2}\dif\bP(\omega)\\
        &=\sum_{I,J\in\bigcup_{i=1}^{N}\collectJ_{i}}\int_{\Omega}\varepsilon_{I}(\omega)\varepsilon_{J}(\omega)\dif\bP(\omega)\cdot \langle \one_{Q_{N}}(-\gen)^{\critic_{p}}P_{1}\varphi_{I},\one_{Q_{N}}(-\gen)^{\critic_{p}}P_{1}\varphi_{J}\rangle_{L^{2}(\ambient,\meas)}\\
        &=\sum_{I\in\bigcup_{i=1}^{N}\collectJ_{i}}\|\one_{Q_{N}}(-\gen)^{\critic_{p}}P_{1}\varphi_{I}\|_{L^{2}(\ambient,\meas)}^{2}\overset{\eqref{e.contq}}{\geq} \sum_{k=1}^{N} \sum_{I\in\collectJ_{k}} \int_{E_{I}}|(-\gen)^{\critic_{p}}P_{1}\varphi_{I}|^{2}\dif\meas\\
        &\overset{\eqref{e.trans},\eqref{e.trmas},\eqref{e.jsumm}}{\geq} c_{p}NR_{N}^{\dimf}.
      \end{align}
    Hence there exists $\omega_{0}\in\Omega$ such that, if we denote $u_{N}:=u_{\omega_{0}}^{(N)}$, then
    \begin{equation}\label{e.unltt}
      \|\one_{Q_{N}}(-\gen)^{\critic_{p}}P_{1}u_{N}\|_{L^{2}(\ambient,\meas)} \geq c_{p}N^{1/2}R_{N}^{\dimf/2}.
    \end{equation}
    By Lemma~\ref{l.semes}-\ref{it.analy}, $(-\gen)^{\critic_{p}}P_{1}u_{N}\in L^{p}(\ambient,\meas)\subset L^{p,\infty}(\ambient,\meas)$. Applying \eqref{e.localL2weak} to $h=(-\gen)^{\critic_{p}}P_{1}u_{N}$ and $E=Q_{N}$, and using $\meas(Q_{N})=R_{N}^{\dimf}$, we obtain from \eqref{e.unltt} that
      \begin{align}
        \|(-\gen)^{\critic_{p}}P_{1}u_{N}\|_{L^{p,\infty}(\ambient,\meas)} &\overset{\eqref{e.localL2weak}}{\geq}\left(\frac{p-2}{p}\right)^{1/2}R_{N}^{\dimf(1/p-1/2)} \|\one_{Q_{N}}(-\gen)^{\critic_{p}}P_{1}u_{N}\|_{L^{2}(\ambient,\meas)}\\
        &\overset{\eqref{e.unltt}}{\geq} c_{p}N^{1/2}R_{N}^{\dimf/p}.\label{e.unout}
      \end{align}
    Since $\critic_{p}<1/2$, Lemma~\ref{l.sobol}, applied with $\sigma=2\critic_{p}$, shows that every finite sum $u_{N}\in\core$ belongs to $\Dom((-\gen)^{\critic_{p}})$. Define
    \begin{equation}
      v_{N}:=\frac{u_{\omega_{0}}^{(N)}}{C_{p}^{1/p}N^{1/p}R_{N}^{\dimf/p}}\in \core\cap\Dom((-\gen)^{\critic_{p}}),\ N\in\bN.
    \end{equation}
    Then \eqref{e.ungra} gives $\norm{\wgrad v_{N}}_{L^{p}(\skeleton,\medm)}\leq1$, while \eqref{e.unout} gives $\|(-\gen)^{\critic_{p}}P_{1}v_{N}\|_{L^{p,\infty}(\ambient,\meas)} \gtrsim N^{(p-2)/(2p)}\uparrow\infty$ as $N\uparrow \infty$. This proves \eqref{e.wtrfl}.
  \end{enumerate}
\end{proof}

\section{On the boundedness of the Riesz transform}\label{s.direct}

\subsection{Density of nice functions}
For $p\in[1,\infty)$, we define
\begin{equation}\label{e.specc-strict}
  \collectD_{p}^{\rm sp} :=\operatorname{span} \Sett{(I-P_{R})^{2}P_{\varepsilon} h} {\varepsilon,R\in(0,\infty),\ h\in L^{2}(\ambient,\meas)\cap L^{p}(\ambient,\meas) \cap L^{p,1}(\ambient,\meas)},
\end{equation} and
\begin{equation}\label{e.specc}
  \collectD_{p} :=\operatorname{span} \Sett{P_{\varepsilon} h} {\varepsilon\in(0,\infty),\ h\in L^{2}(\ambient,\meas)\cap L^{p}(\ambient,\meas) \cap L^{p,1}(\ambient,\meas)}
\end{equation}

\begin{lemma}\label{l.densp}
  Let $p\in[1,\infty)$.
  \begin{enumerate}[label=\textup{(\arabic*)}, align=right, leftmargin=*, topsep=5pt, parsep=0pt, itemsep=2pt]
    \item\label{it.rieszrg} One has $ \collectD_{p}^{\rm sp}\subset\collectD_{p} \subset\Dom(( -\gen)^{1/2-\critic_{p}})$ and
    \begin{equation}\label{e.strict-core-domains}
      \collectD_{p}^{\rm sp}\subset\Dom(( -\gen)^{-\critic_{p}}), \qquad ( -\gen)^{-\critic_{p}}(\collectD_{p}^{\rm sp}) \subset\Dom(( -\gen)^{1/2})=\domain.
    \end{equation}
    \item\label{it.denserg} The space $\collectD_{p}$ is dense in both $L^{p}(\ambient,\meas)$ and $L^{p,1}(\ambient,\meas)$. If $p\in(1,\infty)$, the same assertions hold for $\collectD_{p}^{\rm sp}$. If the function being approximated also belongs to $L^{2}(\ambient,\meas)$, the approximating sequence in $\collectD_{p}$ can be chosen to converge in $L^{2}(\ambient,\meas)$ as well.
  \end{enumerate}
\end{lemma}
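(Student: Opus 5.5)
Part \ref{it.rieszrg} is pure spectral calculus, and I would do it first. For $u=P_{\varepsilon}h$ with $h\in L^{2}(\ambient,\meas)$, the multiplier $\lambda^{1/2-\critic_{p}}e^{-\varepsilon\lambda}$ is bounded on $[0,\infty)$, because $1/2-\critic_{p}$ is either $\ge0$, or negative but still $>-1/2$ (as $\critic_{p}<1$). So what needs checking is integrability near $\lambda=0$ when $\critic_{p}>1/2$, i.e.\ when $p<2$. This is where the factor $(I-P_{R})^{2}$ comes in: it gives $(1-e^{-R\lambda})^{2}\le R^{2}\lambda^{2}$ near $0$. Then $\lambda^{-\critic_{p}}(1-e^{-R\lambda})^{2}e^{-\varepsilon\lambda}$ is bounded, since $2-\critic_{p}>0$. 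Likewise
\[
\lambda^{1/2}\lambda^{-\critic_{p}}(1-e^{-R\lambda})^{2}e^{-\varepsilon\lambda}
\]
is bounded, since $5/2-\critic_{p}>0$ near $0$ and the exponential gives decay at $\infty$. This yields $\collectD_{p}^{\rm sp}\subset\Dom((-\gen)^{-\critic_{p}})$ and $(-\gen)^{-\critic_{p}}(\collectD_{p}^{\rm sp})\subset\Dom((-\gen)^{1/2})=\domain$, the last equality by \cite[Theorem~1.3.1]{FOT11}. For $\collectD_{p}\subset\Dom((-\gen)^{1/2-\critic_{p}})$ when $\critic_{p}>1/2$, I would use $\Ker(\gen)=\{0\}$ only for well-definedness. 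The honest route is to note that $\lambda^{1/2-\critic_{p}}$ with exponent in $(-1/2,0)$ is unbounded at $0$, so $P_{\varepsilon}h$ with $h\in L^{2}\cap L^{p}$, $p<2$, must be handled using $h\in L^{1}$-type decay. Concretely, I would show $\int_{[0,1]}\lambda^{1-2\critic_{p}}\dif\langle\proj_{\lambda}h,h\rangle<\infty$ by writing $\lambda^{1-2\critic_{p}}=c\int_{0}^{\infty}s^{2\critic_{p}-2}e^{-s\lambda}\dif s$. This reduces the claim to $\int_{1}^{\infty}s^{2\critic_{p}-2}\langle P_{s}h,h\rangle\dif s<\infty$. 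Using \eqref{e.ultra} with $p$ and $p'$, together with the Lorentz/Hölder pairing for $h\in L^{p}\cap L^{p,1}$, gives $|\langle P_{s}h,h\rangle|\lesssim s^{-\dimf(2/p-1)/\dimw}$. The integral then converges because $2\critic_{p}-2-\dimf(2/p-1)/\dimw<-1$, which amounts to $2/p\cdot(1)-\ldots$ and reduces to $\dimf>1$. Since $\collectD_{p}^{\rm sp}\subset\collectD_{p}$ is trivial (expand $(I-P_{R})^{2}P_{\varepsilon}=P_{\varepsilon}-2P_{\varepsilon+R}+P_{\varepsilon+2R}$), this completes part \ref{it.rieszrg}.

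For part \ref{it.denserg}, take $f\in L^{p}$ (or $L^{p,1}$) and approximate it first by $h$ which is bounded with compact support; such $h$ lie in $L^{2}\cap L^{p}\cap L^{p,1}$ and are dense in both spaces for $p<\infty$, and also in $L^{2}$ simultaneously when $f\in L^{2}$. Then $P_{\varepsilon}h\to h$ in $L^{p}$, $L^{2}$ and $L^{p,1}$ as $\varepsilon\downarrow0$. On $L^{p}$ and $L^{2}$ this is strong continuity of the Markov semigroup. For $L^{p,1}$ with $p>1$ it follows by interpolating the uniform bounds $\|P_{\varepsilon}\|_{L^{q}\to L^{q}}\le1$ between two exponents around $p$ (Lemma~\ref{l.loren}), combined with convergence on the dense class. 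For $p=1$, $L^{1,1}=L^{1}$.

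For $\collectD_{p}^{\rm sp}$ with $p\in(1,\infty)$, it remains to show $(I-P_{R})^{2}g\to g$ as $R\to\infty$ for $g=P_{\varepsilon}h$. Equivalently, $P_{R}g\to0$ and $P_{2R}g\to0$ in $L^{p}$ and $L^{p,1}$. The heat kernel bound \eqref{e.ultra} gives $\|P_{R}h\|_{L^{p}}\lesssim R^{-\dimf(1-1/p)/\dimw}\|h\|_{L^{1}}\to0$ for $h\in L^{1}\cap L^{p}$, which uses $p>1$ and explains why $p=1$ is excluded. The Lorentz version follows from the same kernel bound applied at two exponents $q_{0}<p<q_{1}$, followed by real interpolation. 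Density of such $h$ gives the claim by a $3\varepsilon$ argument using the uniform boundedness of $P_{R}$.

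The main obstacle is the spectral integrability at $\lambda\to0$ for $\collectD_{p}\subset\Dom((-\gen)^{1/2-\critic_{p}})$ when $p<2$. I would resolve it via the subordination formula and the decay $\langle P_{s}h,h\rangle\lesssim s^{-\dimf(2/p-1)/\dimw}$, and I need to verify that the exponent arithmetic closes there.
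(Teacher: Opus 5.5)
Your proposal is correct and follows essentially the paper's route: the subordination formula $\lambda^{1-2\critic_{p}}=\Gamma(2\critic_{p}-1)^{-1}\int_{0}^{\infty}s^{2\critic_{p}-2}e^{-s\lambda}\dif s$ combined with $L^{p}\to L^{2}$ (equivalently $L^{p}\to L^{p'}$) ultracontractive decay when $p<2$; direct multiplier bounds via $(1-e^{-R\lambda})^{2}\le R^{2}\lambda^{2}$ for $\collectD_{p}^{\rm sp}$; and, for density, truncation, strong continuity, \eqref{e.lorinter}, and the decay $\|P_{R}h\|\lesssim R^{-\frac{\dimf}{\dimw}(1-1/p)}\|h\|_{L^{1}}$. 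Two slips should be fixed: the multiplier $\lambda^{1/2-\critic_{p}}e^{-\varepsilon\lambda}$ is not bounded near $0$ when $\critic_{p}>1/2$ (as you then acknowledge), and the exponent check actually gives $2\critic_{p}-2-\frac{\dimf}{\dimw}\bigl(\frac{2}{p}-1\bigr)=-1-\frac{2/p-1}{\dimw}$, so convergence as $s\to\infty$ follows from $p<2$, while $\dimf>1$ is what yields $2\critic_{p}-1=\frac{(2-p)(\dimf-1)}{p\dimw}>0$, needed for the subordination formula and for the $s\in(0,1)$ piece.
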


\begin{proof}
  For $h\in L^{2}(\ambient,\meas)\cap L^{p}(\ambient,\meas) \cap L^{p,1}(\ambient,\meas)$ and $\varepsilon,R\in(0,\infty)$,
  \begin{equation}
    (I-P_{R})^{2}P_{\varepsilon} h =P_{\varepsilon} h-2P_{R+\varepsilon}h+P_{2R+\varepsilon}h \in\collectD_{p}.
  \end{equation}
  Hence $\collectD_{p}^{\rm sp}\subset\collectD_{p}$. If $p\in(1,\infty)$, then we fix $1<r_{0}<p<r_{1}<\infty$ and $\theta\in(0,1)$ such that
  \begin{equation}\label{e.denspcrr}
    \frac{1}{p}=\frac{1-\theta}{r_{0}}+\frac{\theta}{r_{1}}.
  \end{equation}
  For $h=h_{0}+h_{1}$ with $h_{i}\in L^{r_{i}}(\ambient,\meas)$, $L ^{r _{i} }$-contractivity gives
  \begin{equation}
    \|P_{t}h_{0}\|_{L^{r_{0}}}+s\|P_{t}h_{1}\|_{L^{r_{1}}} \leq\|h_{0}\|_{L^{r_{0}}}+s\|h_{1}\|_{L^{r_{1}}},\qquad s,t>0.
  \end{equation}
  Taking the infimum over decompositions, and then using \eqref{e.lorinterproof}, yields
    \begin{align}
      \|P_{t}h\|_{L^{p,1}(\ambient,\meas)} \leq C_{p}\|h\|_{L^{p,1}(\ambient,\meas)},\ \text{ for all $h\in L^{p,1}(\ambient,\meas)\cap L^{2}(\ambient,\meas)$.}
    \end{align}
  The bound is uniform in $ \allowbreak t > 0 $. For $p=1$, this follows from $L^{1,1}(\ambient,\meas)=L^{1}(\ambient,\meas)$. In particular,
  \begin{equation}
    \collectD_{p}^{\rm sp}\subset\collectD_{p} \subset L^{2}(\ambient,\meas)\cap L^{p}(\ambient,\meas) \cap L^{p,1}(\ambient,\meas).
  \end{equation}

  \begin{enumerate}[label=\textup{(\arabic*)}, align=right, leftmargin=*, topsep=5pt, parsep=0pt, itemsep=2pt]
    \item[\ref{it.rieszrg}]    By Theorem \ref{t.dirichlet}, $\Ker(-\gen)=\{0\}$ and $\proj(\{0\})=0$. Fix $h\in L^{2}(\ambient,\meas)\cap L^{p}(\ambient,\meas) \cap L^{p,1}(\ambient,\meas)$ and $\varepsilon>0$.

    If $1\leq p<2$, by interpolating \eqref{e.ultra} with $L^{2}$-contractivity, we have
    \begin{equation}
      \|P_{t}h\|_{L^{2}(\ambient,\meas)} \leq C_{p} t^{-\frac{\dimf}{\dimw}\left(\frac{1}{p}-\frac{1}{2}\right)}\|h\|_{L^{p}(\ambient,\meas)}, \ \text{ for all } t\in(0,\infty).
    \end{equation}
    Note that
    \begin{equation}\label{e.lam2p-1}
      \lambda^{-2\critic_{p}+1} =\frac{1}{\Gamma(2\critic_{p}-1)}\int_{0}^{\infty} t^{2\critic_{p}-2}e^{-t\lambda}\dif t,\ \text{ for all $\lambda\in(0,\infty)$},
    \end{equation}
    Combining \eqref{e.lam2p-1} with Fubini's theorem and \eqref{e.spec}, we obtain
      \begin{align}
        &\phantom{\ \leq}\int_{[0,\infty)}\lambda^{-2\critic_{p}+1}e^{-2\varepsilon\lambda} \dif\langle\proj_{\lambda} h,h\rangle\overset{\eqref{e.lam2p-1},\eqref{e.spec}}{=}\frac{1}{\Gamma(2\critic_{p}-1)}\int_{0}^{\infty} t^{2\critic_{p}-2}\|P_{\varepsilon+t/2}h\|_{L^{2}(\ambient,\meas)}^{2} \dif t\\
        &\lesssim\|h\|_{L^{2}(\ambient,\meas)}^{2}\int_{0}^{1}t^{2\critic_{p}-2}\dif t +\|h\|_{L^{p}(\ambient,\meas)}^{2} \int_{1}^{\infty} t^{-1-( 2 \allowbreak/ p - \allowbreak1 )/ \dimw }\dif t<\infty.
      \end{align}
    Here $2\critic_{p}-1>0$ and $2/p-1>0$.
    Therefore
    \begin{equation}
      P_{\varepsilon} h\in\Dom((-\gen)^{1/2-\critic_{p}}),\ \text{ for all } p\in[1,2) \text{ and all }h\in L^{2}(\ambient,\meas)\cap L^{p}(\ambient,\meas).
    \end{equation}
    If $p=2$, then $1/2-\critic_{2}=0$ and the required inclusion is $P_{\varepsilon} h\in L^{2}(\ambient,\meas)$.

    If $p>2$, then $1/2-\critic_{p}>0$ and $\sup_{\lambda\in[0,\infty)}\lambda^{1/2-\critic_{p}}e^{-\varepsilon\lambda}<\infty$. Therefore,
      \begin{align}
        \|(-\gen)^{1/2-\critic_{p}}P_{\varepsilon}h\|_{L^{2}(\ambient,\meas)}^{2} &=\int_{(0,\infty)}\lambda^{1-2\critic_{p}}e^{-2\varepsilon\lambda} \dif\langle\proj_{\lambda}h,h\rangle\\
        &\leq\left(\frac{1-2\critic_{p}}{2e\varepsilon}\right)^{1-2\critic_{p}} \|h\|_{L^{2}(\ambient,\meas)}^{2}<\infty.
      \end{align}

    Next, let $g:=(I-P_{R})^{2}P_{\varepsilon} h$. Since $0<\critic_{p}<1$ and $0\leq1-e^{-R\lambda}\leq R\lambda$, we have
      \begin{align}
        \int_{(0,\infty)}\lambda^{-2\critic_{p}} \dif\langle\proj_{\lambda} g,g\rangle &=\int_{(0,\infty)}\lambda^{-2\critic_{p}} (1-e^{-R\lambda})^{4}e^{-2\varepsilon\lambda} \dif\langle\proj_{\lambda} h,h\rangle\\
        &\lesssim R^{4}\varepsilon^{-4+2\critic_{p}} \|h\|_{L^{2}(\ambient,\meas)}^{2}<\infty.
      \end{align}
    Hence $g\in\Dom((-\gen)^{-\critic_{p}})$ by \eqref{e.spec}. For $v:=(-\gen)^{-\critic_{p}}g\in L^{2}(\ambient,\meas)$, the spectral calculus now gives
      \begin{align}
        \int_{(0,\infty)}\lambda\dif\langle\proj_{\lambda} v,v\rangle &=\int_{(0,\infty)}\lambda^{1-2\critic_{p}} (1-e^{-R\lambda})^{4}e^{-2\varepsilon\lambda} \dif\langle\proj_{\lambda} h,h\rangle\\
        &\lesssim R^{4}\varepsilon^{-5+2\critic_{p}}\|h\|_{L^{2}(\ambient,\meas)}^{2}<\infty.
      \end{align}
    Thus $(-\gen)^{-\critic_{p}}g\in\Dom((-\gen)^{1/2})$.

    \item[\ref{it.denserg}] Let $Y$ be either $L^{p}(\ambient,\meas)$ or $L^{p,1}(\ambient,\meas)$, and fix $f\in Y$. Fix $o\in\ambient$ and define the bounded simple functions
    \begin{equation}
      h_{n}(x):=\one_{B(o,n)}(x)\operatorname{sgn}(f(x)) \min\set{n,2^{-n}\left\lfloor 2^{n}|f(x)|\right\rfloor}, \qquad n\in\bN.
    \end{equation}
    Then $|h_{n}-f|\leq|f|$ and $h_{n}\to f$ $\meas$-a.e. on $\ambient$. For $ \allowbreak f \in \allowbreak L ^{p}(\ambient,\meas) $, by the dominated convergence theorem, we have $\|h_{n}-f\|_{L^{p}(\ambient,\meas)}\to0$. For $f\in L^{p,1}(\ambient,\meas)$, one has
    \begin{equation}
      \lim_{n\to\infty}\lambda_{h_{n}-f}(\alpha)=0,\text{ and }0\leq\lambda_{h_{n}-f}(\alpha)\leq\lambda_{f}(\alpha),\ \text{ for all }\alpha\in(0,\infty),
    \end{equation}
    and
    \begin{equation}
      \lim_{n\to\infty}\|h_{n}-f\|_{L^{p,1}(\ambient,\meas)} =p\lim_{n\to\infty}\int_{0}^{\infty}\lambda_{h_{n}-f}(\alpha)^{1/p}\dif\alpha=0.
    \end{equation}
    Indeed, $ \allowbreak \meas ( \{ | f | > \alpha \} ) < \infty $ for $\alpha>0$, and $p\int_{0}^{\infty}\lambda_{f}(\alpha)^{1/p}\dif\alpha<\infty$. If $f\in L^{2}(\ambient,\meas)$ as well, the same pointwise bound gives $\|h_{n}-f\|_{L^{2}(\ambient,\meas)}\to0$. Thus
    \begin{equation}
      h_{n}\in L^{1}(\ambient,\meas)\cap L^{2}(\ambient,\meas) \cap L^{p}(\ambient,\meas)\cap L^{p,1}(\ambient,\meas),\ \text{ and }\ \lim_{n\to\infty}\|h_{n}-f\|_{Y}=0.
    \end{equation}

    Fix $n\in\bN$ and let $h=h_{n}$. Since $h\in L^{r}(\ambient,\meas)$ for every $r\in[1,\infty)$, by the strong continuity of the semigroup,
    \begin{equation}\label{e.sconterr}
      \lim_{\varepsilon\downarrow0} \|P_{\varepsilon} h-h\|_{L^{r}(\ambient,\meas)}=0, \ \text{ for all }h\in L^{r}(\ambient,\meas)\ \text{ and all } r\in[1,\infty).
    \end{equation}
    For $p\in(1,\infty)$, \eqref{e.lorinter} also gives
      \begin{align}
        \lim_{\varepsilon\downarrow0}\|P_{\varepsilon} h-h\|_{L^{p,1}(\ambient,\meas)}\overset{\eqref{e.lorinter}}{\leq} C_{p,r_{0},r_{1}} \lim_{\varepsilon\downarrow0} \|P_{\varepsilon} h-h\|_{L^{r_{0}}(\ambient,\meas)}^{1-\theta} \|P_{\varepsilon} h-h\|_{L^{r_{1}}(\ambient,\meas)}^{\theta}\overset{\eqref{e.sconterr}}{=}0.
      \end{align}
    Choose $\varepsilon_{n}\in(0,1/n)$ such that $f_{n}:=P_{\varepsilon_{n}}h_{n}\in\collectD_{p}$ and $\|f_{n}-h_{n}\|_{Y}\leq n^{-1}$. Then
    \begin{equation}
      \lim_{n\to\infty} \|f_{n}-f\|_{Y}\leq \lim_{n\to\infty}(\|f_{n}-h_{n}\|_{Y}+\|h_{n}-f\|_{Y})\leq \lim_{n\to\infty} (n^{-1}+\|h_{n}-f\|_{Y})=0.
    \end{equation}
    This proves density in each space $ \allowbreak Y\in\{L^{p}(\ambient,\meas),L^{p,1}(\ambient,\meas)\} $. If $f\in L^{2}(\ambient,\meas)\cap Y$, use the same $h_{n}$ and choose $\varepsilon_{n}\in(0,1/n)$ so that
    \begin{equation}
      \|P_{\varepsilon_{n}}h_{n}-h_{n}\|_{Y} +\|P_{\varepsilon_{n}}h_{n}-h_{n}\|_{L^{2}(\ambient,\meas)}\leq\frac{1}{n}.
    \end{equation}
    Then
      \begin{align}
        \|f_{n}-f\|_{Y}+\|f_{n}-f\|_{L^{2}(\ambient,\meas)} &\leq\frac{1}{n}+\|h_{n}-f\|_{Y} +\|h_{n}-f\|_{L^{2}(\ambient,\meas)}\to 0.
      \end{align}
    This is the simultaneous approximation asserted in the lemma.

    Suppose now that $p\in(1,\infty)$. Then
      \begin{align}
       &\phantom{\ \leq} \|P_{t}h\|_{L^{p,1}(\ambient,\meas)} \overset{\eqref{e.lorinter}}{\leq} C_{p,r_{0},r_{1}} \|P_{t}h\|_{L^{r_{0}}(\ambient,\meas)}^{1-\theta} \|P_{t}h\|_{L^{r_{1}}(\ambient,\meas)}^{\theta}\\
        &\overset{\eqref{e.ultra}}{\leq} C t^{-\frac{\dimf}{\dimw} ((1-\theta)(1-1/r_{0})+\theta(1-1/r_{1}))} \|h\|_{L^{1}(\ambient,\meas)}\overset{\eqref{e.denspcrr}}{=}C t^{-\frac{\dimf}{\dimw}\left(1-\frac{1}{p}\right)}\|h\|_{L^{1}(\ambient,\meas)}.
      \end{align}
    Consequently, for either choice of $Y$, there exists a constant $C$ independent of $h$, $\varepsilon$ and $R$ such that
    \begin{equation}
      \|(I-P_{R})^{2}P_{\varepsilon} h-P_{\varepsilon} h\|_{Y} \leq2\|P_{R+\varepsilon}h\|_{Y}+\|P_{2R+\varepsilon}h\|_{Y}\leq3C R^{-\frac{\dimf}{\dimw}\left(1-\frac{1}{p}\right)}\|h\|_{L^{1}(\ambient,\meas)}.
    \end{equation}
    For this $h_{n}=h$ and $\varepsilon_{n}$, set
    \begin{equation}
        R_{n}:=\max\set{n, \bigl(3Cn\|h_{n}\|_{L^{1}(\ambient,\meas)}\bigr)^{\dimw / ( \dimf ( 1 - 1/ \allowbreak p ) ) }}, \ \text{ and }\ g_{n}:=(I-P_{R_{n}})^{2}P_{\varepsilon_{n}}h_{n}\in\collectD_{p}^{\rm sp}.
    \end{equation}
    Then as $n\to\infty$,
      \begin{align}
        \|g_{n}-f\|_{Y}\leq\|g_{n}-f_{n}\|_{Y}+\|f_{n}-h_{n}\|_{Y}+\|h_{n}-f\|_{Y}\leq\frac{2}{n}+\|h_{n}-f\|_{Y}\to0.
      \end{align}
    This proves the density of $\collectD_{p}^{\rm sp}$ for $p\in(1,\infty)$.\qedhere
  \end{enumerate}
\end{proof}
\begin{remark}
 Note that
  \[
   \collectD_{1}^{\mathrm{sp}}\subset  \Sett{h\in L^{1}(\ambient,\meas)}{\int_{\ambient}h\dif\meas=0}
  \]
  and hence, $\collectD_{1}^{\mathrm{sp}}$ is not dense in $L^{1}(\ambient,\meas)$. Indeed, symmetry and conservativity give, for $g\in L^{1}(\ambient,\meas)$ and $t>0$, $\int_{\ambient}P_{t}g\dif\meas=\int_{\ambient}g\dif\meas$.
  Thus, for $R,\varepsilon>0$, $\int_{\ambient}(I-P_{R})^{2}P_{\varepsilon}g\dif\meas =(1-2+1)\int_{\ambient}g\dif\meas=0$.
  The subspace $\Sett{h\in L^{1}(\ambient,\meas)}{\int_{\ambient}h\dif\meas=0}$ is closed since $\left|\int_{\ambient}h\dif\meas\right|\leq\|h\|_{L^{1}(\ambient,\meas)}$, and is  proper because $\one_{B}\in L^{1}(\ambient,\meas)$ has nonzero integral whenever $0<\meas(B)<\infty$.
\end{remark}
\begin{lemma}\label{l.densg}
  Fix $p\in(1,\infty)$. Let $\eta\in L^{\infty}(\skeleton,\medm)$ and suppose that there exists $\collectI\subset\collectA$ such that $\#\collectI<\infty$, and
  \begin{equation}\label{e.etafinsupp}
    \eta=0\ \ \medm\text{-a.e. on } \skeleton\setminus\bigcup_{I\in\collectI}I.
  \end{equation}
  Then there exists a sequence $(f_{j})_{j\in\bN}\subset\core$ such that $\sup_{j\in\bN}\norm{\wgrad f_{j}}_{L^{\infty}(\skeleton,\medm)} \leq2\norm{\eta}_{L^{\infty}(\skeleton,\medm)}$, and $\lim_{j\to\infty}\wgrad f_{j}=\eta$ in $L^{p}(\skeleton,\medm)$ and in $L^{2}(\skeleton,\medm)$.
\end{lemma}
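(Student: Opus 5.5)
The plan is to build $f_{j}$ in two pieces: a primitive of a discretized $\eta$ inside a large cell, and a cutoff along rays to infinity that makes the function compactly supported. The cutoff contributes a gradient whose $L^{p}$ and $L^{2}$ norms tend to $0$. This is where $p>1$ will be used.

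\emph{Step 1: discretize $\eta$.} For $m$ larger than the levels of all $I\in\collectI$, let $\eta_{m}$ be constant on every $J\in\collectA_{m}$ with $J\subset\bigcup_{I\in\collectI}I$, equal to the average of $\eta$ over $J$, and zero elsewhere. On each $I$ this is exactly $\bE_{I}[\eta\mid\sF_{m}^{I}]$ from Definition~\ref{d.n0}. So the argument in the proof of \eqref{e.interfullconv} (approximation by continuous functions plus contractivity of conditional expectation) gives $\eta_{m}\to\eta$ in $L^{p}(\skeleton,\medm)$ and in $L^{2}(\skeleton,\medm)$, with $\|\eta_{m}\|_{L^{\infty}}\leq\|\eta\|_{L^{\infty}}$.

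\emph{Step 2: the primitive inside a big cell.} Fix a cell $Q\in\collectQ$ containing $\bigcup_{I\in\collectI}I$ and a base point $o\in Q$. Set $F_{m}(x):=\int_{[o,x]}\pm\eta_{m}\dif\medm$, where the sign on each arm follows the fixed orientation $\mathfrak{O}$, so that \eqref{e.oriented-fundamental} holds and $\wgrad F_{m}=\eta_{m}$. Because $(\ambient,\metric)$ is a real tree with unique arcs, $F_{m}$ is well defined. It is affine on level-$m$ arms and locally constant off $G_{m}$. It is also constant on each component of $\ambient\setminus Q$, with value $F_{m}(a)$ at the attaching vertex $a\in\cellatt Q\subset\cellvert{Q}$. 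Moreover $|F_{m}(a)|\leq K:=\|\eta\|_{L^{\infty}}\sum_{I\in\collectI}\medm(I)$, uniformly in $m$.

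\emph{Step 3: cut off along rays.} For each of the at most four vertices $a\in\cellatt Q$, choose a geodesic ray $\rho_{a}$ in the skeleton. It starts at $a$, leaves $Q$, and runs through centres and vertices of the increasing cells $3^{k}Q$; in particular it is a union of arms, and its initial segment of length $L$ consists of arms of level $\geq-N$ when $L=3^{N}\ell(Q)$. On the component $U_{a}$ of $\ambient\setminus Q$ attached at $a$, set
\begin{equation}
f(x):=F_{m}(a)\Bigl(1-\min\bigl\{1,\metric(a,\pi_{\rho_{a}}(x))/L\bigr\}\Bigr).
\end{equation}
Let $f:=F_{m}$ on $Q$. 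Then $f$ is continuous and compactly supported, vanishing off the $L$-neighbourhood of $Q$. It is piecewise affine on arms of level $\max(m,\cdot)$ along the ray, and locally constant elsewhere, since the projection onto the ray is constant on the hanging subtrees. Hence $f\in\core$.

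\emph{Step 4: estimates.} The gradient of $f$ is $\eta_{m}$ on $Q$ and has modulus $|F_{m}(a)|/L\leq K/L$ on the first length-$L$ segment of each $\rho_{a}$, and vanishes elsewhere. Taking $L\geq K/\|\eta\|_{L^{\infty}}$ gives $\|\wgrad f\|_{L^{\infty}}\leq 2\|\eta\|_{L^{\infty}}$; if $\eta=0$ take $f_{j}=0$. The error term satisfies $\|\wgrad f-\eta_{m}\|_{L^{q}(\skeleton,\medm)}^{q}\leq 4K^{q}L^{1-q}$ for $q\in\{p,2\}$, which tends to $0$ as $L\to\infty$ because $q>1$. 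A diagonal choice $m=j$, $L=L_{j}\to\infty$ then gives the sequence $(f_{j})$.

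The main obstacle is Step 3: a naive radial cutoff $1-\metric(a,x)/L$ spreads the gradient over the whole skeleton in the $L$-ball, whose length grows like $L^{\dimf}$. It then fails to converge in $L^{p}$ for $p\leq\dimf$. Projecting onto a single ray confines the gradient to length $L$. It remains to check carefully that this projection is compatible with the cell and arm structure, so that $f$ is genuinely in some $\core_{n}$. That should follow from choosing $L$ to be a power of $3$ times $\ell(Q)$, so that the ray segment ends at a vertex of the relevant level.
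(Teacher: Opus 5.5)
Your proof is correct, but it is organized differently from the paper's.

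The paper proceeds in three steps.
\begin{enumerate}
\item It integrates $\eta$ itself to a Lipschitz primitive $F$ with $\|F\|_{\sup}\leq\|\eta\|_{L^{1}(\skeleton,\medm)}$.
\item It multiplies $F$ by the cutoff $\chi_{k}$ of the big cell $Q_{k}=3^{k}\vicsek$. This cutoff is defined, like $\phi_{Q}$, through the projection onto the cross $\celltree{Q_{k}}$, so that $\chi_{k}=1$ on $Q_{k-1}$ and $|\wgrad\chi_{k}|\leq4\cellsize{Q_{k}}^{-1}\one_{\celltree{Q_{k}}}$.
\item Only then does it enter $\core$, by applying the interpolation $I_{m}$ to $\chi_{k}F$. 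This uses \eqref{e.interfullconv} in $\sobolev{p}(\ambient)$ and $\sobolev{2}(\ambient)$ and a diagonal choice of $(k_{j},m_{j})$. The $L^{\infty}$ bound comes from \eqref{e.condi}, since $\wgrad I_{m}u$ is an arm average of $\wgrad u$.
\end{enumerate}

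You reverse the order. You discretize $\eta$ first, so the primitive is already piecewise affine, and you build a cutoff that is itself in $\core$.

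The decisive mechanism is the same in both arguments. You cut off through the projection onto a one-dimensional subtree (your rays, the paper's cross), so the cutoff gradient lives on a set of length comparable to the scale. Its $L^{q}$ norm is then $O(L^{1/q-1})$, which is exactly where $p>1$ enters.

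Your version gives explicit approximants. It even gives $\|\wgrad f_{j}\|_{L^{\infty}}\leq\|\eta\|_{L^{\infty}}$, because the two pieces of the gradient have disjoint supports. The price is that you must check compatibility with arms and vertices by hand, which the paper gets for free from $I_{m}$.

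That check is cleanest with $Q=3^{j}\vicsek$, as your notation $3^{k}Q$ suggests.
\begin{itemize}
\item All four vertices of $Q$ are then attaching points.
\item Each $U_{a}$ contains the straight ray $\{tq_{i}:t\geq3^{j}\}$.
\item The level-$(-j)$ vertices on this ray are exactly the points $3^{j}kq_{i}$ with $k\in\bNN$.
\item So for $L=3^{N+j}$ both kinks of $f$ lie in $V_{-j}$, and $f\in\core_{n}$ for every $n\geq\max\{m,-j\}$.
\end{itemize}

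For a general cell, some components of $\ambient\setminus Q$ are bounded; for instance, the one containing $F_{12}(\vicsek)$ when $Q=F_{10}(\vicsek)$. On such a component no ray exists, and you simply keep $f\equiv F_{m}(a)$ there.
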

\begin{proof}
  If $\eta=0$, then we take $f_{j}=0$. Suppose $\eta\neq0$. Fix $o\in\skeleton$ and use the fixed orientation of the tree $(\ambient,\metric)$. Let $r_{o}(y):=\metric(o,y)$ and $\sigma(x;y):=\wgrad r_{o}(y)$ for $\medm$-almost every $y\in[o,x]$. Explicitly, let $\gamma_{j}:[0,L_{j}]\to\skeleton$ be an oriented arc-length chart, and let $s_{j}\in[0,L_{j}]$ be the unique minimizer of $s\mapsto\metric(o,\gamma_{j}(s))$. For $\gamma_{j}(s)\in[o,x]$, we have
\begin{equation}
\sigma(x;\gamma_{j}(s))=
\begin{cases}
1,&s>s_{j},\\
-1,&s<s_{j}.
\end{cases}
\end{equation}
Define
  \begin{equation}\label{e.primi}
    F(x) := \int_{[o,x]}\sigma(x;y)\eta(y)\dif\medm(y),\ \text{ for all } x\in\skeleton.
  \end{equation}
By our assumption, $\eta\in L^{1}(\skeleton,\medm)\cap L^{p}(\skeleton,\medm)\cap L^{2}(\skeleton,\medm)$. For $x,y\in\skeleton$, by the uniqueness of geodesics,
  \begin{equation}
    |F(x)-F(y)| \leq \int_{[x,y]}|\eta|\dif\medm \leq \|\eta\|_{L^{\infty}(\skeleton,\medm)}\metric(x,y).
  \end{equation}
  By the density of $\skeleton$ in $\ambient$, $F$ extends uniquely to a bounded Lipschitz function on $\ambient$, still denoted by $F$, and $\wgrad F=\eta$ $\medm$-a.e. on $\skeleton$ with $\|F\|_{\sup} \leq \|\eta\|_{L^{1}(\skeleton,\medm)}$. For $k\in\bNN$, denote $Q_{k}:=3^{k}\vicsek$ and define
  \begin{equation}\label{e.cutch}
    \chi_{k}(x) :=\one_{Q_{k}}(x)\min\left( 1, \frac{4\dist(\pi_{\celltree{Q_{k}}}(x),\cellvert{Q_{k}})}{\cellsize{Q_{k}}} \right),\ x\in\ambient.
  \end{equation}
  The $1$-Lipschitz projection of Lemma~\ref{l.projt} gives
  \begin{equation}\label{e.ChiLip}
    |\chi_{k}(x)-\chi_{k}(y)| \leq\frac{4}{\cellsize{Q_{k}}} \metric(\pi_{\celltree{Q_{k}}}(x),\pi_{\celltree{Q_{k}}}(y)) \leq\frac{4}{\cellsize{Q_{k}}}\metric(x,y), \ \text{ for all }x,y\in Q_{k}.
  \end{equation}
  If $x\in Q_{k}$ and $y\notin Q_{k}$, choose $a\in[x,y]\cap\cellatt Q_{k}\subset\cellvert{Q_{k}}$. Then
  \begin{equation}
    |\chi_{k}(x)-\chi_{k}(y)|=|\chi_{k}(x)-\chi_{k}(a)|\overset{\eqref{e.ChiLip}}{ \leq}\frac{4\metric(x,a)}{\cellsize{Q_{k}}} \leq\frac{4\metric(x,y)}{\cellsize{Q_{k}}}.
  \end{equation}
  Therefore
  \begin{equation}\label{e.chipt}
    \text{$\chi_{k}\in\Lip(\ambient,\metric)$ and }|\wgrad\chi_{k}| \leq 4\cellsize{Q_{k}}^{-1}\one_{\celltree{Q_{k}}} \ \ \medm\text{-a.e. on }\skeleton.
  \end{equation}
  Moreover, for $k\in\bN$, since $\dist(\pi_{\celltree{Q_{k}}}(x),\cellvert{Q_{k}}) \geq \frac{2\cellsize{Q_{k}}}{3}$ for all $x\in Q_{k-1}$, we know that $\chi_{k}=1$ on $Q_{k-1}$. Since $\medm(\celltree{Q_{k}})=4\cellsize{Q_{k}}$, for every $s\in(1,\infty)$,
    \begin{align}
      \|\wgrad\chi_{k}\|_{L^{s}(\skeleton,\medm)}^{s} \overset{\eqref{e.chipt}}{\leq} 4^{s}\cellsize{Q_{k}}^{-s}\medm(\celltree{Q_{k}}) = 4^{s+1}\cellsize{Q_{k}}^{1-s} \to 0,\ \text{ as $k\to\infty$}. \label{e.chigr}
    \end{align}
  Choose $k$ large enough that $ \allowbreak\bigcup _{I\in\collectI} I \subset Q_{k-1}$. Then $(\chi_{k}-1)\eta=0$ $\medm$-a.e. The product rule hence gives $\wgrad(\chi_{k}F)-\eta = (\chi_{k}-1)\eta+F\wgrad\chi_{k}= F\wgrad\chi_{k}$. For $s=p$ and $s=2$, \eqref{e.chigr} gives
  \begin{equation}
    \|\wgrad(\chi_{k}F)-\eta\|_{L^{s}(\skeleton,\medm)} \overset{\eqref{e.chigr}}{\leq}4^{1+1/s}\|F\|_{\sup}\cellsize{Q_{k}}^{1/s-1} \to 0,\ \text{ as $k\to\infty$}.
  \end{equation}
  Since $p>1$, both exponents $1/s-1$ for $s=p$ and $s=2$ are negative. Hence
  \begin{equation}\label{e.cutprimconv}
    \lim_{k\to\infty}\wgrad(\chi_{k}F)= \eta \ \text{in } L^{p}(\skeleton,\medm)\text{ and in } L^{2}(\skeleton,\medm).
  \end{equation}
  For fixed $k$, the function $\chi_{k}F$ is compactly supported, belongs to $\abscon(\ambient,\metric)$, and $\wgrad(\chi_{k}F)\in L^{p}(\skeleton,\medm)\cap L^{2}(\skeleton,\medm)$. By Lemma~\ref{l.inter} and \eqref{e.interfullconv}, applied with exponents $p$ and $2$, we have $\lim_{m\to\infty}I_{m}(\chi_{k}F)=\chi_{k}F$ in $\sobolev{p}(\ambient)$ and in $\sobolev{2}(\ambient)$. Choose $k _{j} \allowbreak > k _{j-1} $ and $ \allowbreak m _{j} > m _{j-1} $ so that
  \begin{equation}\label{e.cutpridiag}
    \sum_{s\in\{p,2\}} \|\wgrad I_{m_{j}}(\chi_{k_{j}}F)-\wgrad(\chi_{k_{j}}F)\|_{L^{s}(\skeleton,\medm)} \leq j^{-1},\ \text{ and }\ \sum_{s\in\{p,2\}} \|\eta-\wgrad(\chi_{k_{j}}F)\|_{L^{s}(\skeleton,\medm)} \leq j^{-1}.
  \end{equation}
  For $ \allowbreak f _{j} : = I _{m_{j}} ( \allowbreak \chi _{k_{j}} F ) \in \core $, the triangle inequality gives
  \begin{equation}\label{e.cutpridg1}
    \sum_{s\in\{p,2\}}\|\wgrad f_{j}-\eta\|_{L^{s}(\skeleton,\medm)} \overset{\eqref{e.cutpridiag}}{\leq} 2j^{-1}\to 0,\ \text{ as $j\to\infty$}.
  \end{equation}
  For each arm $J\in\collectA_{m}$, by \eqref{e.condi},
  \begin{align}
   &\phantom{\ \leq} \restr{\abs{\wgrad I_{m}(\chi_{k}F)}}{J} =\left|\frac{1}{\medm(J)}\int_{J}\wgrad(\chi_{k}F)\dif\medm\right| \leq\|\wgrad(\chi_{k}F)\|_{L^{\infty}(\skeleton,\medm)}\\
    &\leq\norm{\eta}_{L^{\infty}(\skeleton,\medm)} +4\norm{F}_{\sup}\cellsize{Q_{k}}^{-1}\leq  2\norm{\eta}_{L^{\infty}(\skeleton,\medm)},\ \text{  when $k$ is sufficiently large}.\label{e.cutpridg2}
  \end{align}
 Therefore, \eqref{e.cutpridg1} and \eqref{e.cutpridg2} together give $\sup_{j\in\bN}\|\wgrad f_{j}\|_{L^{\infty}(\skeleton,\medm)} \leq2\|\eta\|_{L^{\infty}(\skeleton,\medm)}$. This completes the proof.
\end{proof}
\subsection{Failure of weak type \texorpdfstring{$(1,1)$}{(1,1)}}

Recall that $\critic_{1}=1-1/\dimw=\dimf/\dimw$, and since $L^{1,1}(\ambient,\meas)=L^{1}(\ambient,\meas)$, we have
\begin{equation}
  \collectD^{\mathrm{sp}}_{1} :=\operatorname{span}\Sett{(I-P_{R})^{2}P_{\varepsilon}f} {R,\varepsilon\in(0,\infty),\ f\in L^{1}(\ambient,\meas)\cap L^{2}(\ambient,\meas)}.
\end{equation}
\begin{lemma}\label{l.endpoint-gate}
  Let $J$ be an open cable segment, let $b$ be an endpoint of $\ol {J} $, and, for $z\in J$, let $C_{z}$ be the component of $\ambient\setminus\{z\}$ containing $b$. Suppose that $\ol{C_{z}}$ is compact for every $z\in J$. There is a sign $\sigma_{J}\in\{-1,1\}$, determined by the direction of $b$ and the fixed orientation of $J$, such that, for every $u\in\Dom(-\gen)$,
  \begin{equation}\label{e.endpoint-gate}
    \sigma_{J}\wgrad u(z)=\int_{C_{z}}-\gen u\dif\meas, \qquad \medm\text{-a.e. }z\in J.
  \end{equation}
\end{lemma}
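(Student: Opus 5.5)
The plan is to test the weak formulation $\form(u,\phi)=\langle -\gen u,\phi\rangle_{L^{2}(\ambient,\meas)}$, valid for $u\in\Dom(-\gen)$ and $\phi\in\domain$, against cut-off functions that are approximate indicators of $C_{z}$. Fix the orientation of $J$, and let $\sigma_{J}=1$ if $b$ is the endpoint $J^{-}$ (so moving from $z$ toward $b$ goes against the orientation) and $\sigma_{J}=-1$ otherwise. The sign will be fixed consistently at the end.

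\textbf{Step 1: test functions.} Fix $z_{1},z_{2}\in J$ with $z_{1}$ strictly between $z_{2}$ and $b$. Define a function $\phi$ on $\ambient$ by setting $\phi=1$ on $C_{z_{1}}$, $\phi=0$ on the component of $\ambient\setminus\{z_{2}\}$ not containing $b$, and letting $\phi$ be affine in arc length on the segment $[z_{1},z_{2}]$. Because $\ambient$ is a tree, every point off $[z_{1},z_{2}]$ lies in one of the two components just described. Since $J$ is an open cable segment, no branches attach inside $J$, so $\phi$ is well defined and Lipschitz. It is supported in $\ol{C_{z_{2}}}$, which is compact by assumption, so $\phi\in C_{c}(\ambient)\cap\domain$. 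Its weak gradient equals $\pm\metric(z_{1},z_{2})^{-1}\one_{[z_{1},z_{2}]}$, and the sign is determined by the orientation, that is, by $\sigma_{J}$.

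\textbf{Step 2: the identity.} Plugging $\phi$ into the weak formulation gives
\[
\frac{-\sigma_{J}}{\metric(z_{1},z_{2})}\int_{[z_{1},z_{2}]}\wgrad u\dif\medm=\int_{\ambient}(-\gen u)\,\phi\dif\meas .
\]
Now let $z_{1},z_{2}\to z$ from both sides. On the left, $u\in\domain$ gives $\wgrad u\in L^{1}_{\mathrm{loc}}$ on $J$, so the Lebesgue differentiation theorem on the interval $J$ makes the left side tend to $\wgrad u(z)$, up to the sign, for $\medm$-a.e.\ $z$.

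On the right, $\phi\to\one_{C_{z}}$ pointwise off the single point $z$, which is $\meas$-null. All these $\phi$ are bounded by $\one_{\ol{C_{z'}}}$ for a fixed $z'\in J$ on the far side of $z$, and this set has finite measure. Since $-\gen u\in L^{2}$, it is integrable on that set, so dominated convergence applies.

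\textbf{Step 3: the sign.} Adjust the definition of $\sigma_{J}$ to absorb the minus sign, so that the identity reads as in \eqref{e.endpoint-gate}. The sign depends only on whether $b=J^{+}$ or $b=J^{-}$.

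\textbf{Main obstacle.} I expect the delicate point to be Step 1: checking that the piecewise cut-off really belongs to $\domain$ and is continuous at $z_{1}$ and $z_{2}$. This relies on the tree structure, which guarantees that removing a point of an open cable splits $\ambient$ into exactly two components, together with the compactness hypothesis, which ensures $\phi\in L^{2}$ with compact support. The limit passage in Step 2 is routine by comparison.
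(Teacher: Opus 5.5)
Your strategy is the same as the paper's. You test $\form(u,\phi)=\langle -\gen u,\phi\rangle_{L^{2}(\ambient,\meas)}$ against Lipschitz approximations of $\one_{C_{z}}$ whose gradient is a normalized indicator of a short subsegment of $J$. Then you differentiate on the left and use dominated convergence on the right.

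\textbf{The gap is in Step~1.} Your construction rests on a false geometric premise: the Vicsek set is not a cable system, and no nondegenerate segment of the skeleton is free of branching. Every open subsegment of an arm contains the centre $\cellctr{Q}$ of some small cell $Q$ lying along it. The two arms of $\celltree{Q}$ perpendicular to the segment carry corner subcells of $Q$ of measure $\meas(Q)/5$ each. For example, on $[0,1]\subset\vicsek$, the point $2/3=F_{1}(q_{0})$ has $F_{12}(\vicsek)$ and $F_{14}(\vicsek)$ hanging off it.

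Now take $x$ in a side branch attached at $g\in(z_{1},z_{2})$. Then $z_{1}\in[x,b]$ and $z_{2}\notin[x,b]$. So $x$ is neither in $C_{z_{1}}$ nor in a component of $\ambient\setminus\{z_{2}\}$ avoiding $b$. Consequently:
\begin{itemize}
\item your $\phi$ is undefined on a set of positive $\meas$-measure, so it is not an element of $\domain$;
\item the claim ``every point off $[z_{1},z_{2}]$ lies in one of the two components'' is false;
\item the claim that removing a point of an open cable leaves exactly two components fails at the dense, countable set of branch points.
\end{itemize}

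\textbf{The repair.} Set $\phi:=F\circ\pi_{[z_{1},z_{2}]}$, where $F$ is affine on $[z_{1},z_{2}]$ with $F(z_{1})=1$ and $F(z_{2})=0$, and $\pi_{[z_{1},z_{2}]}$ is the nearest-point projection of Lemma~\ref{l.projt}. Then $\phi$ is constant on each side branch, Lipschitz, and supported in $\ol{C_{z_{2}}}$, with $\wgrad\phi=\pm\metric(z_{1},z_{2})^{-1}\one_{[z_{1},z_{2}]}$ $\medm$-a.e. This is the paper's $\eta_{z,r}=\metric(z,\pi_{I_{z,r}}(\cdot))/r$, except that the paper uses the one-sided window $I_{z,r}=[z,z_{r}]\subset[z,b]$.

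That choice of window matters in your Step~2. With a two-sided window, $\phi=F(z)\in(0,1)$ on the side branches attached at $z$ itself. These have positive $\meas$-measure when $z$ is a branch point, so $\phi\to\one_{C_{z}}$ fails there, and $F(z)$ need not even converge. You must discard the countable, hence $\medm$-null, set of branch points; after that your limit argument goes through. In the paper's version $\eta_{z,r}$ vanishes on those branches, so $\eta_{z,r}\to\one_{C_{z}}$ at every point. The one-sided quotient $(u(z_{r})-u(z))/r$ then converges at every differentiability point of $u$ along $J$.
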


\begin{proof}
  Fix $z\in J$ and $r\in(0,\metric(z,b))$, and let $z_{r}\in[z,b]$ satisfy $\metric(z,z_{r})=r$. Set $I_{z,r}:=[z,z_{r}]$ and
  \begin{equation}
    \eta_{z,r}(x):=\frac{\metric(z,\pi_{I_{z,r}}(x))}{r}, \  \text{ for } x\in\ambient.
  \end{equation}
  Here $\pi_{I_{z,r}}$ is the projection defined in Lemma~\ref{l.projt}. It is $1$-Lipschitz and is constant on each component of $\ambient\setminus I_{z,r}$. Thus $\eta_{z,r}$ is Lipschitz, vanishes outside $C_{z}$, and has compact support in $\ol{C_{z}}$. The fixed choice of orientation gives $\wgrad \eta_{z,r}=\sigma _{J} r^{-1}\one_{I_{z,r}}$ $\medm$-a.e. on $\skeleton$, where $\sigma_{J}=1$ if the direction from $z$ to $b$ agrees with that orientation and $\sigma_{J}=-1$ otherwise. Therefore
  \begin{equation}
    0\leq\eta_{z,r}\leq1, \  \|\wgrad\eta_{z,r}\|_{L^{2}(\skeleton,\medm)}^{2}=r^{-1}, \ \text{ and } \eta_{z,r}\in\domain\cap C_{c}(\ambient).
  \end{equation}
  By the fundamental theorem of calculus, we have
  \begin{equation}\label{e.endpgt1}
    \int_{\ambient}(-\gen)u\,\eta_{z,r}\dif\meas=\form(u,\eta_{z,r}) =\int_{\skeleton}\wgrad u\,\wgrad\eta_{z,r}\dif\medm =\frac{u(z_{r})-u(z)}{r}.
  \end{equation}
  For $x\in C_{z}$, the geodesics $[z,x]$ and $[z,b]$ have a common initial segment of positive length; otherwise $x$ and $b$ would belong to distinct components of $\ambient\setminus\{z\}$. Thus $\lim_{r\downarrow0}\eta_{z,r}(x)=\one_{C_{z}}(x)$ for every $x\in\ambient$. Note that
  \begin{equation}
    |(-\gen)u \cdot \eta_{z,r}|\leq|(-\gen)u|\one_{\ol{C_{z}}},\ \text{ and }\ \int_{\ol{C_{z}}}|(-\gen)u|\dif\meas \leq\meas(\ol{C_{z}})^{1/2}\|(-\gen)u\|_{L^{2}(\ambient,\meas)}<\infty.
  \end{equation}
  Letting $r\downarrow 0$ in \eqref{e.endpgt1} and applying the dominated convergence theorem, we have
  \begin{equation}
    \int_{C_{z}}(-\gen)u\dif\meas =\lim_{r\downarrow0}\int_{\ambient}(-\gen)u\,\eta_{z,r}\dif\meas =\lim_{r\downarrow0}\frac{u(z_{r})-u(z)}{r} =\sigma_{J}\wgrad u(z) \text{ for $\medm$-a.e. $z\in J$.}
  \end{equation}
  This proves \eqref{e.endpoint-gate}.
\end{proof}

\begin{lemma}\label{l.flag}
  For $k\in\bNN$ and $w\in\{0,1,2\}^{k}$, let $Q_{w}:=F_{w}(\vicsek)$, $E_{k}:=\bigcup_{w\in\{0,1,2\}^{k}}Q_{w}$ and
  \begin{equation}
    J_{w}:=\Sett{z\in[F_{w3}(q_{0}),F_{w3}(q_{2})]}{\frac{\cellsize{Q_{w}}}{27}<\metric(F_{w3}(q_{0}),z)<\frac{2\cellsize{Q_{w}}}{27}}.
  \end{equation}
  Let $b_{w}\in \ol{J_{w}}$ such that $\metric(F_{w3}(q_{0}),b_{w})=\frac{2}{27}\cellsize{Q_{w}}$. Let $z\in J_{w}$ and let $C_{w,z}$ be the component of $\ambient\setminus\{z\}$ containing $b_{w}$; see Figure \ref{f.QandJ}. Then
  \begin{enumerate}[label=\textup{(\arabic*)}, align=right, leftmargin=*, topsep=5pt, parsep=0pt, itemsep=2pt]
    \item\label{it.flag1} $\cellatt F_{w3}(\vicsek)\subset\{F_{w3}(q_{1}),F_{w3}(q_{3})\}$.
    \item\label{it.flag2} $F_{w32}(\vicsek)\subset C_{w,z}\subset \ol{C_{w,z}}\subset F_{w3}(\vicsek)$.
    \item\label{it.flag3} $\{E_{k}\}_{k\in\bNN}$ is decreasing with $k$, and
    \begin{equation}\label{e.endpoint-branch-gap}
      \dist(\ol{C_{w,z}},E_{N})\geq\cellsize{Q_{w}}/27, \ \text{ for all } |w|<N,\  z\in J_{w} \text{ and } N\in\bN.
    \end{equation}
    \item\label{it.flag4} For $N\in\bN$, let $U_{N}:=\bigcup_{k=0}^{N-1}\ \bigcup_{w\in\{0,1,2\}^{k}}J_{w}$, then $\medm(U_{N})=\frac{N}{27}$.
    \item\label{it.flag5} There exist $R\in(1,\infty)$ and $c_{1}\in(0,\infty)$ such that, for each $N\in\bN$, there exists $\varepsilon_{N}\in(0,\infty)$ such that if we denote $g_{N}:=\frac{5^{N}}{4\cdot 3^{N}}(I-P_{R})^{2}P_{\varepsilon_{N}}\one_{E_{N}}\in\collectD^{\mathrm{sp}}_{1}$, then
    \begin{equation}\label{e.direct-p1-failure}
      \|g_{N}\|_{L^{1}(\ambient,\meas)}\leq1,\ \text{ while }\ \|\wgrad(-\gen)^{-\critic_{1}}g_{N}\|_{L^{1,\infty}(\skeleton,\medm)}\geq c_{1}N.
    \end{equation}
  \end{enumerate}
\end{lemma}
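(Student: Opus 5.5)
Items \ref{it.flag1}--\ref{it.flag4} are geometric bookkeeping, and item \ref{it.flag5} is where the work lies.

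\emph{Item \ref{it.flag1}.} The cell $F_{w3}(\vicsek)$ is the lower-left corner copy inside $Q_{w}$, so it meets the rest of $\ambient$ only through its vertices facing $\cellctr{Q_{w}}$. Its other vertices are either interior attachment-free tips, or they coincide with $F_{w}(q_{3})$ or $F_{w}(q_{1})$ side points. We check this by writing the address explicitly and using Proposition~\ref{p.nest}.

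\emph{Item \ref{it.flag2}.} The point $z$ lies on the arm from $F_{w3}(q_{0})$ toward $F_{w3}(q_{2})$, at distance between $\cellsize{Q_w}/27$ and $2\cellsize{Q_w}/27$. Hence the component of $\ambient\setminus\{z\}$ containing $b_{w}$ is contained in the branch of $F_{w3}(\vicsek)$ hanging beyond $z$. This branch avoids $\cellatt F_{w3}(\vicsek)$ by \ref{it.flag1}. It contains the whole subcell $F_{w32}(\vicsek)$, whose center lies on $[z,F_{w3}(q_{2})]$ beyond $b_{w}$.

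\emph{Item \ref{it.flag3}.} Since $3\notin\{0,1,2\}$, no $Q_{v}$ with $|v|=N$ enters $F_{w3}(\vicsek)$ except through its attachment vertices. The closure $\ol{C_{w,z}}$ stays at distance at least $\metric(z,F_{w3}(q_0))\geq \cellsize{Q_w}/27$ from those vertices, because every path to $E_N$ passes through $z$ and then $F_{w3}(q_0)$.

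\emph{Item \ref{it.flag4}.} Each $J_{w}$ has length $3^{-k}/27$, and there are $3^{k}$ words of length $k$. The $J_w$ are disjoint because they lie in distinct cells. Summing over $k<N$ gives $N/27$.

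\emph{Item \ref{it.flag5}.} We have $\meas(E_N)=(3/5)^N$, so $\|\one_{E_N}\|_{L^1}=(3/5)^N$, and the factor $5^N/(4\cdot3^N)$ together with the $L^1$-contractivity of $(I-P_R)^2P_\varepsilon$ (norm at most $4$) gives $\|g_N\|_{L^1}\leq1$. For the lower bound we write $u:=(-\gen)^{-\critic_1}g_N$. This lies in $\domain$ by Lemma~\ref{l.densp}, and in fact $u=(-\gen)\,w$ with $w:=(-\gen)^{-1-\critic_1}g_N\in\Dom(-\gen)$ after checking spectral integrability. We then apply Lemma~\ref{l.endpoint-gate} on $J_w$ with $b=b_w$, which gives
\[
|\wgrad u(z)| = \Bigl|\int_{C_{w,z}} u\dif\meas\Bigr| .
\]
This integral is rewritten through the kernel of $(-\gen)^{-\critic_1}(I-P_R)^2P_{\varepsilon}$, using the identity $\lambda^{-\critic_1}=\Gamma(\critic_1)^{-1}\int_0^\infty s^{\critic_1-1}e^{-s\lambda}\dif s$. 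It reduces to integrals of $p_{s}$ against the separated sets $C_{w,z}$ and $E_N$. By item \ref{it.flag3} and the heat kernel bounds \eqref{e.HKE}, the contribution of $p_s(x,y)$ with $x\in C_{w,z}$, $y\in E_N$ has size comparable to $\meas(C_{w,z})\meas(E_N)$ times the Riesz potential kernel $\metric^{\dimw\critic_1-\dimf}=\metric^{0}$. We need the signs and the $(I-P_R)^2$ differences, with a fixed large $R$, to produce a positive lower bound of order $\cellsize{Q_w}^{\dimf}\cdot\meas(E_N)\cdot\cellsize{Q_w}^{-\dimf}$ after normalization. The aim is to show $|\wgrad u|\gtrsim 1$ on each $J_w$ for $|w|<N$. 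Here $\varepsilon_N$ is chosen small so that $P_{\varepsilon_N}\one_{E_N}$ is essentially $\one_{E_N}$ on the relevant scales, and the extra factor $(5/3)^N$ against $\meas(E_N)$ yields order one.

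\emph{Conclusion and main obstacle.} Given $|\wgrad u|\gtrsim 1$ on $U_N$, an order-one bound combined with $\medm(U_N)=N/27$ only gives $\|\wgrad u\|_{L^{1,\infty}}\gtrsim N$ if the lower bound on $J_w$ is scale-adapted: it must be of order $3^{k}/N$-weighted or better. The main obstacle is exactly this sharp two-sided kernel estimate showing the gradient on $J_w$ is $\gtrsim 3^{|w|}$ times a uniform constant, with the $R$-differences not cancelling. I would first test this with $R$ large relative to $\cellsize{Q_\emptyset}^{\dimw}$, where $(I-P_R)^2\approx I$ locally.
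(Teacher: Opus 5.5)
Your outline of items (1)--(4) and your bound $\|g_{N}\|_{L^{1}(\ambient,\meas)}\leq1$ are fine. Item (5), however, has a genuine gap.

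First, you misapply Lemma~\ref{l.endpoint-gate}. For $u\in\Dom(-\gen)$ it gives $\sigma_{J}\wgrad u(z)=\int_{C_{z}}(-\gen)u\,\dif\meas$, not $\int_{C_{z}}u\,\dif\meas$. Applying it to $w=(-\gen)^{-1-\critic_{1}}g_{N}$ computes $\wgrad w$, not $\wgrad u$. The function $u=(-\gen)^{-\critic_{1}}g_{N}$ already lies in $\Dom(-\gen)$ because of the factors $(I-P_{R})^{2}$ and $P_{\varepsilon_{N}}$. So the quantity to bound is $\int_{C_{w,z}}(-\gen)^{1-\critic_{1}}g_{N}\,\dif\meas=\int_{C_{w,z}}(-\gen)^{1/\dimw}g_{N}\,\dif\meas$, which involves a \emph{positive} power. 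Your Riesz-potential heuristic therefore concerns the wrong operator; its kernel at this critical order, $\metric^{\dimw\critic_{1}-\dimf}=\metric^{0}$, is really a divergent logarithmic kernel tamed only by $(I-P_{R})^{2}$.

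Second, you never actually produce the lower bound. The missing idea is sign-definiteness together with localization at the single scale $\cellsize{Q_{w}}$. Set $f_{N}:=(5/3)^{N}\one_{E_{N}}$, which vanishes on $C_{w,z}$. Formula \eqref{e.Lpt2} with $\theta=1/\dimw$ and an integration by parts in $t$ give
\[
\int_{C_{w,z}}(-\gen)^{1/\dimw}P_{\varepsilon}f_{N}\,\dif\meas=-\frac{1}{\dimw\Gamma(\critic_{1})}\int_{0}^{\infty}t^{-1-1/\dimw}\int_{C_{w,z}}P_{t}f_{N}\,\dif\meas\,\dif t+O\bigl(C_{N}\varepsilon^{\critic_{1}}\bigr).
\]
The integrand here is nonnegative, so nothing can cancel. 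Restrict to the window $t\in[\cellsize{Q_{w}}^{\dimw},2\cellsize{Q_{w}}^{\dimw}]$. There the lower bound in \eqref{e.HKE}, the inclusion $F_{w32}(\vicsek)\subset C_{w,z}$ from item (2), and $\int_{Q_{w}}f_{N}\,\dif\meas=\cellsize{Q_{w}}$ give $\int_{C_{w,z}}P_{t}f_{N}\,\dif\meas\geq c\,\cellsize{Q_{w}}$. Over the same window, $\int t^{-1-1/\dimw}\,\dif t\asymp\cellsize{Q_{w}}^{-1}$. The product is a constant $c_{0}$ independent of $N$, $w$ and $z$. The $P_{R+\varepsilon}$ and $P_{2R+\varepsilon}$ terms are $O(R^{-1/\dimw})$ by \eqref{e.analy} with $p=1$ and $\|f_{N}\|_{L^{1}}=1$. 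Fixing $R$ large and then $\varepsilon_{N}$ small gives $|\wgrad(-\gen)^{-\critic_{1}}g_{N}|\geq c_{0}/8$ $\medm$-a.e.\ on $U_{N}$.

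Your closing paragraph then sets the wrong target. Since $\|h\|_{L^{1,\infty}(\skeleton,\medm)}\geq\alpha\,\medm(\{|h|>\alpha\})$ for every $\alpha>0$, a uniform bound $|h|\geq c_{0}/8$ on $U_{N}$ together with $\medm(U_{N})=N/27$ already gives $\|h\|_{L^{1,\infty}(\skeleton,\medm)}\geq c_{0}N/432$. No scale-adapted bound is needed. Moreover, the bound you propose to prove, $|\wgrad u|\gtrsim3^{|w|}$ on $J_{w}$, is false for large $|w|$. Combine the same representation with the heat kernel upper bound, the separation $\dist(\ol{C_{w,z}},E_{N})\geq\cellsize{Q_{w}}/27$ from item (3), and $\meas(C_{w,z})\lesssim\cellsize{Q_{w}}^{\dimf}$: this shows the gradient on $J_{w}$ is $O(1)$ uniformly in $w$. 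So the ``main obstacle'' as you formulate it cannot be overcome, whereas the order-one estimate above is exactly what the statement requires.
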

\begin{figure}
  \centering
  \subfigure{
    \begin{tikzpicture}[ x=1.23cm,y=1.23cm,font=\small, line cap=round,line join=round, retained/.style={draw=blue!70!black}, component/.style={draw=green!45!black}, interval/.style={draw=red!85!black,line width=2.2pt}, hull/.style={densely dashed,line width=.45pt}, pics/flag-tree/.style={code={
          \foreach \flaga/\flagb in {0/0,2/0,0/2,-2/0,0/-2}{
            \foreach \flagc/\flagd in {0/0,2/0,0/2,-2/0,0/-2}{
              \foreach \flage/\flagf in {0/0,2/0,0/2,-2/0,0/-2}{
                \pgfmathsetmacro{\flagcx}{\flaga/3+\flagc/9+\flage/27}
                \pgfmathsetmacro{\flagcy}{\flagb/3+\flagd/9+\flagf/27}
                \draw ({\flagcx-1/27},\flagcy)--({\flagcx+1/27},\flagcy) (\flagcx,{\flagcy-1/27})--(\flagcx,{\flagcy+1/27});
              }
            }
          }
      }}]
      \node at (0,2.85) {$Q_w=F_w(\vicsek)$};
      \begin{scope}[scale=2.4]
        \pic[transform shape,draw=black!25,line width=.3pt] {flag-tree};
        \draw[hull,black!60] (1,0)--(0,1)--(-1,0)--(0,-1)--cycle;
        \foreach \flagx/\flagy in {0/0,2/0,0/2}{
          \begin{scope}[ shift={({\flagx/3},{\flagy/3})},scale=1/3]
            \pic[transform shape,retained,line width=.5pt] {flag-tree};
            \draw[retained,hull] (1,0)--(0,1)--(-1,0)--(0,-1)--cycle;
          \end{scope}
        }
        \begin{scope}[shift={(-2/3,0)},scale=1/3]
          \pic[transform shape,draw=black!65,line width=.4pt] {flag-tree};
          \draw[hull,orange!85!black] (1,0)--(0,1)--(-1,0)--(0,-1)--cycle;
          \draw[interval] (0,1/9)--(0,2/9);
        \end{scope}
      \end{scope}
      \node[orange!85!black,anchor=east] at (-1.4,1.25) {$F_{w3}(\vicsek)$};
      \draw[orange!85!black,line width=.45pt] (-1.85,1.06)--(-1.65,0.62);
    \end{tikzpicture}
  }
  \subfigure{
    \begin{tikzpicture}[ x=1.23cm,y=1.23cm,font=\small, line cap=round,line join=round, retained/.style={draw=blue!70!black}, component/.style={draw=green!45!black}, interval/.style={draw=red!85!black,line width=2.2pt}, hull/.style={densely dashed,line width=.45pt}, pics/flag-tree/.style={code={
          \foreach \flaga/\flagb in {0/0,2/0,0/2,-2/0,0/-2}{
            \foreach \flagc/\flagd in {0/0,2/0,0/2,-2/0,0/-2}{
              \foreach \flage/\flagf in {0/0,2/0,0/2,-2/0,0/-2}{
                \pgfmathsetmacro{\flagcx}{\flaga/3+\flagc/9+\flage/27}
                \pgfmathsetmacro{\flagcy}{\flagb/3+\flagd/9+\flagf/27}
                \draw ({\flagcx-1/27},\flagcy)--({\flagcx+1/27},\flagcy) (\flagcx,{\flagcy-1/27})--(\flagcx,{\flagcy+1/27});
              }
            }
          }
      }}]
      \node at (0,2.85)  {$F_{w3}(\vicsek)$};
      \begin{scope}[scale=2.4]
        \pic[transform shape,draw=black!30,line width=.35pt] {flag-tree};
        \draw[hull,orange!85!black] (1,0)--(0,1)--(-1,0)--(0,-1)--cycle;
        \begin{scope}[shift={(0,2/3)},scale=1/3]
          \pic[transform shape,component,line width=.65pt] {flag-tree};
          \draw[component,hull] (1,0)--(0,1)--(-1,0)--(0,-1)--cycle;
        \end{scope}
        \begin{scope}
          \clip (-1/3,1/6) rectangle (1/3,1/3);
          \begin{scope}[scale=1/3]
            \pic[transform shape,component,line width=.65pt] {flag-tree};
          \end{scope}
        \end{scope}
        \draw[interval] (0,1/9)--(0,2/9);
        \fill (0,1/6) circle[radius=.55pt];
        \foreach \flagx in {-1,0,1} \fill (\flagx,0) circle[radius=.65pt];
      \end{scope}
      \node[anchor=north east] at (-2.42,-0.17) {$F_{w3}(q_3)$};
      \node[anchor=north,fill=white,inner sep=1pt] at (0,-0.32) {$F_{w3}(q_0)$};
      \node[anchor=north west] at (2.42,-0.17) {$F_{w3}(q_1)$};
      \node[green!45!black,anchor=west] at (1.05,1.8) {$F_{w32}(\vicsek)$};
      \draw[component,line width=.45pt] (0.98,1.7)--(0.48,1.68);
      \node[red!85!black,anchor=west] at (0.9,0.65) {$J_w$};
      \draw[red!85!black,line width=.45pt] (0.83,0.59)--(0.12,0.41);
    \end{tikzpicture}}
  \caption{The cell $Q_{w}$ and the segment $J_{w}$}
  \label{f.QandJ}
\end{figure}

\begin{proof}

  \begin{enumerate}[label=\textup{(\arabic*)}, align=right, leftmargin=*, topsep=5pt, parsep=0pt, itemsep=2pt]
    \item[\ref{it.flag1}] By \eqref{e.ifsss} and Proposition~\ref{p.nest},
    \begin{equation}\label{e.flag1}
      F_{w3}(\vicsek)\cap\bigcup_{j\in\{0,1,2,4\}}F_{wj}(\vicsek) =\{F_{w3}(q_{1})\},\ \text{ and }\ F_{w3}(\vicsek)\cap\cellvert{Q_{w}}=\{F_{w3}(q_{3})\}.
    \end{equation}
    Since $Q_{w}=\bigcup_{j=0}^{4}F_{wj}(\vicsek)$ and $\cellatt Q_{w}\subset\cellvert{Q_{w}}$, we obtain
      \begin{align}
        &\phantom{\ \leq}\cellatt F_{w3}(\vicsek) =F_{w3}(\vicsek)\cap\ol{\ambient\setminus F_{w3}(\vicsek)}\\
        &\subset \left(F_{w3}(\vicsek)\cap\bigcup_{j\in\{0,1,2,4\}}F_{wj}(\vicsek)\right) \cup\left(F_{w3}(\vicsek)\cap\cellatt Q_{w}\right)\subset\{F_{w3}(q_{1}),F_{w3}(q_{3})\}.
      \end{align}
    This proves \ref{it.flag1}.

    \item[\ref{it.flag2}] Fix $z\in J_{w}$. For $j\in\{1,3\}$, the horizontal and vertical cables through $F_{w3}(q_{0})$ give
    \begin{align}
        [b_{w},F_{w3}(q_{j})]&=[b_{w},F_{w3}(q_{0})]\cup[F_{w3}(q_{0}),F_{w3}(q_{j})],\\
        [b_{w},F_{w3}(q_{0})]\cap[F_{w3}(q_{0}),F_{w3}(q_{j})]&=\{F_{w3}(q_{0})\},\ \  \text{ and }\  z\in(b_{w},F_{w3}(q_{0})).
      \end{align}
    Let $y\in\ambient\setminus F_{w3}(\vicsek)$. The cell $F_{w3}(\vicsek)$ is closed and connected. By Lemma~\ref{l.projt}-\ref{it.proj3},
    \begin{equation}
      \begin{gathered}[c]
        [y,\pi_{F_{w3}(\vicsek)}(y))\cap F_{w3}(\vicsek)=\emptyset,\\
        \pi_{F_{w3}(\vicsek)}(y) \in F_{w3}(\vicsek)\cap\ol{\ambient\setminus F_{w3}(\vicsek)} =\cellatt F_{w3}(\vicsek),\\
        [b_{w},y] =[b_{w},\pi_{F_{w3}(\vicsek)}(y)] \cup[\pi_{F_{w3}(\vicsek)}(y),y].
      \end{gathered}
    \end{equation}
    By \ref{it.flag1}, $\pi_{F_{w3}(\vicsek)}(y)\in\{F_{w3}(q_{1}),F_{w3}(q_{3})\}$. Consequently, $[b_{w},y]=[b_{w},F_{w3}(q_{0})]\cup[F_{w3}(q_{0}),y]$ and $z\in[b_{w},y]$. By \eqref{eq:separation}, $y\notin C_{w,z}$. Thus $C_{w,z}\subset F_{w3}(\vicsek)$, and the closedness of $F_{w3}(\vicsek)$ gives $\ol{C_{w,z}}\subset F_{w3}(\vicsek)$. By \eqref{e.ifsss} and Proposition~\ref{p.nest},
    \begin{equation}
        F_{w32}(q_{4})=F_{w30}(q_{2}),\ [F_{w3}(q_{0}),F_{w32}(q_{4})]\subset F_{w30}(\vicsek),\ F_{w30}(\vicsek)\cap F_{w32}(\vicsek)=\{F_{w32}(q_{4})\}.
      \end{equation}
    Consequently,
    \begin{equation}
      \begin{gathered}[c]
        \metric(F_{w3}(q_{0}),F_{w32}(q_{4}))=\frac{\cellsize{Q_{w}}}{9} >\frac{2\cellsize{Q_{w}}}{27}=\metric(F_{w3}(q_{0}),b_{w}),\\
        [F_{w3}(q_{0}),F_{w32}(q_{4})]\cap F_{w32}(\vicsek)=\{F_{w32}(q_{4})\},\\
        z\in[F_{w3}(q_{0}),b_{w}),\qquad b_{w}\in[F_{w3}(q_{0}),F_{w32}(q_{4})].
      \end{gathered}
    \end{equation}
    Hence $F_{w32}(\vicsek)\cup[b_{w},F_{w32}(q_{4})]$ is a connected subset of $\ambient\setminus\{z\}$ containing $b_{w}$. By the definition of $C_{w,z}$,
    \begin{equation}
      F_{w32}(\vicsek)\subset C_{w,z} \subset\ol{C_{w,z}}\subset F_{w3}(\vicsek).
    \end{equation}

    \item[\ref{it.flag3}] For every $k\in\bNN$,
    \begin{equation}
      E_{k+1} =\bigcup_{w\in\{0,1,2\}^{k}}\ \bigcup_{j=0}^{2}F_{wj}(\vicsek) \subset\bigcup_{w\in\{0,1,2\}^{k}}Q_{w} =E_{k}.
    \end{equation}
    Therefore $\{E_{k}\}_{k\in\bNN}$ is decreasing with $k$. Fix $w\in\{0,1,2\}^{k}$. If $v\in\{0,1,2\}^{k}$ and $v\ne w$, Proposition~\ref{p.nest}-\ref{it.nest1} gives
    \begin{equation}
      F_{w3}(\vicsek)\cap Q_{v} \subset F_{w3}(\vicsek)\cap\cellvert{Q_{w}} \overset{\eqref{e.flag1}}{=}\{F_{w3}(q_{3})\}.
    \end{equation}
    Together with the first-level cell intersections used in \ref{it.flag1}, this gives
      \begin{align}
        F_{w3}(\vicsek)\cap E_{k+1} &\subset \left(F_{w3}(\vicsek)\cap\bigcup_{j=0}^{2}F_{wj}(\vicsek)\right) \cup\bigcup_{\substack{v\in\{0,1,2\}^{k}\\
        v\ne w}} (F_{w3}(\vicsek)\cap Q_{v})\\
        &\subset\{F_{w3}(q_{1}),F_{w3}(q_{3})\}.
      \end{align}

    Let $N>k$, $z\in J_{w}$ and $y\in E_{N}$. Since $E_{N}\subset E_{k+1}$, either $y\notin F_{w3}(\vicsek)$ or $y\in\{F_{w3}(q_{1}),F_{w3}(q_{3})\}$. By \ref{it.flag2},
    \begin{equation}
      [b_{w},y]=[b_{w},F_{w3}(q_{0})]\cup[F_{w3}(q_{0}),y],\ \text{ and }\ [b_{w},F_{w3}(q_{0})]\cap[F_{w3}(q_{0}),y]=\{F_{w3}(q_{0})\}.
    \end{equation}
    In particular, $z\in(b_{w},y)$, $y\notin C_{w,z}$ and $F_{w3}(q_{0})\in[z,y]$. For $x\in C_{w,z}$, the connected arc $[x,y]$ must contain $z$. Hence
      \begin{align}
        \metric(x,y) =\metric(x,z)+\metric(z,y)&=\metric(x,z)+\metric(z,F_{w3}(q_{0}))+\metric(F_{w3}(q_{0}),y)\\
        &\geq\metric(z,F_{w3}(q_{0}))>\frac{\cellsize{Q_{w}}}{27}.
      \end{align}
    By continuity, $\metric(x,y)\geq\metric(z,F_{w3}(q_{0}))$ also holds for $x\in\ol{C_{w,z}}$. Taking the infimum over $x\in\ol{C_{w,z}}$ and $y\in E_{N}$ gives
    \begin{equation}
      \dist(\ol{C_{w,z}},E_{N}) \geq\metric(z,F_{w3}(q_{0}))>\frac{\cellsize{Q_{w}}}{27},
    \end{equation}
    and this proves \eqref{e.endpoint-branch-gap}.

    \item[\ref{it.flag4}] For $w\in\{0,1,2\}^{k}$, we have $J_{w}\subset F_{w3}(\vicsek)\subset Q_{w}\subset E_{k}$. Moreover, $J_{w}\cap\{F_{w3}(q_{1}),F_{w3}(q_{3})\}=\emptyset$. By \ref{it.flag3}, $J_{w}\subset E_{k}\setminus E_{k+1}$. If $|v|>k$, then $J_{v}\subset E_{|v|}\subset E_{k+1}$, so $J_{w}\cap J_{v}=\emptyset$. If $|v|=k$ and $v\ne w$, Proposition~\ref{p.nest}-\ref{it.nest1} and $J_{w}\cap\cellvert{Q_{w}}=\emptyset$ give
    \begin{equation}
      J_{w}\cap J_{v} \subset J_{w}\cap(Q_{w}\cap Q_{v}) \subset J_{w}\cap\cellvert{Q_{w}} =\emptyset.
    \end{equation}
    Thus the intervals $J_{w}$ are pairwise disjoint. By the definition of the length measure,
    \begin{align}
      \medm(U_{N})& =\sum_{k=0}^{N-1} \sum_{w\in\{0,1,2\}^{k}}\medm(J_{w}) \\
      &=\sum_{k=0}^{N-1} \sum_{w\in\{0,1,2\}^{k}}\left(\frac{2\cellsize{Q_{w}}}{27}-\frac{\cellsize{Q_{w}}}{27}\right) =\frac{1}{27}\sum_{k=0}^{N-1}3^{k}3^{-k} =\frac{N}{27}.
    \end{align}

    \item[\ref{it.flag5}] Fix $N\in\bN$ and let $f_{N}:=(5/3)^{N}\one_{E_{N}}$. By Proposition~\ref{p.nest}, for $k\in\{0,\ldots,N\}$ and $w\in\{0,1,2\}^{k}$,
    \begin{equation}
      \begin{gathered}[c]
        \meas(E_{N})=3^{N}5^{-N},\qquad \meas(E_{N}\cap Q_{w})=3^{N-k}5^{-N},\\
        \|f_{N}\|_{L^{1}(\ambient,\meas)}=1,\qquad \|f_{N}\|_{L^{2}(\ambient,\meas)}^{2}=\left(\frac{5}{3}\right)^{N},\qquad \int_{Q_{w}}f_{N}\dif\meas=3^{-k}=\cellsize{Q_{w}}.
      \end{gathered}
    \end{equation}

    Let $|w|<N$ and $z\in J_{w}$. For $x\in F_{w32}(\vicsek)$ and $y\in Q_{w}$, one has $\metric(x,y)\leq\diam(\vicsek)\cellsize{Q_{w}}$. For $t\in[\cellsize{Q_{w}}^{\dimw},2\cellsize{Q_{w}}^{\dimw}]$, by \eqref{e.HKE}, we have for all $x\in F_{w32}(\vicsek)$ and all $y\in Q_{w}$ that
    \begin{equation}\label{e.endptHKE}
        p_{t}(x,y) \overset{\eqref{e.HKE}}{\geq} C_{1}^{-1}(2\cellsize{Q_{w}}^{\dimw})^{-\dimf/\dimw} \exp\left(-c_{3}\diam(\vicsek)^{\dimw/(\dimw-1)}\right)=c\cellsize{Q_{w}}^{-\dimf}.
    \end{equation}
    Since $\meas(F_{w32}(\vicsek))=9^{-\dimf}\cellsize{Q_{w}}^{\dimf}$ and $\int _{Q_{w}} f _{N} \dif \meas = \cellsize{Q_{w}} $, there exists a constant $c\in(0,1)$, independent of $N,w,z$, such that
      \begin{align}
        \int_{C_{w,z}}P_{t}f_{N}\dif\meas &\geq\int_{Q_{w}}f_{N}(y) \int_{F_{w32}(\vicsek)}p_{t}(x,y)\dif\meas(x)\dif\meas(y)\ \text{(by \ref{it.flag2})}\\
        &\overset{\eqref{e.endptHKE}}{\geq} c\cellsize{Q_{w}},\ \text{ for all $t\in[\cellsize{Q_{w}}^{\dimw},2\cellsize{Q_{w}}^{\dimw}]$}.\label{e.endpoint-mass-lower}
      \end{align}
    By \ref{it.flag3},
    \begin{equation}\label{e.endptH1}
      \dist(\ol{C_{w,z}},E_{N}) \geq\frac{\cellsize{Q_{w}}}{27}\geq3^{-N-2}, \qquad \meas(C_{w,z})\leq1.
    \end{equation}
    For $t>0$, \eqref{e.HKE} gives
      \begin{align}
        &0\leq\int_{C_{w,z}}P_{t}f_{N}\dif\meas\\ &\overset{\eqref{e.HKE},\eqref{e.endptH1}}{\leq} C t^{-\dimf/\dimw} \exp\left(-c\left(\frac{3^{-(N+2)\dimw}}{t}\right)^{1/(\dimw-1)}\right) \meas(C_{w,z})\|f_{N}\|_{L^{1}(\ambient,\meas)}\\ &\to 0\qquad\text{(as $t\downarrow0$)}.
      \end{align}
    Thus $\lim_{t\downarrow0}\int_{C_{w,z}}P_{t}f_{N}\dif\meas=0$. For any $0<a<b<\infty$, any $t\in[a,b]$ and any $ 0 \allowbreak < | h | < a / 2 $, the mean value theorem and \eqref{e.ddt} give that
    \begin{equation}
      \left|\frac{p_{t+h}(x,y)-p_{t}(x,y)}{h}\right|f_{N}(y) \leq C(a/2)^{-1-\dimf/\dimw}f_{N}(y), \ \text{ for all } (x,y)\in C_{w,z}\times E_{N}.
    \end{equation}
    Its integral over $C_{w,z}\times E_{N}$ is at most $ \allowbreak C ( a / 2 ) ^{-1-\dimf/\dimw} $, since $ \allowbreak \meas ( C _{w,z} ) \leq 1 $ and $\|f_{N}\|_{L^{1}(\ambient,\meas)}=1$. By the dominated convergence theorem, we have
    \begin{equation}
      \frac{\dif}{\dif t}\int_{C_{w,z}}P_{t}f_{N}\dif\meas =\int_{C_{w,z}}\int_{E_{N}}\frac{\dif}{\dif t}p_{t}(x,y)f_{N}(y) \dif\meas(y)\dif\meas(x),\ \text{ for }t\in(0,\infty).
    \end{equation}
    Moreover,
      \begin{align}
        &\phantom{\ \leq}\left|\frac{\dif}{\dif t}\int_{C_{w,z}}P_{t}f_{N}\dif\meas\right| \leq\int_{C_{w,z}}\int_{E_{N}} \left|\frac{\dif}{\dif t}p_{t}(x,y)\right| f_{N}(y)\dif\meas(y)\dif\meas(x)\\
        &\overset{\eqref{e.endpoint-branch-gap},\eqref{e.ddt},\eqref{e.endptH1}}{\leq} Ct^{-1-\dimf/\dimw} \exp\left(-c\left(\frac{3^{-(N+2)\dimw}}{t}\right)^{1/(\dimw-1)}\right) \leq C_{N}.\label{e.unbd111}
      \end{align}
    The constant $C_{N}$ in \eqref{e.unbd111} is chosen independently of $w,z,t,R,\varepsilon$. For $0<s<t$, the fundamental theorem of calculus gives
    \begin{equation}
      \left|\int_{C_{w,z}}(P_{t}-P_{s})f_{N}\dif\meas\right| \leq\int_{s}^{t}\left|\frac{\dif}{\dif u}\int_{C_{w,z}}P_{u}f_{N}\dif\meas\right|\dif u \overset{\eqref{e.unbd111}}{\leq} C_{N}(t-s).
    \end{equation}
Therefore, if we write $P_{0}:=I$, then
    \begin{equation}\label{e.unbd11+1}
      \left|\int_{C_{w,z}}(P_{t}-P_{s})f_{N}\dif\meas\right| \leq C_{N}|t-s|,\ \text{ for all } s,t\in[0,\infty)
    \end{equation}
    and
    \begin{equation}\label{e.unbd11+2}
      0\leq\int_{C_{w,z}}P_{t}f_{N}\dif\meas \leq\min(C_{N} t,1),\ \text{ for all } t\in[0,\infty).
    \end{equation}
    By \eqref{e.unbd11+2},
    \begin{equation}
        \int_{0}^{\infty}t^{-1-1/\dimw}\int_{C_{w,z}}P_{t}f_{N}\dif\meas\dif t\overset{\eqref{e.unbd11+2}}{\leq} C_{N}\int_{0}^{1}t^{-1/\dimw}\dif t +\int_{1}^{\infty}t^{-1-1/\dimw}\dif t<\infty.\label{e.unbd11+3}
    \end{equation}
    Since the lower bound \eqref{e.endpoint-mass-lower} is uniform in $N,w,z$, and
    \begin{equation}
      \cellsize{Q_{w}}\int_{\cellsize{Q_{w}}^{\dimw}}^{2\cellsize{Q_{w}}^{\dimw}} t^{-1-1/\dimw}\dif t =\dimw(1-2^{-1/\dimw}),
    \end{equation}
    there is a constant $c_{0}\in(0,\infty)$, independent of $N,w,z$, such that
    \begin{equation}\label{e.endpoint-positive-lower}
      \infty\overset{\eqref{e.unbd11+3}}{>}\frac{1}{\dimw\Gamma(\critic_{1})} \int_{0}^{\infty} t^{-1-\frac{1}{\dimw}} \int_{C_{w,z}}P_{t}f_{N}\dif\meas\dif t \overset{\eqref{e.endpoint-mass-lower}}{\geq} c\cellsize{Q_{w}} \int_{\cellsize{Q_{w}}^{\dimw}}^{2\cellsize{Q_{w}}^{\dimw}} t^{-1-\frac{1}{\dimw}}\dif t\geq c_{0}.
    \end{equation}

    Since $f_{N}$ and $\one_{C_{w,z}}$ belong to $L^{2}(\ambient,\meas)$, for $\varepsilon>0$, by the Cauchy--Schwarz inequality and \eqref{e.analy}
      \begin{align}
        &\phantom{\ \leq}\int_{0}^{\infty}t^{-1/\dimw} \left|\left\langle(-\gen)P_{\varepsilon+t}f_{N},\one_{C_{w,z}}\right\rangle_{L^{2}(\ambient,\meas)}\right|\dif t\overset{\eqref{e.analy}}{\lesssim} C\|f_{N}\|_{L^{2}(\ambient,\meas)}\meas(C_{w,z})^{1/2}<\infty.
      \end{align}
    Apply \eqref{e.Lpt2} with $\theta=1/\dimw$,
      \begin{align}
        \int_{C_{w,z}}(-\gen)^{1/\dimw}P_{\varepsilon} f_{N}\dif\meas &=-\frac{1}{\Gamma(\critic_{1})}\int_{0}^{\infty} t^{-1/\dimw} \frac{\dif}{\dif t} \left(\int_{C_{w,z}}P_{\varepsilon+t}f_{N}\dif\meas\right)\dif t\\
        \text{(integration by parts) }\ &=\frac{1}{\dimw\Gamma(\critic_{1})} \int_{0}^{\infty} t^{-1-1/\dimw} \int_{C_{w,z}}(P_{\varepsilon}-P_{\varepsilon+t})f_{N} \dif\meas\dif t;\label{e.unbd112}
      \end{align}
    here, the integration by parts is justified by
    \begin{equation}
      \left|t^{-\frac{1}{\dimw}} \int_{C_{w,z}}(P_{\varepsilon}-P_{\varepsilon+t})f_{N}\dif\meas\right| \overset{\eqref{e.unbd11+1},\eqref{e.unbd11+2}}{\leq} t^{-\frac{1}{\dimw}}\min(C_{N}t,2) \to 0 \quad\text{for }t\downarrow0\text{ or for }t\uparrow\infty.
    \end{equation}
    Since $f_{N}=0$ on $C_{w,z}$, \eqref{e.unbd11+1} gives
    \begin{equation}\label{e.unbd113}
      \left|\int_{C_{w,z}} (P_{\varepsilon}-P_{\varepsilon+t}+P_{t})f_{N}\dif\meas\right| \leq2C_{N}\min(t,\varepsilon),
    \end{equation}
    which, combined with \eqref{e.unbd112}, gives
      \begin{align}
        &\phantom{\ \leq}\left|\int_{C_{w,z}}(-\gen)^{1/\dimw}P_{\varepsilon} f_{N}\dif\meas +\frac{1}{\dimw\Gamma(\critic_{1})} \int_{0}^{\infty} t^{-1-1/\dimw} \int_{C_{w,z}}P_{t}f_{N}\dif\meas\dif t\right|\\
        &\leq C_{N}\int_{0}^{\infty} t^{-1-1/\dimw}\min(t,\varepsilon)\dif t = C_{N}\left(\critic_{1}^{-1}+\dimw\right)\varepsilon^{\critic_{1}}.\label{e.endpoint-regularization-error}
      \end{align}
    Since $\|f_{N}\|_{L^{1}(\ambient,\meas)}=1$, we have by the triangle inequality that
      \begin{align}
        &\phantom{\ \leq}\left|\int_{C_{w,z}}(-\gen)^{1/\dimw} (-2P_{R+\varepsilon}+P_{2R+\varepsilon})f_{N}\dif\meas\right|\\
        &\leq 2\|(-\gen)^{1/\dimw}P_{R+\varepsilon}f_{N}\|_{L^{1}(\ambient,\meas)} +\|(-\gen)^{1/\dimw}P_{2R+\varepsilon}f_{N}\|_{L^{1}(\ambient,\meas)} \overset{\eqref{e.analy}}{\leq} CR^{-1/\dimw}.
      \end{align}

    Since the closure $\ol{C_{w,z}}$ is compact by \ref{it.flag2}, Lemma~\ref{l.endpoint-gate} gives a sign $\sigma_{J_{w}}\in\{-1,1\}$ such that
      \begin{align}
        &\phantom{\ \leq}\left|\wgrad(-\gen)^{-\critic_{1}}(I-P_{R})^{2}P_{\varepsilon} f_{N}(z)\right|\geq- \sigma_{J_{w}}\wgrad(-\gen)^{-\critic_{1}} (I-P_{R})^{2}P_{\varepsilon} f_{N}(z)\\
        &\overset{\eqref{e.endpoint-gate}}{=}-\int_{C_{w,z}}(-\gen)^{1/\dimw} (P_{\varepsilon}-2P_{R+\varepsilon}+P_{2R+\varepsilon})f_{N}\dif\meas\\
        &\overset{\eqref{e.endpoint-positive-lower},\eqref{e.endpoint-regularization-error}}{\geq} c_{0}-C_{N}\varepsilon^{\critic_{1}}-CR^{-1/\dimw}, \qquad \medm\text{-a.e. }z\in J _{ w }, \quad | w | < N .\label{e.unbd114}
      \end{align}

    Choose $R>1$, independently of $N$, such that $CR^{-1/\dimw}\leq c_{0}/4$. For each $N\in\bN$, choose $\varepsilon_{N}>0$ such that $C_{N}\varepsilon_{N}^{\critic_{1}}\leq c_{0}/4$, and set $g_{N}:=\frac{1}{4}(I-P_{R})^{2}P_{\varepsilon_{N}}f_{N}\in\collectD^{\mathrm{sp}}_{1}$. By $L^{1}$-contractivity and the triangle inequality, $\|g_{N}\|_{L^{1}(\ambient,\meas)}\leq1$, and \eqref{e.unbd114} gives
    \begin{equation}\label{e.me>uN}
      \left|\wgrad(-\gen)^{-\critic_{1}}g_{N}(z)\right|\geq\frac{c_{0}}{8}, \qquad \medm\text{-a.e. }z\in U_{N}.
    \end{equation}
    Using \ref{it.flag4}, we conclude that
      \begin{align}
        \|\wgrad(-\gen)^{-\critic_{1}}g_{N}\|_{L^{1,\infty}(\skeleton,\medm)} &\geq\frac{c_{0}}{16} \medm\left(\Sett{z\in\skeleton} {\left|\wgrad(-\gen)^{-\critic_{1}}g_{N}(z)\right|>c_{0}/16}\right)\overset{\eqref{e.me>uN}}{\geq}\frac{c_{0}N}{432}.
      \end{align}
    This proves \eqref{e.direct-p1-failure}.
  \end{enumerate}
\end{proof}

\subsection{Proof of Theorem~\ref{t.dualc}}
\begin{proposition}\label{p.endpoint-restricted}
  For every $q\in(2,\infty)$, there exists $C_{q}\in(0,\infty)$ such that
  \begin{equation}\label{e.endpoint-restricted}
    \norm{(-\gen)^{\critic_{q}}f}_{L^{q}(\ambient,\meas)}^{q} \leq C_{q} \norm{\wgrad f}_{L^{\infty}(\skeleton,\medm)}^{q-2} \norm{\wgrad f}_{L^{2}(\skeleton,\medm)}^{2}, \ \text{for every }f\in\core.
  \end{equation}
\end{proposition}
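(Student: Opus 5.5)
The plan is to reduce \eqref{e.endpoint-restricted} to an interpolation between an $L^{2}$ bound and an $L^{\infty}$-type bound on a Littlewood--Paley-type representation of $(-\gen)^{\critic_{q}}f$. For $q>2$ we have $\critic_{q}<1/2$, and the target exponent balance is $\critic_{q}=\frac{2}{q}\cdot\frac12+(1-\frac{2}{q})\frac{1}{\dimw}$. The case $q=2$, with exponent $1/2$, is the energy identity $\|(-\gen)^{1/2}f\|_{2}^{2}=\|\wgrad f\|_{2}^{2}$. The case $q=\infty$, with exponent $1/\dimw$, should correspond to a bound $\|(-\gen)^{1/\dimw}f\|_{\infty}\lesssim\|\wgrad f\|_{L^{\infty}(\medm)}$ in a weak sense.

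I would write, as in \eqref{e.Lpt2},
\begin{equation}
(-\gen)^{\critic_{q}}f=\frac{1}{\Gamma(1-\critic_{q})}\int_{0}^{\infty}s^{-\critic_{q}}(-\gen)P_{s}f\dif s .
\end{equation}
For each $s$, I would pair $(-\gen)P_{s}f$ against a test function $h$ to get $\form(f,P_{s}h)=\int\wgrad f\,\wgrad P_{s}h\dif\medm$. This yields two pointwise-in-$s$ estimates:
\begin{equation}
|\langle(-\gen)P_{s}f,h\rangle|\le\|\wgrad f\|_{L^{\infty}}\|\wgrad P_{s}h\|_{L^{1}(\medm)}\lesssim s^{-\critic_{1}}\|\wgrad f\|_{\infty}\|h\|_{L^{1}},
\end{equation}
from \eqref{e.gradPt} with $p'=1$ (taken from \cite[Theorem~3.12]{BC24}, valid at $p'=1$), and $\|(-\gen)P_{s}f\|_{2}\lesssim s^{-1/2}\|\wgrad f\|_{2}$. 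The first gives $\|(-\gen)P_{s}f\|_{\infty}\lesssim s^{-\critic_{1}}\|\wgrad f\|_{\infty}$.

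Interpolating these at fixed $s$ via $\|g\|_{q}\le\|g\|_{\infty}^{1-2/q}\|g\|_{2}^{2/q}$ gives
\begin{equation}
\|(-\gen)P_{s}f\|_{q}\lesssim s^{-(1-\frac{2}{q})\critic_{1}-\frac1q}\,\|\wgrad f\|_{\infty}^{1-2/q}\|\wgrad f\|_{2}^{2/q},
\end{equation}
and the exponent equals $1-\critic_{q}$. Integrating against $s^{-\critic_{q}}$ then produces exactly $\int s^{-1}\dif s$, a logarithmic divergence at both ends.

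This is the main obstacle: the naive Minkowski integration fails precisely at the critical exponent. I would fix it with a real-interpolation / good-$\lambda$ argument instead of Minkowski. Split at a level-dependent time $\tau$: write $(-\gen)^{\critic_{q}}f=A_{\tau}+B_{\tau}$, with $A_{\tau}=\int_{0}^{\tau}$ and $B_{\tau}=\int_{\tau}^{\infty}$. Then
\begin{equation}
\|A_{\tau}\|_{2}\lesssim\tau^{1/2-\critic_{q}}\|\wgrad f\|_{2},\qquad \|B_{\tau}\|_{\infty}\lesssim\tau^{1/\dimw-\critic_{q}}\|\wgrad f\|_{\infty}.
\end{equation}
Both integrals converge because $\critic_{q}<1/2$ and $\critic_{q}>1/\dimw$. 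The $A_{\tau}$ bound is obtained spectrally: $\int_{0}^{\tau}s^{-\critic_{q}}\lambda e^{-s\lambda}\dif s\lesssim\lambda^{1/2}\tau^{1/2-\critic_{q}}$. The $B_{\tau}$ bound uses the $L^{\infty}$ estimate above, since $\int_{\tau}^{\infty}s^{-\critic_{q}-\critic_{1}}\dif s$ converges because $\critic_{q}+\critic_{1}>1$.

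For each $\lambda>0$, choose $\tau$ so that $\|B_{\tau}\|_{\infty}\le\lambda/2$. Chebyshev on $A_{\tau}$ then gives
\begin{equation}
\meas(|(-\gen)^{\critic_{q}}f|>\lambda)\lesssim\lambda^{-q}\|\wgrad f\|_{\infty}^{q-2}\|\wgrad f\|_{2}^{2},
\end{equation}
where the powers match by the exponent identity. This is only a weak-$L^{q}$ bound.

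To upgrade to strong $L^{q}$, I would run the argument on the sub-pieces $\int_{2^{k}}^{2^{k+1}}$ with almost-orthogonality. A cleaner route is a Marcinkiewicz-type sublinear interpolation: apply the weak estimate at exponents $q_{0}<q<q_{1}$, with the roles of $\|\wgrad f\|_{\infty}$ and $\|\wgrad f\|_{2}$ fixed. If the strong bound resists, I would instead use the layer-cake formula with the $\lambda$-dependent splitting inserted inside $\int\lambda^{q-1}\meas(\cdot>\lambda)\dif\lambda$, bounding $\|A_{\tau(\lambda)}\|_{2}^{2}$ spectrally. The integral $\int\lambda^{q-3}\tau(\lambda)^{1-2\critic_{q}}\lambda e^{-2s\lambda}\dots$ then becomes a spectral integral that can be summed, which yields the strong bound.
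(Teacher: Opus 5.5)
Your weak-type step is correct. The bound $\norm{(-\gen)P_{s}f}_{L^{\infty}(\ambient,\meas)}\lesssim s^{-\critic_{1}}\norm{\wgrad f}_{L^{\infty}(\skeleton,\medm)}$, obtained by duality from the $L^{1}$ gradient estimate of \cite[Theorem~3.12]{BC24}, is right, as are the bound $\norm{A_{\tau}}_{L^{2}}\lesssim\tau^{1/2-\critic_{q}}\norm{\wgrad f}_{L^{2}}$ and the identity $(1-2\critic_{q})/(\critic_{q}-1/\dimw)=q-2$. The gap is in the upgrade to the strong $L^{q}$ bound, which is the actual content of the proposition at the critical exponent.

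\emph{Marcinkiewicz route.} Marcinkiewicz interpolation is not available, because the operator $(-\gen)^{\critic_{q}}$ itself changes with $q$. For a fixed fractional order, your splitting gives a weak bound only at the single scale-invariant exponent. Bounds at other exponents would need $L^{s}$ control of $A_{\tau}$ for some $s\neq2$, which is a Riesz-type estimate you do not have.

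\emph{Dyadic route.} The almost-orthogonality option is too unspecified to check.

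\emph{Layer-cake route.} As written, it uses $\norm{A_{\tau(\alpha)}}_{L^{2}}^{2}\lesssim\tau(\alpha)^{1-2\critic_{q}}\norm{\wgrad f}_{L^{2}}^{2}$ (I write $\alpha$ for the level). This gives
\[
\int_{0}^{\infty}\alpha^{q-3}\tau(\alpha)^{1-2\critic_{q}}\dif\alpha\simeq\norm{\wgrad f}_{L^{\infty}}^{q-2}\int_{0}^{\infty}\frac{\dif\alpha}{\alpha}=\infty,
\]
which is exactly the logarithmic divergence you found with Minkowski.

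The missing idea is to keep the full multiplier and integrate in the level first. Write $A_{\tau}=m_{\tau}(-\gen)f$ with
\[
m_{\tau}(\lambda)=\Gamma(1-\critic_{q})^{-1}\int_{0}^{\tau}s^{-\critic_{q}}\lambda e^{-s\lambda}\dif s\lesssim\min\bigl(\lambda\tau^{1-\critic_{q}},\lambda^{\critic_{q}}\bigr).
\]
Set $K:=\norm{\wgrad f}_{L^{\infty}(\skeleton,\medm)}$ and $\tau(\alpha)=(cK/\alpha)^{1/(\critic_{q}-1/\dimw)}$. By Tonelli,
\[
\norm{(-\gen)^{\critic_{q}}f}_{L^{q}}^{q}\lesssim\int_{[0,\infty)}\Bigl(\int_{0}^{\infty}\alpha^{q-3}m_{\tau(\alpha)}(\lambda)^{2}\dif\alpha\Bigr)\dif\langle\proj_{\lambda}f,f\rangle .
\]
Split the inner integral at $\alpha_{*}=cK\lambda^{\critic_{q}-1/\dimw}$, where $\lambda\tau(\alpha_{*})=1$. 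Using $(\critic_{q}-1/\dimw)(q-2)+2\critic_{q}=1$, it is bounded by $CK^{q-2}\lambda$. Summing against the spectral measure gives $CK^{q-2}\norm{(-\gen)^{1/2}f}_{L^{2}}^{2}=CK^{q-2}\norm{\wgrad f}_{L^{2}}^{2}$. With this repair your argument is a valid alternative that works entirely at the level of spectral multipliers.

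The paper instead splits at a point-dependent time $T(x)$, with no weak-type step:
\begin{itemize}
\item It bounds $\int_{0}^{T}$ by Cauchy--Schwarz against the vertical square function $S_{f}(x)=(\int_{0}^{\infty}\abs{(-\gen)P_{t}f(x)}^{2}\dif t)^{1/2}$, which satisfies $\norm{S_{f}}_{L^{2}}^{2}=\frac12\norm{\wgrad f}_{L^{2}}^{2}$.
\item It bounds $\int_{T}^{\infty}$ by the same $L^{\infty}$ estimate you use.
\item Optimizing $T(x)$ gives $\abs{(-\gen)^{\critic_{q}}f(x)}\lesssim K^{1-2/q}S_{f}(x)^{2/q}$, and integrating the $q$-th power finishes.
\end{itemize}
This version needs the almost-everywhere representation of $(-\gen)^{\critic_{q}}f(x)$ as a time integral of $(-\gen)P_{t}f(x)$, which your repaired spectral argument avoids.
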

\begin{proof}
  Let $f\in\core$. By \eqref{e.critp}, if we set $ b:=\frac{1}{2}-\frac{1}{\dimw}$ and $\delta:=\frac{1}{2}-\critic_{q}$, then $0<\delta<b$ and $\delta/b=1-2/q$. For $t\in(0,\infty)$ and $h\in L^{1}(\ambient,\meas)\cap L^{2}(\ambient,\meas)$, \cite[Theorem~3.12]{BC24} gives
  \begin{equation}\label{e.endpoint-grad-one}
    \norm{\wgrad P_{t}h}_{L^{1}(\skeleton,\medm)} \leq C t^{-(1-1/\dimw)}\norm{h}_{L^{1}(\ambient,\meas)}.
  \end{equation}
  By H\"older's inequality,
    \begin{align}
      &\phantom{\ \leq}\left|\langle(-\gen)P_{t}f,h\rangle_{L^{2}(\ambient,\meas)}\right|=\abs{\form(f,P_{t}h)}\\
      &\overset{\eqref{e.formdef}}{\leq}\norm{\wgrad f}_{L^{\infty}(\skeleton,\medm)} \norm{\wgrad P_{t}h}_{L^{1}(\skeleton,\medm)}\overset{\eqref{e.endpoint-grad-one}}{\leq} C t^{-(1-1/\dimw)} \norm{\wgrad f}_{L^{\infty}(\skeleton,\medm)} \norm{h}_{L^{1}(\ambient,\meas)}.
    \end{align}
  Taking the supremum over such $h$ with $\norm{h}_{L^{1}(\ambient,\meas)}\leq1$, we obtain
  \begin{equation}\label{e.endpoint-AP-infty}
    \norm{(-\gen)P_{t}f}_{L^{\infty}(\ambient,\meas)} \leq C t^{-(1-1/\dimw)} \norm{\wgrad f}_{L^{\infty}(\skeleton,\medm)}.
  \end{equation}

  Choose $ \allowbreak n $ such that $f\in\core_{n}$. By \eqref{e.directLkernel},
  \begin{equation}
    (-\gen)P_{t}f(x)=\sum_{z\in Z_{f}}c_{f}(z)p_{t}(z,x), \qquad (t,x)\in(0,\infty)\times\ambient.
  \end{equation}
  It is jointly continuous by Theorem~\ref{t.dirichlet}-\ref{it.HKE}. Since $\meas$ has full support, \eqref{e.endpoint-AP-infty} holds for this representative at every $x$, for every $ t \allowbreak > 0 $. Define
  \begin{equation}
    S_{f}(x):=\left(\int_{0}^{\infty} |(-\gen)P_{t}f(x)|^{2}\dif t\right)^{1/2}, \ \text{ for } x\in\ambient.
  \end{equation}
  By Fubini's theorem and the spectral calculus,
  \begin{equation}\label{e.endpoint-square-energy}
    \norm{S_{f}}_{L^{2}(\ambient,\meas)}^{2} =\int_{[0,\infty)} \left(\int_{0}^{\infty}\lambda^{2}e^{-2t\lambda}\dif t\right) \dif\langle\proj_{\lambda} f,f\rangle=\frac{1}{2}\norm{(-\gen)^{1/2}f}_{L^{2}(\ambient,\meas)}^{2} =\frac{1}{2}\norm{\wgrad f}_{L^{2}(\skeleton,\medm)}^{2}.
  \end{equation}
  In particular, $S_{f}(x)<\infty$ for $\meas$-a.e. $x\in\ambient$.

  Since $\critic_{q}\in(0,1/2)$ and $f\in\core\subset\domain$, we have by Lemma \ref{l.sobol} that $f\in\Dom((-\gen)^{\critic_{q}})$. By the same proof as in \eqref{e.Lpt2}, we have
  \begin{equation}\label{e.endpoint-fractional-integral}
    \langle(-\gen)^{\critic_{q}}f,h\rangle_{L^{2}(\ambient,\meas)} =\frac{1}{\Gamma(1-\critic_{q})} \int_{0}^{\infty} t^{-\critic_{q}}\langle(-\gen)P_{t}f,h\rangle_{L^{2}(\ambient,\meas)}\dif t \quad\text{for all }h\in L^{2}(\ambient,\meas).
  \end{equation}
  Indeed, the spectral calculus gives
  \begin{equation}\label{e.-genp<}
    \norm{(-\gen)P_{t}f}_{L^{2}(\ambient,\meas)} \lesssim
    \begin{cases}
      t^{-1/2}\norm{(-\gen)^{1/2}f}_{L^{2}(\ambient,\meas)},&t\in(0,1],\\
      t^{-1}\norm{f}_{L^{2}(\ambient,\meas)},&t\in[1,\infty).
    \end{cases}
  \end{equation}
  Consequently,
    \begin{align}
      &\phantom{\ \leq}\int_{0}^{\infty}t^{-\critic_{q}}\|(-\gen)P_{t}f\|_{L^{2}(\ambient,\meas)}\dif t\\
      &\overset{\eqref{e.-genp<}}{\leq} C\|(-\gen)^{1/2}f\|_{L^{2}(\ambient,\meas)}\int_{0}^{1}t^{-\critic_{q}-1/2}\dif t +C\|f\|_{L^{2}(\ambient,\meas)}\int_{1}^{\infty}t^{-\critic_{q}-1}\dif t<\infty.
    \end{align}
  This proves the absolute integrability that is used in the proof of \eqref{e.endpoint-fractional-integral}.

  For every $ \allowbreak h \in L ^{2}(\ambient,\meas) $, for each $x$ with $S _{f} ( x \allowbreak ) < \infty $ and every $T \allowbreak \in (0,\infty)$, by the Cauchy--Schwarz inequality on the integration over $(0,T)$, and \eqref{e.endpoint-AP-infty} on the integration over $(T,\infty)$, we have
    \begin{align}
      \int_{0}^{\infty} t^{-\critic_{q}}|(-\gen)P_{t}f(x)|\dif t &\leq S_{f}(x)\left(\int_{0}^{T}t^{-2\critic_{q}}\dif t\right)^{1/2} +C\norm{\wgrad f}_{L^{\infty}(\skeleton,\medm)} \int_{T}^{\infty} t^{-\critic_{q}-1+1/\dimw}\dif t\\
      &\leq C_{q}\left(T^{\frac{1}{2}-\critic_{q}} S_{f}(x) +T^{\frac{1}{\dimw}-\critic_{q}}\norm{\wgrad f}_{L^{\infty}(\skeleton,\medm)}\right).\label{e.endpoint-integral-split}
    \end{align}
  Therefore, by \eqref{e.endpoint-fractional-integral}, we have
  \begin{equation}\label{e.endpoint-integral-split1}
    (-\gen)^{\critic_{q}}f(x)=\frac{1}{\Gamma(1-\critic_{q})} \int_{0}^{\infty} t^{-\critic_{q}}(-\gen)P_{t}f(x)\dif t,\ \text{ for $\meas$-a.e. $x\in\ambient$.}
  \end{equation}
  If $\norm{\wgrad f}_{L^{\infty}(\skeleton,\medm)}>0$ and $S_{f}(x)>0$, choose $ T:=\left(\frac{\norm{\wgrad f}_{L^{\infty}(\skeleton,\medm)}}{S_{f}(x)}\right)^{(\frac{1}{2}-\frac{1}{\dimw})^{-1}}$. For this choice, $T^{b}=\norm{\wgrad f}_{L^{\infty}(\skeleton,\medm)}/S_{f}(x)$ and
    \begin{align}
      T^{1/2-\critic_{q}}S_{f}(x) &=\norm{\wgrad f}_{L^{\infty}(\skeleton,\medm)}^{\delta/b}S_{f}(x)^{1-\delta/b},\\
      T^{1/\dimw-\critic_{q}}\norm{\wgrad f}_{L^{\infty}(\skeleton,\medm)} &=T^{\delta-b}\norm{\wgrad f}_{L^{\infty}(\skeleton,\medm)} =\norm{\wgrad f}_{L^{\infty}(\skeleton,\medm)}^{\delta/b}S_{f}(x)^{1-\delta/b},
    \end{align}
 which, combined with \eqref{e.endpoint-integral-split}, \eqref{e.endpoint-integral-split1} and the fact that $\delta/b=1-2/q$, gives
  \begin{equation}\label{e.endpoint-pointwise}
    |(-\gen)^{\critic_{q}}f(x)| \leq C_{q} \norm{\wgrad f}_{L^{\infty}(\skeleton,\medm)}^{1-2/q} S_{f}(x)^{2/q},\ \text{ for $\meas$-a.e. $x\in\ambient$}.
  \end{equation}
  If $\norm{\wgrad f}_{L^{\infty}(\skeleton,\medm)}=0$, then \eqref{e.endpoint-square-energy} gives $ \allowbreak S _{f} = 0 $ $\meas$-a.e.; the pointwise integral above gives $ \allowbreak ( - \gen ) ^{\critic_{q}} f = 0 $ there. If $S_{f}(x)=0$, the pointwise integral above vanishes at $x$. Hence \eqref{e.endpoint-pointwise} holds $\meas$-a.e. on $\ambient$. Raising \eqref{e.endpoint-pointwise} to the power $q$ and integrating, we have
  \begin{equation}
    \norm{(-\gen)^{\critic_{q}}f}_{L^{q}(\ambient,\meas)}^{q} \leq C_{q}\norm{\wgrad f}_{L^{\infty}(\skeleton,\medm)}^{q-2} \norm{S_{f}}_{L^{2}(\ambient,\meas)}^{2}\overset{\eqref{e.endpoint-square-energy}}{\leq} C_{q}\norm{\wgrad f}_{L^{\infty}(\skeleton,\medm)}^{q-2} \norm{\wgrad f}_{L^{2}(\skeleton,\medm)}^{2}.
  \end{equation}
  This proves \eqref{e.endpoint-restricted}.
\end{proof}

\begin{lemma}\label{l.riesz-duality}
  Let $p\in(1,\infty)$ and $p':=\frac{p}{p-1}$. For every $f\in\core$ and $h\in\collectD_{p}^{\mathrm{sp}}$,
  \begin{equation}\label{e.duali}
    \int_{\skeleton}\wgrad f\,\riesz_{p}h\dif\medm =\left\langle(-\gen)^{\critic_{p'}}f,h \right\rangle_{L^{2}(\ambient,\meas)},
  \end{equation}
  and
  \begin{equation}\label{e.regdu}
    \int_{\skeleton}\wgrad f\,\riesz_{p}P_{1}h\dif\medm =\left\langle(-\gen)^{\critic_{p'}}P_{1}f,h \right\rangle_{L^{2}(\ambient,\meas)}.
  \end{equation}
\end{lemma}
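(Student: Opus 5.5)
The plan is to reduce both identities to the form identity $\form(f,v)=\langle(-\gen)^{1/2}f,(-\gen)^{1/2}v\rangle$ for $f,v\in\domain$, followed by spectral calculus. Fix $f\in\core\subset\domain$ and $h\in\collectD_{p}^{\rm sp}$. By Lemma~\ref{l.densp}-\ref{it.rieszrg}, $v:=(-\gen)^{-\critic_{p}}h\in\domain$. Since $\riesz_{p}h=\wgrad v$, the definition \eqref{e.formdef} gives
\begin{equation}
  \int_{\skeleton}\wgrad f\,\riesz_{p}h\dif\medm=\form(f,v).
\end{equation}
By \cite[Theorem~1.3.1]{FOT11}, $\Dom((-\gen)^{1/2})=\domain$ and $\form(f,v)=\langle(-\gen)^{1/2}f,(-\gen)^{1/2}v\rangle_{L^{2}(\ambient,\meas)}$.

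Next I identify this inner product with $\langle(-\gen)^{\critic_{p'}}f,h\rangle$, using $\critic_{p}+\critic_{p'}=1$, so that $1/2-\critic_{p}=\critic_{p'}-1/2$. Since $\proj(\{0\})=0$, one has $h=(-\gen)^{\critic_{p}}v$. In spectral form,
\begin{equation}
  \langle(-\gen)^{1/2}f,(-\gen)^{1/2}v\rangle=\int_{(0,\infty)}\lambda^{1/2}\cdot\lambda^{1/2-\critic_{p}}\dif\langle\proj_{\lambda}f,h\rangle=\int_{(0,\infty)}\lambda^{\critic_{p'}}\dif\langle\proj_{\lambda}f,h\rangle.
\end{equation}
The right-hand side equals $\langle(-\gen)^{\critic_{p'}}f,h\rangle$ provided $f\in\Dom((-\gen)^{\critic_{p'}})$. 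The functional calculus rule $\langle \phi(-\gen)f,\psi(-\gen)g\rangle=\int\phi\psi\dif\langle\proj_\lambda f,g\rangle$ justifies these steps once the relevant domain memberships hold. We have $f\in\Dom((-\gen)^{1/2})$ because $f\in\domain$, and $v\in\Dom((-\gen)^{1/2})$. Also $h=(-\gen)^{\critic_{p}}v$ with $v\in\Dom((-\gen)^{-\critic_{p}}\cdot(-\gen)^{\critic_p})$ trivially, so the measure $\dif\langle\proj_\lambda f,h\rangle$ equals $\lambda^{\critic_p}\dif\langle\proj_\lambda f,v\rangle$.

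The main obstacle is verifying $f\in\Dom((-\gen)^{\critic_{p'}})$ for every $p\in(1,\infty)$.
\begin{itemize}
  \item If $p\le 2$, then $p'\ge2$ and $\critic_{p'}\le1/2$. Since $f\in\domain$ and $\Ker(-\gen)=\{0\}$, the bound $\lambda^{2\critic_{p'}}\le 1+\lambda$ gives $f\in\Dom((-\gen)^{\critic_{p'}})$.
  \item If $p>2$, then $p'\in(1,2)$, and Lemma~\ref{l.coreconv} gives $\core\subset\Dom((-\gen)^{\critic_{p'}})$; equivalently, apply Lemma~\ref{l.sobol} with $\sigma=2\critic_{p'}<2-\dims/2$.
\end{itemize}
Alternatively, one can avoid domain issues on the $f$ side entirely. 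Both sides are the integral of the bounded-after-splitting function $\lambda^{\critic_{p'}}$, and the identity is read as $\langle(-\gen)^{\critic_{p'}}f,h\rangle$ once we know $\int\lambda^{2\critic_{p'}}\dif\langle \proj_\lambda f,f\rangle<\infty$, which is exactly the domain statement above.

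For \eqref{e.regdu}, I would note that $P_{1}h=(I-P_{R})^{2}P_{\varepsilon+1}h_{0}$ for the corresponding generators of $h$, so $P_{1}h\in\collectD_{p}^{\rm sp}$. Apply \eqref{e.duali} to $P_{1}h$ and use the self-adjointness of $P_{1}$ together with the commutation $P_{1}(-\gen)^{\critic_{p'}}f=(-\gen)^{\critic_{p'}}P_{1}f$, both of which follow from the spectral calculus. This yields $\langle(-\gen)^{\critic_{p'}}f,P_{1}h\rangle=\langle(-\gen)^{\critic_{p'}}P_{1}f,h\rangle$.
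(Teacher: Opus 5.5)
Your argument is correct and is essentially the paper's proof. You write the left side as $\form(f,(-\gen)^{-\critic_{p}}h)$ using $(-\gen)^{-\critic_{p}}h\in\domain$, convert it to $\int_{(0,\infty)}\lambda^{1-\critic_{p}}\,\dif\langle\proj_{\lambda}f,h\rangle$ by spectral calculus and $\critic_{p}+\critic_{p'}=1$, and deduce \eqref{e.regdu} from $P_{1}h\in\collectD_{p}^{\rm sp}$ and self-adjointness, exactly as the paper does. The only difference is cosmetic: you check $f\in\Dom((-\gen)^{\critic_{p'}})$ by splitting into $p\le2$ and $p>2$, whereas the paper applies Lemma~\ref{l.sobol} once using $2\critic_{p'}<2\critic_{1}<2-\dims/2$ (your ``alternatively'' remark adds nothing, since it relies on the same domain fact).
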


\begin{proof}
  By \eqref{e.critp}, $\critic_{p}+\critic_{p'}=1$ and $0<2\critic_{p'}<2\critic_{1}$. Since $2-\frac{\dims}{2}-2\critic_{1} =\frac{2-\dimf}{\dimw}>0$, Lemma~\ref{l.sobol} gives $f\in\Dom((-\gen)^{\critic_{p'}})$. Also, $f\in\core\subset\domain$. Set $u:=(-\gen)^{-\critic_{p}}h$. By \eqref{e.strict-core-domains}, $u\in\domain$; thus $\riesz_{p}h=\wgrad u\in L^{2}(\skeleton,\medm)$. By \eqref{e.formdef} and spectral calculus,
    \begin{align}
      \int_{\skeleton}\wgrad f\,\riesz_{p}h\dif\medm &=\form(f,u) =\left\langle(-\gen)^{1/2}f,(-\gen)^{1/2}u \right\rangle_{L^{2}(\ambient,\meas)}\\
      &=\int_{(0,\infty)}\lambda^{1-\critic_{p}} \dif\langle\proj_{\lambda}f,h\rangle =\left\langle(-\gen)^{\critic_{p'}}f,h \right\rangle_{L^{2}(\ambient,\meas)}.\label{e.duali-polar}
    \end{align}
  Here $1-\critic_{p}=\critic_{p'}$, and the spectral integral is absolutely convergent by the Cauchy--Schwarz inequality:
  \begin{equation}
    \int_{(0,\infty)}\lambda^{\critic_{p'}} \,\bigl|\dif\langle\proj_{\lambda}f,h\rangle\bigr| \leq\|(-\gen)^{\critic_{p'}}f\|_{L^{2}(\ambient,\meas)} \|h\|_{L^{2}(\ambient,\meas)}<\infty.
  \end{equation}
 This proves \eqref{e.duali}. Since $P_{1}(I-P_{R})^{2}P_{\varepsilon}g =(I-P_{R})^{2}P_{\varepsilon+1}g$, we know that $P_{1}h\in\collectD_{p}^{\mathrm{sp}}$. Applying \eqref{e.duali} to $P_{1}h$ and using self-adjointness gives
    \begin{align}
      \int_{\skeleton}\wgrad f\,\riesz_{p}P_{1}h\dif\medm&=\left\langle(-\gen)^{\critic_{p'}}f,P_{1}h \right\rangle_{L^{2}(\ambient,\meas)}=\left\langle(-\gen)^{\critic_{p'}}P_{1}f,h \right\rangle_{L^{2}(\ambient,\meas)}.
    \end{align}
  This proves \eqref{e.regdu}.
\end{proof}
\begin{proof}[Proof of Theorem~\ref{t.dualc}]
  For each $p\in[1,\infty)$, let $\collectD_{p}^{\rm sp}$ be the linear subspace defined in \eqref{e.specc-strict}. The properties \ref{it.Ds1}, \ref{it.Ds2} and \ref{it.Ds3} are proved in Lemma \ref{l.densp}.
  \begin{enumerate}[label=\textup{(\arabic*)}, align=right, leftmargin=*, topsep=5pt, parsep=0pt, itemsep=2pt]
    \item[\ref{it.dlow1}] Let $p=1$. By Lemma~\ref{l.flag}-\ref{it.flag5}, for each $N\in\bN$, there exists $g_{N}\in\collectD_{1}^{\rm sp}$ such that
    \begin{equation}
      \|g_{N}\|_{L^{1}(\ambient,\meas)}\leq1,\qquad \|\riesz_{1}g_{N}\|_{L^{1,\infty}(\skeleton,\medm)}\geq c_{1}N.
    \end{equation}
    This proves \eqref{e.dfail1}.
    \item[\ref{it.dlowb}] Let $p\in(1,2)$ and let $q:=p/(p-1)$. We first note that, for every $\eta\in L^{\infty}(\skeleton,\medm)$ that satisfies \eqref{e.etafinsupp}, the construction in the proof of Lemma~\ref{l.densg} gives a sequence $(f_{j})_{j\in\bN}\subset\core$ such that
    \begin{equation}\label{e.endpoint-dens-infty}
        \lim_{j\to\infty}\norm{\wgrad f_{j}-\eta}_{L^{2}(\skeleton,\medm)}=0,\ \text { and }\ \sup_{j\in\bN}\norm{\wgrad f_{j}}_{L^{\infty}(\skeleton,\medm)} \leq2\norm{\eta}_{L^{\infty}(\skeleton,\medm)}.
    \end{equation}

    Fix $h\in\collectD_{p}^{\rm sp}$. By \eqref{e.strict-core-domains}, we have $\riesz_{p}h\in L^{2}(\skeleton,\medm)$. Let $E\subset\skeleton$ be a measurable set that is contained in a finite union of cables. Apply \eqref{e.endpoint-dens-infty} to $\eta:=\operatorname{sgn}(\riesz_{p}h)\one_{E}$. Since $\norm{\eta}_{L^{\infty}(\skeleton,\medm)}\leq1$ and $\riesz_{p}h\in L^{2}(\skeleton,\medm)$,
    \begin{equation}
      \left|\int_{\skeleton}(\wgrad f_{j}-\eta)\riesz_{p}h\dif\medm\right| \leq\|\wgrad f_{j}-\eta\|_{L^{2}(\skeleton,\medm)}\|\riesz_{p}h\|_{L^{2}(\skeleton,\medm)} \to 0.
    \end{equation}
    Furthermore, $q>2$ and Proposition~\ref{p.endpoint-restricted} imply
    \begin{equation}\label{e.enptredds}
      \|(-\gen)^{\critic_{q}}f_{j}\|_{L^{q}(\ambient,\meas)} \overset{\eqref{e.endpoint-restricted}}{\leq} C_{q}2^{1-2/q}\|\wgrad f_{j}\|_{L^{2}(\skeleton,\medm)}^{2/q}.
    \end{equation}
    Thus \eqref{e.duali} and H\"older's inequality give
      \begin{align}
        \int_{E}|\riesz_{p}h|\dif\medm &=\lim_{j\to\infty}\int_{\skeleton}\wgrad f_{j}\,\riesz_{p}h\dif\medm\overset{\eqref{e.duali}}{=} \lim_{j\to\infty}\langle(-\gen)^{\critic_{q}}f_{j},h\rangle_{L^{2}(\ambient,\meas)}\\
        &\overset{\eqref{e.enptredds}}{\leq} C_{q}\norm{h}_{L^{p}(\ambient,\meas)} \lim_{j\to\infty}\norm{\wgrad f_{j}}_{L^{2}(\skeleton,\medm)}^{2/q}\\
        &\overset{\eqref{e.endpoint-dens-infty}}{=}C_{q}\norm{h}_{L^{p}(\ambient,\meas)} \norm{\eta}_{L^{2}(\skeleton,\medm)}^{2/q} \leq C_{q}\norm{h}_{L^{p}(\ambient,\meas)}\medm(E)^{1/q}.\label{e.endpoint-set-test}
      \end{align}
    Enumerate the arms as $\collectA=\{I_{j}:j\in\bN\}$. For any measurable $E\subset\skeleton$ with $\medm(E)<\infty$, set $E_{n}:=E\cap\bigcup_{j=1}^{n}I_{j}$. Then
      \begin{align}
        \int_{E}|\riesz_{p}h|\dif\medm &=\lim_{n\to\infty}\int_{E_{n}}|\riesz_{p}h|\dif\medm\\
        &\leq C_{q}\|h\|_{L^{p}(\ambient,\meas)} \lim_{n\to\infty}\medm(E_{n})^{1/q} =C_{q}\|h\|_{L^{p}(\ambient,\meas)}\medm(E)^{1/q}.
      \end{align}
    which, combined with \eqref{e.lorno} and \eqref{e.loreq}, gives that
    \begin{equation}
      \norm{\riesz_{p}h}_{L^{p,\infty}(\skeleton,\medm)} \leq\|\riesz_{p}h\|_{(p,\infty)} \leq C_{p}\norm{h}_{L^{p}(\ambient,\meas)}.
    \end{equation}
    This proves
    \begin{equation}\label{e.direct-weak-low}
      \norm{\riesz_{p}h}_{L^{p,\infty}(\skeleton,\medm)} \leq C_{p}\norm{h}_{L^{p}(\ambient,\meas)}, \ \text{for every }h\in\collectD_{p}^{\rm sp}.
    \end{equation}
    By the density of $\collectD_{p}^{\rm sp}$ in $L^{p}(\ambient,\meas)$ for $p\in(1,\infty)$ in Lemma \ref{l.densp}, we can then extend $\riesz_{p}$ to be a bounded linear operator on $L^{p}(\ambient,\meas)$.

    \item[\ref{it.dlowr}] For $p=1$, the assertion follows from \ref{it.dlow1}, since $L^{1,1}(\ambient,\meas)=L^{1}(\ambient,\meas)$ and $L^{1}(\skeleton,\medm)\subset L^{1,\infty}(\skeleton,\medm)$. Let $p\in(1,2)$ and $p \allowbreak \allowbreak' : = p / (p - 1 )$. Suppose, for contradiction, that $\riesz_{p}$ has a bounded extension $\riesz_{p}: L^{p,1}(\ambient,\meas) \to L^{p}(\skeleton,\medm)$. The operator $P_{1}$ is bounded on $L^{p,1}(\ambient,\meas)$ by the argument in Lemma~\ref{l.densp}. Hence \eqref{e.regdu} and H\"older's inequality give, for $f\in\core$ and $h\in\collectD_{p}^{\rm sp}$,
      \begin{align}
        &\phantom{\ \leq}\left| \int_{\ambient} ((-\gen)^{\critic_{p'}}P_{1}f)\cdot h\dif\meas \right|\overset{\eqref{e.regdu}}{\leq} \norm{\wgrad f}_{L^{p'}(\skeleton,\medm)} \norm{\riesz_{p}P_{1}h}_{L^{p}(\skeleton,\medm)}\\
        &\leq C_{p} \norm{\wgrad f}_{L^{p'}(\skeleton,\medm)} \norm{P_{1}h}_{L^{p,1}(\ambient,\meas)} \quad\text{(by the assumed boundedness of $\riesz_{p}$)} \\
        &\leq C_{p} \norm{\wgrad f}_{L^{p'}(\skeleton,\medm)} \norm{h}_{L^{p,1}(\ambient,\meas)}. \label{e.dualweakcontr}
      \end{align}
    By the density of $\collectD_{p}^{\rm sp}$ in $L^{p,1}(\ambient,\meas)$ and \eqref{e.lordualnorm}, we have by \eqref{e.dualweakcontr} that
    \begin{equation}
      \norm{ (-\gen)^{\critic_{p'}}P_{1}f }_{L^{p',\infty}(\ambient,\meas)} \leq C_{p} \norm{\wgrad f}_{L^{p'}(\skeleton,\medm)}, \  \text{ for all }f\in\core.
    \end{equation}
    Since $p'>2$, this contradicts Theorem~\ref{t.strfl}-\ref{it.wtrfl}. Therefore the bounded extension $\riesz_{p}: L^{p,1}(\ambient,\meas) \to L^{p}(\skeleton,\medm)$ cannot exist.
    \item[\ref{it.dequa}] Let $p=2$. Since $\critic_{2}=1/2$, and $ \allowbreak \allowbreak\proj ( \{ 0 \} ) = 0 $, we have, for $h\in\collectD_{2}^{\rm sp}$,
      \begin{align}
        \norm{\riesz_{2}h}_{L^{2}(\skeleton,\medm)}^{2} &= \allowbreak\norm{(-\gen)^{1/2-\critic_{2}}h} _{L^{2}(\ambient,\meas)} ^{ 2} = \int_{(0,\infty)} \dif\langle\proj_{\lambda} h,h\rangle = \norm{h}_{L^{2}(\ambient,\meas)}^{2}.
      \end{align}
    Therefore we can extend $\riesz_{2}$ to be an isometry.
    \item[\ref{it.dhigh}] Let $p\in(2,\infty)$, so $p'\in(1,2)$. Let $f\in \core\subset\Dom((-\gen)^{\critic_{p'}})$. By \eqref{e.duali}, the Lorentz H\"older inequality \eqref{e.lorholder}, and Theorem~\ref{t.mainw},
    \begin{align}
      \left| \int_{\skeleton} \wgrad f\,\riesz_{p} h\dif\medm \right| &\overset{\eqref{e.duali}}{=} \left| \int_{\ambient}(-\gen)^{\critic_{p'}}fh\dif\meas \right| \overset{\eqref{e.lorholder}}{\leq} C_{p} \norm{(-\gen)^{\critic_{p'}}f}_{L^{p',\infty}(\ambient,\meas)} \norm{h}_{L^{p,1}(\ambient,\meas)} \\
      &\overset{\eqref{e.mainw}}{\leq} C_{p} \norm{\wgrad f}_{L^{p'}(\skeleton,\medm)} \norm{h}_{L^{p,1}(\ambient,\meas)}. \label{e.dualb}
    \end{align}
    Fix $h\in\collectD_{p}^{\rm sp}$. By Lemma~\ref{l.densg}, $\wgrad\core$ is dense in $L^{p'}(\skeleton,\medm)$. Thus \eqref{e.dualb} defines a bounded linear functional on $L^{p'}(\skeleton,\medm)$. Since $\bigl(L^{p'}(\skeleton,\medm)\bigr)' = L^{p}(\skeleton,\medm)$, there exists a unique $\wt{\riesz}_{p} h\in L^{p}(\skeleton,\medm)$ such that
    \begin{equation}\label{e.Rtildepair}
      \int_{\skeleton} \wgrad f\,\wt{\riesz}_{p} h\dif\medm = \int_{\skeleton} \wgrad f\,\riesz_{p} h\dif\medm, \ \text{ for all } f\in\core,\ \text{ and }\norm{\wt{\riesz}_{p} h}_{L^{p}(\skeleton,\medm)} \leq C_{p}\norm{h}_{L^{p,1}(\ambient,\meas)}.
    \end{equation}
    We claim that
    \begin{equation}\label{e.Ridentify}
      \wt{\riesz}_{p} h = \riesz_{p} h \ \ \medm\text{-a.e. on }\skeleton.
    \end{equation}
    Let $\eta\in L^{\infty}(\skeleton,\medm)$ be $\medm$-essentially supported on a finite union of cable arms. Applying Lemma~\ref{l.densg} with exponent $p'$, there exist $f_{j}\in\core$ such that $\wgrad f_{j}\to\eta$ in $L^{p'}(\skeleton,\medm)$ and in $L^{2}(\skeleton,\medm)$, as $j\to\infty$. Since $\wt{\riesz}_{p} h\in L^{p}(\skeleton,\medm)$ and $\riesz_{p} h\in L^{2}(\skeleton,\medm)$, passing to the limit in \eqref{e.Rtildepair} gives
    \begin{equation}\label{e.testdiff}
      \int_{\skeleton} \eta \bigl( \wt{\riesz}_{p} h-\riesz_{p} h \bigr)\dif\medm =0.
    \end{equation}
    Since this holds for all such $\eta$, we obtain \eqref{e.Ridentify}. Therefore \eqref{e.Rtildepair} yields
    \begin{equation}\label{e.Rcorebd}
      \norm{\riesz_{p} h}_{L^{p}(\skeleton,\medm)} \leq C_{p}\norm{h}_{L^{p,1}(\ambient,\meas)}, \ \  h\in\collectD_{p}^{\rm sp}.
    \end{equation}
    By Lemma~\ref{l.densp}, $\collectD_{p}^{\rm sp}$ is dense in $L^{p,1}(\ambient,\meas)$, and hence \eqref{e.Rcorebd} gives the unique bounded extension in \eqref{e.dhigh}. Suppose, for contradiction, that $\riesz_{p}$ has a bounded extension $\riesz_{p}: L^{p}(\ambient,\meas)\to L^{p}(\skeleton,\medm)$. Since $p'<2$, H\"older's inequality, \eqref{e.regdu}, and $L^{p}$-contractivity of $P_{1}$ give
    \begin{align}
      &\phantom{\ \leq}\left| \int_{\ambient} ((-\gen)^{\critic_{p'}}P_{1}f)\cdot h\dif\meas \right| \overset{\eqref{e.regdu}}{\leq} \norm{\wgrad f}_{L^{p'}(\skeleton,\medm)} \norm{\riesz_{p}P_{1}h}_{L^{p}(\skeleton,\medm)} \\
      &\leq C_{p} \norm{\wgrad f}_{L^{p'}(\skeleton,\medm)} \norm{P_{1}h}_{L^{p}(\ambient,\meas)} \quad\text{(by the assumed boundedness of $\riesz_{p}$)} \\
      &\leq C_{p} \norm{\wgrad f}_{L^{p'}(\skeleton,\medm)} \norm{h}_{L^{p}(\ambient,\meas)}. \label{e.strongdualcontr}
    \end{align}
    By Lemma~\ref{l.densp}, $\collectD_{p}^{\rm sp}$ is dense in $L^{p}(\ambient,\meas)$. Thus $L^{p}$--$L^{p'}$ duality applied to \eqref{e.strongdualcontr} gives
    \begin{equation}
      \norm{ (-\gen)^{\critic_{p'}}P_{1}f }_{L^{p'}(\ambient,\meas)} \leq C_{p} \norm{\wgrad f}_{L^{p'}(\skeleton,\medm)}, \ \ f\in\core.
    \end{equation}
    Since $1<p'<2$, this contradicts Theorem~\ref{t.strfl}-\ref{it.strfl}. Hence no bounded $L^{p}$ extension can exist when $p>2$.
  \end{enumerate}
\end{proof}

\appendix
\section{Useful facts}\label{sa.facts}
\begin{definition}\label{d.lorentz}
  Let $(\ambient,\meas)$ be a $\sigma$-finite measure space. For every measurable function $h:\ambient\to[-\infty,\infty]$, let
  \begin{equation}\label{e.distri}
    \lambda_{h}(\alpha) := \meas\bigl(\Sett{x\in\ambient}{|h(x)|>\alpha}\bigr), \ \  \alpha\in(0,\infty).
  \end{equation}
  \begin{enumerate}[label=\textup{(\arabic*)}, align=right, leftmargin=*, topsep=5pt, parsep=0pt, itemsep=2pt]
    \item For $p\in [ 1,\infty)$ and $q\in[1,\infty]$, define 
    \begin{equation}\label{e.lorpq}
      \norm{h}_{L^{p,q}(\ambient,\meas)} :=
      \begin{cases}
        \displaystyle \left( p\int_{0}^{\infty} \alpha^{q-1}\lambda_{h}(\alpha)^{q/p}\dif\alpha \right)^{1/q}, & q\in[1,\infty), \\[3mm]
        \displaystyle \sup_{\alpha\in(0,\infty)} \alpha \lambda_{h}(\alpha)^{1/p}, & q=\infty.
      \end{cases}
    \end{equation}
     Let $L^{p,q}(\ambient,\meas)$ be the space of $\meas$-a.e. equivalence classes of measurable functions $h$ such that $\norm{h}_{L^{p,q}(\ambient,\meas)}<\infty$.
    \item For $ \allowbreak \allowbreak p \in ( 1 , \infty) $ and $q=\infty$, we define
    \begin{equation}\label{e.lorno}
      \|h\|_{(p,\infty)} :=\sup\Sett{\meas(E)^{1/p-1} \int_{E}|h|\dif\meas}{\text{$E\subset\ambient$ is measurable and $\meas(E)\in(0,\infty)$}}.
    \end{equation}
  \end{enumerate}
\end{definition}
\begin{lemma}\label{l.loren}
  Let $(\ambient,\meas)$ be a $\sigma$-finite measure space. Let $p\in(1,\infty)$ and $q\in[1,\infty]$. Then the following assertions hold.
  \begin{enumerate}[label=\textup{(\arabic*)}, align=right, leftmargin=*, topsep=5pt, parsep=0pt, itemsep=2pt]
    \item\label{it.loren1} The functional $\norm{\cdot}_{L^{p,q}}$ is a quasi-norm. If $1\leq q\leq p$, then it is a norm and $L^{p,q}(\ambient,\meas)$ is a Banach space. If $p<q\leq\infty$, then it is equivalent to a Banach norm. Moreover,
    \begin{equation}\label{e.lorchain}
      L^{p,1}(\ambient,\meas) \subset L^{p}(\ambient,\meas) = L^{p,p}(\ambient,\meas) \subset L^{p,\infty}(\ambient,\meas)
    \end{equation}
    continuously. If $q<\infty$, the simple functions are dense in $L^{p,q}(\ambient,\meas)$. If $1\leq q\leq p$, convergence in $L^{p,q}(\ambient,\meas)$ implies convergence in measure.
    \item\label{it.loren2} For every measurable function $h$,
    \begin{equation}\label{e.loreq}
      \norm{h}_{L^{p,\infty}(\ambient,\meas)} \leq \|h\|_{(p,\infty)} \leq \frac{p}{p-1} \norm{h}_{L^{p,\infty}(\ambient,\meas)}.
    \end{equation}
    In particular, $\|\cdot\|_{(p,\infty)}$ is a norm and $\bigl( L^{p,\infty}(\ambient,\meas), \|\cdot\|_{(p,\infty)} \bigr)$ is a Banach space.
    \item\label{it.loren3} Let
    \begin{equation}
      1<r_{0}<p<r_{1}<\infty, \ \text{ and }\ \frac{1}{p} = \frac{1-\theta}{r_{0}} + \frac{\theta}{r_{1}}, \ \text{ for some }\ \theta\in(0,1).
    \end{equation}
    Then, for every $h\in L^{r_{0}}(\ambient,\meas)\cap L^{r_{1}}(\ambient,\meas)$,
    \begin{equation}\label{e.lorinter}
      \norm{h}_{L^{p,1}(\ambient,\meas)} \leq C_{p,r_{0},r_{1}} \|h\|_{L^{r_{0}}(\ambient,\meas)}^{1-\theta} \|h\|_{L^{r_{1}}(\ambient,\meas)}^{\theta}.
    \end{equation}
    \item\label{it.loren4} Assume in addition that $(\ambient,\meas)$ is nonatomic. Let $p':=\frac{p}{p-1}$. Then, for every $h\in L^{p,1}(\ambient,\meas)$ and $g\in L^{p',\infty}(\ambient,\meas)$,
    \begin{equation}\label{e.lorholder}
      \int_{\ambient}|hg|\dif\meas \leq \norm{h}_{L^{p,1}(\ambient,\meas)} \norm{g}_{L^{p',\infty}(\ambient,\meas)}.
    \end{equation}
    More precisely, there exists $C_{p}\in(1,\infty)$ such that
    \begin{equation}\label{e.lordualnorm}
      C_{p}^{-1}\norm{g}_{L^{p',\infty}} \leq \sup_{\substack{h\in L^{p,1}\\ \norm{h}_{L^{p,1}}\leq1}} \left| \int_{\ambient}hg\dif\meas \right| \leq \norm{g}_{L^{p',\infty}}.
    \end{equation}
    \item\label{it.loren5} If $p>2$, $h\in L^{p,\infty}(\ambient,\meas)$, and $E\subset\ambient$ is measurable with $\meas(E)<\infty$, then
    \begin{equation}\label{e.localL2weak}
      \|h\|_{L^{2}(E,\meas)} \leq \left(\frac{p}{p-2}\right)^{1/2} \meas(E)^{\frac{1}{2}-\frac{1}{p}} \norm{h}_{L^{p,\infty}(\ambient,\meas)}.
    \end{equation}
  \end{enumerate}
\end{lemma}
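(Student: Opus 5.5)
The lemma collects standard facts about Lorentz spaces. The plan is to cite \cite[Section~1.4]{Gra14} for the structural statements in \ref{it.loren1}. For the quantitative inequalities \ref{it.loren2}--\ref{it.loren5}, which carry explicit constants, I would give short direct proofs from the layer-cake formulas
\begin{equation}
\int_{E}|h|^{s}\dif\meas=\int_{0}^{\infty}s\alpha^{s-1}\min\bigl(\meas(E),\lambda_{h}(\alpha)\bigr)\dif\alpha,\qquad \int_{0}^{\infty}t^{1/p-1}h^{*}(t)\dif t=p\int_{0}^{\infty}\lambda_{h}(\alpha)^{1/p}\dif\alpha ,
\end{equation}
where $h^{*}$ is the decreasing rearrangement of $h$. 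The second identity says that the normalisation in \eqref{e.lorpq} gives $\norm{h}_{L^{p,1}}=\int_{0}^{\infty}t^{1/p}h^{*}(t)\,\dif t/t$ exactly. It follows from Fubini's theorem and the fact that $\{t:h^{*}(t)>\alpha\}=[0,\lambda_{h}(\alpha))$.

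For \ref{it.loren1}, the quasi-norm property, normability, the embeddings \eqref{e.lorchain}, completeness and density of simple functions are \cite[Proposition~1.4.10, Theorem~1.4.11, Exercise~1.4.3]{Gra14}. Convergence in measure then follows from $\lambda_{h}(\alpha)\le\alpha^{-p}\norm{h}_{L^{p,\infty}}^{p}\lesssim\alpha^{-p}\norm{h}_{L^{p,q}}^{p}$.

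For \ref{it.loren2}, the upper bound comes from the first layer-cake formula with $s=1$. Set $N:=\norm{h}_{L^{p,\infty}}$ and bound $\lambda_{h}(\alpha)\le N^{p}\alpha^{-p}$. Splitting the integral at $\alpha_{0}=N\meas(E)^{-1/p}$ gives $\int_{E}|h|\le\frac{p}{p-1}\meas(E)^{1-1/p}N$. For the lower bound, I would take $E\subset\{|h|>\alpha\}$ of finite positive measure, which $\sigma$-finiteness allows, with $\meas(E)$ arbitrarily close to $\lambda_{h}(\alpha)$. Then $\int_{E}|h|\ge\alpha\meas(E)$, hence $\alpha\meas(E)^{1/p}\le\|h\|_{(p,\infty)}$. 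Since $\|\cdot\|_{(p,\infty)}$ is a supremum of seminorms, it is a norm. Completeness transfers from \ref{it.loren1} through the equivalence \eqref{e.loreq}.

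For \ref{it.loren3}, put $A:=\|h\|_{L^{r_{0}}}$ and $B:=\|h\|_{L^{r_{1}}}$. Chebyshev's inequality gives $\lambda_{h}(\alpha)\le\min(A^{r_{0}}\alpha^{-r_{0}},B^{r_{1}}\alpha^{-r_{1}})$. I would insert this into $p\int_{0}^{\infty}\lambda_{h}^{1/p}\dif\alpha$ and split at $\alpha_{0}=(B^{r_{1}}/A^{r_{0}})^{1/(r_{1}-r_{0})}$. Both pieces converge because $r_{0}<p<r_{1}$, and the relation defining $\theta$ makes the result equal to a constant times $A^{1-\theta}B^{\theta}$. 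This bookkeeping of exponents is the only computation in the lemma needing care, so it is the step I expect to be the main, though mild, obstacle.

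For \ref{it.loren4}, the Hardy--Littlewood inequality gives $\int|hg|\le\int_{0}^{\infty}h^{*}g^{*}\dif t$. Using $g^{*}(t)\le t^{-1/p'}\norm{g}_{L^{p',\infty}}$ and the rearrangement identity above yields \eqref{e.lorholder} with constant $1$. For the lower bound in \eqref{e.lordualnorm}, I would test against $h=\one_{E}\operatorname{sgn}(g)$ with $E\subset\{|g|>\alpha\}$ of finite positive measure, which exists by nonatomicity and $\sigma$-finiteness. Since $\norm{h}_{L^{p,1}}=p\meas(E)^{1/p}$, this gives a ratio of at least $p^{-1}\alpha\meas(E)^{1/p'}$, so $C_{p}=p$ works.

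For \ref{it.loren5}, I would use the first layer-cake formula with $s=2$ and the same splitting at $\alpha_{0}=N\meas(E)^{-1/p}$, where again $N:=\norm{h}_{L^{p,\infty}}$. This gives $\int_{E}|h|^{2}\le N^{2}\meas(E)^{1-2/p}\bigl(1+\tfrac{2}{p-2}\bigr)=\tfrac{p}{p-2}N^{2}\meas(E)^{1-2/p}$, and taking square roots gives \eqref{e.localL2weak}.
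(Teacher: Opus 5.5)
Your proposal is correct. For \ref{it.loren1}, \ref{it.loren2}, \ref{it.loren5} and the lower bound in \eqref{e.lordualnorm} it is the paper's argument:
\begin{itemize}
\item citations for the structural facts;
\item the layer-cake splitting at $\alpha_0=N\meas(E)^{-1/p}$;
\item testing against finite-measure subsets of $\{|h|>\alpha\}$, which gives $C_p=p$.
\end{itemize}

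The genuine differences are in \ref{it.loren3} and in the upper bound of \ref{it.loren4}.

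For \eqref{e.lorinter}, the paper quotes the real-interpolation identity \eqref{e.lorinterproof}, $(L^{r_0},L^{r_1})_{\theta,1}=L^{p,1}$. It then bounds the $(\theta,1)$-norm by $\|h\|_{L^{r_0}}^{1-\theta}\|h\|_{L^{r_1}}^{\theta}$. You instead feed the two Chebyshev bounds into $p\int_0^\infty\lambda_h^{1/p}\,\dif\alpha$ and split at their crossing point. Your bookkeeping closes: the powers of $A$ and $B$ come out as $\frac{r_0(r_1-p)}{p(r_1-r_0)}=1-\theta$ and $\frac{r_1(p-r_0)}{p(r_1-r_0)}=\theta$. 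The explicit constant is $p^2\bigl((p-r_0)^{-1}+(r_1-p)^{-1}\bigr)$, and no interpolation theory is needed. What the paper's route buys is the identity \eqref{e.lorinterproof} itself. It is reused in Lemma~\ref{l.densp} to get uniform $L^{p,1}$-bounds for $P_t$ from its $L^{r_i}$-contractivity. Your computation proves \eqref{e.lorinter} but would not supply that.

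For \eqref{e.lorholder}, you use the Hardy--Littlewood rearrangement inequality together with $g^*(t)\le t^{-1/p'}\norm{g}_{L^{p',\infty}}$. The paper instead writes $\int|hg|\,\dif\meas=\int_0^\infty\int_{\{|h|>\alpha\}}|g|\,\dif\meas\,\dif\alpha$. It then applies the upper bound in \eqref{e.loreq}, with exponent $p'$, to each level set. Both give constant $1$. The paper's version stays with distribution functions throughout by recycling \ref{it.loren2}, so it never needs decreasing rearrangements.

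Three small points, none of which affects the argument:
\begin{itemize}
\item Your first layer-cake formula is only an inequality, since $\meas(E\cap\{|h|>\alpha\})\le\min(\meas(E),\lambda_h(\alpha))$. You only use it in that direction.
\item Definiteness of $\|\cdot\|_{(p,\infty)}$ comes from the first inequality in \eqref{e.loreq}, not from its being a supremum of seminorms.
\item The test sets in \ref{it.loren4} need only $\sigma$-finiteness, not nonatomicity.
\end{itemize}
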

\begin{proof}
  \begin{enumerate}[label=\textup{(\arabic*)}, align=right, leftmargin=*, topsep=5pt, parsep=0pt, itemsep=2pt]
    \item[\ref{it.loren1}] The assertions on the quasi-norm property and the norm property for $1\leq q\leq p$ follow from \cite[Chapter~IV, Theorem~4.3]{BS88}. If $p<q\leq\infty$, the Lorentz quasi-norm is equivalent to a norm by \cite[Chapter~IV, Definition~4.4 and Lemma~4.5]{BS88}. In particular, for $p\in(1,\infty)$ and $q\in[1,\infty]$, $L^{p,q}(\ambient,\meas)$ is normable. Completeness with respect to the Lorentz quasi-norm follows from \cite[Theorem~1.4.11]{Gra14}; hence the corresponding normed spaces are Banach spaces. Moreover, $L^{p,p}(\ambient,\meas)=L^{p}(\ambient,\meas)$ with equality of norms; see \cite[Chapter~IV, Definition~4.1]{BS88}. The continuous embeddings
    \begin{equation}
      L^{p,q_{0}}(\ambient,\meas) \subset L^{p,q_{1}}(\ambient,\meas), \ \text{ with }\ 0<q_{0}<q_{1}\leq\infty,
    \end{equation}
    follow from \cite[Proposition~1.4.10]{Gra14}. Taking $q_{0}=1$, $q_{1}=p$, and then $q_{0}=p$, $q_{1}=\infty$ proves \eqref{e.lorchain}. The density of finitely simple functions in $L^{p,q}(\ambient,\meas)$ for $q<\infty$ follows from \cite[Theorem~1.4.13]{Gra14}. For the convergence assertion, let $h_{n}\to h$ in $L^{p,q}(\ambient,\meas)$ and fix $\alpha>0$. If $q<\infty$, monotonicity of the distribution function gives
    \begin{equation}
        \|h_{n}-h\|_{L^{p,q}(\ambient,\meas)}^{q} \geq p\int_{\alpha/2}^{\alpha}s^{q-1}\lambda_{h_{n}-h}(s)^{q/p}\dif s\geq\frac{p(1-2^{-q})}{q}\alpha^{q}\lambda_{h_{n}-h}(\alpha)^{q/p}.
    \end{equation}
    If $q=\infty$, then $\lambda_{h_{n}-h}(\alpha)\leq\alpha^{-p}\|h_{n}-h\|_{L^{p,\infty}(\ambient,\meas)}^{p}$. Thus in both cases
    \begin{equation}
      \meas(\Sett{x}{|h_{n}(x)-h(x)|>\alpha})\leq C_{p,q}\alpha^{-p}\|h_{n}-h\|_{L^{p,q}(\ambient,\meas)}^{p}\to 0.
    \end{equation}
    This proves \ref{it.loren1}.
    \item[\ref{it.loren2}] Let $A:=\|h\|_{L^{p,\infty}(\ambient,\meas)}$ and let $E$ be measurable with $0<\meas(E)<\infty$. For $0<A<\infty$, put $a:=A\meas(E)^{-1/p}$. The layer-cake decomposition gives
      \begin{align}
        \int_{E}|h|\dif\meas &=\int_{0}^{\infty}\meas(E\cap\{|h|>\alpha\})\dif\alpha \leq\int_{0}^{\infty}\min\{\meas(E),A^{p}\alpha^{-p}\}\dif\alpha\\
        &\leq\meas(E)\int_{0}^{a}\dif\alpha+A^{p}\int_{a}^{\infty}\alpha^{-p}\dif\alpha =\frac{p}{p-1}A\meas(E)^{1-1/p}.
      \end{align}
    The cases $ \allowbreak A = 0 $ and $A=\infty$ are immediate. Conversely, for $E\subset\{|h|>\alpha\}$ of finite positive measure,
    \begin{equation}
      \|h\|_{(p,\infty)}\geq\meas(E)^{1/p-1}\int_{E}|h|\dif\meas\geq\alpha\meas(E)^{1/p}.
    \end{equation}
    Exhausting $\{|h|>\alpha\}$ by such sets and taking the supremum over $\alpha>0$ proves \eqref{e.loreq}. Also
    \begin{equation}
      \|h+k\|_{(p,\infty)}\leq\|h\|_{(p,\infty)}+\|k\|_{(p,\infty)},\qquad \|ch\|_{(p,\infty)}=|c|\|h\|_{(p,\infty)}.
    \end{equation}
    The first inequality in \eqref{e.loreq} gives definiteness. Completeness follows from \eqref{e.loreq} and \cite[Theorem~1.4.11]{Gra14}.
    \item[\ref{it.loren3}] By the real interpolation for Lebesgue spaces \cite[Theorem~5.3.1]{BL76}, we have
    \begin{equation}\label{e.lorinterproof}
      \bigl( L^{r_{0}}(\ambient,\meas), L^{r_{1}}(\ambient,\meas) \bigr)_{\theta,1} = L^{p,1}(\ambient,\meas)
    \end{equation}
    with equivalent norms, where $p^{-1}=(1-\theta)r_{0}^{-1}+\theta r_{1}^{-1}$. The definition of the real interpolation norm \cite[Chapter~3, Section~3.1]{BL76} gives $\|h\|_{(L^{r_{0}},L^{r_{1}})_{\theta,1}} \leq C_{\theta} \|h\|_{L^{r_{0}}}^{1-\theta} \|h\|_{L^{r_{1}}}^{\theta}$. Combining this with \eqref{e.lorinterproof} proves \eqref{e.lorinter}.
    \item[\ref{it.loren4}] Apply \eqref{e.loreq} with exponent $p'$ to the set $\{|h|>\alpha\}$. Since $p'/(p'-1)=p$ and $1-1/p'=1/p$, Fubini's theorem gives
    \begin{equation}
        \int_{\ambient}|hg|\dif\meas =\int_{0}^{\infty}\int_{\{|h|>\alpha\}}|g|\dif\meas\dif\alpha\leq p\|g\|_{L^{p',\infty}}\int_{0}^{\infty}\lambda_{h}(\alpha)^{1/p}\dif\alpha =\|h\|_{L^{p,1}}\|g\|_{L^{p',\infty}}.
    \end{equation}
    This proves \eqref{e.lorholder} and the upper bound in \eqref{e.lordualnorm}. For $\alpha>0$ and a measurable set $E\subset\{|g|>\alpha\}$ of finite positive measure, set $h:=\operatorname{sgn}(g)\one_{E}/(p\meas(E)^{1/p})$. By \eqref{e.lorpq},
    \begin{equation}
      \|h\|_{L^{p,1}}=1,\ \text{ and }\ \int_{\ambient}hg\dif\meas =\frac{\int_{E}|g|\dif\meas}{p\meas(E)^{1/p}} \geq\frac{\alpha}{p}\meas(E)^{1/p'}.
    \end{equation}
    Exhausting each superlevel set by finite-measure sets and taking the supremum over $\alpha$ gives the lower bound in \eqref{e.lordualnorm} with $ \allowbreak C _{ \allowbreak p }= p $. This also agrees with the duality statement in \cite[Theorem~1.4.16\textup{(v)}]{Gra14}.
    \item[\ref{it.loren5}] If $\meas(E)=0$ or $\|h\|_{L^{p,\infty}}=0$, the assertion is immediate. Otherwise put $A:=\|h\|_{L^{p,\infty}}$ and $a:=A\meas(E)^{-1/p}$. Since $p>2$,
      \begin{align}
        \int_{E}|h|^{2}\dif\meas &=2\int_{0}^{\infty}\alpha\meas(E\cap\{|h|>\alpha\})\dif\alpha\leq2\meas(E)\int_{0}^{a}\alpha\dif\alpha +2A^{p}\int_{a}^{\infty}\alpha^{1-p}\dif\alpha\\
        &=\frac{p}{p-2}A^{2}\meas(E)^{1-2/p}.
      \end{align}
    Consequently,
    \begin{equation}
      \int_{E}|h|^{2}\dif\meas \leq \frac{p}{p-2} \meas(E)^{1-2/p} \norm{h}_{L^{p,\infty}(\ambient,\meas)}^{2}.
    \end{equation}
    Taking square roots gives \eqref{e.localL2weak}.
  \end{enumerate}
\end{proof}
The tripod and gate-projection facts used in the next lemma are standard in the theory of real trees; see, for example, \cite[Chapter~3]{Eva08}.
\begin{lemma}\label{l.projt}
  Let $(\ambient,\metric)$ be a real tree. Let $A\subset \ambient$ be nonempty, closed, and connected. Then,
  \begin{enumerate}[label=\textup{(\arabic*)}, align=right, leftmargin=*, topsep=5pt, parsep=0pt, itemsep=2pt]
    \item\label{it.proj1} $A$ is geodesically convex.
    \item\label{it.proj2} There is a unique map $\pi_{A}:\ambient\to A$ satisfying $ \allowbreak\metric (x,\pi_{A}(x))=\dist(x,A)$ for all $x\in\ambient$.
    \item\label{it.proj3} For every $x\in \ambient$ and every $a\in A$, we have
    \begin{equation}
      [x,a]=[x,\pi_{A}(x)]\cup[\pi_{A}(x),a],\ \text{and}\ [x,\pi_{A}(x)]\cap A=\{\pi_{A}(x)\}.
    \end{equation}
    \item\label{it.proj4} For all $x,y\in\ambient$, we have $\metric(\pi_{A}(x),\pi_{A}(y))\leq \metric(x,y)$.
    \item\label{it.proj5} If $C$ is a connected component of $\ambient\setminus A$, then there exists $g_{C}\in A$ such that
    \begin{equation}\label{e.gatec}
      \pi_{A}(x)=g_{C},\text{ and }[x,g_{C})\subset C\ \text{ for all } x\in C.
    \end{equation}
  \end{enumerate}
\end{lemma}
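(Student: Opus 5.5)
The statement is Lemma~\ref{l.projt}, the standard gate-projection facts for a nonempty closed connected subset $A$ of a real tree. I will prove them in the order stated, using only the two axioms of Definition~\ref{d.rtree0}: unique geodesics, and every injective path is the geodesic.

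\emph{Convexity \ref{it.proj1}.} Let $a,b\in A$. Since $A$ is closed and connected in a real tree, which is locally arcwise connected, $A$ is arcwise connected. I expect this to be the main subtle point: in a uniquely arcwise connected geodesic space, a connected closed set is arcwise connected. I would argue that if the geodesic $[a,b]$ left $A$ at some point $z\notin A$, then $\ambient\setminus\{z\}$ would split into components containing $a$ and $b$ respectively, by the tree axiom. That would disconnect $A$, a contradiction. This needs the fact that removing an interior point of $[a,b]$ separates $a$ from $b$, which follows from axiom (ii): any path from $a$ to $b$ contains an injective path with image $[a,b]\ni z$. Hence $[a,b]\subset A$.

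\emph{Projection \ref{it.proj2}--\ref{it.proj3}.} Fix $x$ and some $a_0\in A$. The function $t\mapsto \phi_{x,a_0}(t)$ meets the closed set $A$. Let $t_*$ be the smallest such $t$ and set $\pi_A(x):=\phi_{x,a_0}(t_*)$. For any $a\in A$, convexity gives $[\pi_A(x),a]\subset A$. The concatenation of $[x,\pi_A(x)]$ and $[\pi_A(x),a]$ meets only at $\pi_A(x)$, because $[x,\pi_A(x))\cap A=\emptyset$. So the concatenation is an injective path, hence by axiom (ii) it equals $[x,a]$. This gives \ref{it.proj3}. It also gives $\metric(x,a)=\metric(x,\pi_A(x))+\metric(\pi_A(x),a)$, so $\pi_A(x)$ is the unique nearest point, and it does not depend on $a_0$.

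\emph{Lipschitz \ref{it.proj4} and gates \ref{it.proj5}.} For \ref{it.proj4}, take $x,y$ with $p=\pi_A(x)\neq q=\pi_A(y)$. Applying \ref{it.proj3} twice, the path $[x,p]\cup[p,q]\cup[q,y]$ is injective: the pieces $[x,p)$ and $(q,y]$ avoid $A\supset[p,q]$, and they cannot meet each other, since otherwise $[x,q]$ would pass through $p$ and then again through the common point. So it is the geodesic $[x,y]$, giving $\metric(x,y)\ge\metric(p,q)$. For \ref{it.proj5}, let $C$ be a component of $\ambient\setminus A$ and $x,y\in C$. The geodesic $[x,y]$ lies in $C$, since $C$ is open and connected in a real tree, hence arcwise connected and convex by the same argument as in \ref{it.proj1}. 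So $[x,y]\cap A=\emptyset$. If $\pi_A(x)\ne\pi_A(y)$, the injective path from the proof of \ref{it.proj4} would equal $[x,y]$ and would meet $A$, a contradiction. Hence $\pi_A$ is constant on $C$, with value $g_C$. Finally, $[x,g_C)$ avoids $A$ and is connected and contains $x$, so $[x,g_C)\subset C$.
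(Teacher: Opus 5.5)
Your proof is correct and follows essentially the paper's route: cut-point separation for convexity, the first point of $[x,a_0]$ in $A$ as the projection, the arc $[x,\pi_A(x)]\cup[\pi_A(x),\pi_A(y)]\cup[\pi_A(y),y]=[x,y]$ for the Lipschitz bound, and convexity of components for the gate; where the paper uses the tripod/branch-point property, you apply axiom (ii) to injective concatenations. A few small points need tightening. First, your opening claim that a closed connected subset of a locally arcwise connected space is arcwise connected is false in general, and you do not need it. Second, your separation step as written only places $a$ and $b$ in different \emph{path} components of $\ambient\setminus\{z\}$, so add that these path components are open (balls in a real tree are path connected) and hence are the connected components; this is what the paper's open classes of $\sim_{z}$ provide. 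Third, in (4) a common point $w\in[x,p)\cap(q,y]$ is most cleanly ruled out as follows: $[w,q]$ is a subarc of $[x,q]=[x,p]\cup[p,q]$ and so contains $p\in A$, whereas $[w,q]\subset[y,q]$ meets $A$ only at $q$.
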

\begin{proof}
  We repeatedly use the standard \emph{tripod property} of a real tree \cite[Lemma~3.22]{Eva08}: for every $u,v,w\in\ambient$ there is a unique point $b=b(u,v,w)$ such that
  \begin{equation}\label{eq:tripod}
    [u,v]\cap[v,w]\cap[w,u]=\set{b},
  \end{equation}
  and
  \begin{equation}\label{eq:tripod-decomp}
    [u,v]=[u,b]\cup[b,v],\qquad [v,w]=[v,b]\cup[b,w],\qquad [w,u]=[w,b]\cup[b,u].
  \end{equation}
Fix $z\in\ambient$. For $u,v\in \ambient\setminus\set{z}$, define $u\sim_{z} v$ if and only if $z\notin[u,v]$. Reflexivity and symmetry are immediate. For transitivity, the tripod decomposition gives
    \begin{align}
      u\sim_{z}v,\ v\sim_{z}w &\quad\Longrightarrow \quad z\notin[u,v]\cup[v,w],\\
      [u,w]\subset[u,v]\cup[v,w] &\quad\Longrightarrow \quad z\notin[u,w]\quad\to \quad u\sim_{z}w.
    \end{align}
  If $ u \allowbreak \neq z $, then $B(u,\metric(u,z))\subset\{v\in\ambient\setminus\{z\}:u\sim_{z}v\}$, since $z\in[u,v]$ would give $ \allowbreak \metric ( u, \allowbreak v ) = \metric ( u , z ) + \metric ( z , v ) \geq \metric ( u , z ) $. Each equivalence class is therefore open; its relative complement is a union of the other open classes. A connected subset of $\ambient\setminus \allowbreak \{ z \} $ must lie in one class. In particular,
  \begin{equation}\label{eq:separation}
    z\in(u,v) \quad\Longrightarrow \quad u\text{ and }v\text{ lie in distinct connected components of } \ambient\setminus\set{z}.
  \end{equation}
  \begin{enumerate}[label=\textup{(\arabic*)}, align=right, leftmargin=*, topsep=5pt, parsep=0pt, itemsep=2pt]
    \item[\ref{it.proj1}] Fix $a_{0},a_{1}\in A$. We claim that $[a_{0},a_{1}]\subset A$. Suppose, to the contrary, that there exists $z\in(a_{0},a_{1})\setminus A$. By \eqref{eq:separation}, $a_{0}$ and $a_{1}$ lie in distinct connected components of $\ambient\setminus\set{z}$. On the other hand, $A\subset\ambient\setminus\set{z}$ is connected and contains both $a_{0}$ and $a_{1}$, which is impossible. Hence $[a_{0},a_{1}]\subset A$. Thus $A$ is geodesically convex.
    \item[\ref{it.proj2}] Fix $x\in\ambient$. If $x\in A$, then necessarily $\pi_{A}(x)=x$, so assume that $x\notin A$. Choose $a_{0}\in A$. Since $[x,a_{0}]$ is isometric to the compact interval $[0,\metric(x,a_{0})]$ and $A$ is closed, $K:=[x,a_{0}]\cap A$ is a nonempty compact subset of $[x,a_{0}]$. Therefore there exists $g\in K$ such that $\metric(x,g)=\min_{z\in K}\metric(x,z)$. In particular,
    \begin{equation}\label{eq:first-no-A}
      [x,g)\cap A=\emptyset.
    \end{equation}
    We show that $g$ is the nearest point of $A$ to $x$. Let $a\in A$, and let $b=b(x,g,a)$ be the branch point of the triple $(x,g,a)$. By \ref{it.proj1}, $[g,a]\subset A$. By \eqref{eq:tripod}, $b\in[x,g]\cap[g,a]$. Consequently $b\in[x,g]\cap A$. In view of \eqref{eq:first-no-A}, we must have $b=g$. By the tripod decomposition \eqref{eq:tripod-decomp}, this is exactly the statement that
    \begin{equation}\label{eq:gate-decomp-pre}
      [x,a]=[x,g]\cup[g,a], \qquad [x,g]\cap[g,a]=\set{g}.
    \end{equation}
    Hence
    \begin{equation}\label{eq:distance-gate}
      \metric(x,a)=\metric(x,g)+\metric(g,a)\geq \metric(x,g), \ \text{ for all } a\in A.
    \end{equation}
    Since $g\in A$, \eqref{eq:distance-gate} gives $\metric(x,g)=\dist(x,A)$. Thus a nearest point exists. To prove uniqueness, suppose that $h\in A$ also satisfies $\metric(x,h)=\dist(x,A)=\metric(x,g)$. Applying \eqref{eq:distance-gate} with $a=h$ gives $\metric(x,h)=\metric(x,g)+\metric(g,h)$. Therefore $\metric(g,h)=0$, so $g=h$. We may consequently define $\pi_{A}(x):=g$. This proves \ref{it.proj2}.
    \item[\ref{it.proj3}] Let $x\in\ambient$ and $a\in A$. The equalities \eqref{eq:gate-decomp-pre}, with $g=\pi_{A}(x)$, already give
    \begin{equation}\label{eq:gate-decomp}
      [x,a]=[x,\pi_{A}(x)]\cup[\pi_{A}(x),a].
    \end{equation}
    Moreover, if $z\in[x,\pi_{A}(x))\cap A$, then $\metric(x,z)<\metric(x,\pi_{A}(x))=\dist(x,A)$, which is impossible. Since $\pi_{A}(x)\in A$, we obtain $[x,\pi_{A}(x)]\cap A=\set{\pi_{A}(x)}$.
    \item[\ref{it.proj4}] Fix $x,y\in\ambient$. If $\pi_{A}(x)=\pi_{A}(y)$, then the required estimate is immediate. Assume $\pi_{A}(x)\neq \pi_{A}(y)$. By \ref{it.proj3},
    \begin{equation}\label{eq:first-intersections}
      [x,\pi_{A}(x)]\cap[\pi_{A}(x),\pi_{A}(y)]=\set{\pi_{A}(x)}, \ [y,\pi_{A}(y)]\cap[\pi_{A}(y),\pi_{A}(x)]=\set{\pi_{A}(y)}.
    \end{equation}
    We claim that
    \begin{equation}\label{eq:xg-yh-disjoint}
      [x,\pi_{A}(x)]\cap[y,\pi_{A}(y)]=\emptyset.
    \end{equation}
    Suppose, to the contrary, that there is a point $z\in[x,\pi_{A}(x)]\cap[y,\pi_{A}(y)]$. Let $b=b(\pi_{A}(x),\pi_{A}(y),z)$ be the branch point. Then
    \begin{equation}
      b\in[\pi_{A}(x),\pi_{A}(y)]\cap[\pi_{A}(x),z]\cap[\pi_{A}(y),z].
    \end{equation}
    Since $z\in[x,\pi_{A}(x)]$ and $z\in[y,\pi_{A}(y)]$, we know that
    \begin{equation}
      [\pi_{A}(x),z]\subset[x,\pi_{A}(x)], \ \text{ and }\ [\pi_{A}(y),z]\subset[y,\pi_{A}(y)].
    \end{equation}
    Together with the fact that $[\pi_{A}(x),\pi_{A}(y)]\subset A$ implied by \ref{it.proj1}, this yields
    \begin{equation}
      b\in A\cap[x,\pi_{A}(x)]=\set{\pi_{A}(x)}, \qquad b\in A\cap[y,\pi_{A}(y)]=\set{\pi_{A}(y)}.
    \end{equation}
    Thus $\pi_{A}(x)=\pi_{A}(y)$, a contradiction. Hence \eqref{eq:xg-yh-disjoint} holds. It follows from \eqref{eq:first-intersections} and \eqref{eq:xg-yh-disjoint} that $[x,\pi_{A}(x)]\cup[\pi_{A}(x),\pi_{A}(y)]\cup[\pi_{A}(y),y]$ is an arc joining $x$ to $y$. A real tree has a unique arc joining two points, hence $[x,y]=[x,\pi_{A}(x)]\cup[\pi_{A}(x),\pi_{A}(y)]\cup[\pi_{A}(y),y]$, with pairwise intersections only at the consecutive endpoints. Therefore
    \begin{equation}
      \metric(x,y) = \metric(x,\pi_{A}(x))+\metric(\pi_{A}(x),\pi_{A}(y))+\metric(\pi_{A}(y),y) \geq \metric(\pi_{A}(x),\pi_{A}(y)).
    \end{equation}
    This proves \ref{it.proj4}.
    \item[\ref{it.proj5}] Let $C$ be a connected component of $\ambient\setminus A$. For $x,y\in C$, if $z\in(x,y)\setminus C$, then \eqref{eq:separation} places the two points of the connected set $C\subset\ambient\setminus\{z\}$ in distinct components. Hence $[x,y]\subset C$ for every $x,y\in C$. Fix $x,y\in C$ and write $g:=\pi_{A}(x)$ and $h:=\pi_{A}(y)$. If $\pi_{A}(x)\neq \pi_{A}(y)$, the arc decomposition in \ref{it.proj4} and convexity in \ref{it.proj1} give
    \begin{equation}
      \emptyset\neq[g,h]=[\pi_{A}(x),\pi_{A}(y)]\subset A\cap[x,y]\subset A\cap C=\emptyset,
    \end{equation}
    a contradiction. Hence $\pi_{A}(x)=\pi_{A}(y)$ for every $x,y\in C$. Thus there exists a unique $g_{C}\in A$ such that
    \begin{equation}\label{eq:projection-constant-C}
      \pi_{A}(x)=g_{C}, \qquad x\in C.
    \end{equation}
    Finally, let $x\in C$. By \ref{it.proj3} and \eqref{eq:projection-constant-C}, $[x,g_{C})\cap A=\emptyset$, so $[x,g_{C})\subset\ambient\setminus A$. The set $[x,g_{C})$ is connected and contains $x$. Since $C$ is the connected component of $\ambient\setminus A$ containing $x$, maximality of connected components gives $[x,g_{C})\subset C$. Together with \eqref{eq:projection-constant-C}, this is exactly \eqref{e.gatec}.\qedhere
  \end{enumerate}
\end{proof}
\begin{lemma}\label{l.proj6}
  Let $(\ambient,\metric)$ be the unbounded Vicsek set. For every $Q\in\collectQ$, we have
  \begin{equation}\label{e.massp}
    \meas\left( \Sett{x\in Q} { \dist\bigl(\pi_{\celltree{Q}}(x),\cellvert{Q}\bigr) \geq \frac{\cellsize{Q}}{5} } \right) \geq \frac{1}{5}\meas(Q).
  \end{equation}
\end{lemma}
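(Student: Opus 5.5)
The plan is to reduce to the compact cell $\vicsek$ by self-similarity and then find an explicit subcell whose projection onto the tree $\celltree{Q}$ stays away from the vertices. Write $Q=3^{k}F_{w}(\vicsek)$. The map $x\mapsto 3^{k}F_{w}(x)$ is a similarity of ratio $\cellsize{Q}$. It sends $\vicsek$ onto $Q$, $\celltree{\vicsek}$ onto $\celltree{Q}$ and $\cellvert{\vicsek}=\{q_{1},\dots,q_{4}\}$ onto $\cellvert{Q}$. It also scales $\metric$ by $\cellsize{Q}$ and $\meas$ by $\cellsize{Q}^{\dimf}$. Nearest-point projections onto closed connected sets (Lemma~\ref{l.projt}) commute with such similarities, since the projection is characterized metrically. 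So it suffices to prove \eqref{e.massp} for $Q=\vicsek$, where $\cellsize{\vicsek}=1$ and the claim reads $\meas(\{x\in\vicsek:\dist(\pi_{T}(x),\{q_{j}\})\geq 1/5\})\geq\frac15\meas(\vicsek)$, with $T:=\celltree{\vicsek}=\bigcup_{j}[q_{0},q_{j}]$.

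Next, take the central subcell $F_{0}(\vicsek)$, whose measure is exactly $\frac15\meas(\vicsek)$, and show that every $x\in F_{0}(\vicsek)$ lies in the set. The set $F_{0}(\vicsek)\cap T$ equals the cross $\bigcup_{j}[q_{0},F_{0}(q_{j})]=\celltree{F_{0}(\vicsek)}$, which is nonempty, closed and connected. For $x\in F_{0}(\vicsek)$, the arc $[x,q_{0}]$ lies in $F_{0}(\vicsek)$ by convexity (Lemma~\ref{l.projt}-\ref{it.proj1}). By Lemma~\ref{l.projt}-\ref{it.proj3}, $\pi_{T}(x)$ is the first point of $[x,q_{0}]$ meeting $T$. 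Hence $\pi_{T}(x)\in F_{0}(\vicsek)\cap T$, so $\metric(q_{0},\pi_{T}(x))\leq \metric(q_{0},F_{0}(q_{j}))=1/3$.

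Since $\pi_{T}(x)$ lies on some arm $[q_{0},q_{j}]$ of length $1$, its distance to $q_{j}$ is at least $1-1/3=2/3$. For $i\neq j$, the geodesic from $\pi_{T}(x)$ to $q_{i}$ passes through $q_{0}$, so that distance is at least $1$. Therefore $\dist(\pi_{T}(x),\cellvert{\vicsek})\geq 2/3\geq 1/5$ for all $x\in F_{0}(\vicsek)$. This gives $\meas$ of the set $\geq\meas(F_{0}(\vicsek))=5^{-1}\meas(\vicsek)$, and rescaling yields \eqref{e.massp}.

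The only delicate point is the claim that $\pi_{T}(x)\in F_{0}(\vicsek)$. It rests on $[x,q_{0}]\subset F_{0}(\vicsek)$ together with the gate decomposition $[x,q_{0}]=[x,\pi_{T}(x)]\cup[\pi_{T}(x),q_{0}]$ from Lemma~\ref{l.projt}-\ref{it.proj3}. Both hold because $q_{0}\in T$ and cells are closed and connected, which I expect to be routine.
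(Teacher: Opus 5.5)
Your proof is correct and follows the paper's argument. Both take the central subcell $Q_0=3^kF_{w0}(\vicsek)$ (your $F_0(\vicsek)$ after rescaling), show $\pi_{\celltree{Q}}(Q_0)\subset Q_0\cap\celltree{Q}=\celltree{Q_0}$, and conclude $\dist(\pi_{\celltree{Q}}(x),\cellvert{Q})\geq 2\cellsize{Q}/3$ on $Q_0$. The differences are cosmetic: the rescaling to $\vicsek$ is unnecessary, and you obtain $\pi_{\celltree{Q}}(x)\in[x,\cellctr{Q}]\subset Q_0$ directly from Lemma~\ref{l.projt}-\ref{it.proj3} with $a=\cellctr{Q}$, which is slightly cleaner than the paper's branch-point contradiction.
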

\begin{proof}
  Suppose $Q\in\collectQ_{n}$, $Q=3^{k}F_{w}(\vicsek)$, $Q_{0}:=3^{k}F_{w0}(\vicsek)$, and let $\ell:=\cellsize{Q}$. Then $Q_{0}$ is the unique element in $\collectQ_{n+1}$ that contains $\cellctr{Q}$, and $\cellsize{Q_{0}}=\frac{\ell}{3}$, $\meas(Q_{0})=\frac{1}{5}\meas(Q)$ and $\cellctr{Q_{0}}=\cellctr{Q}$. For every $a\in\cellvert{Q}$, let $q_{a}$ be the unique point of $[\cellctr{Q},a]$ satisfying $\metric(\cellctr{Q},q_{a})=\frac{\ell}{3}$. By the definition of $Q_{0}$,
  \begin{equation}\label{e.q0tree}
    \cellvert{Q_{0}} = \Sett{q_{a}}{a\in\cellvert{Q}}, \ \text{ and }\ \celltree{Q_{0}} = \bigcup_{a\in\cellvert{Q}} [\cellctr{Q},q_{a}] = Q_{0}\cap\celltree{Q}.
  \end{equation}
  Consequently, if $z\in\celltree{Q_{0}}$ and $z\in[\cellctr{Q},q_{a}]$, then $\metric(z,a) = \ell-\metric(\cellctr{Q},z) \geq \ell-\frac{\ell}{3} = \frac{2\ell}{3}$, whereas, for $b\in\cellvert{Q}\setminus\{a\}$, $\metric(z,b) = \metric(z,\cellctr{Q})+\ell \geq\ell$. Hence $\dist(\celltree{Q_{0}},\cellvert{Q}) = \frac{2\ell}{3}$. We claim that $\pi_{\celltree{Q}}(Q_{0})\subset\celltree{Q_{0}}$. Indeed, fix $x\in Q_{0}$ and let $z:=\pi_{\celltree{Q}}(x)$. Suppose, to the contrary, that $z\notin\celltree{Q_{0}}$. Let $b$ be the branch point of $x,z,\cellctr{Q}$. Then $b\in[x,\cellctr{Q}] \cap[z,\cellctr{Q}]$. Since $Q_{0}$ is connected, Lemma \ref{l.projt}-\ref{it.proj1} gives $[x,\cellctr{Q}]\subset Q_{0}$, while, since $\celltree{Q}$ is connected, $[z,\cellctr{Q}]\subset\celltree{Q}$. Thus, by \eqref{e.q0tree}, $b\in Q_{0}\cap\celltree{Q} = \celltree{Q_{0}}$. Since $z\notin\celltree{Q_{0}}$, we have $b\neq z$, and hence $b\in[x,z)\cap\celltree{Q}$. This contradicts Lemma \ref{l.projt}-\ref{it.proj3}, which gives
  \begin{equation}
    [x,\pi_{\celltree{Q}}(x)]\cap\celltree{Q} = \{\pi_{\celltree{Q}}(x)\} = \{z\}.
  \end{equation}
  Thus the claim holds. Therefore, for every $x\in Q_{0}$, $\dist\bigl( \pi_{\celltree{Q}}(x),\cellvert{Q} \bigr) \geq \frac{2\ell}{3} > \frac{\ell}{5}$. In particular,
  \begin{equation}
    \inf_{x\in Q_{0}}\dist(\pi_{\celltree{Q}}(x),\cellvert{Q}) \geq\dist(\celltree{Q_{0}},\cellvert{Q})=\frac{2\ell}{3}.
  \end{equation}
  Hence
  \begin{equation}
    Q_{0} \subset \Sett{x\in Q} { \dist\bigl( \pi_{\celltree{Q}}(x),\cellvert{Q} \bigr) \geq\frac{\ell}{5} }.
  \end{equation}
  Consequently,
    \begin{align}
      \meas\left( \Sett{x\in Q} { \dist\bigl( \pi_{\celltree{Q}}(x),\cellvert{Q} \bigr) \geq\frac{\ell}{5} } \right)\geq \meas(Q_{0})= \frac{1}{5}\meas(Q),
    \end{align}
  which proves \eqref{e.massp}.
\end{proof}
\begin{lemma}\label{lem:capped-p}
  Let $p\in[1,2)$, $q\in(0,1)$ and $b_{0}\in(0,\infty)$. Let $\{M_{n},\rF_{n}\}_{n\in \bN}$ be a uniformly integrable real martingale on a probability space $(\Omega,\rF,\bP)$, with terminal value $M_{\infty}\in L^{p}(\Omega,\bP)$. Let $M_{0}:=0$ and $b_{n}:=b_{0}q^{n}$ for $n\in\bN$. Assume that
  \begin{equation}
    |M_{n}-M_{n-1}|\le b_{n}\quad\bP\text{-a.s. on $\Omega$, for every }n\in \bN.
  \end{equation}
  Then
  \begin{equation}\label{eq:capped-mart}
    \sum_{n=1}^{\infty}b_{n}^{p-2}\|M_{n}-M_{n-1}\|_{L^{2}(\Omega,\bP)}^{2} \le C_{p,q}\|M_{\infty}\|_{L^{p}(\Omega,\bP)}^{p}.
  \end{equation}
\end{lemma}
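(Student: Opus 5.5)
The plan is to reduce \eqref{eq:capped-mart} to a pointwise bound by the martingale square function, and then apply a Burkholder-type square function inequality. Write $d_{n}:=M_{n}-M_{n-1}$ (so $d_{1}=M_{1}$) and $S:=\bigl(\sum_{n\ge1}d_{n}^{2}\bigr)^{1/2}$. Since $p-2<0$, the weights $b_{n}^{p-2}$ increase in $n$, and the cap $|d_{n}|\le b_{n}$ gives $b_{n}^{p-2}d_{n}^{2}\le b_{n}^{p}$.

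\emph{Step 1 (pointwise reduction).} Fix $\omega$ and split the indices at the level $S(\omega)$.
\begin{itemize}
\item For $n$ with $b_{n}\ge S$, use $b_{n}^{p-2}\le S^{p-2}$. Summing gives at most $S^{p-2}\sum_{n}d_{n}^{2}=S^{p}$.
\item For $n$ with $b_{n}<S$, use $b_{n}^{p-2}d_{n}^{2}\le b_{n}^{p}$. Since $b_{n}=b_{0}q^{n}$ is geometric, the sum is at most $(1-q^{p})^{-1}$ times the largest such $b_{n}$ raised to the power $p$, hence at most $(1-q^{p})^{-1}S^{p}$.
\end{itemize}
Therefore
\begin{equation}
\sum_{n\ge1}b_{n}^{p-2}d_{n}^{2}\le C_{p,q}\,S^{p}\quad\bP\text{-a.s.}
\end{equation}
Taking expectations and using Tonelli, the left side of \eqref{eq:capped-mart} is at most $C_{p,q}\,\bE[S^{p}]$.

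\emph{Step 2 (square function bound).} For $p\in(1,2)$, Burkholder's square function inequality gives $\bE[S^{p}]\le C_{p}\sup_{n}\|M_{n}\|_{L^{p}}^{p}$. Uniform integrability gives $M_{n}=\bE[M_{\infty}\mid\rF_{n}]$, so by conditional Jensen this supremum is at most $\|M_{\infty}\|_{L^{p}}^{p}$. This proves the lemma for $p>1$.

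\emph{Step 3 (the case $p=1$).} This is the main obstacle: $\bE[S]\lesssim\bE|M_{\infty}|$ fails for general martingales, so the cap must be used. The extra structure available is the uniform tail bound
\begin{equation}
|M_{\infty}-M_{n}|\le\sum_{k>n}b_{k}\le\frac{q}{1-q}\,b_{n}.
\end{equation}
First attempt: Davis' inequality gives $\bE[S]\le C\,\bE[M^{*}]$, where $M^{*}:=\sup_{n}|M_{n}|$. It would then suffice to show $\bE[M^{*}]\lesssim\bE|M_{\infty}|$ from the tail bound, via the following dichotomy.
\begin{itemize}
\item On $\{|M_{n}|\ge 2Cb_{n}\}$ the path is already trapped: $|M_{\infty}|\ge\frac12|M_{n}|$.
\item On $\{|M_{n}|<2Cb_{n}\}$ one has $|M_{n}|\lesssim b_{n}$.
\end{itemize}
So $M^{*}\lesssim|M_{\infty}|+b_{\tau}$, where $\tau$ is the last time the path is not trapped. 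What remains is to bound $\bE[b_{\tau}]$ by $\bE|M_{\infty}|$.

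Fallback if that last step fails: work with the layered form instead. Use $b_{n}^{-1}\approx c_{q}\sum_{k\le n}b_{k}^{-1}$ to rewrite the left side as a constant times $\sum_{k}b_{k}^{-1}\,\bE\bigl[(M_{\infty}-M_{k-1})^{2}\bigr]$. Then estimate each term by a truncated quantity of the form $\bE\bigl[\min\{(M_{\infty}-M_{k-1})^{2},\,b_{k}|M_{\infty}|\}\bigr]$, using the same trapping dichotomy; these terms sum geometrically to $\lesssim\bE|M_{\infty}|$.
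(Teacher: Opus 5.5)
\paragraph{Verdict.} Your Steps 1 and 2 are correct and prove the lemma for $p\in(1,2)$, by a route different from the paper's: you dominate the sum pointwise by $S^{p}$ and then apply Burkholder's inequality. This relies on a deep inequality whose constant degenerates as $p\downarrow1$, whereas the paper's argument is elementary and uniform in $p\in[1,2)$. The gap is the case $p=1$, which the paper needs: Proposition~\ref{p.goodp} and Theorem~\ref{t.strfl}-\ref{it.scale} include $p=1$.

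\paragraph{The first attempt at $p=1$.} The inequality $M^{*}\lesssim|M_{\infty}|+b_{\tau}$ is false. At non-trapped times $n\le\tau$ you only know $|M_{n}|<2Cb_{n}$, and there $b_{n}\ge b_{\tau}$, so the bound points the wrong way. For example, take $q=1/2$ and the path $M_{n}=b_{n}$ for $n\le N$, frozen afterwards; it has positive probability under fair $\pm b_{n}$ steps. It has $M^{*}=b_{1}$ but $|M_{\infty}|+b_{\tau}=2b_{N}$. The step you flagged as remaining is actually trivial: at the first trapped time $T=\tau+1$ one has $b_{\tau}=b_{T}/q\lesssim|M_{\infty}|$.

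\paragraph{Step 1 cannot work at $p=1$.} No argument can rescue Step 1 here, because $\bE[S]$ and $\bE[M^{*}]$ are not bounded by $C_{q}\bE|M_{\infty}|$ under the caps. Fix $K\in\bN$ and set $h:=2(q/2)^{K}b_{1}$. Let $M_{1}=\pm h$ with probability $\pi/2$ each and $M_{1}=0$ otherwise. At each time $1+k$, $k=1,\dots,K$, let a nonzero path double or drop to $0$ by a fair coin; the step $2^{k-1}h$ is at most $b_{1+k}$ because $(q/2)^{K-k}\le1$. Then freeze the path. This gives $\bE|M_{\infty}|=\pi h$ and $\bE[M^{*}]=\pi h(K/2+1)$, with $S\ge M^{*}/2$. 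Meanwhile the left side of \eqref{eq:capped-mart} with $p=1$ is at most $3\pi h$. On paths that die early the increments lie far below the caps, so $b_{n}^{-1}d_{n}^{2}\ll|d_{n}|$, and Step 1 throws exactly this away. For $p>1$ the loss is harmless, since $\bE[S^{p}]\lesssim\bE|M_{\infty}|^{p}$ anyway; for $p=1$ nothing compensates it.

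\paragraph{The fallback.} It does not close the gap either. The rewriting $\sum_{n}b_{n}^{-1}\bE[d_{n}^{2}]\asymp\sum_{k}b_{k}^{-1}\bE[(M_{\infty}-M_{k-1})^{2}]$ is correct. However, the truncation step is only asserted, and the claimed geometric summation fails pointwise. Take a path with $|M_{k-1}|\approx b_{k}$ until $b_{k}\approx\delta:=|M_{\infty}|$. Each of the roughly $\log_{1/q}(b_{0}/\delta)$ indices with $b_{k}>\delta$ then contributes about $\delta$ to $\sum_{k}\min\{(M_{\infty}-M_{k-1})^{2}/b_{k},|M_{\infty}|\}$. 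The bound can hold only in expectation, through martingale cancellation, and proving that is equivalent to the lemma itself.

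\paragraph{The missing idea.} You need an argument that keeps the weights $b_{n}^{-1}$ and uses the martingale property before the caps take over. The paper does this with $\Phi_{1,b}(x)=\sqrt{x^{2}+b^{2}}-b$. This function is decreasing in $b$, satisfies $0\le\Phi_{1,b}(x)\le|x|$, and has $\Phi_{1,b}''\gtrsim b^{-1}$ on $[-2b,2b]$. Let $\tau=\inf\{n:|M_{n-1}|>b_{n}\}$ and $\sigma:=\tau-1$. The increments $\bE[\Phi_{1,b_{n}}(M_{n\wedge\sigma})-\Phi_{1,b_{n}}(M_{(n-1)\wedge\sigma})]$ then dominate $b_{n}^{-1}\bE[d_{n}^{2}\one_{\{n<\tau\}}]$ and telescope to at most $\bE|M_{\infty}|$. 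After $\tau$, the caps give $\sum_{n\ge\tau}b_{n}^{-1}d_{n}^{2}\le b_{\tau}/(1-q)<|M_{\sigma}|/(1-q)$, which optional stopping controls by $\bE|M_{\infty}|$.
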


\begin{proof}
  Define $\Phi_{p, b }(x ):=(x^{2}+b^{2})^{p/2}-b^{p}$ for $x\in\bR$ and $b\in(0,\infty)$. Since $p/2\leq1$, we have $0\leq\Phi_{p,b}(x)\leq|x|^{p}$. Direct differentiation gives $\partial_{b}\Phi_{p,b}(x)=pb((x^{2}+b^{2})^{p/2-1}-b^{p-2})\leq0$ and $\Phi_{p,b}''(x)=p(x^{2}+b^{2})^{p/2-2}(b^{2}+(p-1)x^{2})$. For $|x|\leq2b$, use $p/2-2<0$ and $p-1\geq0$ to obtain
  \begin{equation}
    \Phi_{p,b}''(x) \geq p(5b^{2})^{p/2-2}b^{2} =p5^{p/2-2}b^{p-2}>0.
  \end{equation}
  This remains valid at $p=1$. For $x,y\in[-2b,2b]$, Taylor's formula gives
    \begin{align}
      \Phi_{p,b}(x)-\Phi_{p,b}(y)-\Phi_{p,b}'(y)(x-y) &=(x-y)^{2}\int_{0}^{1}(1-s)\Phi_{p,b}''(y+s(x-y))\dif s\\
      &\geq\frac{p5^{p/2-2}}{2}b^{p-2}(x-y)^{2}.
    \end{align}
  Thus
  \begin{equation}\label{e.taylor}
    \Phi_{p,b}(x)-\Phi_{p,b}(y)\geq \Phi_{p,b}'(y)\cdot(x-y)+\frac{p5^{\frac{p}{2}-2}}{2}b^{p-2}(x-y)^{2},\ \text{ for all }x,y\in[-2b,2b].
  \end{equation}
  Define $\tau ( \omega ) : = \inf \Sett{n\in\bN}{|M_{n-1}(\omega)|>b_{n}}$ and $\sigma ( \omega ) : = \tau ( \omega ) - 1 $ for $\omega \in \Omega$. We use the conventions $\inf\emptyset=\infty$ and $\infty-1=\infty$, and let $\wt{M}_{n}:=M_{n\wedge\sigma}$. Since $M_{0}=0$, we have $\tau\geq2$ and $\sigma\geq1$. Also $\{\sigma\leq n\}=\bigcup_{j=1}^{n+1}\{|M_{j-1}|>b_{j}\}\in\rF_{n}$ for $n\geq1$, so $\sigma$ is a stopping time for the given martingale. The deterministic caps imply $|M_{n}|\leq\sum_{j=1}^{n} b_{j}\leq b_{0}q/(1-q)$, so all finite-time products below are integrable. Then
  \begin{equation}\label{e.differ1}
    \wt{M}_{n}(\omega)-\wt{M}_{n-1}(\omega)=M_{n\wedge\sigma(\omega)}(\omega)-M_{(n-1)\wedge\sigma(\omega)}(\omega)=(M_{n}(\omega)-M_{n-1}(\omega))\one_{\{n<\tau\}}(\omega)
  \end{equation}
  and
  \begin{equation}\label{e.differ1.1}
    |M_{n-1}(\omega)\one_{\{n<\tau\}}(\omega)|\le b_{n}, \ |M_{n}(\omega)-M_{n-1}(\omega)|\le b_{n}, \ \ \bP\text{-a.s. on }\Omega.
  \end{equation}
  On $\{n<\tau\}$, both $M_{n-1}$ and $M_{n}$ belong to $[-2b_{n},2b_{n}]$. Therefore,
    \begin{align}
      &\phantom{\ \leq}\Phi_{p,b_{n}}(\wt{M}_{n})-\Phi_{p,b_{n}}(\wt{M}_{n-1})\\
      &\overset{\eqref{e.taylor}}{\geq} \Phi_{p,b_{n}}'(\wt{M}_{n-1})\cdot (\wt{M}_{n}-\wt{M}_{n-1})+\frac{p5^{\frac{p}{2}-2}}{2}b_{n}^{p-2}(\wt{M}_{n}-\wt{M}_{n-1})^{2}\\
      &\overset{\eqref{e.differ1}}{=}\one_{\{n<\tau\}}\Phi_{p,b_{n}}'(M_{(n-1)\wedge\sigma})\cdot (M_{n}-M_{n-1})+\frac{p5^{\frac{p}{2}-2}}{2}b_{n}^{p-2}\one_{\{n<\tau\}}(M_{n}-M_{n-1})^{2}.\label{e.differ2}
    \end{align}
  For $n\geq2$, the event $\{n<\tau\}$ belongs to $\rF_{n-1}$ and $\left|\one_{\{n<\tau\}}\Phi_{p,b_{n}}'(M_{(n-1)\wedge\sigma})\right| \leq pb_{n}^{p-1}$. Consequently,
    \begin{align}
      &\phantom{\ \leq}\bE\left[\one_{\{n<\tau\}}\Phi_{p,b_{n}}'(M_{(n-1)\wedge\sigma})(M_{n}-M_{n-1})\right]\\
      &=\bE\left[\one_{\{n<\tau\}}\Phi_{p,b_{n}}'(M_{n-1})\bE[M_{n}-M_{n-1}\mid\rF_{n-1}]\right]=0.
    \end{align}
  For $n \allowbreak = 1 $, the same term vanishes because $\Phi_{p,b_{1}}'(M_{0})=\Phi_{p,b_{1}}'(0)=0$. Taking expectations gives
    \begin{align}
      &\phantom{\ \leq}\bE[\Phi_{p,b_{n}}(\wt{M}_{n})-\Phi_{p,b_{n}}(\wt{M}_{n-1})]\\
      &\overset{\eqref{e.differ2}}{\geq}\bE[\one_{\{n<\tau\}}\Phi_{p,b_{n}}'(M_{(n-1)\wedge\sigma})\cdot (M_{n}-M_{n-1})]+\frac{p5^{\frac{p}{2}-2}}{2}b_{n}^{p-2}\bE[\one_{\{n<\tau\}}(M_{n}-M_{n-1})^{2}]\\
      &=\frac{p5^{\frac{p}{2}-2}}{2}b_{n}^{p-2}\bE[\one_{\{n<\tau\}}(M_{n}-M_{n-1})^{2}]\label{e.differ3}
    \end{align}

  Because $b_{n+1}<b_{n}$, the monotonicity of $b\mapsto \Phi_{p,b}(x)$ implies that $ \Phi_{p,b_{n}}(x)-\Phi_{p,b_{n+1}}(x)\le0$. Consequently,
    \begin{align}
      &\phantom{\ \leq}\sum_{n=1}^{N}\bE\left[ \Phi_{p,b_{n}}(\wt {M}_{n}) -\Phi_{p,b_{n}}(\wt {M}_{n-1}) \right]\\
      &= \bE[\Phi_{p,b_{N}}(\wt {M}_{N})] -\bE[\Phi_{p,b_{ 1 }}(\wt {M}_{0})] +\sum_{n=1}^{N-1}\bE\left[ \Phi_{p,b_{n}}(\wt {M}_{n}) -\Phi_{p,b_{n+1}}(\wt {M}_{n}) \right]\\
      &\le \bE[\Phi_{p,b_{N}}(\wt {M}_{N})] \le \bE[|\wt {M}_{N}|^{p}]=\bE[\abs{\bE[M_{\infty}\mid\rF_{N\wedge\sigma}]}^{p}]\overset{\mathrm{(Jensen)}}{\leq}\bE[|M_{\infty}|^{p}].\label{e.differ4}
    \end{align}
  Here $\wt {M} _{ \allowbreak0 }=M_{ \allowbreak0 }=0$. Uniform integrability gives $M_{k}\to M_{\infty}$ in $L^{1}$, and hence, for $k\geq j\geq1$,
  \begin{equation}
    \|\bE[M_{\infty}\mid\rF_{j}]-M_{j}\|_{1} =\|\bE[M_{\infty}-M_{k}\mid\rF_{j}]\|_{1} \leq\|M_{\infty}-M_{k}\|_{1}\to 0.
  \end{equation}
  Since $N\wedge\sigma$ takes values in $\{1,\ldots,N\}$, for every $ \allowbreak A \in \rF _{N\wedge\sigma} $,
    \begin{align}
      \bE[\one_{A}M_{N\wedge\sigma}] =\sum_{j=1}^{N}\bE[\one_{A\cap\{N\wedge\sigma=j\}}M_{j}]=\sum_{j=1}^{N}\bE[\one_{A\cap\{N\wedge\sigma=j\}}M_{\infty}] =\bE[\one_{A}M_{\infty}].
    \end{align}
  Thus $M_{N\wedge\sigma}=\bE[M_{\infty}\mid\rF_{N\wedge\sigma}]$, as used in \eqref{e.differ4}. Summing \eqref{e.differ3} and using \eqref{e.differ4}, letting $N\to\infty$, and using monotone convergence, we obtain
  \begin{equation}\label{eq:before-crossing-all}
    \sum_{n=1}^{\infty} b_{n}^{p-2}\bE\left[|M_{n}-M_{n-1}|^{2}\one_{\{n<\tau\}}\right] \le \frac{2}{p5^{\frac{p}{2}-2}}\bE[|M_{\infty}|^{p}].
  \end{equation}

  On $\{\tau<\infty\}$, the definition of $\tau$ gives $b_{\tau}<|M_{\tau-1}|$. We interpret products supported on this event as zero on its complement. Then
    \begin{align}
      &\phantom{\ \leq}\sum_{n=1}^{\infty} b_{n}^{p-2}|M_{n}(\omega)-M_{n-1}(\omega)|^{2}\one_{\{n\geq\tau\}}(\omega)\one_{\{\tau<\infty\}}(\omega)\\
      &\overset{\eqref{e.differ1.1}}{\leq} \sum_{n\ge\tau(\omega)}b_{n}^{p}\one_{\{\tau<\infty\}}(\omega)=\frac{b_{\tau(\omega)}^{p}}{1-q^{p}}\one_{\{\tau<\infty\}}(\omega)\\
      &\leq\frac{|M_{\tau(\omega)-1}|^{p}}{1-q^{p}}\one_{\{\tau<\infty\}}(\omega)=\frac{|M_{\sigma(\omega)}|^{p}}{1-q^{p}}\one_{\{\tau<\infty\}}(\omega)\label{e.differ5}
    \end{align}
  For every $N\in\bN$, conditional Jensen's inequality gives $\bE[|M_{\sigma\wedge N}|^{p}]\leq\bE[|M_{\infty}|^{p}]$. Since $|M_{\sigma}|^{p}\one_{\{\sigma\leq N\}}\leq|M_{\sigma\wedge N}|^{p}$ and $\{\sigma<\infty\}=\{\tau<\infty\}$,
    \begin{align}
      \bE[|M_{\sigma}|^{p}\one_{\{\tau<\infty\}}] &=\lim_{N\to\infty}\bE[|M_{\sigma}|^{p}\one_{\{\sigma\leq N\}}]\leq\sup_{N\geq1}\bE[|M_{\sigma\wedge N}|^{p}] \leq\bE[|M_{\infty}|^{p}].
    \end{align}
  Consequently, Fubini's theorem gives
  \begin{equation}
    \sum_{n=1}^{\infty} b_{n}^{p-2}\bE\left[|M_{n}-M_{n-1}|^{2}\one_{\{n\geq\tau\}}\one_{\{\tau<\infty\}}\right]\overset{\eqref{e.differ5}}{\leq}\frac{1}{1-q^{p}}\bE[|M_{\sigma}|^{p}\one_{\{\tau<\infty\}}]\leq \frac{\bE[|M_{\infty}|^{p}]}{1-q^{p}}.
  \end{equation}
  For each $n$, the events $\{n<\tau\}$ and $\{n\geq\tau\}\cap\{\tau<\infty\}$ partition $\Omega$. Adding the two estimates proves \eqref{eq:capped-mart} with
  \begin{equation}
    C_{p,q}=\frac{2}{p5^{p/2-2}}+\frac{1}{1-q^{p}}<\infty.
  \end{equation}
\end{proof}

\begin{lemma}\label{l.integral-estimate}
  Let $\theta\in(0,1)$ and $ \allowbreak c \in(0, \allowbreak\infty )$. For each $t\in(0,\infty)$ and $ R\in \allowbreak [ 0,\infty)$, define
  \begin{equation}
    I(t,R) := \int_{0}^{\infty} s^{-\theta}(t+s)^{-1-\frac{\dimf}{\dimw}} \exp\left(-c\left(\frac{R^{\dimw}}{t+s}\right)^{\frac{1}{\dimw-1}}\right) \dif s.
  \end{equation}
  Then there exists $C\in(0,\infty)$, depending only on $\theta,c,\dimf,\dimw$, such that
  \begin{equation}\label{e.integral-estimate-metric}
    I(t,R) \leq C\left(t^{1/\dimw}+R\right)^{-\dimf-\theta\dimw}.
  \end{equation}
\end{lemma}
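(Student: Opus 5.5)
We prove \eqref{e.integral-estimate-metric} by splitting according to whether the Gaussian-type factor is active or not. Write $a:=\dimf/\dimw\in(0,1)$ and $\kappa:=1/(\dimw-1)$. The target bound is equivalent to the two estimates
\[
I(t,R)\lesssim t^{-a-\theta}\quad\text{always},\qquad I(t,R)\lesssim R^{-\dimw(a+\theta)}\quad\text{when } R^{\dimw}\geq t,
\]
since $\dimf+\theta\dimw=\dimw(a+\theta)$ and $(t^{1/\dimw}+R)^{\dimw}\simeq\max(t,R^{\dimw})$.

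For the first estimate, drop the exponential (it is at most $1$) and substitute $s=tu$:
\[
I(t,R)\leq t^{-a-\theta}\int_{0}^{\infty}u^{-\theta}(1+u)^{-1-a}\dif u .
\]
The integral is a finite Beta-type integral, because $\theta<1$ at $0$ and $\theta+1+a>1$ at $\infty$.

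For the second estimate, assume $R^{\dimw}\geq t$ and split the $s$-integral at $s=R^{\dimw}$.
\begin{itemize}
\item On $s\geq R^{\dimw}$ we have $t+s\geq s$, so again dropping the exponential gives a contribution at most $\int_{R^{\dimw}}^{\infty}s^{-\theta-1-a}\dif s\simeq R^{-\dimw(a+\theta)}$.
\item On $s\in(0,R^{\dimw})$ we have $t+s\leq 2R^{\dimw}$. Here we use the elementary bound $\sup_{x\geq 1}x^{\beta}e^{-cx^{\kappa}}<\infty$ with $x=R^{\dimw}/(t+s)\geq 1/2$. Taking $\beta=1+a$ bounds $(t+s)^{-1-a}\exp(\cdots)$ by $C R^{-\dimw(1+a)}$. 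Integrating $s^{-\theta}$ over $(0,R^{\dimw})$ then gives $C R^{\dimw(1-\theta)}R^{-\dimw(1+a)}=C R^{-\dimw(a+\theta)}$.
\end{itemize}

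Combining the two regimes yields $I\lesssim\min\bigl(t^{-a-\theta},R^{-\dimw(a+\theta)}\bigr)$ for $R^{\dimw}\geq t$, and $I\lesssim t^{-a-\theta}$ otherwise. Either way this is $\lesssim (t^{1/\dimw}+R)^{-\dimf-\theta\dimw}$.

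The only mildly delicate step is the near region $s<R^{\dimw}$. There the exponential must absorb the power $(t+s)^{-1-a}$ uniformly even when $t$ is tiny compared with $R^{\dimw}$. The polynomial-times-stretched-exponential supremum bound handles this, with constants depending only on $c,\dimf,\dimw,\theta$.
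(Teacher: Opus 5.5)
Your proof is correct and follows essentially the paper's route. You use the same case split $R^{\dimw}\le t$ versus $R^{\dimw}>t$, the same Beta-integral bound in the first case, and a split at the same scale $s=R^{\dimw}$ in the second (the paper's $u=1$ after rescaling $s=R^{\dimw}u$). Two small differences: you treat the whole near region $s<R^{\dimw}$ in one step via the uniform bound on $x^{1+a}e^{-cx^{\kappa}}$, where the paper uses two pieces $J_{1},J_{2}$; and you quote the supremum over $x\ge 1$ but apply it for $x\ge 1/2$, which is harmless since $\sup_{x>0}x^{\beta}e^{-cx^{\kappa}}<\infty$ for $\beta\ge 0$.
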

\begin{proof}
  We distinguish two cases. Suppose first that $R^{\dimw}\leq t$. Since the exponential factor is bounded above by $1$, with the change of variables $s=tu$,
    \begin{align}
      I(t,R) &\leq \int_{0}^{\infty} s^{-\theta}(t+s)^{-1-\frac{\dimf}{\dimw}}\dif s=t^{-\frac{\dimf}{\dimw}-\theta} \int_{0}^{\infty} u^{-\theta}(1+u)^{-1-\dimf/\dimw}\dif u\\
      &\lesssim t^{-\frac{\dimf}{\dimw}-\theta}\lesssim (2t)^{-\frac{\dimf}{\dimw}-\theta}\lesssim \left(t+R^{\dimw}\right)^{-\frac{\dimf}{\dimw}-\theta}\ \text{ (since $R^{\dimw}\leq t$)}.\label{e.integral-case2}
    \end{align}
  Suppose now that $R^{\dimw}>t$. With the change of variables $s=R^{\dimw}u$,
    \begin{align}
      I(t,R) &= R^{-\dimf-\theta\dimw} \int_{0}^{\infty} u^{-\theta}(tR^{-\dimw}+u)^{-1-\frac{\dimf}{\dimw}} \exp\left(-c(tR^{-\dimw}+u)^{-\frac{1}{\dimw-1}}\right) \dif u \\
      &=R^{-\dimf-\theta\dimw}J(tR^{-\dimw}),
    \end{align}
  where
    \begin{align}
      J(\tau)&:=\int_{0}^{\infty} u^{-\theta}(\tau+u)^{-1-\frac{\dimf}{\dimw}} \exp\left(-c(\tau+u)^{-\frac{1}{\dimw-1}}\right) \dif u\\
      &=\left(\int_{0}^{\tau}+\int_{\tau}^{1}+\int_{1}^{\infty}\right)u^{-\theta}(\tau+u)^{-1-\frac{\dimf}{\dimw}} \exp\left(-c(\tau+u)^{-\frac{1}{\dimw-1}}\right) \dif u\\
      &=:J_{1}(\tau)+J_{2}(\tau)+J_{3}(\tau).
    \end{align}
  We show that $J(\tau)$ is uniformly bounded for $\tau\in(0,1)$. Put $c_{1}:=c2^{-1/(\dimw-1)}>0$. If $0<u<\tau$, then $\tau\leq\tau+u\leq2\tau$. Hence
    \begin{align}
      J_{1}(\tau) &\leq \tau^{-1-\frac{\dimf}{\dimw}} \exp\left(-c2^{-\frac{1}{\dimw-1}}\tau^{-\frac{1}{\dimw-1}}\right) \int_{0}^{\tau} u^{-\theta}\dif u= \frac{\tau^{-\frac{\dimf}{\dimw}-\theta}}{1-\theta} \exp\left(-c_{1}\tau^{-\frac{1}{\dimw-1}}\right) \leq C,
    \end{align}
  since $\sup_{r\in(0,1)} r^{-\frac{\dimf}{\dimw}-\theta} \exp\left(-c_{1}r^{-\frac{1}{\dimw-1}}\right) <\infty$. If $\tau\leq u\leq1$, then $u\leq\tau+u\leq2u$. Thus
    \begin{align}
      J_{2}(\tau) &\leq \int_{\tau}^{1} u^{-1-\frac{\dimf}{\dimw}-\theta} \exp\left(-c2^{-\frac{1}{\dimw-1}}u^{-\frac{1}{\dimw-1}}\right) \dif u\\
      &\leq \int_{0}^{1} u^{-1-\frac{\dimf}{\dimw}-\theta} \exp\left(-c2^{-\frac{1}{\dimw-1}}u^{-\frac{1}{\dimw-1}}\right) \dif u\\
      &\leq ({\dimw-1}) \int_{1}^{\infty} v^{(\dimw-1)\left(\frac{\dimf}{\dimw}+\theta\right)-1} \exp(-c2^{-1/(\dimw-1)}v)\dif v\leq C,\ \text{(with $v=u^{-1/(\dimw-1)}$)}.
    \end{align}
  Finally, if $u\geq1$, then $\tau+u\geq u$, and the exponential factor is bounded above by $1$. Therefore
    \begin{align}
      J_{3}(\tau) &\leq \int_{1}^{\infty} u^{-1-\frac{\dimf}{\dimw}-\theta}\dif u =\left(\frac{\dimf}{\dimw}+\theta\right)^{-1}.
    \end{align}
  Consequently, $\sup_{\tau\in(0,1)}J(\tau)<\infty$. It follows that $I(t,R)\lesssim R^{-\dimf-\theta\dimw}$. Since $t<R^{\dimw}$, we have $R^{\dimw}<t+R^{\dimw}<2R^{\dimw}$, and hence $I(t,R)\leq C\left(t+R^{\dimw}\right)^{-\frac{\dimf}{\dimw}-\theta}$. Combining this with \eqref{e.integral-case2} and using $t^{1/\dimw}+R\asymp(t+R^{\dimw})^{1/\dimw}$ proves \eqref{e.integral-estimate-metric}.
\end{proof}
\begin{lemma}\label{l.molec}
  Let $p\in(1,\infty)$. Let $\sI$ be an at most countable index set. Let $(E_{i})_{i\in\sI}$ be Borel subsets of $\ambient$ such that
  \begin{equation}\label{e.moldis}
    \meas(E_{i}\cap E_{j})=0, \qquad i\neq j.
  \end{equation}
  Assume that there exist $(z_{i})_{i\in\sI}\subset\ambient$, $(r_{i})_{i\in\sI}\subset(0,\infty)$, and a constant $C_{0}\in(1,\infty)$ such that
  \begin{equation}\label{e.molhy}
    E_{i}\subset B(z_{i},C_{0}r_{i}), \qquad C_{0}^{-1}r_{i}^{\dimf}\leq\meas(E_{i})\leq C_{0}r_{i}^{\dimf}.
  \end{equation}
  Let $N\in(\dimf,\infty)$. Then there exists a constant $C_{p,N,C_{0}}$, depending only on $p,N,C_{0}$ and the constant in \eqref{e.ahlfors}, such that, for every family $(c_{i})_{i\in\sI}\subset[0,\infty)$,
  \begin{equation}\label{e.molec}
    \left\| \sum_{i\in\sI} c_{i}\left(1+\frac{\metric(\cdot,z_{i})}{r_{i}}\right)^{-N} \right\|_{L^{p}(\ambient,\meas)}^{p} \leq C_{p,N,C_{0}}\sum_{i\in\sI}c_{i}^{p}\meas(E_{i}).
  \end{equation}
\end{lemma}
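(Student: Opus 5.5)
This is a Fefferman--Stein type vector-valued inequality for bumps. I would prove it by duality against the Hardy--Littlewood maximal function on the Ahlfors-regular space $(\ambient,\metric,\meas)$. Write $\Phi_{i}(x):=(1+\metric(x,z_{i})/r_{i})^{-N}$ and $F:=\sum_{i}c_{i}\Phi_{i}$. Since all terms are nonnegative, monotone convergence lets me assume $\sI$ is finite. Let $p'$ be the conjugate exponent and take $g\geq0$ with $\|g\|_{L^{p'}(\ambient,\meas)}\leq1$. Then
\[
\int_{\ambient}Fg\dif\meas=\sum_{i}c_{i}\int_{\ambient}\Phi_{i}g\dif\meas .
\]

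The first step is the pointwise bound $\int_{\ambient}\Phi_{i}g\dif\meas\leq C\,r_{i}^{\dimf}\,\inf_{y\in E_{i}}Mg(y)$. Here $M$ is the uncentered maximal operator over balls. To get it, split $\ambient$ into $B(z_{i},r_{i})$ and the annuli $B(z_{i},2^{j+1}r_{i})\setminus B(z_{i},2^{j}r_{i})$, exactly as in the $p=1$ computation in the proof of Lemma~\ref{l.varia}. On the $j$-th annulus $\Phi_{i}\leq2^{-jN}$, so
\[
\int_{\ambient}\Phi_{i}g\dif\meas\lesssim\sum_{j\geq0}2^{-jN}\int_{B(z_{i},2^{j+1}r_{i})}g\dif\meas .
\]
For $y\in E_{i}\subset B(z_{i},C_{0}r_{i})$, the ball $B(z_{i},\max(2^{j+1},C_{0})r_{i})$ contains $y$. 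Ahlfors regularity \eqref{e.ahlfors} then gives
\[
\int_{B(z_{i},2^{j+1}r_{i})}g\dif\meas\lesssim(2^{j}C_{0}r_{i})^{\dimf}Mg(y).
\]
Since $N>\dimf$, the sum $\sum_{j}2^{-j(N-\dimf)}$ converges, which yields the claim with a constant depending on $N$ and $C_{0}$.

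The second step uses the lower bound $r_{i}^{\dimf}\leq C_{0}\meas(E_{i})$ from \eqref{e.molhy}. It gives
\[
c_{i}r_{i}^{\dimf}\inf_{E_{i}}Mg\lesssim\int_{E_{i}}c_{i}Mg\dif\meas .
\]
Summing over $i$ and using the almost-disjointness \eqref{e.moldis}, I obtain
\[
\int_{\ambient}Fg\dif\meas\lesssim\int_{\ambient}\Big(\sum_{i}c_{i}\one_{E_{i}}\Big)Mg\dif\meas .
\]
H\"older's inequality bounds the right-hand side by $\|\sum_{i}c_{i}\one_{E_{i}}\|_{L^{p}}\|Mg\|_{L^{p'}}$.

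For the first factor, disjointness gives $\|\sum_{i}c_{i}\one_{E_{i}}\|_{L^{p}}^{p}=\sum_{i}c_{i}^{p}\meas(E_{i})$. For the second factor, $\|Mg\|_{L^{p'}}\lesssim\|g\|_{L^{p'}}\leq1$ because $p'\in(1,\infty)$. This is the $L^{p'}$ boundedness of the maximal operator on the doubling space $(\ambient,\metric,\meas)$; I would cite it as the Hardy--Littlewood maximal theorem on spaces of homogeneous type, which follows from Vitali covering together with Marcinkiewicz interpolation. Taking the supremum over all such $g$ gives $\|F\|_{L^{p}}^{p}\lesssim\sum_{i}c_{i}^{p}\meas(E_{i})$, which is \eqref{e.molec}.

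The only point needing care is the first step: the constant must be uniform in $i$. This is guaranteed because the comparison ball's radius is $\max(2^{j+1},C_{0})r_{i}$, so the only dependence is on $C_{0}$ and on the constant in \eqref{e.ahlfors}. The remaining steps are routine, and $p>1$ enters exactly through the maximal theorem at exponent $p'<\infty$.
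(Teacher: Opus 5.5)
Your proposal is correct and follows essentially the same route as the paper: duality against nonnegative $g\in L^{p'}$, the dyadic-annulus majorization of each bump, comparison of $\int_{B(z_i,2^{j+1}r_i)}g\dif\meas$ with $\meas(E_i)$ times the infimum over $E_i$ of the maximal function, then H\"older's inequality, the almost-disjointness of the $E_i$, and the $L^{p'}$-boundedness of the maximal operator. The differences are cosmetic. You use the uncentered rather than the centered maximal function. You apply H\"older to $\int(\sum_i c_i\one_{E_i})\,Mg\dif\meas$ rather than to the sum over $i$, which is equivalent. You also make the reduction to finite $\sI$ explicit.
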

\begin{proof}
  Define
  \begin{equation}
    \omega_{i}(x):= \left(1+\frac{\metric(x,z_{i})}{r_{i}}\right)^{-N}, \ x\in\ambient.
  \end{equation}
  If $\metric(x,z_{i})<2r_{i}$, the $j=0$ term below is one. Otherwise choose $ \allowbreak j \geq 1 $ such that $2^{j}r_{i}\leq\metric(x,z_{i})<2^{j+1}r_{i}$. Then $\omega_{i}(x)\leq2^{-jN}$. Thus
  \begin{equation}\label{e.molann}
    \omega_{i}(x) \leq C_{N}\sum_{j=0}^{\infty} 2^{-jN}\one_{B(z_{i},2^{j+1}r_{i})}(x),\ \text{ for every $x\in\ambient$}.
  \end{equation}
  Let $p'=p/(p-1)$, and let $h\in L^{p'}(\ambient,\meas)$ satisfy $h\geq0$ and $\|h\|_{L^{p'}(\ambient,\meas)}=1$. Let $\sM $ denote the centered Hardy--Littlewood maximal operator on $(\ambient,\metric, \meas )$ :
  \begin{equation}
    (\sM h)(y):=\sup_{r\in(0,\infty)}\frac{1}{\meas(B(y,r))}\int_{B(y,r)}|h|\dif\meas.
  \end{equation}
  Fix $i$ and $y\in E_{i}$. By \eqref{e.molhy}, $\metric(y,z_{i})\leq C_{0}r_{i}$. Hence, for every $j\in\bNN$, $B(z_{i},2^{j+1}r_{i})\subset B(y,C_{1}2^{j}r_{i})$ for a constant $C_{1}$ depending only on $C_{0}$. Therefore, by Ahlfors regularity and the lower bound in \eqref{e.molhy},
    \begin{align}
      \int_{B(z_{i},2^{j+1}r_{i})}h\dif\meas &\leq \int_{B(y,C_{1}2^{j}r_{i})}h\dif\meas\leq \meas(B(y,C_{1}2^{j}r_{i}))(\sM h)(y)\\
      &\lesssim 2^{j\dimf}r_{i}^{\dimf}(\sM h)(y)\lesssim 2^{j\dimf}\meas(E_{i})(\sM h)(y). \label{e.molmax}
    \end{align}
  Since \eqref{e.molmax} holds for every $y\in E_{i}$,
  \begin{equation}\label{e.molinf}
    \int_{B(z_{i},2^{j+1}r_{i})}h\dif\meas \leq C2^{j\dimf}\meas(E_{i}) \einf_{y\in E_{i}}(\sM h)(y).
  \end{equation}
  Using \eqref{e.molann} and \eqref{e.molinf},
    \begin{align}
      \int_{\ambient}\sum_{i\in\sI} c_{i}\omega_{i} h\dif\meas &\overset{\eqref{e.molann}}{\leq} C_{N}\sum_{j=0}^{\infty}2^{-jN} \sum_{i\in\sI} c_{i} \int_{B(z_{i},2^{j+1}r_{i})}h\dif\meas\\
      &\overset{\eqref{e.molinf}}{\leq} C\sum_{j=0}^{\infty}2^{-j(N-\dimf)} \sum_{i\in\sI} c_{i}\meas(E_{i}) \einf_{E_{i}}\sM h. \label{e.moldual1}
    \end{align}
  Since $N>\dimf$, $\sum_{j=0}^{\infty}2^{-j(N-\dimf)}<\infty$. Applying H\"older's inequality to the sum over $i$,
    \begin{align}
      &\phantom{\ \leq}\sum_{i\in\sI} c_{i}\meas(E_{i}) \einf_{E_{i}}\sM h\leq \left(\sum_{i\in\sI} c_{i}^{p}\meas(E_{i})\right)^{1/p} \left( \sum_{i\in\sI}\meas(E_{i}) \left(\einf_{E_{i}}\sM h\right)^{p'} \right)^{1/p'}\\
      &\leq \left(\sum_{i\in\sI} c_{i}^{p}\meas(E_{i})\right)^{1/p}\left(\sum_{i}\int_{E_{i}}|\sM h|^{p'}\dif\meas\right)^{1/p'}\\
      &\overset{\eqref{e.moldis}}{\leq}\left(\sum_{i\in\sI} c_{i}^{p}\meas(E_{i})\right)^{1/p}\|\sM h\|_{L^{p'}(\ambient,\meas)}\leq C_{p}\left(\sum_{i\in\sI} c_{i}^{p}\meas(E_{i})\right)^{1/p}, \label{e.molholder}
    \end{align}
  where we use the $L^{p'}$-boundedness of $\sM$ on the doubling metric measure space $(\ambient,\metric,\meas)$, and the assumption $\|h\|_{L^{p'}(\ambient,\meas)}=1$; see \cite{CW71}. Combining \eqref{e.moldual1} with \eqref{e.molholder},

  \begin{equation}
    \int_{\ambient}\left(\sum_{i} c_{i}\omega_{i}\right) h\dif\meas \leq C_{p,N,C_{0}} \left(\sum_{i} c_{i}^{p}\meas(E_{i})\right)^{1/p}.
  \end{equation}
  By duality, we obtain \eqref{e.molec}.
\end{proof}
\begin{lemma}\label{l.intpot}
  Let $c,q\in(0,\infty)$ and $0<\beta<a$. For $R\in[0,\infty)$, we have
  \begin{equation}\label{e.intpotdef}
    \sI(R):= \frac{1}{\Gamma(\beta)} \int_{0}^{\infty} s^{\beta-1}(1+s)^{-a} \exp\left(-c\left(\frac{R}{1+s}\right)^{q}\right) \dif s.
  \end{equation}
  Then there exists $C=C(a,\beta,q,c)\in(1,\infty)$ such that, for every $R\in[0,\infty)$,
  \begin{equation}\label{e.intpot}
    C^{-1}(1\vee R)^{\beta-a} \leq \sI (R) \leq C(1\vee R)^{\beta-a}.
  \end{equation}
\end{lemma}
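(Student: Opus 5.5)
The plan is to change variables so that the dependence on $R$ is isolated, and then treat $R\le 1$ and $R>1$ separately. For $R\le1$ the claim is that $\sI(R)$ is comparable to a constant. For the upper bound, bound the exponential by $1$; then
\[
\sI(R)\le \frac{1}{\Gamma(\beta)}\int_0^\infty s^{\beta-1}(1+s)^{-a}\dif s=\frac{B(\beta,a-\beta)}{\Gamma(\beta)}<\infty,
\]
which is finite because $0<\beta<a$. For the lower bound, use $R/(1+s)\le 1$ for $s\ge 0$, so the exponential is at least $e^{-c}$ and $\sI(R)\ge e^{-c}B(\beta,a-\beta)/\Gamma(\beta)$. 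Since $\sI$ is monotone decreasing in $R$, the regime $R\le 1$ is settled.

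For $R>1$, substitute $s=Ru$:
\[
\sI(R)=\frac{R^{\beta-a}}{\Gamma(\beta)}\int_0^\infty u^{\beta-1}\bigl(R^{-1}+u\bigr)^{-a}\exp\Bigl(-c\bigl(R^{-1}+u\bigr)^{-q}\Bigr)\dif u=:\frac{R^{\beta-a}}{\Gamma(\beta)}K(R^{-1}).
\]
It then suffices to show that $K(\tau)$ is bounded above and below by positive constants for $\tau\in(0,1)$.

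For the lower bound on $K$, restrict to $u\in[1,2]$. There $\tau+u\in[1,3]$, so the integrand is at least $2^{-|\beta-1|}3^{-a}e^{-c}$. This gives a uniform positive lower bound.

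For the upper bound on $K$, split the integral exactly as in the proof of Lemma~\ref{l.integral-estimate}: into $(0,\tau)$, $(\tau,1)$ and $(1,\infty)$.
\begin{itemize}
\item On $(0,\tau)$, $\tau+u\in[\tau,2\tau]$. The contribution is at most $\beta^{-1}\tau^{\beta-a}\exp(-c2^{-q}\tau^{-q})$, which is bounded on $(0,1)$ because the exponential decay beats any power.
\item On $(\tau,1)$, $\tau+u\in[u,2u]$. The contribution is at most $\int_0^1 u^{\beta-a-1}\exp(-c2^{-q}u^{-q})\dif u$. This is finite by the substitution $v=u^{-q}$, which turns it into a Gamma-type integral.
\item On $(1,\infty)$, bound the exponential by $1$. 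The contribution is at most $\int_1^\infty u^{\beta-1-a}\dif u=(a-\beta)^{-1}$.
\end{itemize}

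The only delicate point is the small-$u$ behaviour, where the negative power $u^{\beta-a-1}$ (or $\tau^{\beta-a}$) must be dominated by the stretched exponential. This is routine once the splitting is done, and it is the same mechanism as in Lemma~\ref{l.integral-estimate}. Combining the two regimes with $1\vee R$ gives \eqref{e.intpot}, with $C$ depending only on $a,\beta,q,c$.
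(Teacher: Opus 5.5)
Your proof is correct and follows essentially the paper's argument. For $R\le 1$ you use the same comparison of the exponential with $e^{-c}$ together with the Beta integral. For $R>1$ you use the same scaling $s=Ru$ and get the lower bound from the same window ($s\in[R,2R]$, i.e.\ your $u\in[1,2]$). The only cosmetic difference is in the upper bound: the paper splits at $s=1$ and bounds the tail piece by the full integral $\int_0^\infty u^{\beta-a-1}e^{-c2^{-q}u^{-q}}\dif u$, whereas you substitute globally and split into three pieces.
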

\begin{proof}
  Suppose that $R\in[0,1]$. Then $\exp(-c\left(\frac{R}{1+s}\right)^{q})\asymp1$, and therefore $\sI (R)\asymp1$. Indeed,
    \begin{align}
      e^{-c}\leq\exp\left(-c\left(\frac{R}{1+s}\right)^{q}\right)&\leq1,\qquad s\in(0,\infty),\\
      \frac{1}{\Gamma(\beta)}\int_{0}^{\infty}s^{\beta-1}(1+s)^{-a}\dif s &=\frac{\Gamma(a-\beta)}{\Gamma(a)}\in(0,\infty),\\
      e^{-c}\frac{\Gamma(a-\beta)}{\Gamma(a)}\leq\sI(R)&\leq\frac{\Gamma(a-\beta)}{\Gamma(a)}.
    \end{align}
  Since $(1\vee R)^{\beta-a}=1$ for $R\in[0,1]$, \eqref{e.intpot} follows in this range. Assume $R\in[1,\infty)$. Split
  \begin{equation}
    \sI(R)=\sI_{0}(R)+\sI_{\infty}(R),
  \end{equation}
  where
    \begin{align}
      \sI_{0}(R)&:=\frac{1}{\Gamma(\beta)}\int_{0}^{1}s^{\beta-1}(1+s)^{-a}\exp\left(-c\left(\frac{R}{1+s}\right)^{q}\right)\dif s,\\
      \sI_{\infty}(R)&:=\frac{1}{\Gamma(\beta)}\int_{1}^{\infty}s^{\beta-1}(1+s)^{-a}\exp\left(-c\left(\frac{R}{1+s}\right)^{q}\right)\dif s.
    \end{align}
  For $0<s<1$, we have $\frac{R}{1+s}\geq\frac{R}{2}$, and therefore
    \begin{align}
      \sI_{0}(R) &\leq \frac{e^{-c2^{-q}R^{q}}}{\Gamma(\beta)} \int_{0}^{1}s^{\beta-1}\dif s= \frac{1}{\beta\Gamma(\beta)}e^{-c2^{-q}R^{q}} \leq C R^{\beta-a}. \label{e.intpotsmall}
    \end{align}
  The last inequality follows from
  \begin{equation}
    R^{a-\beta}e^{-c2^{-q}R^{q}} \leq\sup_{v>0}v^{(a-\beta)/q}e^{-c2^{-q}v} =\left(\frac{a-\beta}{qc2^{-q}e}\right)^{(a-\beta)/q}<\infty.
  \end{equation}
  For $s\geq1$, we have $s\leq1+s\leq 2s$; hence $(1+s)^{-a}\leq s^{-a}$ and $\exp(-c(R/(1+s))^{q})\leq\exp(-c2^{-q}(R/s)^{q})$. Therefore, with the change of variables $s=Ru$ (and using $a>\beta$ and $q>0$),
    \begin{align}
      \sI_{\infty}(R) &\leq \frac{R^{\beta-a}}{\Gamma(\beta)} \int_{R^{-1}}^{\infty} u^{\beta-a-1}\exp(-c2^{-q}u^{-q})\dif u \leq C R^{\beta-a}.\label{e.intpotupper}
    \end{align}
  The dominating integral in \eqref{e.intpotupper} is finite: with $v=u^{-q}$,
    \begin{align}
      \int_{0}^{\infty}u^{\beta-a-1}e^{-c2^{-q}u^{-q}}\dif u &=\frac{1}{q}\int_{0}^{\infty}v^{(a-\beta)/q-1}e^{-c2^{-q}v}\dif v=\frac{\Gamma((a-\beta)/q)}{q(c2^{-q})^{(a-\beta)/q}}<\infty,
    \end{align}
  since $(a-\beta)/q>0$. Together with \eqref{e.intpotsmall}, this proves the upper bound. For the lower bound, restrict \eqref{e.intpotdef} to $s\in[R,2R]$. Since $R\geq1$ and $R\leq s\leq2R$,
  \begin{equation}
    s^{\beta-1}\geq c_{\beta} R^{\beta-1}, \ (1+s)^{-a}\geq3^{-a}R^{-a}, \ \text{ and }\ \exp(-c\left(\frac{R}{1+s}\right)^{q})\geq e^{-c}.
  \end{equation}
  Consequently,
    \begin{align}
      \sI(R) &\geq \frac{c_{\beta} e^{-c}3^{-a}}{\Gamma(\beta)} R^{\beta-1-a}\int_{R}^{2R}\dif s\geq c R^{\beta-a}. \label{e.intpotlower}
    \end{align}
  This proves the lower bound.
\end{proof}
\section{List of notations}\label{sa.notat}
{\renewcommand{\arraystretch}{0.85}
\begin{longtable}{@{}p{0.21\textwidth}p{0.61\textwidth}p{0.10\textwidth}@{}}
  \toprule
  \textbf{Notation} & \textbf{Meaning} & \textbf{Page} \\
  \midrule
  \endfirsthead
  \toprule
  \textbf{Notation} & \textbf{Meaning} & \textbf{Page} \\
  \midrule
  \endhead
  \bottomrule
  \endfoot
  $q_{j},F_{j},\vicsek$ & The five marked points, the five similitudes, and the compact Vicsek set. & p.~\pageref{e.ifsss} \\
  $\ambient$ & The unbounded Vicsek set $\bigcup_{k\geq0}3^{k}\vicsek$. & p.~\pageref{e.ambient} \\
  $\metric$ & The intrinsic geodesic metric on $\ambient$. & p.~\pageref{p.geometry} \\
  $\meas$ & The normalized $\dimf$-dimensional Hausdorff measure on $\ambient$. & p.~\pageref{e.ahlfors} \\
  $\dimf,\dimw$ & Hausdorff dimension and walk dimension: $\dimf=\log_{3} 5$ and $\dimw=\dimf+1$. & p.~\pageref{e.critp} \\
  $\critic_{p}$ & Critical fractional order $(\dimf+p-1)/(p\dimw)$. & p.~\pageref{e.critp} \\
  $W_{n},W_{*},F_{w}$ & Words of length $n$, all finite words, and the corresponding compositions of the maps $F_{j}$. & p.~\pageref{d.cells} \\
  $Q,\collectQ,\collectQ_{n}$ & A cell, the family of all cells, and the cells of size $\ell_{n}=3^{-n}$. & p.~\pageref{d.cells} \\
  $\cellsize{Q},\cellctr{Q}$ & The size and center of the cell $Q$. & p.~\pageref{d.cells} \\
  $\cellvert{Q},\celltree{Q},\cellatt Q$ & The vertices, associated tree, and attachment boundary of $Q$. & p.~\pageref{d.cells} \\
  $G_{n},V_{n},\collectA_{n}$ & The level-$n$ cable graph, its vertex set, and its closed cable arms. & p.~\pageref{e.cellfamilies} \\
  $\skeleton,\medm$ & The skeleton $ \allowbreak\ \bigcup _{Q\in\collectQ} ( \celltree{Q} \setminus \cellvert{Q} ) $ and its length measure. & p.~\pageref{n.skeleton} \\
  $\abscon(\ambient,\metric)$ & Absolutely continuous functions. & p.~\pageref{e.acdef} \\
  $\mathfrak {O} ,\wgrad_{\mathfrak {O} }$ & A fixed orientation and the associated weak derivative on $\skeleton$. & p.~\pageref{n.orientation} \\
  $\core_{n},\core$ & Compactly supported level-$n$ piecewise-affine functions and their union. & p.~\pageref{e.coredef} \\
  $\sobolev{p}(\ambient),\sobolev{p}_{0}(Q)$ & Sobolev spaces on $\ambient$ and on $Q$. & p.~\pageref{e.sobdef} \\
  $(\form,\domain)$ & The canonical Dirichlet form and its domain $\domain=\sobolev{2}(\ambient)$. & p.~\pageref{e.formdef} \\
  $\gen$ & The non-positive self-adjoint generator of $(\form,\domain)$ with respect to $\meas$. & p.~\pageref{t.dirichlet} \\
  $P_{t},p_{t}(x,y)$ & The heat semigroup $P_{t}=e^{t\gen}$ and its heat kernel. & p.~\pageref{e.heatkernel} \\
  $\proj_{\lambda}$ & The spectral projection operator of the nonnegative operator $-\gen$. & p.~\pageref{e.spec} \\
  $q_{t}^{(\theta)}(x,y)$ & The kernel of $(-\gen)^{\theta} P_{t}$. & p.~\pageref{e.qdef} \\
  $\harmonic_{Q}u,m_{Q}(u)$ & Harmonic function on $Q$ with boundary condition $u$, and the mean of the four vertex values. & p.~\pageref{d.harmo} \\
  $I_{n}u,D_{n}u$ & Level-$n$ affine interpolation and differences $D_{n}u:=I_{n}u-I_{n-1}u$. & p.~\pageref{d.inter} \\
  $\collectE_{k},\sP_{m}^{I},\sF_{m}^{I}$ & New arms at level $k$, partitions of an arm $ \allowbreak I $, and their generated $\sigma$-algebras. & p.~\pageref{d.n0} \\
  $\collectB_{\lambda},b_{Q},g$ & Maximal bad cells, their bad functions, and the good function. & p.~\pageref{d.stopping} \\
  $G_{f}$ & The weak-$L^{p}$ limit of $(-\gen)^{\critic_{p}}P_{t}f$ as $t\downarrow0$ in Theorem~\ref{t.mainw}. & p.~\pageref{e.homog} \\
  $\phi_{Q},E_{Q}$ & A cutoff function for $Q$ and the region on which it equals one. & p.~\pageref{e.cutof} \\
  $\lambda_{h},L^{p,q},\|\cdot\|_{(p,\infty)}$ & The distribution function of $h$, the Lorentz spaces, and the equivalent norm on weak $L^{p,\infty}$. & p.~\pageref{d.lorentz} \\
  $\pi_{A}$ & Nearest-point projection onto a nonempty closed connected subset $A$ of a real tree. & p.~\pageref{l.projt} \\
  $\collectD_{p},\riesz_{p}$ & A linear subspace and the homogeneous critical Riesz transform; $\riesz_{p}g=\wgrad(-\gen)^{-\critic_{p}}g$ for $ \allowbreak g \in \collectD _{p}^{\rm sp } $. & p.~ \pageref{e.rieszp} \\
\end{longtable}
}

\noindent \textbf{Acknowledgments.} F.B.~is partially supported by grant 10.46540/4283-00175B from the Independent Research Fund Denmark, by the Villum Investigator grant \emph{Stochastic Analysis in Aarhus}, and by the European Research Council (ERC) under the European Union's Horizon Europe research and innovation programme (RanGe project, Grant Agreement No.~101199772). A.C.~is partially funded by the Villum Investigator grant \emph{Stochastic Analysis in Aarhus}. L.C.~is partially supported by grant 10.46540/4283-00175B from the Independent Research Fund Denmark.

\vspace{0.4cm}
\noindent \textbf{AI disclosure statement.} Generative artificial intelligence tools were used during the initial exploratory phase of this work to assist in considering possible directions and organizing preliminary ideas, and during the final preparation phase to assist with proofreading, linguistic and stylistic polishing. The authors take full responsibility for the content of the manuscript.

\vspace{10pt}

\noindent Fabrice~Baudoin:
\vspace{-3pt}

\noindent Department of Mathematics, Aarhus University, 8000 Aarhus C, Denmark
\vspace{-3pt}

\noindent \texttt{fbaudoin@math.au.dk}
\vspace{5pt}

\noindent Aobo~Chen:
\vspace{-3pt}

\noindent Department of Mathematics, Aarhus University, 8000 Aarhus C, Denmark
\vspace{-3pt}

\noindent \texttt{aobochen.math@hotmail.com} / \texttt{aobochen@math.au.dk}

\vspace{5pt}

\noindent Li~Chen:
\vspace{-3pt}

\noindent Department of Mathematics, Aarhus University, 8000 Aarhus C, Denmark
\vspace{-3pt}

\noindent \texttt{lchen@math.au.dk}
\end{document}